\newcommand{\RR}{\mathbb{R}}
\newcommand{\ZZ}{\mathbb{Z}}
\newcommand{\PP}{\mathbb{P}}
\newcommand{\CC}{\mathbb{C}}
\newcommand{\HH}{\mathbb{H}}
\newcommand{\QQ}{\mathbb{Q}}
\newcommand{\EE}{\mathbb{E}}
\newcommand{\FF}{\mathbb{F}}
\newcommand{\DD}{\mathbb{D}}
\newcommand{\C}{\mathbb{C}}
\newcommand{\Z}{\mathbb{Z}}
\newcommand{\CP}{\mathbb{CP}}
\newcommand{\R}{\mathbb{R}}
\newcommand{\Q}{\mathbb{Q}}
\DeclareMathOperator{\Mod}{Mod}
\DeclareMathOperator{\Aut}{Aut}
\DeclareMathOperator{\GL}{GL}
\DeclareMathOperator{\PGL}{PGL}
\DeclareMathOperator{\Tr}{Tr}
\DeclareMathOperator{\OO}{O}
\DeclareMathOperator{\Stab}{Stab}
\DeclareMathOperator{\Fix}{Fix}
\DeclareMathOperator{\Bl}{Bl}
\DeclareMathOperator{\MOD}{Mod}
\DeclareMathOperator{\Ric}{Ric}
\DeclareMathOperator{\Homeo}{Homeo}
\DeclareMathOperator{\Diff}{Diff}
\DeclareMathOperator{\Isom}{Isom}
\DeclareMathOperator{\RRef}{Ref}
\DeclareMathOperator{\Id}{Id}
\newtheorem{theorem}{Theorem}[section]
\newtheorem{proposition}[theorem]{Proposition}
\newtheorem{lemma}[theorem]{Lemma}
\newtheorem{corollary}[theorem]{Corollary}
\theoremstyle{definition}
\newtheorem{definition}[theorem]{Definition}
\newtheorem{question}[theorem]{Question}
\newtheorem{remark}[theorem]{Remark}
\numberwithin{equation}{subsection}
\title{Cyclic Nielsen realization for del Pezzo surfaces}
\author{Seraphina Eun Bi Lee, Tudur Lewis, and Sidhanth Raman}
\date{}
\begin{document}
\maketitle
\begin{abstract}
    The cyclic Nielsen realization problem for a closed, oriented manifold asks whether any mapping class of finite order can be represented by a homeomorphism of the same order.  In this article, we resolve the smooth, metric, and complex cyclic Nielsen realization problem for certain ``irreducible" mapping classes on the family of smooth 4-manifolds underlying del Pezzo surfaces. Both positive and negative examples of realizability are provided in various settings. Our techniques are varied, synthesizing results from reflection group theory and 4-manifold topology.
\end{abstract}

\setcounter{tocdepth}{1}
\tableofcontents

\section{Introduction}\label{sectionIntro}
For $0 \leq n \leq 8$, consider the closed, oriented, smooth 4-manifold
$$M_n := \CC \PP^2 \# n \overline{\CC\PP^2}$$
and let $\Mod(M_n) := \pi_0(\Homeo^+(M_n))$ denote its \emph{topological} mapping class group. The manifold $M_n$ admits both the structures of a complex surface (including del Pezzo surfaces) and Einstein manifolds (including (conformally) K\"ahler--Einstein metrics). These structures provide a rich source of homeomorphisms and mapping classes of $M_n$ through their automorphisms.

One approach to studying the elements of $\Mod(M_n)$ is through the (topological) \emph{Nielsen realization problem}, which asks whether every finite subgroup $G \leq \Mod(M_n)$ has a lift $\tilde G \leq \Homeo^+(M_n)$ isomorphic to $G$ under the natural quotient map $q: \Homeo^+(M_n) \to \Mod(M_n)$. In the classical case of closed, oriented surfaces $\Sigma_g$, $g \geq 2$, Kerckhoff's solution \cite{kerckhoff} of the Nielsen realization problem shows that finite subgroups of $\Mod(\Sigma_g)$ are exactly those that arise as automorphisms of complex structures (and equivalently, isometries of hyperbolic structures) on $\Sigma_g$. In the case of cyclic subgroups $G \leq \Mod(\Sigma_g)$, the Nielsen realization problem was resolved earlier by Nielsen \cite{nielsen} and Fenchel \cite{fenchel}. 

In analogy with the case of surfaces, we study certain finite cyclic subgroups $\langle f \rangle \cong \Z/m\Z \leq \Mod(M_n)$ and compare which of these subgroups are realizable by finite groups of diffeomorphisms or automorphisms of a complex or metric structure. In other words, we compare the solutions to the following refinements of the Nielsen realization problem arising from each type of geometric structure on $M_n$ for certain elements $f \in \Mod(M_n)$ of order $m < \infty$:
\begin{itemize}
    \item \textit{(Complex Nielsen)} Does there exist a complex structure $J$ on $M_n$ and a complex automorphism $\varphi \in \Aut(M_n, J)$ of order $m$ such that $[\varphi] = f$?
    \item \textit{(Metric Nielsen)} Does there exist an Einstein metric $g$ on $M_n$ and an isometry $\varphi \in \Isom(M_n, g)$ of order $m$ such that $[\varphi] = f$?
    \item \textit{(Smooth Nielsen)} Does there exist a diffeomorphism $\varphi \in \Diff^+(M_n)$ of order $m$ such that $[\varphi] = f$?
\end{itemize}
If yes, we say that such a mapping class $f$ is \emph{realizable} by a complex (resp. metric, smooth) automorphism of $M_n$.

\subsection{Statement of results}

In this paper we study certain finite-order \emph{irreducible} elements $f \in \Mod^+(M_n)$, an index-$2$ subgroup of $\Mod(M_n)$ (see \Cref{sectionTopDelPezzo}). An element $f \in \Mod(M_n)$ is called irreducible if $f$ does not preserve any decomposition $H_2(M, \Z) \oplus H_2(\# k \overline{\CP^2}, \Z) \cong H_2(M_n, \Z)$ where $M$ is a del Pezzo manifold and $k > 0$; see \Cref{irreddef} for a more precise statement. The realizability of irreducible elements can be used to deduce the realizability of some reducible elements (cf. \Cref{primenielsen}).

The failure of complex Nielsen realization for certain finite-order, irreducible $f \in \Mod(M_n)$ was already known by work of Dolgachev--Iskovskikh \cite{dolgachev--iskovskikh}, and in this paper we find many such classes that are not smoothly realizable (see \Cref{ComplexVSSmooth}). However, the following theorem shows that in many cases, the three types of Nielsen realization problems considered above have equivalent solutions.

Below, we say that $(M_n,J)$ is a \emph{del Pezzo surface} if $J$ is a complex structure on $M_n$ that is biholomorphic to a non-singular surface with ample anticanonical bundle. A metric $g$ is an \emph{Einstein} metric if its Ricci curvature tensor is proportional to $g$. See Section \ref{sectionMetComplex} for the precise definition of Einstein metrics of positive, symplectic type.

\begin{theorem}[Equivalence of Nielsen Realizations]\label{mainthm1}
Let $3 \leq n \leq 7$ and let $f\in \MOD^+(M_n)$ be irreducible and of order $m < \infty$. Suppose further that $H_2(M_n, \Z)^{\langle f \rangle} \cong \Z$. The following are equivalent: 
\begin{enumerate}[label=(\alph*)]
    \item (Metric) There exists an Einstein metric $g$ (of positive symplectic type) on $M_n$ and an isometry $\varphi \in \Isom(M_n,g)$ of order $m$ such that $[\varphi] = f$.
    \item (Complex) There exists a complex structure $J$ so that $(M_n, J)$ is a del Pezzo surface and a complex automorphism $\varphi \in \mathrm{Aut}(M_n,J)$ of order $m$ such that $[\varphi] = f$.
    \item (K\"{a}hler--Einstein) There exists an K\"{a}hler--Einstein pair $(M_n,g,J)$ and an automorphism $\varphi \in \Aut(M_n,g,J)$ of order $m$ such that $[\varphi]  =f$.
    \item (Smooth) There exists a diffeomorphism $\varphi \in \Diff^+(M_n)$ of order $m$ such that $[\varphi] = f$.
\end{enumerate}    
\end{theorem}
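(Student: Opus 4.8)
The plan is to separate the differential-geometric content from the topological realization and to reduce everything to three nontrivial implications. Four implications are essentially formal. Any K\"ahler--Einstein pair $(M_n,g,J)$ of positive type has Fano underlying complex structure, which for $3\le n\le 7$ is a del Pezzo surface, and its metric is Einstein of positive symplectic type; thus $(c)\Rightarrow(b)$ and $(c)\Rightarrow(a)$. Likewise every biholomorphism and every isometry is a diffeomorphism, so $(b)\Rightarrow(d)$ and $(a)\Rightarrow(d)$. It therefore suffices to establish $(a)\Rightarrow(b)$, $(b)\Rightarrow(c)$, and $(d)\Rightarrow(b)$, after which all four statements are equivalent.

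For $(b)\Rightarrow(c)$ I would use that every del Pezzo surface with $3\le n\le 7$ admits a K\"ahler--Einstein metric by the work of Tian (and Tian--Yau); since $\langle\varphi\rangle$ is finite, Bando--Mabuchi uniqueness of the KE metric in its K\"ahler class lets me choose it $\langle\varphi\rangle$-invariant, so that $\varphi$ becomes a KE automorphism of order $m$. For $(a)\Rightarrow(b)$ I would invoke the structure theory of Einstein metrics of positive symplectic type on these del Pezzo $4$-manifolds (see \Cref{sectionMetComplex}), by which such a metric is conformally K\"ahler and canonically determines a del Pezzo complex structure $J$; the isometry $\varphi$ preserves the metric together with its symplectic data and hence preserves $J$, yielding $\varphi\in\Aut(M_n,J)$ of order $m$. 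These two steps close the equivalence among $(a)$, $(b)$, and $(c)$.

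The substance of the theorem is $(d)\Rightarrow(b)$. Using the identification $\Mod(M_n)\cong\OO(H_2(M_n,\Z))$ from the work of Freedman and Quinn, the condition $[\varphi]=f$ is equivalent to $\varphi_*=f_*$ on $H_2$, so complex realizability becomes the statement that the finite-order isometry $f_*\in\OO(I_{1,n})$ is induced by an automorphism of some del Pezzo structure on $M_n$. Since $f\in\Mod^+(M_n)$ preserves the canonical class $K$ (see \Cref{sectionTopDelPezzo}) and $H_2(M_n,\Z)^{\langle f\rangle}\cong\Z$, the restriction $f_*|_{K^\perp}$ is a finite-order isometry of the root lattice $E_n=K^\perp$ with no nonzero fixed vector. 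Reflection group theory then confines $f_*$ to a short, explicit list of conjugacy classes of such elliptic elements of $W(E_n)$, and complex realizability of each class is governed precisely by the Dolgachev--Iskovskikh criterion. It remains to show that any class not realized by a del Pezzo automorphism admits no smooth representative of order $m$ either.

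I expect this last matching to be the main obstacle. Here I would study a hypothetical order-$m$ diffeomorphism $\varphi$ through its fixed locus $\Fix(\varphi)$ and the associated normal representations, which the hypothesis $H_2(M_n,\Z)^{\langle f\rangle}\cong\Z$ rigidifies: the invariant part of the intersection form is rank one, so the $G$-signature theorem and equivariant index constraints leave only finitely many numerical possibilities for the fixed-point data. The plan is to prove that these possibilities can be realized smoothly exactly when the Dolgachev--Iskovskikh criterion holds, so that the topological obstruction is sharp rather than merely necessary. Establishing this sharpness --- an equivariant-topological converse to the complex classification --- over the finite range $3\le n\le 7$ and the finite list of elliptic conjugacy classes is where I anticipate the real work, with reflection-group bookkeeping organizing the case analysis.
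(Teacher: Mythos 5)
Your skeleton is the paper's own: the equivalence of (a), (b), (c) is proved exactly as you propose (Tian's existence theorem, Bando--Mabuchi equivariant uniqueness for (b)$\Rightarrow$(c), LeBrun's classification of Einstein metrics of positive symplectic type for (a)$\Rightarrow$(c); this is \Cref{MCequiv}), and (d)$\Rightarrow$(b) is reduced, as in \Cref{irredclassification} and \Cref{CarterClassify}, to showing that the finitely many cuspidal classes absent from the Dolgachev--Iskovskikh tables ($D_5(a_1)$ and $D_2+D_3$ for $n=5$; $A_7$, $D_4+3A_1$, $D_6+A_1$, $E_7(a_3)$ for $n=7$) admit no finite-order smooth representative. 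Two corrections to your reduction: an element of $\Mod^+(M_n) \cong \OO^+(1,n)(\Z)$ does \emph{not} preserve $K_{M_n}$ --- the stabilizer of $K_{M_n}$ is the finite subgroup $W_n$ --- so "$f$ fixes $K$" holds only after conjugation, which is the content of \Cref{irredclassification} (via the Tits cone and parabolic subgroups, also ruling out the conic-bundle case $P_n$ using your rank-one hypothesis); transferring smooth and complex realizability across that conjugation then uses Wall's theorem that the conjugating class is itself represented by a diffeomorphism. Also, in (a)$\Rightarrow$(b) you must verify that $\varphi$ preserves the \emph{orientation} of the self-dual line $\mathcal H^2_+(M_n,g)$, a hypothesis in \Cref{MCequiv}; here it is automatic because $f$ fixes a positive-square class and two positive vectors in a signature-$(1,n)$ form cannot be orthogonal, but it must be discharged.

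The genuine gap is in the step you yourself flag as the real work: your proposed mechanism --- $G$-signature plus equivariant index constraints pinning down fixed-point data --- is not sufficient, and your "sharpness" framing is misdirected (only one direction is needed: non-complex-realizable implies non-smoothly-realizable; the realizable classes already carry complex, hence smooth, representatives). For most of the six classes the paper needs, beyond the $G$-signature theorem, Edmonds' theorem on the trivial/cyclotomic/regular summand decomposition of $H_2$ as a $\Z[G]$-module to control $b_1$ of the fixed set, together with the Lefschetz fixed-point theorem (packaged as \Cref{ObstructionLemma}, applied to carefully chosen powers of $f$, e.g.\ $p=3$ for $D_4+3A_1$ and an explicit numerical $\cot$-estimate for $D_6+A_1$, which then kills $E_7(a_3)$ via its cube). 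Crucially, for the order-$8$ class $A_7$ the local numerical data is \emph{consistent}: every nontrivial power has Lefschetz number $2$ and $\varphi$-signature $2$, so no fixed-point count or signature defect rules it out. The paper instead classifies $\Fix(\varphi^4)$ into three cases via Edmonds and, in the surviving case $X \sqcup \{p_1,p_2\}$ with $\chi(X)=0$, runs a global argument on the quotient: remove invariant balls around $p_1,p_2$, form $M = (M_7 - (B_1\sqcup B_2))/\langle\varphi\rangle$ with lens-space boundary, cap with $2$-handlebodies to get a closed $4$-manifold $A$, and derive the contradiction $[H_2(A):\mathrm{im}(F)] = 64e^{-1}$ dividing $32e^{-1}$ from Mayer--Vietoris, Armstrong's simple-connectivity of quotients, and the unimodular-lattice index formula $\det(Q|_{L_0}) = \pm[L:L_0]^2$. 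Nothing in your outline anticipates this quotient-space analysis, and without it the $A_7$ class passes through your proposed sieve.
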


\begin{remark}[On the condition $H_2(M_n, \Z)^{\langle f \rangle} \cong \Z$]
\Cref{irredclassification} shows that if $f \in \Mod^+(M_n)$ (with $3 \leq n \leq 8$) is irreducible of finite order then $H_2(M_n, \Z)^{\langle f \rangle} \cong \Z$ or $f$ fixes a nontrivial class $v \in H_2(M_n, \Z)$ of self-intersection $0$. The study of classes $f$ satisfying the first condition is closely related to that of automorphisms of del Pezzo surfaces; \Cref{irred-minimal} and \cite[Theorem 3.8]{dolgachev--iskovskikh} or \cite[Proposition 4.1]{blanc} together imply that if $f$ is realizable by a complex automorphism $\varphi \in \Aut(M_n, J)$ then $(M_n, J)$ is in fact a del Pezzo surface. 

Moreover, this condition on $H_2(M_n, \Z)^{\langle f \rangle}$ cannot be removed in general; see \Cref{Jonquiere--not--einstein} for an explicit example of an irreducible class $f \in \Mod^+(M_7)$ with $H_2(M_7, \Z)^{\langle f \rangle} \not\cong\Z$ that satisfies (d) but not (a), (b) or (c).

With the connection to del Pezzo surfaces in mind, we choose to focus on irreducible classes $f$ with $H_2(M_n, \Z)^{\langle f \rangle} \cong \Z$ in \Cref{mainthm1}. 
\end{remark}

The proof of \Cref{mainthm1} is partly synthetic and partly enumerative. A simple synthetic proof of equivalence between metric, complex, and K\"{a}hler--Einstein Nielsen realization for the mapping classes of $M_n$ considered in \Cref{mainthm1} is given in \Cref{MCequiv}. Proving that complex and smooth Nielsen realization are equivalent requires more work. Using the Coxeter theory of a distinguished finite subgroup $W_n \leq \Mod^+(M_n)$ called the \emph{Weyl group} (\Cref{defn:weyl}), we reduce the problem to obstructing the smooth realizability of finitely many conjugacy classes of $W_n$. We then study each such conjugacy class carefully in \Cref{ComplexVSSmooth}. 

While the existence of finite, non-cyclic subgroups of $\Mod(M_n)$ that do not lift to $\Diff^+(M_n)$ was known for $n= 2, 3$ by work of the first-named author \cite[Corollary 1.4, Theorem 1.6]{lee}, \Cref{mainthm1} differs from these previous results as it shows that there exist individual, finite-order elements of $\Mod(M_n)$ that are not smoothly realizable. As such, the proof of the equivalence of (b) and (d) of \Cref{mainthm1} requires more refined obstructive tools, such as an analysis (\Cref{sec:A7}) of the topology of possible finite quotients of $M_n$.

Note that there are other del Pezzo manifolds not covered by \Cref{mainthm1}, such as $M_n$ for $n = 0, 1, 2$ and $\CC\PP^1 \times \CC\PP^1$. Nielsen realization for irreducible mapping classes in these cases is straightforward to verify --- see \Cref{blowup2more}. We also give partial positive Nielsen realization results in the last remaining case of $n=8$, e.g. \Cref{coxetermainthm}. The following question remains open.
\begin{question}\label{ques1}
    Are the metric, complex, K\"ahler--Einstein, and smooth Nielsen realization problems equivalent for finite, cyclic subgroups $G = \langle f \rangle \leq \Mod(M_8)$ generated by irreducible classes $f$?
\end{question}

In addition to Theorem \ref{mainthm1}, we provide explicit examples of realizable and non-realizable irreducible elements of $\MOD(M_n)$ by a more refined study of the Coxeter theory of the Weyl group $W_n$. For example, the Coxeter elements form a distinguished conjugacy class of $W_n$ of order $h_n$, the Coxeter number of $W_n$ (see \Cref{coxeterDef}). In the following theorem, we show that Coxeter elements also form a distinguished conjugacy class of $\Mod^+(M_n)$ among classes of order $h_n$ for all $3 \leq n \leq 8$.
\begin{theorem}[Coxeter Nielsen Realization]\label{coxetermainthm}
    Let $3 \leq n \leq 7$ and let $f \in \mathrm{Mod}^+(M_n)$ be an irreducible mapping class of order $h_n$. Then $f$ is conjugate to a Coxeter element and is realizable as a automorphism of order $h_n$ on a del Pezzo surface $(M_n, J)$. For $n=8$, suppose $f \in \mathrm{Mod}^+(M_8)$ is irreducible of order $h_8$.
\begin{itemize}
    \item If $f \in \Aut(H_2(M_8, \Z), Q_{M_8})$ has trace $0$ then $g$ is conjugate to a Coxeter element, and is realizable by a complex automorphism of order $h_8$ on a del Pezzo surface $(M_8, J)$.
    \item If $f \in \Aut(H_2(M_8, \Z), Q_{M_8})$ has nonzero trace then $g$ is not realizable by any finite-order diffeomorphism of $M_8$. 
\end{itemize}
\end{theorem}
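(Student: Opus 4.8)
The plan is to pass to the homological representation and reduce the entire statement to an analysis of conjugacy classes in the Weyl group $W_n$. By the work of Freedman and Quinn the action on homology identifies $\Mod(M_n)$ with $\Aut(H_2(M_n,\ZZ),Q_{M_n})$, and by \Cref{sectionTopDelPezzo} together with \Cref{irredclassification} an irreducible finite-order class $f\in\MOD^+(M_n)$ fixes the canonical class $K$ and so corresponds to an element $w\in W_n$ of the same order acting on the root lattice $K^\perp$ without nonzero fixed vector (the alternative isotropic case of \Cref{irredclassification} does not support a class of order $h_n$). The theorem then splits into two tasks: a \emph{classification} task, pinning down the conjugacy classes of $W_n$ of order $h_n$ subject to irreducibility; and a \emph{realization/obstruction} task, exhibiting a del Pezzo automorphism in the realizable cases and obstructing smoothing in the remaining case.

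For the classification when $3\le n\le 7$, I would enumerate the possible characteristic polynomials of $w$ on $K^\perp$: each is a product $\prod_i \Phi_{d_i}$ of cyclotomic polynomials of total degree $n$ with $d_i\mid h_n$, with least common multiple $h_n$ (so that $w$ has order $h_n$) and with no factor $\Phi_1$ (irreducibility, i.e.\ no eigenvalue $1$). Comparing against the characteristic polynomials of conjugacy classes of $W_n$---equivalently, using the Borel--de Siebenthal description of sub-root-systems together with Carter's classification---I would rule out every candidate other than the Coxeter characteristic polynomial $\prod_j(t-\zeta^{m_j})$, where $\zeta=e^{2\pi i/h_n}$ and $m_j$ are the exponents. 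The mechanism is that any ``extra'' cyclotomic factor forces a sub-root-system that either fails to embed in $E_n$ or leaves an invariant $(-1)$-class, contradicting irreducibility. Since the Coxeter elements form a single conjugacy class, this shows $f$ is conjugate to a Coxeter element. Complex realizability is then classical: the classification of \cite{dolgachev--iskovskikh} exhibits, for each degree $9-n$ with $1\le 9-n\le 6$, a del Pezzo surface carrying a cyclic automorphism of order $h_n$ whose homological action is a Coxeter element.

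For $n=8$ I would first record the numerics: the $E_8$ Coxeter element has characteristic polynomial $\Phi_{30}$ on the unimodular lattice $K^\perp\cong E_8$, so its trace there is $\mu(30)=-1$ and its trace on $H_2(M_8,\ZZ)$ is $-1+1=0$. In the trace-$0$ case I would run the same cyclotomic/Carter enumeration to show that the Coxeter class is the \emph{unique} irreducible conjugacy class of order $30$ with vanishing trace, and realize it by the degree-$1$ del Pezzo surface admitting an order-$30$ automorphism. In the nonzero-trace case I would argue non-realizability: assuming a finite-order diffeomorphism $\varphi\in\Diff^+(M_8)$ with $[\varphi]=f$, I would study the cyclic group it generates through its fixed-point data and the topology of its quotients, as developed in \Cref{sec:A7}. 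Concretely, the Lefschetz numbers $L(\varphi^k)=2+\Tr(f^k)$ and the $G$-signature theorem constrain the isolated fixed points and fixed surfaces of $\varphi$ and of its prime-power subactions $\varphi^{15},\varphi^{10},\varphi^{6}$ (of orders $2,3,5$), and I would show that the rotation data forced by a nonzero trace is incompatible with the intersection form of $M_8$---equivalently, that the resolved finite quotient cannot carry the required second homology.

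The main obstacle is precisely this last step: obstructing \emph{smooth} realizability of the nonzero-trace order-$30$ classes. The Weyl-theoretic inputs are finite bookkeeping and complex realizability is a citation, so the real work is the $4$-manifold topology needed to convert ``wrong trace'' into a genuine contradiction. I expect the delicate points to be (a) controlling the fixed-point set of the order-$30$ map and its prime-power subactions simultaneously, so that the $G$-signature and Lefschetz constraints can be combined coherently, and (b) the quotient analysis of \Cref{sec:A7}, where one must show that the candidate finite quotient of $M_8$ fails to exist with the homology dictated by a nonzero trace. This is the step that genuinely separates the smooth problem from its complex and metric counterparts, consistent with the fact that the full equivalence of \Cref{mainthm1} is available only for $n\le 7$.
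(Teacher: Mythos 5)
Your classification and complex-realization steps track the paper closely. The paper's Theorems \ref{cox_conjugate_7} and \ref{cox_conjugate_8} carry out exactly the enumeration you propose: \Cref{irredclassification} reduces to cuspidal classes of $W_n$ and of $P_n \cong W(D_{n-1})$, the $P_n$ classes are ruled out order-by-order via the signed cycle types of \Cref{negativeEven} (your parenthetical claim that the ``isotropic case'' supports no class of order $h_n$ is true but is verified there by this explicit check, not by a general principle), and for $n=8$ the two order-$30$ cuspidal classes of $W(E_8)$ are separated precisely by the trace computation you give: $\Phi_{30}$ (trace $0$ on $H_2$) versus $r$ with $\chi_r(t) = (t+1)^2(t^2-t+1)(t^4-t^3+t^2-t+1)$ on $\EE_8$ (trace $1$). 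For realization the paper uses McMullen's explicit automorphisms $(x,y) \mapsto (a,b)+(y,y/x)$ \cite[Theorem 11.1]{mcmullen} (plus \cite[Theorem 2]{wall-diffeos} to realize the conjugating class smoothly, and \Cref{irred-minimal} with \cite[Theorem 3.8]{dolgachev--iskovskikh} to see the realizing surface is del Pezzo); your citation of the Dolgachev--Iskovskikh classification is an acceptable substitute, though you still need the conjugation-realization step.

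The genuine gap is in the nonrealizability of the nonzero-trace class $r$, where your proposed mechanism would fail as sketched. You plan to obstruct via the prime-power subactions $\varphi^{15}, \varphi^{10}, \varphi^{6}$ of orders $2,3,5$ together with $G$-signature rotation data and the quotient analysis of \Cref{sec:A7}. But none of these prime-order subactions carries an obstruction: $r^{15}$ acts as $-\Id$ on $\EE_8$, so it \emph{is} the Bertini class $w^{h_8/2}$, which is smoothly (indeed complexly) realizable by \Cref{geiserbertini}; and one computes $\Lambda(r^{10}) = 8 > 0$ and $\Lambda(r^{6}) = 6 > 0$, so Lefschetz--Edmonds considerations at $p = 3$ and $p = 5$ produce fixed sets (spheres and points of positive Euler characteristic) that are perfectly consistent. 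The paper's actual proof (\Cref{30_irred_tr1_nonrealizable}) hinges on a \emph{composite} power you never consider: $r_6 := r^5$ of order $6$ has characteristic polynomial $(t+1)^6(t^2-t+1)$ on $\EE_8$, hence negative Lefschetz number $\Lambda(r_6) = 2 + (1-5) = -2$, while $H_2(M_8,\Z)$ decomposes as trivial-plus-regular over $\Z[\langle r^{10}\rangle]$, with no cyclotomic summands at $p=3$. \Cref{ObstructionLemma} then gives an immediate contradiction --- any finite-order realization would have fixed set a nonempty disjoint union of spheres and points (by \Cref{edmonds}) with Euler characteristic $-2$ --- obstructing $r_6$ and hence $r$ against \emph{all} finite-order diffeomorphisms, not just order $30$. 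No rotation-number bookkeeping and none of the lens-space/handlebody quotient machinery of \Cref{sec:A7} is needed; that machinery is required only for the $A_7$ class on $M_7$, where all Lefschetz numbers are positive and the soft obstruction lemma is unavailable.
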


Coxeter elements account for large numbers of realizable elements in $W_n$, as recorded below:
\begin{center}
\begin{tabular}{|c|>{\centering\arraybackslash}p{1.5cm}|>{\centering\arraybackslash}p{1.5cm}|>{\centering\arraybackslash}p{1.5cm}|>{\centering\arraybackslash}p{1.5cm}|>{\centering\arraybackslash}p{1.5cm}|>{\centering\arraybackslash}p{1.5cm}|>{\centering\arraybackslash}p{1.5cm}|}
\hline
$n$ & 3 & 4&5&6&7&8\\ \hline\hline
\makecell{ realizable, irreducible\\ elements of order $h_n$ }  & 2       & 24 & 240                                    & 4320 &161280   &23224320\\ \hline
\end{tabular}
\end{center}

Coxeter elements also characterize irreducible mapping classes of odd, prime order. 
\begin{theorem}\label{thm:primes-are-powers}
Let $3 \leq n \leq 8$ and let $f \in \Mod^+(M_n)$ be an irreducible mapping class of odd, prime order. Then $f$ is conjugate in $\Mod^+(M_n)$ to a power of a Coxeter element of $W_n \leq \Mod^+(M_n)$. 
\end{theorem}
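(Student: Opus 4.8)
The plan is to transport the problem into the finite reflection group $W_n$ and then apply Springer's theory of regular elements. First I would use the identification of $\Mod(M_n)$ with $\Aut(H_2(M_n,\Z),Q_{M_n})$ from \Cref{sectionTopDelPezzo}: under it $\MOD^+(M_n)$ becomes the stabilizer of the canonical class $K$, acting on the negative-definite root lattice $R_n:=K^{\perp}$ as its full automorphism group, with $W_n \trianglelefteq \Aut(R_n)$ and $\Aut(R_n)/W_n$ a $2$-group. Since $f$ has odd order its image lies in $W_n$, and conjugacy of finite-order classes in $\MOD^+(M_n)$ is detected by conjugacy of their actions on $R_n$. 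Thus it suffices to prove: every irreducible $w\in W_n$ of odd prime order $p$ is $W_n$-conjugate to a power of a Coxeter element.

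Next I would extract the eigenvalue constraints. Write $m_1,\dots,m_n$, $d_1,\dots,d_n$, and $h_n$ for the exponents, degrees, and Coxeter number of $W_n$. As $w$ fixes $K$ and has prime order $p$, its action on $R_n\otimes\C$ is diagonalizable with eigenvalues among $1$ and the primitive $p$-th roots of unity; rationality of the characteristic polynomial forces it to equal $(x-1)^{a}\Phi_p(x)^{b}$ with $a+b(p-1)=n$ and $\dim_{\C}\ker(w-\zeta_p)=b$. Springer's theorem bounds $b\le a(p):=\#\{i:p\mid d_i\}$, and since the $p-1$ Galois-conjugate eigenspaces have equal dimension, $(p-1)a(p)\le n$. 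On the other hand, \Cref{irredclassification} forces the eigenvalue-$1$ multiplicity to be minimal for irreducible classes: the fixed lattice $H_2(M_n,\Z)^{\langle w\rangle}$ is either $\cong\Z$ (so $a=0$) or a rank-$2$ lattice containing a primitive isotropic class (so $a=1$). Setting $a_0:=n-(p-1)a(p)$, the bound $b\le a(p)$ reads $a\ge a_0$, while irreducibility gives $a\le 1$; the integrality of $b$ together with $a_0\le a\le 1$ then forces $a=a_0$ and $b=a(p)$ (and where the degrees give $a_0\ge2$ — the cases $(n,p)=(4,3),(5,3),(6,5),(7,5),(8,7)$ — no irreducible class of order $p$ exists at all). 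Hence $w$ realizes the maximal $\zeta_p$-eigenspace dimension, so $w$ is a $p$-regular element, and by Springer all $p$-regular elements of $W_n$ form a single conjugacy class.

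Finally I would split on whether $p\mid h_n$. When $p\mid h_n$ — which a direct check shows holds for every realizable pair, namely $(n,p)\in\{(4,5),(6,3),(7,3),(8,3),(8,5)\}$, where reducing the exponents $m_i \bmod p$ reproduces exactly $(x-1)^{a_0}\Phi_p^{a(p)}$ — the Coxeter element $c$ is $h_n$-regular, so $c^{h_n/p}$ has a regular $\zeta_p$-eigenvector and is $p$-regular, hence conjugate to $w$; this exhibits $w$ as a power of a Coxeter element. When $p\nmid h_n$, the only pairs with $a_0\le 1$ and $p\mid|W_n|$ are $(n,p)=(5,5)$ and $(7,7)$; here the unique $p$-regular class is represented by the Coxeter element of an $A_{p-1}$-subsystem (a $5$- or $7$-cycle on the exceptional classes), whose rank-$2$ fixed lattice $\langle K,v\rangle$ satisfies $K^2\cdot|v^2|=80$ and $28$ respectively — neither a perfect square — so $\langle K,v\rangle$ contains no isotropic vector. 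By \Cref{irredclassification} such a class is reducible, contradicting the irreducibility of $w$; hence these two cases do not occur, completing the argument.

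The substantive step is the second paragraph's control of the eigenvalue-$1$ multiplicity: everything hinges on knowing that an irreducible class cannot fix a large sublattice, which is precisely the content of \Cref{irredclassification} and is the part demanding real work — once it is in hand, the Springer machinery is purely formal. The secondary, easily overlooked subtlety is that $p$-regularity does \emph{not} by itself yield a Coxeter power when $p\nmid h_n$; the two "regular but reducible" classes for $D_5$ and $E_7$ must be excluded by hand, and it is exactly the arithmetic of their fixed lattices (non-square discriminants, hence no isotropic vector) that rescues the theorem.
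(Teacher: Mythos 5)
Your route through Springer's regular-element theory is genuinely different from the paper's proof (which reduces to cuspidal classes of $W_n$ via \Cref{irredclassification} and \Cref{negativeEven} and then enumerates them from the Carter and Geck--Pfeiffer tables), but as written it has genuine gaps. First, your opening reduction is misstated: $\Mod^+(M_n) \cong \OO^+(1,n)(\Z)$ is an infinite hyperbolic reflection group, not the stabilizer of $K_{M_n}$, so ``odd order forces the image into $W_n$'' via a $2$-group quotient of $\Aut(K_{M_n}^\perp)$ is ill-founded; conjugating $f$ into $W_n$ is exactly what \Cref{irredclassification} supplies, and you must invoke it for that step, not only for the fixed-lattice dichotomy. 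Second, and more seriously, the inference ``$b = a(p)$ implies $w$ is $p$-regular'' is not Springer's eigenspace bound but the much deeper Lehrer--Springer/Lehrer--Michel theorem, which you neither cite nor verify the hypotheses of: it requires $p$ to be a regular number for $W_n$, and it genuinely fails for the \emph{reducible} group $W_3 \cong W(A_2) \times W(A_1)$, where $3$ is not a regular number (any $\zeta_3$-eigenvector has vanishing $A_1$-component, hence lies on a reflection hyperplane) even though elements attain $b = a(3) = 1$. This matters because the pair $(n,p) = (3,3)$ --- which has $a_0 = 1$ and $3 \mid h_3 = 6$ --- is missing from all three of your case lists, and it is precisely the case where your machinery breaks; it must be handled separately (easily: the unique order-$3$ class of $W_3$ has rank-$2$ fixed lattice $\Z\{H, E_1+E_2+E_3\}$ with no isotropic vector, hence is reducible by \Cref{irredclassification}).

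Relatedly, your claim that $(7,3)$ is a \emph{realizable} pair is false and contradicts the paper's \Cref{primeIrredCuspidal}: the exponents of $W(E_7)$ are $1,5,7,9,11,13,17$, so $c^6$ has order $3$ with $a = 1$ (from the exponent $9$), its fixed lattice has rank $2$, and by \Cref{irredclassification} an irreducible such class would have to generate a cuspidal class of $P_7 \cong W(D_6)$ --- impossible, since by \Cref{negativeEven} every cuspidal class of $P_n$ has \emph{even} order. Hence $c^6$ is reducible and no irreducible class of odd prime order exists in $W_7$. For the theorem as literally stated this error is harmless (your chain still yields the vacuously true implication that an irreducible order-$3$ class in $\Mod^+(M_7)$ is conjugate to $c^6$), but it exposes the missing idea: for odd prime order, the parity fact of \Cref{negativeEven} kills the rank-$2$ alternative of \Cref{irredclassification} outright, uniformly disposing of $(3,3)$, $(7,3)$, $(5,5)$, and $(7,7)$ with no discriminant computations and no regularity theory in the $a = 1$ regime --- indeed your isotropic-vector test, correct for $(5,5)$ and $(7,7)$, should have been applied to $(7,3)$ as well, and its omission is what produced the false side claim. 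If you add that observation, so that irreducibility plus odd order forces $H_2(M_n,\Z)^{\langle w \rangle} = \Z\{K_{M_n}\}$ and $a = 0$, restrict the Lehrer--Springer step to the surviving irreducible groups with $p \mid h_n$ (where $p$, dividing $h_n$, is automatically regular and $c^{h_n/p}$ is $\zeta_p$-regular), and use the rationality of Weyl-group conjugacy classes to pass between the primitive $p$-th roots, your argument becomes a correct and attractive table-free alternative to the paper's enumeration.
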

Theorem \ref{thm:primes-are-powers} follows from Theorem \ref{primeIrredCuspidal}, which gives a more refined characterization of irreducible elements of odd, prime order. The coarser fact that any irreducible $g \in \Mod^+(M_n)$ of odd, prime order is realizable by a complex automorphism can also be deduced from the enumerative proof of Corollary \ref{CarterClassify}, which follows from the work of Dolgachev--Iskovskikh \cite{dolgachev--iskovskikh} and Carter \cite{carter}.

Theorem \ref{thm:primes-are-powers} sheds light on the automorphisms realizing the prime order irreducible classes via the birational maps of $\CP^2$ that lift to Coxeter elements. These maps take the form $(x,y) \mapsto (a,b) + (y,y/x)$ in affine coordinates, for certain choices of $(a,b) \in \C^2$, so they can be studied explicitly; see \cite[Section 11]{mcmullen} for more details.

Finally, we consider the complex realizability of classes $f \in \Mod(M_n)$ (both irreducible and reducible) of odd, prime order. Note that Theorems \ref{coxetermainthm} and \ref{thm:primes-are-powers} together handle the realizability of irreducible elements of odd, prime order. In the following corollary, we give an inductive argument to deduce complex realizability of reducible classes using the irreducible case as a base case. 
\begin{corollary}\label{primenielsen}
Let $0 \leq n \leq 8$. If $f \in \MOD(M_n)$ has odd prime order $p$ then $f$ is realizable  by a complex automorphism of $(M_n, J)$ for some complex structure $J$ turning $M_n$ into an $n$-fold blowup of $\CP^2$.
\end{corollary}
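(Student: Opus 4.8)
The plan is to argue by strong induction on $n$, peeling off exceptional summands and using the irreducible case as the base of the induction. First I would record the elementary reduction that, since $\MOD^+(M_n)$ has index $2$ in $\MOD(M_n)$, any $f$ of odd order $p$ automatically lies in $\MOD^+(M_n)$; thus throughout we may work inside $\MOD^+(M_n)$ and its image in $\Aut(H_2(M_n,\Z), Q_{M_n})$, which by the discussion of \Cref{sectionTopDelPezzo} detects the mapping class. The base cases are then handled by existing results: for $3 \le n \le 8$ an irreducible $f$ of odd prime order is realizable by a complex automorphism by Theorems \ref{coxetermainthm} and \ref{thm:primes-are-powers} (equivalently, by the enumerative proof of Corollary \ref{CarterClassify}), while the cases $n \le 2$ are covered by \Cref{blowup2more}.

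For the inductive step, suppose $f$ is reducible of order $p$. By definition (\Cref{irreddef}) it preserves an orthogonal splitting $H_2(M_n,\Z) \cong H_2(M_{n-k},\Z) \oplus L$, where $M_{n-k}$ is a del Pezzo manifold, $k>0$, and $L \cong \langle -1\rangle^{\oplus k}$ is spanned by exceptional classes $E_{n-k+1},\dots,E_n$. I would first observe that the action of $f$ on $L$ is forced to be an honest permutation $\pi$ of these classes: $f$ preserves the canonical class $K_{M_n}$, whose orthogonal projection to $L$ is $E_{n-k+1}+\cdots+E_n$, and the only signed permutations of the $E_i$ fixing this vector are the sign-free ones. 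Since $f$ has prime order $p$, the orbits of $\pi$ have size $1$ or $p$; say there are $a$ fixed classes and $b$ orbits of size $p$, with $a+pb=k$. Likewise, the projection of $K_{M_n}$ to $H_2(M_{n-k},\Z)$ is $K_{M_{n-k}}$, so the restriction $f' := f|_{H_2(M_{n-k},\Z)}$ is a $K$-preserving isometry of order dividing $p$. If $f'$ is nontrivial then it has order $p$ and the induction hypothesis provides a complex automorphism $\varphi_0$ of order $p$ on some blowup $(M_{n-k}, J_0)$ of $\CP^2$ inducing $f'$; if $f'$ is trivial we instead take $\varphi_0$ to be a suitable nontrivial order-$p$ automorphism acting trivially on $H_2(M_{n-k},\Z)$, as discussed below.

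The realization of $f$ is then produced by an equivariant blow-up. Choosing a $\varphi_0$-invariant configuration consisting of one fixed point for each of the $a$ fixed exceptional classes and one free orbit $\{x, \varphi_0 x, \dots, \varphi_0^{p-1}x\}$ for each of the $b$ orbits of size $p$, I would blow up this configuration to obtain a complex structure $J$ exhibiting $M_n$ as an $n$-fold blowup of $\CP^2$, together with the lift $\varphi \in \Aut(M_n, J)$ of $\varphi_0$. By construction $\varphi$ has order $p$, acts on $H_2(M_{n-k},\Z)$ as $f'$ and permutes the new exceptional classes by $\pi$, so it induces $f$ on $H_2(M_n,\Z)$; since the mapping class is determined by this action (\Cref{sectionTopDelPezzo}), we conclude $[\varphi]=f$, completing the induction.

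The main obstacle is the existence of the invariant configuration with the prescribed combinatorial type, and in particular guaranteeing enough fixed points and free orbits of a single order-$p$ automorphism $\varphi_0$. Free orbits are never a problem once $\varphi_0 \neq \mathrm{id}$, since the fixed locus is a proper closed subvariety whose dense complement consists entirely of free orbits; the number of fixed points can be controlled by the holomorphic (and topological) Lefschetz fixed point formula on the rational surface $M_{n-k}$, where the cases with a negative Lefschetz count in fact force a positive-dimensional fixed locus and hence supply infinitely many fixed points. The genuinely delicate case is when $f'$ acts trivially on $H_2(M_{n-k},\Z)$ while $\pi$ is nontrivial: here $\varphi_0$ cannot be the identity, and I would instead realize the trivial $H_2$-action by a nontrivial order-$p$ automorphism arising from a diagonal torus action on $\CP^2$, blown up only at fixed points, which simultaneously supplies fixed points and free orbits. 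Verifying that such a $\varphi_0$ can always be arranged to carry a configuration of the required type --- so that the resulting surface remains a blowup of $\CP^2$ --- is where the argument requires the most care.
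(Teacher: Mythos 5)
Your overall strategy coincides with the paper's: realize a distinguished ``core'' of $f$ by a complex automorphism with nonempty fixed locus, then produce the remaining exceptional classes by equivariant blowups --- fixed points for the classes fixed by $f$, free orbits for the size-$p$ orbits. The differences are organizational rather than substantive: you run a strong induction peeling off one invariant summand at a time, whereas the paper writes $f = (f_1, f_2)$ with $f_1$ \emph{irreducible} in a single step and invokes \Cref{thm:primes-are-powers} and \Cref{coxetermainthm} directly; and you extract fixed points from the (holomorphic/topological) Lefschetz formula, whereas the paper uses the $G$-signature theorem when $f_1 \neq \Id$ and explicit torus actions when $f_1 = \Id$ (in the paper's setup minimality of the core surface is automatic in the trivial case, since the identity is irreducible only on $\CP^2$ or $\CP^1 \times \CP^1$, so your extra case of a trivial $f'$ on a non-minimal surface simply does not arise there). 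Your holomorphic-Lefschetz observation (rationality forces $L_{\mathrm{hol}} = 1 \neq 0$, hence a fixed point for \emph{any} finite-order automorphism) is if anything a cleaner route to nonemptiness of the fixed locus, and your flagged care point --- needing $a$ fixed points when the fixed locus may a priori be small --- is resolved in the paper by the observation that blowing up a fixed point creates at least two new fixed points on the exceptional curve, so one iterates.

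There is, however, one incorrect justification: your argument that $f$ acts on $L$ by an honest (sign-free) permutation. You assert that $f$ preserves $K_{M_n}$ and that $K_{M_n}$ projects to $E_{n-k+1} + \cdots + E_n$ under the splitting. Neither claim is valid: an element of $\Mod^+(M_n)$ need not fix the canonical class --- only elements conjugated into $W_n = \Stab(K_{M_n})$ do, and a reducible $f$ of odd prime order need not be so normalized --- and the isometry $\iota$ in \Cref{irreddef} is abstract, so the splitting carries no compatibility with $K_{M_n}$. (For the same reason your claim that $f'$ is ``$K$-preserving'' is unjustified, though your induction never actually uses it.) The repair is easy and uses only the odd order of $f$: as a signed permutation of the exceptional classes, any negative $r$-cycle has order $2r$ (cf.\ \Cref{negativeEven}), so all cycles of $f|_L$ are positive; a positive cycle may still carry internal signs, but these are eliminated by replacing suitable basis vectors $e_i$ with $-e_i$ --- equivalently, by conjugating by a sign-change isometry, which is realizable by a diffeomorphism, and realizability is invariant under conjugation in $\Mod(M_n)$ by Wall's theorem, as used elsewhere in the paper. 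With that substitution your proof goes through; the paper's own proof makes the same normalization tacitly when it speaks of ``$f_2$-orbits of $B$.'' (One last quibble: for $n \leq 2$ the relevant fact is not \Cref{blowup2more} but that $\Mod(M_0)$ and $\Mod(M_1)$ are $2$-groups and that irreducible elements of $\Mod^+(M_2)$ are conjugate into $W_2 \cong \Z/2\Z$, so no irreducible class of odd prime order exists there.)
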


Our proof of \Cref{primenielsen} relies on the enumerative result of \Cref{thm:primes-are-powers}, and it would be interesting to find moduli-theoretic (or other non-enumerative) proofs of \Cref{mainthm1} or \Cref{primenielsen} in full generality. This seems possible in some special cases. For example, McMullen's work \cite{mcmullen} on dynamics of automorphisms of $M_n$ (which mainly focuses on the dynamics of $M_{n}$ for $n \geq 10$) can be applied to show that irreducible elements of odd, prime order are realizable by complex automorphisms. See \Cref{rmk:mcmullen} for more details.

\subsection{Related work}

The classical Nielsen realization problem for surfaces $\Sigma_g$ was solved by Kerckhoff \cite{kerckhoff} for all finite subgroups $G \leq \Mod(\Sigma_g)$; the special case of finite cyclic subgroups $G$ was solved earlier by Nielsen \cite{nielsen} and Fenchel \cite{fenchel}. In higher dimensions, Raymond--Scott \cite{raymond--scott} showed that the Nielsen realization problem fails in every dimension $d \geq 3$ in certain nilmanifolds. In dimension $4$, recent work of Farb--Looijenga \cite{farb--looijenga} and the first-named author \cite{lee, lee-involutions} address variants of the Nielsen realization problem for K3 surfaces and del Pezzo surfaces respectively. Baraglia--Konno \cite{baraglia2023note} and Konno \cite{konno2022dehn} have used essentially orthogonal techniques, e.g. Seiberg--Witten invariants, to prove non-realization results, the latter of which extends some of the results in \cite{farb--looijenga} to spin $4$-manifolds with nonzero signature. Arabadji--Baykur \cite{arabadji--baykur} and Konno--Miyazawa--Taniguchi \cite{konno--miyazawa--taniguchi} have since also obtained non-realization results for non-spin $4$-manifolds.

The Nielsen realization problem for del Pezzo surfaces is intricately linked to the study of finite subgroups of the plane Cremona group. The general case is studied completely in work of Dolgachev--Iskovskikh \cite{dolgachev--iskovskikh}; also see work of Blanc \cite{blanc} on the classification of finite-order conjugacy classes of the Cremona group. Our current work relies crucially on the classification of minimal rational $G$-surfaces for finite cyclic groups $G$ in \cite{dolgachev--iskovskikh}. Many special cases were studied previously as well; for example, see Bayle--Beauville \cite{bayle--beauville} and Beauville--Blanc \cite{beauville--blanc} for the case of $G = \Z/p\Z$ with $p$ prime.

Birational automorphisms of $\CP^2$ have also been used to generate interesting examples of diffeomorphisms of rational surfaces outside the finite-group setting. For example, McMullen \cite{mcmullen} used the action of $W_n$ on the moduli space of marked blowups of $\CP^2$ without nodal roots to realize Coxeter elements of $W_n \leq \Mod(M_n)$ by dynamically-interesting complex automorphisms. Many other people have also studied the dynamics of automorphisms coming from the Cremona group, such as Bedford--Kim \cite{bedford--kim2006,bedford--kim2009} who also found rational surface automorphisms of positive entropy, and Cantat--Lamy \cite{cantat--lamy} who constructed normal subgroups of the Cremona group via its action on the Picard--Manin space. 

Finally, we point out that some tools (\Cref{irredclassification}) developed in this paper can be used to simplify some proofs of \cite{lee-involutions} by the first-named author, by eliminating much of the casework by arguments in the style of Bayle--Beauville \cite{bayle--beauville}. See \Cref{rmk:avoiding-mori-thy} and \Cref{geiserbertini} for more details.

\subsection{Outline}

In \Cref{sectionTopMapping}, we give some background on the topological mapping class group of del Pezzo manifolds and explicate its relationship to Coxeter theory. In \Cref{sectionRefBlowup}, we provide connections of our problem to algebraic geometry via minimal $G$-surfaces. Most notably, we give various homological critera that irreducible mapping classes must satisfy, and prove a reduction result regarding the possible conjugacy classes of complex \textit{non-realizable} irreducible mapping classes (\Cref{CarterClassify}). In \Cref{sectionMetComplex}, we resolve the first three equivalences of \Cref{mainthm1}. In \Cref{ComplexVSSmooth}, we leverage a variety of techniques (branched covers of 4-manifolds, $G$-signature theorem, etc.) to prove smooth non-realizability in all possible cases outlined in \Cref{CarterClassify}. This completes the proof of \Cref{mainthm1} which is given in \Cref{sec:mainthmproof}. In \Cref{sectionCoxeter}, we analyze explicit examples of (non-)realizability of distinguished finite-order classes from the view of Coxeter theory, and as a consequence prove Theorems \ref{coxetermainthm} and \ref{thm:primes-are-powers}. Using these two theorems, we deduce a proof of Corollary \ref{primenielsen} as well.

\subsection{Acknowledgements}

We thank our advisors, Benson Farb, Tara Brendle and Vaibhav Gadre, and Jesse Wolfson respectively, for their support, interest, and feedback on this work. We thank Farb for organizing a workshop on 4-manifold mapping class groups, where we all first met and would subsequently begin working on this project together. SL thanks Cindy Tan for many helpful discussions about branched coverings of $4$-manifolds and Carlos A. Serv\'an for insightful conversations. SL also thanks Igor Dolgachev and Allan Edmonds for helpful email correspondences about their respective previous work. TL thanks Sam Lewis and Riccardo Giannini for insightful conversations. SR thanks Josh Jordan and Jeff Streets for helpful discussions.

\section{Coxeter theory of $\Mod(M_n)$}\label{sectionTopMapping}

This section recalls known facts about the topological mapping class group, $\Mod(M_n)$, of the del Pezzo manifold $M_n$. We introduce the key property of \emph{irreducibility} for certain mapping classes studied in this paper, and give it a Coxeter theoretic interpretation that is crucial in the enumeration proofs of Section \ref{sectionRefBlowup}.

\subsection{Topological mapping class groups of del Pezzo manifolds}\label{sectionTopDelPezzo}

Del Pezzo surfaces are smooth complex algebraic surfaces with ample anti-canonical bundles. The diffeomorphism type of every del Pezzo surface is either $\CC\PP^2$, $\CC\PP^1 \times \CC\PP^1$, or $M_n = \CC\PP^2 \# n \overline{ \CC\PP^2}$ for $1 \leq n \leq 8$. We refer to these smooth manifolds as \emph{del Pezzo manifolds}. By the Mayer--Vietoris sequence,
$$H_k(M_n,\ZZ) = \begin{cases}
\ZZ & \text{ if }k=0,4,\\
0 & \text{ if }k=1,3,\\
\ZZ^{n+1} &\text{ if } k=2.
\end{cases}$$
The intersection form $Q_{M_n}$ on $H_2(M_n,\ZZ)$ takes the form $\langle 1\rangle \oplus n\langle -1 \rangle$ with respect to the natural $\Z$-basis $\{H, E_1, \dots, E_n\}$ of $H_2(M_n; \Z)$, where $H$ denotes the hyperplane class and $E_1, \dots, E_n$ denote the $n$ exceptional divisors. Let $\ZZ^{1,n}$ denote the lattice $\ZZ^{n+1}$ equipped with the signature $(1,n)$-form $\langle 1\rangle \oplus n\langle -1 \rangle$.

By theorems of Freedman \cite{freedman}, Perron \cite{perron1986pseudo}, Quinn \cite{quinn}, Cochran--Habegger \cite{cochran--habegger}, and Gabai--Gay--Hartman--Krushkal--Powell \cite{gabai2023pseudo}, there is an isomorphism
\begin{equation}\label{mod-isomorphism}
    \MOD(M_n) := \pi_0(\Homeo^+(M_n)) \to \Aut(H_2(M_n,\ZZ), Q_{M_n}) \cong \OO(1,n)(\ZZ).
\end{equation}

\begin{remark}
Surjectivity of (\ref{mod-isomorphism}) is due to Freedman \cite{freedman}. Injectivity follows from work of Perron \cite{perron1986pseudo}, Quinn \cite{quinn}, Cochran--Habegger \cite{cochran--habegger}, and Gabai--Gay--Hartman--Krushkal--Powell \cite{gabai2023pseudo}. See \cite[Section 1.3]{gabai2023pseudo} for a more detailed history of this theorem.
\end{remark}

We identify $\MOD(M_n)$ with $\OO(1,n)(\ZZ)$ using the isomorphism (\ref{mod-isomorphism}). Let $\MOD^+(M_n)$ denote the subgroup
\[
    \OO^+(1,n)(\ZZ) \leq \OO(1,n)(\ZZ),
\]
that preserves each of the two sheets of the hyperboloid $\{v \in H_2(M_n,\RR): Q_{M_n}(v,v) = 1\}$. Each connected component (equipped with the form $-Q_{M_n}$) of the hyperboloid is isometric to hyperbolic space $\HH^n$. Recall the notion of irreducibility for a mapping class.

\begin{definition}[{\cite[Definition 1.1]{lee-involutions}}]\label{irreddef}
A mapping class $g \in \MOD(M_n)$ is called \textit{reducible} if there exists a del Pezzo manifold $M$ and some $k > 0$ such that there is an isometry
$$\iota : (H_2(M,\ZZ) \oplus H_2(\#k\overline{\CC\PP^2} , \ZZ),Q_M \oplus Q_{\#k\overline{\CC\PP^2}}) \to (H_2(M_n,\ZZ), Q_{M_n})$$
with $g$ is contained in the image of the induced inclusion $$\iota_* : \MOD(M)\times \MOD(\# k \overline{\CC\PP^2}) \hookrightarrow \MOD(M_n).$$ Otherwise, $g$ is called \textit{irreducible}.
\end{definition}

There are a few reasons to restrict attention to irreducible mapping classes. Finite order reducible mapping classes that are Nielsen realizable are often given by a $G$-equivariant connect sum of irreducible mapping classes or non-minimal rational $G$-surfaces (see \cite[Corollary 1.5]{lee-involutions} and \Cref{primenielsen}). Another more algebro-geometric reason is the connection between irreducibility and the classical notion of minimal rational $G$-surfaces (see \Cref{irred-minimal}).

\subsection{$\OO^+(1,n)(\Z)$ is a Coxeter group for $2 \leq n \leq 9$.}\label{sec:coxeter}
For each vector $v \in \ZZ^{1,n}$ with $Q_{M_n}(v,v) = \pm 1, \pm 2$, the \emph{reflection} $\RRef_v \in \OO(1,n)(\Z)$ \emph{about $v$} is the map
\[
    \RRef_v: x \mapsto x - \frac{2Q_{M_n}(x,v)}{Q_{M_n}(v,v)} v.
\]
Wall \cite{wall-indefinite-orthogonal} determined explicit generators for $\OO^+(1,n)(\Z)$ for $2 \leq n \leq 9$ in terms of reflections.
\begin{theorem}[{Wall 1964, \cite[Theorems 1.5, 1.6]{wall-indefinite-orthogonal}}]\label{thm:wall-generators}
The groups $\OO^+(1,n)(\Z)$ have generators:
\begin{align*}
    \OO^+(1,2)(\Z) &= \langle \RRef_{H-E_1-E_2}, \, \RRef_{E_1-E_2}, \, \RRef_{E_2} \rangle, \\
     \OO^+(1,n)(\Z) &= \langle \RRef_{H-E_1-E_2-E_3}, \, \RRef_{E_1-E_2}, \, \RRef_{E_2-E_3}, \, \dots, \, \RRef_{E_{n-1}-E_n}, \, \RRef_{E_n} \rangle \text{ if }3 \leq n \leq 9.
\end{align*}
\end{theorem}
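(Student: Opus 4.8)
The plan is to prove the equality $\Gamma = \OO^+(1,n)(\Z)$, where $\Gamma$ denotes the subgroup generated by the listed reflections, by an orbit--stabilizer argument based at the hyperplane class $H$. Writing $G = \OO^+(1,n)(\Z)$ and $Q = Q_{M_n}$, the key observation is that if (i) $\Stab_G(H) \subseteq \Gamma$ and (ii) $\Gamma$ acts transitively on the orbit $G \cdot H$, then $\Gamma = G$: indeed, for $g \in G$ one may choose $\gamma \in \Gamma$ with $\gamma H = g H$, whence $\gamma^{-1} g \in \Stab_G(H) \subseteq \Gamma$ and so $g \in \Gamma$. The proof thus splits into checking that the listed maps lie in $G$, identifying $\Stab_G(H)$, and establishing the transitivity (ii) by an explicit reduction algorithm.

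First I would verify that each listed map lies in $G$. Every reflecting vector $v$ has $Q(v,v) \in \{-1,-2\}$ (for instance $Q(E_i - E_{i+1}, E_i - E_{i+1}) = -2$, $Q(E_n, E_n) = -1$, and $Q(H - E_1 - E_2 - E_3, H - E_1 - E_2 - E_3) = -2$), so the coefficient $2Q(x,v)/Q(v,v)$ is an integer for every $x \in \Z^{1,n}$ and $\RRef_v$ preserves the lattice; moreover $v$ is spacelike, so $\RRef_v$ fixes a hyperplane of signature $(1,n-1)$ and preserves each sheet of the hyperboloid, placing it in $\OO^+$. Next, $\Stab_G(H)$ is the automorphism group of $H^\perp = \langle E_1, \dots, E_n\rangle \cong \langle -1 \rangle^{\oplus n}$, namely the group of signed permutations of the $E_i$ (the hyperoctahedral group of order $2^n n!$), all of whose elements fix $H$ and hence preserve the sheets. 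Finally, the reflections $\RRef_{E_1 - E_2}, \dots, \RRef_{E_{n-1}-E_n}$ realize the transpositions and $\RRef_{E_n}$ the sign change on $E_n$, and these are precisely the standard Coxeter generators of the type-$B_n$ group; so $\Stab_G(H) \subseteq \Gamma$, which gives (i).

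For (ii) I would reduce an arbitrary $v = gH \in G\cdot H$ to $H$ using $\Gamma$, tracking the height $a(v) := Q(v,H)$, which equals the $H$-coefficient of $v = aH - \sum_i b_i E_i$. Since $v$ lies in the same sheet as $H$ we have $a \geq 1$, and $a = 1$ forces $\sum_i b_i^2 = a^2 - 1 = 0$, i.e. $v = H$. The hyperoctahedral part of $\Gamma$ lets me arrange $b_1 \geq b_2 \geq \cdots \geq b_n \geq 0$ without changing $a$, while the Cremona reflection $\RRef_{H-E_1-E_2-E_3}$ sends $a \mapsto 2a - (b_1 + b_2 + b_3)$. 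Thus whenever $b_1 + b_2 + b_3 > a$ I can strictly decrease the (positive integer) height, so after finitely many steps $v$ becomes \emph{reduced}, meaning $b_1 \geq \cdots \geq b_n \geq 0$ and $a \geq b_1 + b_2 + b_3$. For $n = 2$ the reflection $\RRef_{H-E_1-E_2}$ plays the analogous role.

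The main obstacle is the concluding combinatorial lemma: the only reduced norm-$1$ vector in the orbit $G \cdot H$ is $H$ itself. The difficulty is that reduced norm-$1$ vectors with $a \geq 2$ genuinely exist --- for example $-K_n := 3H - E_1 - \cdots - E_n$ is reduced and, when $n = 8$, has norm $9 - n = 1$. The hypothesis $n \leq 9$ (equivalently $K_n^2 = 9-n \geq 0$) enters precisely here: it bounds the finitely many reduced norm-$1$ vectors, and for each such vector other than $H$ I would compute its orthogonal complement and show that it is not isomorphic to $H^\perp \cong \langle -1\rangle^{\oplus n}$ (e.g. $(-K_8)^\perp \cong E_8$ is even while $H^\perp$ is odd), so that it cannot lie in $G \cdot H$. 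Since the reduction keeps $v$ inside $G\cdot H$ at every step, its reduced form must therefore be $H$, establishing (ii) and hence $\Gamma = G$. I expect this lattice-theoretic step --- controlling all reduced norm-$1$ vectors and separating their $G$-orbits --- to be the crux, and it is exactly where the argument breaks for $n \geq 10$, where the listed reflections no longer generate $\OO^+(1,n)(\Z)$. An alternative, more geometric route runs through Vinberg's theory: for $n \leq 9$ the listed reflections are the walls of a finite-volume Coxeter simplex in $\HH^n$, and one shows this fundamental polyhedron admits no nontrivial symmetry induced by $G$, so that $G$ coincides with the reflection group.
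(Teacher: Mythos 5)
The paper offers no proof of this statement at all: Theorem~2.2 is imported verbatim from Wall, and the surrounding text only uses it (together with injectivity of the geometric representation) to identify $\OO^+(1,n)(\Z)$ with a Coxeter group. So your sketch can only be measured against the standard argument, and its skeleton is indeed the standard one: the orbit--stabilizer frame at $H$ is valid, the integrality and sheet-preservation of the listed reflections check out, $\Stab_G(H)$ is correctly identified with the hyperoctahedral group of signed permutations of the $E_i$ (generated by $\RRef_{E_1-E_2},\dots,\RRef_{E_{n-1}-E_n},\RRef_{E_n}$), and the sort-then-Cremona move strictly decreases the positive integer height $a = Q(v,H)$ whenever $b_1+b_2+b_3 > a$, so the descent terminates at a reduced vector. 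All of this is correct.

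The genuine gap sits exactly at your declared crux, and in one respect your plan is aimed at the wrong target. For $3 \leq n \leq 7$ the orthogonal-complement test is vacuous: every odd negative-definite unimodular lattice of rank $\leq 7$ is isomorphic to $n\langle -1\rangle$ (the first exotic class, $E_8 \oplus (n-8)\langle -1 \rangle$, appears only at rank $8$), and since $v^2 = 1$ forces the splitting $\Z^{1,n} = \Z v \oplus v^\perp$, \emph{every} norm-one vector on the positive sheet already lies in $G \cdot H$. Hence for these $n$ there is nothing for the complement invariant to exclude; what you must prove instead is the purely combinatorial (Noether-style) statement that $a^2 - 1 = \sum b_i^2$ with $b_1 \geq \cdots \geq b_n \geq 0$ and $b_1 + b_2 + b_3 \leq a$ forces $a = 1$ when $n \leq 7$ --- and that inequality, which your sketch never establishes, is the real content of the theorem in those ranks. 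Relatedly, your claim that $n \leq 9$ ``bounds the finitely many reduced norm-$1$ vectors'' is not automatic: for $n = 9$ one has $K_9^2 = 0$, and reduced solutions could a priori accumulate along the isotropic ray of $3H - E_1 - \cdots - E_9$ (profiles with all $b_i \approx a/3$); ruling this out needs an actual argument, e.g.\ the Pell-type observation that the all-equal profile gives $a^2 - 9b^2 = 1$, whose only solution is $(a,b) = (1,0)$, together with control of near-constant profiles. Where the complement test does bite ($n = 8, 9$), your even/odd comparison is correct, and the surviving reduced exceptional vectors such as $3H - E_1 - \cdots - E_8$ are exactly the ones it eliminates. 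Your alternative route through Vinberg's theory is legitimate --- it is essentially what the paper's Section~2.2 gestures at in citing Vinberg --- but there too the verification that the chamber walls are precisely the listed roots replaces, rather than avoids, a finite computation of the same flavor.
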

For an analysis of (a finite-index subgroup of) $\OO^+(1,n)(\Z)$ as a hyperbolic reflection group, see Vinberg \cite{vinberg}. Consider the Coxeter system $(W(n), S(n))$ specified in \Cref{fig:o1n-coxeter}. There is a surjective homomorphism $\rho: W(n) \to \OO^+(1,n)(\Z)$ given on the generators by
\[
    \rho(s_0) = \begin{cases}
    \RRef_{H - E_1 - E_2} & \text{ if }n= 2, \\
    \RRef_{H - E_1 - E_2 - E_3} & \text{ if }3 \leq n \leq 9,
    \end{cases} \qquad \rho(s_k) = \begin{cases}
    \RRef_{E_k - E_{k+1}} & \text{ if }1 \leq k \leq n-1, \\
    \RRef_{E_n} & \text{ if } k = n.
    \end{cases}
\]
Let $\Phi_n: \OO^+(1,n)(\Z) \to \OO(\Z^{1,n} \otimes \R)$ denote the natural action of $\OO^+(1,n)(\Z)$ on $\Z^{1,n} \otimes \R$ and let $\Psi_n: W(n) \to \GL(V_n)$ be the geometric representation of $W(n)$ (cf. \cite[Section 5.3]{humphreys}). There is an isometry $(V_n, B_n) \to \Z^{1,n} \otimes \R$ (where $B_n$ is the bilinear form defined in \cite[Section 5.3]{humphreys}) such that under this identification, the geometric representation $\Psi_n$ coincides with the composition $\Phi_n \circ \rho$ (\cite[Section 2.4]{lee}). Because $\Psi_n$ is injective (\cite[Corollary 5.4]{humphreys}), the homomorphism $\rho$ is in fact an isomorphism. Hence $\OO^+(1,n)(\Z)$ is a Coxeter group.

\begin{figure}
    \centering
    \includegraphics{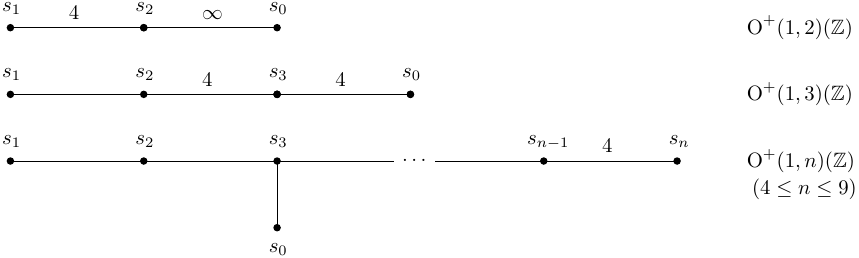}
    \caption{Coxeter diagrams for $(W(n), S(n))$. The subgraph spanned by the vertices $s_0, \dots, s_{n-1}$ is the Coxeter diagram for the Weyl group $W_n.$ }
    \label{fig:o1n-coxeter}
\end{figure}

\subsection{Weyl group}

The \emph{Weyl group} $W_n \leq \Mod(M_n)$ is a distinguished Coxeter subgroup of $\Mod^+(M_n)$.
\begin{definition}\label{defn:weyl}
    The \textit{Weyl group} $W_n \leq \MOD(M_n)$ is the stabilizer of the (Poincar\'e dual of the) canonical class
$$K_{M_n} := -3H + \sum_{k=1}^n E_k \in H_2(M_n, \ZZ).$$ 
\end{definition}
For $3\leq n \leq 8$, the group $W_n$ is a finite group and is generated by the \textit{simple reflections} (\cite[Corollary 8.2.15]{dolgachev})
\begin{equation}\label{eqn:weyl-generators}
W_n = \langle \RRef_{H - E_1 - E_2 - E_3} \text{, }\RRef_{E_1 - E_2}\text{, }\RRef_{E_2 - E_3} \text{, }\dots\text{, }\RRef_{E_{n-1} - E_{n}}\rangle
\end{equation}
and $W_n$ naturally acts on the lattice
\[
	\EE_n := \{v \in \Z^{1,n} : Q_{M_n}(v, K_{M_n}) = 0\} = \Z\{H - E_1 - E_2 - E_3, \, E_1 - E_2, \, \dots, \, E_{n-1}-E_n\}.
\]

There are isomorphisms
\begin{align*}
    W_1 & \cong 1, &W_2 & \cong \Z/2\Z, &W_3 & \cong W(A_2)\times W(A_1), \\
    W_4 & \cong W(A_4), &W_5 & \cong W(D_5), &W_i & \cong W(E_i), \; 6 \leq i \leq 8,
\end{align*}
where $W(\Gamma)$ denotes the Coxeter group with Coxeter diagram $\Gamma$. This can be seen by drawing the Coxeter diagrams for the simple reflections as in \Cref{fig:o1n-coxeter}, and using the classification of finite reflection groups.

\subsection{Irreducibility, parabolic subgroups, and cuspidal conjugacy classes} \label{sectionCuspidal}

We now introduce some tools from Coxeter theory to characterize the irreducible elements of $\Mod^+(M_n)$. Let $W := W(S)$ be a Coxeter group with generating set $S$. For any $I \subset S$, consider the \emph{parabolic subgroup} 
$$W_I := \langle s : s \in I \rangle \subset W(S).$$
The group $W_I$ is again a Coxeter group with generating set $I$ \cite[Theorem 5.5]{humphreys}.

Coxeter groups admit special conjugacy classes called \emph{cuspidal classes} that are useful in the classification of irreducible mapping classes. 
\begin{definition}
A conjugacy class $C$ of a Coxeter group $W$ is called a \textit{cuspidal class} if $C \cap W_I = \emptyset$ for all proper subsets $I \subset S$. For a finite Coxeter group equipped with the geometric representation, the stabilizer of a nonzero vector is a conjugate of a proper parabolic subgroup \cite[Theorem 5.13, Exercise 5.13]{humphreys}, hence an element generates a cuspidal conjugacy class if and only if it does not fix a nonzero vector; cf. \cite[Lemma 3.1.10]{geck2000characters}. 
\end{definition}

From now on, we specialize to the Coxeter theory of $W := \Mod^+(M_n)$ for $3 \leq n \leq 8$ and its parabolic subgroups. The group $W$ has as simple roots
\[
    \Delta_n := \{H-E_1-E_2-E_3, \, E_1-E_2, \, E_2 - E_3, \, \dots \, E_{n-1} - E_n, \, E_n\}.
\]
By (\ref{eqn:weyl-generators}), the Weyl group $W_n$ is a (finite) parabolic subgroup of $W$. The Weyl group will be one of two important parabolic subgroups of $W = \Mod^+(M_n)$ appearing in this paper:
\[
    W_n = W_{\Delta_n - \{E_n\}}, \qquad P_n := W_{\Delta_n - \{E_1-E_2, E_n\}}.
\]
The Coxeter diagram of $P_n$ coincides with that of $W(D_{n-1})$, and so $P_n \cong W(D_{n-1})$. As shown below, many parabolic subgroups $W_I$ of $\Mod^+(M_n)$ are finite. 

The following lemma provides a necessary condition for irreducibility.
\begin{lemma}[{Proof of \cite[Lemma 2.6(2)]{lee-involutions}}]\label{lem:irred-criterion-from-involutions}
Let $w \in W$. Let $I \subseteq \Delta_n - \{\alpha\}$ where $\alpha \neq E_1 - E_2, \, E_n$. If $w$ is conjugate in $W$ to an element of the parabolic subgroup $W_{I}$ then $w$ is reducible.
\end{lemma}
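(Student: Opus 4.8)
The plan is to reduce to a single maximal parabolic, exploit conjugation-invariance of reducibility, and then exhibit by hand a del Pezzo orthogonal splitting of $\Z^{1,n}$ preserved by that parabolic, treating the three types of admissible $\alpha$ separately.

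First I would record two easy reductions. Since parabolic subgroups are monotone in their generating sets, $I \subseteq \Delta_n \setminus \{\alpha\}$ gives $W_I \subseteq W_{\Delta_n \setminus \{\alpha\}}$, so it suffices to handle the maximal parabolic $W_{\Delta_n \setminus \{\alpha\}}$. Next, reducibility is invariant under conjugation in $W = \OO^+(1,n)(\Z)$: if $w$ preserves an orthogonal splitting $\Z^{1,n} = L \oplus N$ with $L$ a del Pezzo lattice and $N \cong k\langle -1\rangle$, and $u \in W$, then $uwu^{-1}$ preserves $u(L) \oplus u(N)$, which is again such a splitting because $u$ is a lattice isometry carrying del Pezzo lattices to del Pezzo lattices and $k\langle -1\rangle$ to $k\langle -1\rangle$. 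Hence it is enough to produce, for each admissible $\alpha$, one orthogonal splitting $\Z^{1,n} = L \oplus N$ of the required type preserved by the \emph{entire} group $W_{\Delta_n \setminus \{\alpha\}}$; any $w$ conjugate into $W_I \subseteq W_{\Delta_n \setminus \{\alpha\}}$ is then reducible.

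The admissible roots are $\alpha \in \{H - E_1 - E_2 - E_3\} \cup \{E_i - E_{i+1} : 2 \le i \le n-1\}$, and in each case I would simply verify that every generating reflection of $W_{\Delta_n \setminus \{\alpha\}}$ preserves the proposed splitting. For $\alpha = H - E_1 - E_2 - E_3$, the surviving generators $\RRef_{E_i - E_{i+1}}$ and $\RRef_{E_n}$ are signed permutations of $E_1, \dots, E_n$ and fix $H$, so one takes $L = \langle H\rangle \cong \langle 1\rangle$ and $N = \langle E_1, \dots, E_n\rangle \cong n\langle -1\rangle$ (del Pezzo type $\CP^2$). For $\alpha = E_i - E_{i+1}$ with $3 \le i \le n-1$, the reflection $\RRef_{H - E_1 - E_2 - E_3}$ acts within $\langle H, E_1, E_2, E_3\rangle$ (using $i \ge 3$), the surviving permutation reflections respect the partition $\{E_1, \dots, E_i\} \sqcup \{E_{i+1}, \dots, E_n\}$, and $\RRef_{E_n}$ lies in the second block; one takes $L = \langle H, E_1, \dots, E_i\rangle \cong \Z^{1,i}$ and $N = \langle E_{i+1}, \dots, E_n\rangle \cong (n-i)\langle -1\rangle$ (del Pezzo type $M_i$, where $i \le 7$). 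In each case $N \cong k\langle -1\rangle$ with $k \ge 1$, and $L$ is a del Pezzo lattice, so the splitting realizes reducibility.

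The main obstacle is the remaining case $\alpha = E_2 - E_3$, where the naive ``separate the $E_j$'' splitting fails because $H - E_1 - E_2 - E_3$ mixes the two blocks. Here I would instead use the isotropic classes $H - E_1$ and $H - E_2$: a direct check shows that $\RRef_{E_1 - E_2}$ swaps them, that $\RRef_{E_j - E_{j+1}}$ ($j \ge 3$) and $\RRef_{E_n}$ fix them, and crucially that $\RRef_{H - E_1 - E_2 - E_3}$ fixes each of them, since both $H - E_1$ and $H - E_2$ are orthogonal to $H - E_1 - E_2 - E_3$. Thus $L := \langle H - E_1, H - E_2\rangle$ is preserved; it is a hyperbolic plane (the intersection form is $\left(\begin{smallmatrix} 0 & 1 \\ 1 & 0\end{smallmatrix}\right)$), i.e.\ a del Pezzo lattice of type $\CP^1 \times \CP^1$. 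Its orthogonal complement computes to $N = L^\perp = \langle H - E_1 - E_2, E_3, \dots, E_n\rangle \cong (n-1)\langle -1\rangle$, and since $L$ is unimodular one gets a genuine $\Z$-module orthogonal direct summand, completing the splitting. With these three families of splittings in hand, every element of every admissible $W_I$ is reducible, proving the lemma.
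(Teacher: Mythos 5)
Your proposal is correct and follows essentially the same route as the paper: the paper's proof simply cites \cite[Lemma 2.6(2)]{lee-involutions}, whose argument consists of exactly the explicit $W_{\Delta_n - \{\alpha\}}$-invariant orthogonal splittings you construct --- $\langle H \rangle \oplus n\langle -1\rangle$ for $\alpha = H-E_1-E_2-E_3$, $\Z^{1,\ell} \oplus (n-\ell)\langle -1 \rangle$ for $\alpha = E_\ell - E_{\ell+1}$ with $\ell \geq 3$, and the hyperbolic-plane ($\CP^1 \times \CP^1$) splitting $\Z\{H-E_1,\, H-E_2\} \oplus (n-1)\langle -1 \rangle$ for $\alpha = E_2 - E_3$. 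All of your verifications (conjugation-invariance of reducibility, invariance of each splitting under the surviving simple reflections, and the identification of the complements) check out, so your write-up is a correct fleshed-out version of the cited proof rather than a genuinely different argument.
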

\begin{proof}
If $\alpha \neq E_1 - E_2, \, E_n$ then $\alpha \in \Delta_n$ is one of:
\[
    H- E_1-E_2-E_3 \, \text{ or }\, E_\ell - E_{\ell+1} \text{ for }2 \leq \ell \leq n-1. 
\]
For each choice of $\alpha$ above, the proof of \cite[Lemma 2.6(2)]{lee-involutions} explicitly finds a del Pezzo surface $M$, an integer $k > 0$, and an isometric decomposition
\[
    (H_2(M_n; \Z),\, Q_{M_n}) \cong (H_2(M; \Z) \oplus H_2(\# k \overline{\CP^2}; \Z),\, Q_M \oplus Q_{\# k \overline{\CP^2}})
\]
which is preserved by $W_{\Delta_n - \{\alpha\}}$. Hence every element of $W_I$ is reducible.
\end{proof}

Parabolic subgroups arise naturally as the stabilizers of points under the action of Coxeter groups on their Tits cones. Let $R_n := - Q_{M_n}$ denote the signature $(n,1)$ form on $H_2(M_n, \R)$. Vinberg's algorithm applied to $W$ \cite[Proposition 4, Table 4]{vinberg}, gives a (closed) fundamental domain of the action of $W$ on $\HH^n \subseteq H_2(M_n, \R)$ as the polyhedron
\[
	D := \bigcap_{\alpha \in \Delta_n} \{v \in \HH^n : R_n(v, \alpha) \leq 0\}
\]
where $\Delta_n$ denotes the set of simple roots of $W$. (To be precise, the fundamental domain $D$ above is $g(P)$, where $g = \RRef_{E_1} \circ \dots \circ \RRef_{E_n} \in W$ and $P$ is the fundamental domain found by Vinberg.) The representation $W \to \OO(H_2(M_n; \R), R_{n})$ is isomorphic to the geometric representation of $W$ (cf. \Cref{sec:coxeter}). By unimodularity of $R_n$, the pairing $R_n$ induces an isomorphism $H_2(M_n, \R) \to H_2(M_n, \R)^*$ of representations of $W$ where $H_2(M_n, \R)^*$ denotes the contragredient representation of the geometric representation. Let $U \subseteq H_2(M_n, \R)$ be the Tits cone of $W$, where $H_2(M_n, \R)$ is identified with $H_2(M_n, \R)^*$ via $R_n$. Then $D$ is contained in $-U$. Moreover, $\mathbb H^n \subseteq -U$ as well because because the $W$-translates of $D \subseteq \mathbb H^n$ cover $\mathbb H^n$. The $W$-stabilizer of any point $p \in D$ is equal to the $W$-stabilizer of the corresponding point $-p \in U$. 

Define the subset $D_I \subseteq D$ for any $I \subseteq \Delta_n$
\[
	D_I := \left(\bigcap_{\alpha \in I} \{v \in \HH^n: R_n(v, \alpha) = 0\}\right) \cap \left(\bigcap_{\alpha \notin I} \{v \in \HH^n: R_n(v, \alpha) < 0\}\right)
\]
so that the subsets $D_I$ exhaust $D$. By \cite[Theorem 5.13]{humphreys}, the stabilizer of any point in $D_I$ in $W$ is the parabolic subgroup $W_I$. Note that for any pair of subsets $I \subseteq J \subseteq \Delta_n$, there is an inclusion of closed faces $\overline {D_J} \subseteq \overline{D_I}$ and an inclusion of stabilizers $W_I \subseteq W_J$. Moreover, $\overline{D_{I \cup J}} = \overline{D_I} \cap \overline{D_J}$, and hence for any $p \in D$, there exists a largest subset $I \subseteq \Delta_n$ such that $p$ is contained in the face $\overline{D_I}$. 

The following lemma is the key technical tool used to characterize irreducible elements of $\Mod^+(M_n)$ in terms of conjugacy classes of $W_n$ and $P_n$.
\begin{lemma}\label{irredclassification}
Let $3 \leq n \leq 8$. Let $w \in \MOD^+(M_n)$ have finite order and consider its fixed subspace $H_2(M_n, \ZZ)^{\langle w \rangle}$. If $w$ is irreducible then up to conjugacy in $\MOD^+(M_n)$, either 
\begin{enumerate}[label=(\alph*)]
    \item $H_2(M_n, \ZZ)^{\langle w \rangle} =\ZZ\{K_{M_n}\}$ (or equivalently, $w$ generates a cuspidal conjugacy class of $W_n$), or \label{DelPezzoCase}
    \item $H_2(M_n, \ZZ)^{\langle w \rangle} = \ZZ\{K_{M_n}, H-E_1\}$ (or equivalently, $w$ generates a cuspidal conjugacy class of $P_n$).\label{ConicCase}
\end{enumerate}
\end{lemma}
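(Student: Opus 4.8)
The plan is to combine the fundamental-domain description of $W := \Mod^+(M_n)$ recalled above with the irreducibility criterion of \Cref{lem:irred-criterion-from-involutions} and a short computation in $\Z^{1,n}$. Concretely, I would pin down, up to conjugacy, the parabolic subgroup that pointwise-fixes the fixed space $F := H_2(M_n,\R)^{\langle w\rangle}$, show that it is generated by the simple reflections indexed by some $I_0 \subseteq \Delta_n$ with $F = \bigcap_{\alpha \in I_0}\alpha^{\perp}$, and then prove that only the two parabolics $W_n$ and $P_n$ survive the constraints imposed by irreducibility and by the geometry of $\HH^n$.

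First I would place a fixed point well. Since $w$ has finite order, it fixes a point of $\HH^n$ (e.g.\ a circumcenter of a finite orbit), so $F$ contains a timelike vector; as $F=\ker(w-1)$ is a nondegenerate subspace of signature $(1,\dim F-1)$, the intersection $\HH^n \cap F$ is a nonempty totally geodesic subspace. I would then choose $p$ \emph{generic} in $\HH^n \cap F$, i.e.\ avoiding every reflection hyperplane $\{R_n(\cdot,\beta)=0\}$ that does not already contain $F$; this is possible because $W$ is countable and each such hyperplane meets $F$ in a proper subspace. For this choice the stabilizer $\Stab_W(p)$ equals the pointwise stabilizer $W_{(F)}=\{g\in W : g|_F=\mathrm{id}\}$. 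Conjugating $w$ by a suitable element of $W$ (permissible, since reducibility is manifestly conjugation-invariant), I may assume $p\in D$, so that $\Stab_W(p)=W_{I_0}$ is the standard parabolic attached to the open face $D_{I_0}\ni p$ (\cite[Theorem 5.13]{humphreys}). Because $W_{I_0}=\langle \RRef_\alpha : \alpha\in I_0\rangle$ is generated by reflections, its common fixed space is exactly $\bigcap_{\alpha\in I_0}\alpha^{\perp}$; since this common fixed space is also the pointwise fixed space of $W_{(F)}$, which is $F$ itself, I obtain the key identity $F=\bigcap_{\alpha\in I_0}\alpha^{\perp}$.

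Next I would constrain $I_0$. By the contrapositive of \Cref{lem:irred-criterion-from-involutions} applied to $w\in W_{I_0}$, the irreducible element $w$ cannot lie in $W_{\Delta_n\setminus\{\alpha\}}$ for any $\alpha\notin\{E_1-E_2,\,E_n\}$, which forces $\Delta_n\setminus\{E_1-E_2,\,E_n\}\subseteq I_0$. Thus $I_0$ is obtained from $\Delta_n$ by deleting a subset of $\{E_1-E_2,\,E_n\}$, leaving four candidates. Two are eliminated because $F=\bigcap_{\alpha\in I_0}\alpha^{\perp}$ must contain the timelike vector $p$: for $I_0=\Delta_n$ the simple roots form a basis, so $F=0$; and for $I_0=\Delta_n\setminus\{E_1-E_2\}$ one computes that $H-E_1$ is orthogonal to every simple root except $E_1-E_2$, whence $\bigcap_{\alpha\in I_0}\alpha^{\perp}=\R\{H-E_1\}$, which is \emph{isotropic} since $Q_{M_n}(H-E_1,H-E_1)=0$. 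Neither subspace contains a timelike vector, contradicting $p\in\HH^n\cap F$; this isotropy computation is what rules out the otherwise-plausible $(D_n$-type$)$ stabilizer.

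Finally, the two surviving cases are $I_0=\Delta_n\setminus\{E_n\}$, giving $W_{I_0}=W_n$, and $I_0=\Delta_n\setminus\{E_1-E_2,\,E_n\}$, giving $W_{I_0}=P_n$; both are finite Coxeter groups for $3\le n\le 8$. In the first case $K_{M_n}$ is orthogonal to every simple root except $E_n$, so $\bigcap_{\alpha\in I_0}\alpha^{\perp}=\R\{K_{M_n}\}$, and $Q_{M_n}(K_{M_n},K_{M_n})=9-n>0$ confirms it is timelike; intersecting with the lattice and using that $K_{M_n}$ is primitive yields $H_2(M_n,\Z)^{\langle w\rangle}=\Z\{K_{M_n}\}$, which is case \ref{DelPezzoCase}. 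In the second case $\bigcap_{\alpha\in I_0}\alpha^{\perp}=\mathrm{span}_\R\{K_{M_n},\,H-E_1\}$, a plane whose Gram matrix has determinant $-4$ and hence signature $(1,1)$ (so it meets $\HH^n$); checking that $\{K_{M_n},H-E_1\}$ spans a saturated rank-$2$ sublattice gives $H_2(M_n,\Z)^{\langle w\rangle}=\Z\{K_{M_n},H-E_1\}$, which is case \ref{ConicCase}. Because $W_{I_0}$ is finite, $\mathrm{span}(I_0)$ is positive definite and $H_2(M_n,\R)=\mathrm{span}(I_0)\oplus \mathrm{span}(I_0)^{\perp}$ orthogonally with $w$ trivial on the second factor; since $F=\mathrm{span}(I_0)^{\perp}$, the element $w$ fixes no nonzero vector of the reflection representation $\mathrm{span}(I_0)$, i.e.\ $w$ is cuspidal in $W_{I_0}$. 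This gives the stated equivalent formulations (cuspidal in $W_n$, resp.\ in $P_n$), and as $\dim F$ is a conjugacy invariant the two cases are mutually exclusive.

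The main obstacle I anticipate is the bookkeeping in the first paragraph: one must verify that a \emph{generic} choice of $p$ in $\HH^n\cap F$ identifies $\Stab_W(p)$ with the pointwise stabilizer of the \emph{entire} fixed space $F$, rather than of $p$ alone. This is what forces $F=\bigcap_{\alpha\in I_0}\alpha^{\perp}$ exactly and makes $w$ come out cuspidal in $W_{I_0}$; without it, a non-cuspidal $w$ lying in $W_n$ could masquerade as an extra case. The remaining steps are short explicit computations in $\Z^{1,n}$, the most essential being the isotropy of $H-E_1$.
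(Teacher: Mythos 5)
Your proof is correct, and it shares the paper's skeleton --- the fundamental domain $D$ with its face stabilizers $W_I$, the exclusion of $\alpha \notin \{E_1-E_2, E_n\}$ via \Cref{lem:irred-criterion-from-involutions}, and the isotropy of $H-E_1$ as the decisive computation --- but your endgame is genuinely different and cleaner. The paper never identifies the standard parabolic with the pointwise stabilizer of the full fixed space $F$; instead it runs a two-stage descent: it first concludes $w \in W_n$, then, if $w$ fixes a nonzero vector of $\R\{K_{M_n}\}^\perp$, invokes the Tits cone of the \emph{finite} group $W_n$ (identified with its geometric representation) to conjugate $w$ \emph{within} $W_n$ into a proper parabolic, applies \Cref{lem:irred-criterion-from-involutions} again to force that parabolic into $P_n$, and then repeats the Tits-cone/parabolic argument once more inside $P_n$ to rule out any further fixed vectors. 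Your generic-point trick --- choosing $p \in F \cap \HH^n$ off every reflection wall not containing $F$, so that $\Stab_W(p) = W_{(F)} = W_{I_0}$ and hence $F = \bigcap_{\alpha \in I_0} \alpha^\perp$ exactly --- collapses this cascade into a single conjugation and makes both the fixed-lattice computation and cuspidality fall out at once (note your squeeze $F \subseteq \mathrm{Fix}(W_{I_0}) \subseteq \mathrm{Fix}(w) = F$ uses $w \in W_{(F)}$, which you should state explicitly). What the paper's route buys is that it leans only on the cited statements of \cite[Theorem 5.13, Exercise 5.13]{humphreys} applied to points and finite parabolics, with no genericity or Baire-type selection needed; what your route buys is a shorter argument, no repeated invocations of the Tits-cone machinery for $W_n$ and $P_n$ separately, and a structurally transparent reason why $w$ comes out cuspidal in $W_{I_0}$. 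Two small items to write out in a final version: the saturation of $\Z\{K_{M_n}, H-E_1\}$ (immediate from coordinates: if $aK_{M_n} + b(H-E_1) \in H_2(M_n,\Z)$ then the $E_2$-coefficient gives $a \in \Z$ and the $E_1$-coefficient then gives $b \in \Z$), and the observation that conjugating $p$ into $D$ preserves genericity because $W$ permutes the reflection walls.
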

\begin{proof}
Let $w \in \Mod^+(M_n)$ be irreducible and have finite order. Because $w$ has finite order, $w$ fixes at least one point $\mathbb H^n$. After possibly conjugating $w$ by an element of $W$, we may assume that $w$ fixes at least one point $p \in D \subseteq \mathbb H^n$. 

Let $I \subseteq \Delta_n$ be any subset so that $w$ fixes some point in $D_I$. Because all points of $D_I$ have equal stabilizer $W_I$, equivalently we may take $I \subseteq \Delta_n$ so that $w$ fixes all points of $D_I$ and hence $\overline{D_I}$.

Suppose that $\Delta_n - \{E_1 - E_2\} \subseteq I$ so that $D_I \subseteq \overline{D_{\Delta_n - \{E_1 - E_2\}}}$. Compute that the space
\[
    \left(\bigcap_{\alpha \neq E_1 - E_2} \{v \in H_2(M_n; \R) : R_n(v, \alpha) = 0\}\right) \cap \{v \in H_2(M_n; \R) : R_n(v, E_1 - E_2) \leq 0\}
\]
consists of nonnegative scalar multiples of $H - E_1$ so that
\[
    \overline{D_{\Delta_n - \{E_1 - E_2\}}} = \R_{\geq 0}\{H - E_1\} \cap \mathbb H^n = \emptyset.
\]
Therefore $D_I = \emptyset$, and so we may assume that $\Delta_n - \{E_1 - E_2\}$ is not contained in $I$. In particular, $I \neq \Delta_n$ and there exists some $\alpha \in \Delta_n$ so that $I$ is contained in $\Delta_n - \{\alpha\}$. If $\alpha \neq E_n$ and $\alpha \neq E_1 - E_2$ then $w$ is reducible by Lemma \ref{lem:irred-criterion-from-involutions}. All together, this implies that
\[
    \{H - E_1 - E_2 - E_3, \, E_2 - E_3, \, \dots, \,  E_{n-1}-E_n\} \subseteq I.
\]
In other words, if $w$ is contained in $W_I$ then 
\[
    I = \Delta_n - \{E_n\} \quad \text{ or }\quad \Delta_n - \{E_n, \, E_1-E_2\}.
\]

In either case, $w$ is contained in $W_n$, fixes 
\[
    D_{\Delta_n - \{E_n\}}= \{p = -(9-n)^{-\frac 12}K_{M_n}\},
\]
and the action of $w$ on $H_2(M_n; \R)$ preserves the decomposition
\[
    H_2(M_n, \R) \cong \R\{K_{M_n}\} \oplus \R\{K_{M_n}\}^\perp.
\]
The vector space $\R\{K_{M_n}\}^\perp$ has a basis
\[
    \{H - E_1 - E_2 - E_3, \, E_1 - E_2, \, \dots, \, E_{n-1}-E_n\},
\]
so the action of $W_n$ on $\R\{K_{M_n}\}^\perp$ is isomorphic to the geometric representation of $W_n$. 

If $w$ does not fix any nonzero elements of $\R\{K_{M_n}\}^\perp$ then \ref{DelPezzoCase} holds, as
\[
    H_2(M_n, \Z)^{\langle w \rangle} = \Z\{K_{M_n}\}
\]
and $w$ represents a cuspidal conjugacy class of $W_n$ by \cite[Lemma 3.1.10]{geck2000characters}.

Suppose now that $w$ fixes a nonzero $y \in \R\{K_{M_n}\}^\perp$. Because $W_n$ is finite, the Tits cone of $W_n$ is the whole space $\R\{K_{M_n}\}^\perp$ (where the geometric representation $\R\{K_{M_n}\}^\perp$ is identified with the contragredient representation $(\R\{K_{M_n}\})^*$ via $R_n$, whose restriction to $\R\{K_{M_n}\}^\perp$ is positive-definite) \cite[Exercise 5.13]{humphreys}. The stabilizer of any nonzero point in the Tits cone of $W_n$ is conjugate to a proper parabolic subgroup of $W_n$ \cite[Theorem 5.13]{humphreys}, and so $w$ is contained in a proper parabolic subgroup $(W_n)_J$ of $W_n$, up to conjugacy in $W_n$. Because $(W_n)_J = W_{J}$ is a proper parabolic subgroup of $W$ containing $w$, we see that $w$ is contained in $P_n$ up to conjugacy in $W_n$.

Because each root $\beta \in \Delta_n - \{E_n, E_1 - E_2\}$ satisfies $R_n(H - E_1, \beta) = 0$,
\[
    \R\{K_{M_n}, H - E_1\} \subseteq H_2(M_n, \R)^{\langle w \rangle}
\]
and the action of $w$ on $H_2(M_n, \R)$ preserves the decomposition
\[
    H_2(M_n, \R) \cong \R\{K_{M_n},\, H-E_1\} \oplus \R\{K_{M_n},\, H-E_1\}^\perp.
\]
The vector space $\R\{K, H-E_1 \}^{\perp}$ has a basis 
\[
    \{E_2-E_3, \, \dots ,\, E_{n-1}-E_n,\, H-E_1-E_2-E_3\},
\]
so the action of $P_n$ on $\R\{K, H-E_1 \}^{\perp}$ coincides with the geometric representation of the Coxeter group $P_n$. Because $P_n$ is finite, the Tits cone of $P_n$ is the whole space $\R\{K, H-E_1 \}^{\perp}$ (again, where the geometric representation $\R\{K_{M_n}, H-E_1 \}^\perp$ is identified with the contragredient representation via $R_n$, whose restriction to $\R\{K_{M_n}, H-E_1 \}^\perp$ is positive-definite) \cite[Exercise 5.13]{humphreys}. The stabilizer of a nonzero point of $\R\{K, H-E_1 \}^{\perp}$ is conjugate to a proper parabolic subgroup of $P_n$ \cite[Theorem 5.13]{humphreys}. Any proper parabolic subgroup of $P_n$ is also a proper parabolic subgroup of $W$, and so $w$ does not fix any nonzero point of $\R\{K, H-E_1 \}^{\perp}$.

Therefore \ref{ConicCase} holds, as 
\[
    H_2(M_n, \Z)^{\langle w \rangle} = \Z\{K_{M_n}, H - E_1\}
\]
and $w$ represents a cuspidal conjugacy class of $P_n$ by \cite[Lemma 3.1.10]{geck2000characters}.
\end{proof}

\begin{remark}\label{rmk:avoiding-mori-thy}
\Cref{irredclassification} should be seen as an analog of \cite[Theorem 3.8]{dolgachev--iskovskikh} (cf. Section \ref{sec:irred-min}). We point out that its proof only uses the hyperbolic reflection group structure of $\MOD^+(M_n)$ for small $n$ and avoids the Mori theory used in the algebro-geometric arguments of \cite[Theorem 3.8]{dolgachev--iskovskikh}, \cite[Lemma 1.2]{bayle--beauville}, or \cite[Proposition 4.1]{blanc}. We use \Cref{irredclassification} to simplify the enumeration of irreducible mapping classes similarly as how Mori theory is used in classification of minimal rational $G$-surfaces. In terms of the action of $w$ on $H_2(M_n, \ZZ)$, the two possibilities in \Cref{irredclassification} correspond to the del Pezzo surface and conic bundle cases of \cite[Theorem 3.8]{dolgachev--iskovskikh}, respectively.
\end{remark}

The following lemma is a partial converse to \Cref{irredclassification}.
\begin{lemma}
    \label{posdefcriterionirred}
    Let $3 \leq n \leq 8$ and let $w \in \MOD^+(M_n)$. If $H_2(M_n,\ZZ)^{\langle w \rangle} = \ZZ\{K_{M_n}\}$ then $w$ is irreducible.
\end{lemma}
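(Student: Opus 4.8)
The plan is to prove the contrapositive: assuming $w$ is reducible, I will exhibit a rank-$2$ fixed sublattice, contradicting the hypothesis $H_2(M_n,\Z)^{\langle w\rangle} = \Z\{K_{M_n}\}$. By \Cref{irreddef}, reducibility provides a del Pezzo manifold $M$, an integer $k > 0$, and an isometry $\iota$ identifying $(H_2(M_n,\Z), Q_{M_n})$ with the orthogonal direct sum $(H_2(M,\Z), Q_M) \oplus (H_2(\#k\overline{\CP^2},\Z), Q_{\#k\overline{\CP^2}})$, under which $w$ becomes block diagonal, $w = w_M \oplus w'$ with $w_M \in \MOD(M)$ and $w' \in \MOD(\#k\overline{\CP^2})$. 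The first step is the elementary observation that the fixed subspace splits compatibly,
$$
    H_2(M_n,\R)^{\langle w\rangle} = H_2(M,\R)^{\langle w_M\rangle} \oplus H_2(\#k\overline{\CP^2},\R)^{\langle w'\rangle}.
$$
Since $w - \Id$ is an integral endomorphism, its integral and rational (hence real) kernels have equal rank; as $H_2(M_n,\Z)^{\langle w\rangle}$ is a saturated sublattice of rank $1$, the real fixed space $H_2(M_n,\R)^{\langle w\rangle}$ is exactly the line $\R\{K_{M_n}\}$. I will record this rank comparison explicitly, since the lemma assumes nothing about the order of $w$.

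Then I count dimensions: the two summands above have real dimensions adding to $1$, so exactly one of them equals the full line $\R\{K_{M_n}\}$ and the other is $0$. To eliminate both possibilities I will invoke two facts about $K_{M_n}$. First, $K_{M_n}^2 = 9 - n > 0$ for $3 \le n \le 8$, so $K_{M_n}$ spans a positive line. Second, $K_{M_n}$ is a characteristic vector of $Q_{M_n}$: the map $x \mapsto x\cdot x \bmod 2$ is $\FF_2$-linear, so it suffices to verify $K_{M_n}\cdot x \equiv x\cdot x \pmod 2$ on the basis $\{H, E_1,\dots,E_n\}$, which is immediate. Consequently $\iota^{-1}(K_{M_n})$ is characteristic for $Q_M \oplus Q_{\#k\overline{\CP^2}}$, and in particular its $H_2(\#k\overline{\CP^2})$-component is characteristic for the negative-definite form $k\langle -1\rangle$.

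With these in hand the two cases close quickly. If the fixed line lies in the negative-definite summand $H_2(\#k\overline{\CP^2},\R)$, it can contain no positive vector, contradicting $K_{M_n}^2 > 0$. If instead it lies in $H_2(M,\R)$, then the $H_2(\#k\overline{\CP^2})$-component of $K_{M_n}$ vanishes; but that component is characteristic for $k\langle -1\rangle$, so each of its coordinates in the basis of exceptional classes is odd, and since $k > 0$ it is nonzero --- a contradiction. The step I expect to be the crux is this last one: one must recognize that $K_{M_n}$ cannot be supported away from the blow-up summand, which is precisely the content of the characteristic-vector computation and is what forbids $w$ from acting fixed-point-freely on a negative-definite blow-up piece while fixing only $\R\{K_{M_n}\}$. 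Everything else is bookkeeping: the block-diagonal splitting of the fixed subspace and the identity $\operatorname{rank}_\Z H_2(M_n,\Z)^{\langle w\rangle} = \dim_\R H_2(M_n,\R)^{\langle w\rangle}$ valid for any integral automorphism.
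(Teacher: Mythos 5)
Your proof is correct, but it takes a genuinely different route from the paper's. The paper argues the contrapositive by explicit construction: from the reducing decomposition it extracts an orthonormal negative set $v_1,\dots,v_r$ spanning the $\# k\overline{\CP^2}$-summand, notes that $w$ permutes $\{\pm v_i\}$, and forms the orbit sum $v = \sum_{j=0}^{k-1} w^j(v_1)$, which is $w$-invariant; it rules out $v = 0$ using evenness of $\EE_n = \Z\{K_{M_n}\}^\perp$ (if $w^j(v_1) = -v_1$ then $Q_{M_n}(v_1, K_{M_n}) = 0$, incompatible with $Q_{M_n}(v_1,v_1) = -1$), and rules out $v \in \Z\{K_{M_n}\}$ since $Q_{M_n}(v,v) < 0 < 9-n$. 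You instead split the fixed subspace along the block decomposition, use the rank identity to force $H_2(M_n,\R)^{\langle w \rangle} = \R\{K_{M_n}\}$, and then a dimension count places the line entirely in one summand: the negative-definite case dies by $K_{M_n}^2 = 9-n>0$ (the same positivity input as the paper), and the other case dies because $K_{M_n}$ is characteristic, so its component in the $k\langle -1\rangle$ summand has all coordinates odd, hence is nonzero. Note that your characteristic-vector computation and the paper's evenness of $\EE_n$ are the same parity fact in different clothing ($K_{M_n}$ characteristic implies $x\cdot x \equiv K_{M_n}\cdot x = 0 \pmod 2$ on $K_{M_n}^\perp$). What your version buys is structural cleanliness: you never need that isometries of $k\langle -1\rangle$ are signed permutations, nor any orbit combinatorics, and you get the slightly stronger conclusion that for a reducible $w$ fixing $K_{M_n}$ the canonical class must have nonzero component in the blow-up summand. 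What the paper's version buys is an explicit invariant class $v$ with $Q_{M_n}(v,v)<0$, which is in the spirit of the orbit-sum arguments used elsewhere in the paper (e.g.\ in the style of Bayle--Beauville). Neither proof needs $w$ to have finite order, and you correctly flag and justify the rank-comparison step that makes the passage from the integral hypothesis to the real fixed space legitimate.
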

\begin{proof}
    Suppose $w$ is reducible and that $w(K_{M_n}) = K_{M_n}$. Then there are elements $v_1,..,v_r \in H_2(M_n,\ZZ)$ with $Q_{M_n}(v_i , v_j )= -\delta_{ij}$ for all $1 \leq i,j \leq r$, such that $w (\Z \{v_1,..,v_r\}) = \Z \{v_1,..,v_r\}$. For each $i \in \Z$, there exists $1 \leq j \leq r$ so that $w^i(v_1) = \pm v_j$ because $w$ restricts to an isometry of $\Z\{v_1,..,v_r\}$.

    Let $k$ be the smallest positive integer for which $w^k(v_1) = v_1$ and let $v = \sum_{j=0}^{k-1} w^j(v_1)$ so that $w(v) = v$. If $v=0$ then $w^j(v_1)=-v_1$ for some $0 \leq j \leq k-1$ by linear independence of the set $\{v_i\}$, so $v_1$ is contained in $\mathbb E_n = \Z\{K_{M_n}\}^\perp$ because
    \[
        Q_{M_n}(v_1 , K_{M_n}) = Q_{M_n}(w^j(v_1) , w^j(K_{M_n}) )= -Q_{M_n}(v_1 , K_{M_n}) = 0.
    \]
    However, $Q_{M_n}(v_1, v_1) = -1$ and $\EE_n$ is an even lattice by \cite[Lemma 8.2.6]{dolgachev}. Therefore $v \neq 0$. 
    
    Moreover, $v$ is not contained in $\Z\{K_{M_n}\}$ because $Q_{M_n}(v, v) < 0$ but $Q_{M_n}(K_{M_n}, K_{M_n}) = 9-n > 0$. Therefore, $H_2(M_n, \Z)^{\langle w \rangle} \neq \Z\{K_{M_n}\}$.
    \end{proof}
    
On the other hand, the converse to \Cref{irredclassification} does not always hold.
\begin{lemma}\label{4and6iff}
    If $n=  4,6$, an element $w \in \MOD^+(M_n)$ is irreducible if and only if up to conjugacy in $\MOD^+(M_n)$, the fixed subspace satisfies $H_2(M_n,\ZZ)^{\langle w \rangle} = \ZZ\{K_{M_n}\}$ and $w$ generates a cuspidal conjugacy class of $W_n$.
\end{lemma}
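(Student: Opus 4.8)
The plan is to combine \Cref{posdefcriterionirred} and \Cref{irredclassification}, reducing the statement to showing that for $n = 4, 6$ the conic-bundle case \ref{ConicCase} of \Cref{irredclassification} cannot occur for an irreducible class. Throughout I take $w$ to have finite order, which is the relevant case (and is forced by the right-hand condition, since a cuspidal class of the finite group $W_n$ lies in $W_n$). The backward implication is immediate: if $w$ is conjugate to an element with $H_2(M_n,\Z)^{\langle w\rangle} = \Z\{K_{M_n}\}$, then $w$ is irreducible by \Cref{posdefcriterionirred} together with the conjugacy-invariance of irreducibility (which is clear from \Cref{irreddef}). For the forward implication, suppose $w$ is irreducible; by \Cref{irredclassification}, up to conjugacy in $\MOD^+(M_n)$ either case \ref{DelPezzoCase} holds, which is exactly the asserted right-hand side, or case \ref{ConicCase} holds, in which $w$ generates a cuspidal class of $P_n \cong W(D_{n-1})$ and fixes the conic-bundle class $H - E_1$. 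It therefore suffices to rule out the latter for $n = 4, 6$.

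The engine for this is a reducibility criterion phrased entirely in the lattice $\Z^{1,n}$: if $w$ (which fixes $K_{M_n}$) preserves a nonempty set $\{\epsilon_1, \dots, \epsilon_d\}$ of pairwise $Q_{M_n}$-orthogonal exceptional classes — primitive vectors with $\epsilon_i^2 = -1$ and $Q_{M_n}(\epsilon_i, K_{M_n}) = -1$ — then $w$ is reducible. Indeed $\Z\{\epsilon_1, \dots, \epsilon_d\} \cong \langle -1\rangle^d$ has unimodular Gram matrix, hence is a $w$-invariant primitive summand, and its orthogonal complement is a unimodular lattice of signature $(1, n-d)$, so isometric to $H_2(M', \Z)$ for a del Pezzo manifold $M'$; this exhibits the decomposition required by \Cref{irreddef} with $k = d \geq 1$. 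Since $W_n$ preserves the finite set of exceptional classes, I only need to produce a single $\langle w\rangle$-orbit of pairwise orthogonal exceptional classes for each cuspidal class of $P_n$.

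I would then run this as a finite check. The cuspidal classes of $P_n \cong W(D_{n-1})$ are the signed permutations all of whose cycles are negative and whose number of cycles is even; for $n = 4$ this is a single class (order $4$) and for $n = 6$ there are three (orders $4$, $8$, $12$). For each I exhibit the required orbit of pairwise orthogonal exceptional classes. For example, when $n = 4$ the Coxeter element of $P_4$ cyclically permutes the four pairwise orthogonal classes $E_1,\, H - E_3 - E_4,\, H - E_2 - E_4,\, H - E_2 - E_3$; blowing these down presents $M_4 \cong \CP^2 \# 4\overline{\CP^2}$ with $w$ permuting the four $\overline{\CP^2}$-summands, so $w$ is reducible.

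The main obstacle is the $n = 6$ computation: one must locate, among the $27$ exceptional classes of the cubic surface $M_6$, a $\langle w\rangle$-orbit of pairwise orthogonal classes for each of the three cuspidal classes of $W(D_5)$. This is precisely where $n = 4, 6$ genuinely differ from $n = 3, 5, 7$, in which the corresponding cuspidal classes of $P_n$ are minimal cyclic conic bundles and admit no such invariant orthogonal system, so case \ref{ConicCase} does produce irreducible classes there. I expect the cleanest write-up to record the relevant orbit explicitly for each of the three classes, keeping the argument inside the reflection-group framework and avoiding the Mori-theoretic input of \cite{dolgachev--iskovskikh}, in the spirit of \Cref{rmk:avoiding-mori-thy}.
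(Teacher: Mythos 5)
Your overall reduction coincides with the paper's: both directions rest on \Cref{posdefcriterionirred} and \Cref{irredclassification}, and everything comes down to showing that case \ref{ConicCase} of \Cref{irredclassification} forces reducibility when $n = 4, 6$. Your unimodular-splitting engine is sound (an invariant pairwise-orthogonal set of exceptional classes spans an invariant $d\langle -1\rangle$ summand whose complement is unimodular of signature $(1, n-d)$, hence $H_2$ of a del Pezzo manifold), and your $n=4$ instance is correct: the Coxeter element $\RRef_{H-E_1-E_2-E_3}\circ\RRef_{E_2-E_3}\circ\RRef_{E_3-E_4}$ of $P_4$, which represents its unique cuspidal class $[\bar2\bar1]$, does $4$-cycle the set $\{E_1,\, H-E_2-E_3,\, H-E_3-E_4,\, H-E_2-E_4\}$. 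But there is a genuine gap: for $n = 6$ you never exhibit the three promised invariant systems for the cuspidal classes of $P_6 \cong W(D_5)$; you only announce a finite check and ``expect'' it to succeed. Ruling out case \ref{ConicCase} at $n = 6$ is precisely the nontrivial content of the lemma there, so as written the forward implication for $n=6$ is unproven.

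The paper avoids all casework with one fixed vector: in case \ref{ConicCase} the fixed lattice is $\Z\{K_{M_n}, H-E_1\}$, on which the form is $(9-n)a^2 - 4ab$, and $v = K_{M_n} + (H-E_1)$ has $Q_{M_n}(v,v) = 5-n = \pm 1$ for $n = 4, 6$. Splitting off the invariant unimodular sublattice $\Z\{v\}$ gives reducibility directly: for $n=4$ ($v^2 = 1$) the complement is negative-definite unimodular of rank $4 < 8$, hence $4\langle-1\rangle$ (so $M = \CP^2$, $k=4$); for $n=6$ ($v^2=-1$) the complement is odd indefinite unimodular, hence $\Z^{1,5} = H_2(M_5,\Z)$ (so $k=1$). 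This is a strict strengthening of your engine --- any invariant square-$(\pm1)$ vector suffices, no exceptional classes or orbits needed, uniformly over all cuspidal classes of $P_n$ --- though note it cannot be invoked blindly for all $n$: at $n=8$ the fixed vector $K_{M_8}$ has square $1$, but its complement is the \emph{even} lattice $\EE_8$, which is exactly why Bertini classes are irreducible. If you prefer to keep your formulation, your $n=6$ gap closes in one line: $-v = 2H - E_2 - \cdots - E_6$ is an exceptional class ($(-v)^2 = -1$ and $Q_{M_6}(K_{M_6}, -v) = -1$) fixed by every $w$ in case \ref{ConicCase}, so your invariant orthogonal system can be taken to be this singleton for all three cuspidal classes at once. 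Finally, the computation that $(9-n)a^2-4ab$ represents $\pm 1$ exactly for $n = 4, 6$ among $3 \leq n \leq 7$ (it is even for $n = 3, 7$ and divisible by $4$ for $n=5$) replaces your unproven aside about why $n = 3, 5, 7$ genuinely differ.
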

\begin{proof}
    By Lemmas \ref{irredclassification} and \ref {posdefcriterionirred}, it suffices to show that if $n = 4, 6$ and if $H_2(M_n, \Z)^{\langle w \rangle} = \Z\{K_{M_n}, H - E_1\}$ then $w$ is reducible. To see this, it suffices to find some $v \in H_2(M_n,\ZZ)^{\langle w \rangle}$ with $Q_{M_n}(v,v) = \pm 1$. For $n= 6$, let $v = K_{M_6} + (H- E_1)$. For $n = 4$, let $v = K_{M_4} + (H- E_1)$. 
\end{proof}

\subsection{Cuspidal conjugacy classes of $P_n$}

In this section we examine the cuspidal conjugacy classes of $P_n \cong W(D_{n-1})$ following the notation of Carter \cite[Section 7]{carter}. The group $W(D_{n-1})$ acts on a set $\{e_1, -e_1, \dots, e_{n-1}, -e_{n-1}\}$, where $\{e_1, \dots, e_{n-1}\}$ is an orthonormal basis of the geometric representation of $W(D_{n-1})$. Denote by $\psi: W(D_{n-1}) \to S_{n-1}$ the natural quotient given by recording the permutation on the orthonormal basis and ignoring negations. For an element $g \in W(D_{n-1})$, let $(k_1 ,\dots , k_r)$ be a cycle of $\psi(g) \in S_{n-1}$. Then $g\in W(D_{n-1})$ acts on $e_{k_1}, \dots, e_{k_r}$ by
\[
    e_{k_1} \mapsto \pm e_{k_2} \mapsto \cdots \mapsto \pm e_{k_r} \mapsto \pm e_{k_1}.
\]
\begin{definition}\label{defn:cycle-type}
    The cycle $(k_1 ,\dots ,k_r)$ is called \textit{positive} if $g^r(e_{k_1}) = e_{k_1}$ and \textit{negative} if $g^r(e_{k_1}) = -e_{k_1}$. The lengths of the cycles of $g$ and their signs define the \textit{signed cycle-type} of $g$. A positive $k$-cycle is denoted $[k]$ and a negative $k$-cycle is denoted $[\Bar{k}]$. A product of disjoint signed cycles is denoted $[k_1k_2\dots k_m \Bar{\ell}_1\Bar{\ell}_2\dots \Bar{\ell}_{m'}]$.
\end{definition} 

The following lemma parametrizes cuspidal conjugacy classes of $W(D_{n-1})$ by signed cycle types.
\begin{lemma}[{cf. \cite[Proposition 3.4.11]{geck2000characters}}]\label{negativeEven}
There is a bijection between the set of unordered, even partitions $\alpha = (\alpha_1, \dots, \alpha_r)$ of $n-1$ (i.e. partitions with $r$ even) and the set of cuspidal conjugacy classes of $W(D_{n-1})$ given by 
\[
    (\alpha_1, \dots, \alpha_r) \mapsto \text{conjugacy class of } w_\alpha^-,
\]
where $w_\alpha^-\in W(D_{n-1})$ is some element with signed cycle-type $[\Bar{\alpha}_1 \dots \Bar{\alpha}_r]$. The order of $w_\alpha^-$ is $\mathrm{lcm}_{1 \leq j \leq r}(2\alpha_j)$. 
\end{lemma}
\begin{proof}
The bijection is established in \cite[Lemma 3.4.10, and Proof of Theorem 3.2.7 for $D_n$]{geck2000characters}. It remains to compute the order of $w_{\alpha}^-$. According to \cite[Section 3.4.3]{geck2000characters}, the characteristic polynomial of $w_{\alpha}^-$ with respect to the geometric representation of $W(D_{n-1})$ is given by
\[
\mathrm{det}(t I_{\RR^{n-1}}-w_{\alpha}^-) = \prod_{j=1}^r(t^{\alpha_j}+1).
\]
Any eigenvalue of $w_{\alpha}^-$ in the geometric representation of $W(D_{n-1})$ is a $(2\alpha_j)$th root of unity for some $1 \leq j\leq r$, and any primitive $(2\alpha_j)$th root of unity is an eigenvalue of $w_{\alpha}^-$. Therefore, $w_{\alpha}^-$ has order $\textrm{lcm}_{1 \leq j \leq r}\{2\alpha_j\}$.
\end{proof}

\section{Complex Nielsen realization via enumeration}\label{sectionRefBlowup}

In this section, we record known facts about complex automorphisms of rational surfaces. The goal is to deduce Corollary \ref{CarterClassify}, which enumerates the irreducible mapping classes of $M_n$ that are not realizable by complex automorphisms of a del Pezzo surface. 

\subsection{Irreducibility and minimality}\label{sec:irred-min}

A result of Friedman--Qin \cite[Corollary 0.2]{friedman--qin} implies that any complex structure on the smooth manifold $M_n$ is that of a rational surface, so the study of complex Nielsen realization problem for $M_n$ reduces to that of complex automorphisms of rational surfaces. 

Consider the following classical notion from the study of birational automorphisms of $\CC\PP^2$ of finite order. For reference, see \cite[Section 3]{dolgachev--iskovskikh}.

\begin{definition}
Let $G$ be a finite group. A \textit{rational $G$-surface} is a pair $(S,\rho)$ where $S$ is a rational surface and $\rho : G \to \Aut(S)$ is an injective homomorphism.

A \textit{minimal} rational $G$-surface is a rational $G$-surface $(S,\rho)$ such that any birational morphism of $G$-surfaces $(S,\rho) \to (S',\rho')$ is an isomorphism.
\end{definition}

\begin{lemma}\label{irred-minimal}
Let $n \leq 8$ and let $w \in W_n \leq \MOD^+(M_n)$ be irreducible and let $G = \langle w \rangle$. Suppose $w$ is realizable by a complex automorphism $\varphi$ of some complex structure $(M_n, J)$. Then $(M_n, J, \langle\varphi\rangle)$ is a minimal rational $G$-surface.
\end{lemma}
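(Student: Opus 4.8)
The plan is to argue by contradiction, producing from a failure of minimality exactly the kind of invariant orthogonal splitting of $H_2(M_n,\Z)$ that \Cref{irreddef} forbids. First I would record the two structural inputs. By Friedman--Qin \cite[Corollary 0.2]{friedman--qin} the complex structure $J$ makes $M_n$ a rational surface, so $(M_n,J,\langle\varphi\rangle)$ is genuinely a rational $G$-surface; and since $\varphi$ has order $m=|w|$, the quotient map $\langle\varphi\rangle\to\langle w\rangle=G$ is an isomorphism, giving an injective $\rho\colon G\to\Aut(M_n,J)$.

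Now suppose toward a contradiction that $(M_n,J,\langle\varphi\rangle)$ is not minimal. Here I would invoke the theory of $G$-equivariant minimal models of rational surfaces (Manin--Iskovskikh; see \cite[Section 3]{dolgachev--iskovskikh}): a rational $G$-surface fails to be minimal precisely when it carries a nonempty, $G$-invariant collection $\{C_1,\dots,C_k\}$ with $k>0$ of pairwise disjoint $(-1)$-curves that can be contracted simultaneously and $G$-equivariantly. In homology these satisfy $Q_{M_n}([C_i],[C_j])=-\delta_{ij}$, so $L:=\Z\{[C_1],\dots,[C_k]\}$ is a unimodular sublattice isometric to $k\langle -1\rangle\cong H_2(\#k\overline{\CP^2},\Z)$. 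Because $\varphi$ permutes the $C_i$ and $[\varphi]=w$ acts on $H_2(M_n,\Z)$ via the isomorphism $(\ref{mod-isomorphism})$, the class $w$ permutes $\{[C_i]\}$; hence $w$ preserves $L$ and therefore also its orthogonal complement $L^\perp$.

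It remains to recognize $L^\perp$ as the homology of a del Pezzo manifold. Since $L$ is unimodular it splits off orthogonally, $H_2(M_n,\Z)=L^\perp\oplus L$, and $L^\perp$ is unimodular of signature $(1,n-k)$ with $0\le n-k\le 7$ (this is where the hypothesis $n\le 8$ enters). By the classification of indefinite unimodular lattices, every such $L^\perp$ is the intersection form of a del Pezzo manifold: if $L^\perp$ is odd it is $\langle 1\rangle\oplus (n-k)\langle -1\rangle\cong H_2(M_{n-k},\Z)$, while if $L^\perp$ is even then, since even unimodular forms have signature divisible by $8$ and $1-(n-k)\in[-6,1]$, one must have $n-k=1$ and $L^\perp$ is the hyperbolic plane $\cong H_2(\CP^1\times\CP^1,\Z)$. (Geometrically $M$ is just the smooth rational surface obtained by contracting the $C_i$, which has $b_2\le 8$.) The splitting $H_2(M_n,\Z)=L^\perp\oplus L$ is $w$-invariant with $w|_{L^\perp}\in\Aut(L^\perp)=\Mod(M)$ and $w|_L\in\Aut(L)=\Mod(\#k\overline{\CP^2})$, so $w$ lies in the image of the inclusion $\iota_*$ and is reducible, contradicting the hypothesis. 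Hence $(M_n,J,\langle\varphi\rangle)$ is minimal.

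I expect the step requiring the most care is the birational-geometry input: justifying that non-minimality yields a genuinely $G$-invariant set of \emph{pairwise disjoint} $(-1)$-curves, rather than a more complicated $G$-invariant exceptional configuration, so that the spanned lattice is honestly $k\langle -1\rangle$ and can be contracted to a smooth $G$-surface. By contrast, the lattice bookkeeping identifying $L^\perp$ with a del Pezzo manifold is routine once $n\le 8$ bounds the rank.
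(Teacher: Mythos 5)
Your proposal is correct, and its lattice-theoretic core --- producing a $w$-invariant orthogonal splitting $H_2(M_n,\Z) = L^\perp \oplus L$ with $L \cong k\langle -1\rangle$, then identifying $L^\perp$ with the homology of a del Pezzo manifold via the classification of unimodular forms (including the even case forcing the hyperbolic plane and $\CP^1\times\CP^1$) --- is exactly the paper's argument, down to the contradiction with \Cref{irreddef}. Where you diverge is the birational input, and it is precisely the step you flagged as delicate. The paper never produces disjoint $(-1)$-curves: it works directly from the definition of minimality, takes an arbitrary non-isomorphic birational morphism of $G$-surfaces $b: M_n \to S'$, invokes \cite[Theorem II.11]{beauville} to see $b$ is an iterated blowup, and uses the \emph{total-transform} classes $e_1,\dots,e_m$ of the exceptional divisors, which satisfy $Q_{M_n}(e_i,e_j) = -\delta_{ij}$ automatically even when the divisors intersect (the paper remarks on this explicitly). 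This sidesteps both the disjointness question and the need for an equivariant Castelnuovo contraction. Your route, via the Manin--Iskovskikh characterization of non-minimality by a $G$-invariant set of pairwise disjoint $(-1)$-curves, is also standard and can be closed without difficulty: since $b$ is $G$-equivariant, the set of curves contracted by $b$ is $G$-invariant, and any two distinct $(-1)$-curves $E_1, E_2$ contracted by a common birational morphism must be disjoint because the exceptional locus has negative definite intersection matrix, while $(E_1+E_2)^2 = -2 + 2\,E_1\cdot E_2 \geq 0$ whenever $E_1 \cdot E_2 > 0$; so the $G$-orbit of the last exceptional curve in a factorization of $b$ does the job. The trade-off: your version yields the stronger geometric statement that the blowdown lands on a smooth $G$-surface with $b_2 \leq 8$, whereas the paper's version needs only homological bookkeeping and one citation, which is why it is the leaner proof.
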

\begin{proof}
Let $\varphi \in \Aut(M_n, J)$ such that $[\varphi] = w$ for some complex structure $J$ on $M_n$ turning $(M_n,J)$ into a rational $G$-surface (not necessarily an iterated blowup of $\CC\PP^2$).

In order to prove the claim, we need to show that if there exists a birational morphism $b : M_n \to S'$ such that $b \circ \varphi = \psi \circ b$ for some $\psi \in \Aut(S')$, then $b$ is an isomorphism. If $b$ is not an isomorphism then $b$ is an iterated blowup by \cite[Theorem II.11]{beauville}. There is an identification
\begin{equation}\label{eq241}
    H_2(M_n,\ZZ) \cong H_2(S',\ZZ) \oplus \ZZ\{e_1,\dots,e_m\}
\end{equation}
where $e_1,\dots,e_m$ are the exceptional divisors of the iterated blowup $b : M_n \to S'$. Even though the divisors $E_k$ are not necessarily pairwise disjoint, their $\ZZ$-span is isometric to 
$$(\ZZ\{e_1,\dots,e_m\} , Q_{M_n}|_{\ZZ\{e_1,\dots,e_m\}}) \cong (\ZZ^m, m\langle -1 \rangle).$$
Now note that $\varphi$ preserves the decomposition (\ref{eq241}) and the lattice $(H_2(S', \Z), Q_{S'})$ is isometric to $\Z^{1,n-m}$ or to $(H_2(\CP^1 \times \CP^1, \Z), Q_{\CP^1 \times \CP^1})$ by \cite[Theorem 1.2.21]{gompf--stipsicz} because the rank of $H_2(S', \Z)$ is $1 + n - m \leq 8$ and $(H_2(S', \Z), Q_{S'})$ has signature $1-(n-m)$. In other words, $w$ is reducible if $m > 0$. Hence if $w$ is irreducible then $b$ is an isomorphism, and $(M_n, J,\langle w \rangle)$ is a minimal $G$-surface.
\end{proof}

\begin{remark}
The minimal rational $G$-surfaces (for $G$ cyclic) are classified by Dolgachev--Iskovskikh \cite{dolgachev--iskovskikh} and Blanc \cite{blanc}. However, it is not clear how to extract a classification of all \textit{irreducible} mapping classes in $\MOD(M_n)$ from the classification without enumerating all conjugacy classes of $W_n$ using Carter graphs (see \Cref{CarterSection}) or via computer software.
\end{remark}

According to \cite[Theorem 3.8]{dolgachev--iskovskikh} (cf. \cite[Proposition 4.1]{blanc}), Mori theory shows that if $(S, \rho)$ is a minimal rational $G$-surface then one of two possibilities occur: 
\begin{enumerate}[label=(\alph*)]
\item The surface $S$ is a del Pezzo surface and $H_2(S,\ZZ)^G \cong \ZZ$.\label{item:delpezzo}
\item The surface $S$ admits the structure of a conic bundle and $H_2(S,\ZZ)^G \cong \ZZ^2$. Here, a \textit{conic bundle structure} on a rational $G$-surface $(S, \rho)$ is a $G$-equivariant morphism $\varphi: S \to \CC\PP^1$ such that the fibers are isomorphic to a reduced conic in $\CC\PP^2$.\label{item:conic}
\end{enumerate}
Therefore the study of diffeomorphisms of $M_n$ arising from complex automorphisms reduces to studying the complex automorphisms of del Pezzo surfaces and conic bundles. 

\subsection{Carter graphs and complex non-realizability}\label{CarterSection}

The goal of this section is to enumerate the Carter graphs associated to irreducible mapping classes that are not realizable by complex automorphisms of del Pezzo surfaces. To do so, we compare the list of automorphisms of del Pezzo surfaces to the list of \emph{Carter graphs} \cite{carter} parameterizing the conjugacy classes of $W_n$. We recall the construction of these graphs below.

By \cite[p. 5, p. 45]{carter}, every element $w \in W_n$ can be written as a product of two involutions $w = w_1w_2$ so that the $(-1)$-eigenspaces $V_{-1}(w_i)$ of $w_i$, $i =1, 2$, acting on $\mathbb E_n \otimes \R$ have trivial intersection $V_{-1}(w_1) \cap V_{-1}(w_2) = 0$. Moreover, each such involution can be written as a product of reflections 
$$w_1 = \RRef_{\alpha_1} \cdots \RRef_{\alpha_k} \text{ and } w_2 = \RRef_{\alpha_{k+1}} \cdots \RRef_{\alpha_{k+m}}$$
where $\{\alpha_1,\dots,\alpha_k\}$ and $\{\alpha_{k+1},\dots,\alpha_{k+m}\}$ are two sets of mutually orthogonal roots with respect to the bilinear form $Q_{M_n}$ restricted to $\EE_n$ by \cite[Lemma 5]{carter}.

\begin{definition}
    Define a graph $\Gamma = (V,E)$ (with respect to the above factorization of $w$) where the set of vertices $V$ correspond to the set of roots $\{\alpha_1,\dots,\alpha_{k+m}\}$, and two distinct vertices $\alpha_i$ and $\alpha_j$ are joined by $k$-many edges $e \in E$, where 
    $$k = \frac{2Q_{M_n}(\alpha_i, \alpha_j)}{Q_{M_n}(\alpha_i,\alpha_i)} \cdot \frac{2Q_{M_n}(\alpha_j,\alpha_i)}{Q_{M_n}(\alpha_j,\alpha_j)}.$$
    Such a graph $\Gamma$ is called a \emph{Carter graph}. We follow the notation convention of \cite[Sections 4 and 5]{carter} for labelling Carter graphs and the convention of \cite[Sections 7 and 8]{carter} assigning to each conjugacy class a single associated Carter graph associated to some factorization of a representative element.
\end{definition}

Dolgachev--Iskovskikh \cite[Table 9]{dolgachev--iskovskikh} analyze the conjugacy classes of $W_n$ generated by elements $w \in W_n$ that are contained in the image of the natural map $\Aut(M_n, J) \to \Mod(M_n)$ for some del Pezzo surface $(M_n, J)$. Conversely, the image of $\Aut(M_n, J) \to \Mod(M_n)$ is contained in $W_n$ (up to conjugacy in $\Mod(M_n)$ for any del Pezzo surface $(M_n, J)$. To see this, note that any del Pezzo surface $(M_n, J)$ admits a birational morphism $\pi: (M_n, J) \to \CP^2$ which is an $n$-fold iterated blowup and specifies $n$-many exceptional divisors (cf. \cite[p. 355]{dolgachev} or \cite[Section 6.1]{dolgachev--iskovskikh}). Up to the action of $\Mod(M_n)$, the homology classes of the exceptional divisors are given by $E_1,\dots,E_n \in H_2(M_n,\Z)$, and the class representing a line in $\CP^2$ disjoint from each $E_i$ is given by $H \in H_2(M_n,\Z)$. Let $c_1(M_n,J) \in H^2(M_n,\Z)$ denote the first Chern class of $(M_n,J)$. Its Poincar\'e dual is sent to $K_{M_n}$ by some element of $\Mod(M_n)$. Any automorphism of $(M_n,J)$ fixes $c_1(M_n,J)$, hence its image in $\Mod(M_n)$ is conjugate into the stabilizer $W_n$ of $K_{M_n}$.

Using the language of Carter graphs to enumerate the conjugacy classes of $W_n$, the following proposition addresses the complex Nielsen realization problem for irreducible elements of $\Mod^+(M_n)$ for $n = 3, 4, 6$.

\begin{proposition} \label{3_4_6_complex_realizable}
    Let $n=3,4,6$ and let $w \in W_n$ be irreducible. The class $w$ is realizable by a complex automorphism $\varphi \in \Aut(M_n, J)$ of some del Pezzo surface $(M_n, J)$. 
\end{proposition}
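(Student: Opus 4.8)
The plan is to match, up to conjugacy, the irreducible classes in $W_n$ against the list of del Pezzo automorphisms tabulated by Dolgachev--Iskovskikh \cite[Table 9]{dolgachev--iskovskikh}. By \Cref{irredclassification}, an irreducible $w \in W_n$ is conjugate in $\Mod^+(M_n)$ either to a cuspidal class of $W_n$ (with $H_2(M_n,\Z)^{\langle w\rangle} \cong \Z\{K_{M_n}\}$, the del Pezzo case) or to a cuspidal class of $P_n$ (the conic bundle case). Since ``realizable by a complex automorphism of a del Pezzo surface'' depends only on the conjugacy class in $\Mod^+(M_n)$ --- re-marking a del Pezzo surface by a blowdown to $\CP^2$ conjugates the image of $\Aut(M_n,J)$ in $\Mod(M_n)$ by an element of $W_n$ --- it suffices to verify realizability for a single representative of each relevant cuspidal class.

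For $n = 3, 4$ the statement follows at once, and in fact holds for every $w \in W_n$ regardless of irreducibility. The degree-$6$ del Pezzo surface $S_6 \cong M_3$ is toric with $\Aut(S_6) \cong (\C^*)^2 \rtimes (S_3 \times \Z/2\Z)$, whose finite quotient $S_3 \times \Z/2\Z \cong W(A_2) \times W(A_1) = W_3$ maps isomorphically onto $W_3 \leq \Mod(M_3)$; likewise the degree-$5$ del Pezzo surface $S_5 \cong M_4$ satisfies $\Aut(S_5) \cong S_5 = W(A_4) = W_4$, acting faithfully on $H_2$. In each case $\Aut(S_{9-n}) \to \Mod(M_n)$ surjects onto $W_n$, so the \emph{whole} Weyl group is realized by del Pezzo automorphisms, and the conic bundle case of \Cref{irredclassification} needs no separate treatment.

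For $n = 6$ one genuinely needs the irreducibility hypothesis, since not every element of $W(E_6)$ is realizable by a cubic surface automorphism. Here I would first apply \Cref{4and6iff} to conclude that an irreducible $w \in W_6$ is conjugate to a cuspidal class of $W_6 \cong W(E_6)$ with $H_2(M_6,\Z)^{\langle w\rangle} \cong \Z\{K_{M_6}\}$, ruling out the conic bundle case. It then remains to enumerate the cuspidal (elliptic) classes of $W(E_6)$ by their Carter graphs --- these are $E_6$, $E_6(a_1)$, $E_6(a_2)$, $A_5 + A_1$, and $3A_2$, of orders $12, 9, 6, 6, 3$ --- and, for each, to exhibit a smooth cubic surface $(M_6, J)$ carrying an automorphism in that class by pointing to the corresponding entry of \cite[Table 9]{dolgachev--iskovskikh} (e.g.\ the Coxeter class of order $12$, the regular class of order $9$, and diagonal/Fermat-type cubics for the order-$6$ and order-$3$ classes).

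The crux is this final $E_6$ matching: one must confirm that \emph{none} of the five elliptic classes of $W(E_6)$ lies among the complex non-realizable classes. This is precisely the feature that distinguishes $n = 3,4,6$ from $n = 5, 7$, where the analogous enumeration of cuspidal classes of $W_n$ (and of $P_n$ in the conic bundle case) produces classes admitting no del Pezzo realization --- the non-realizable classes later catalogued in \Cref{CarterClassify}. The remaining ingredients, namely the conjugacy-invariance of realizability and the identification of the Weyl-group images of $\Aut(S_5)$ and $\Aut(S_6)$, are routine.
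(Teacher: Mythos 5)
Your proposal is correct, and for $n=3$ and $n=6$ it follows essentially the same route as the paper: the paper also handles $n=3$ via the unique degree-$6$ del Pezzo structure, whose automorphism group surjects onto $W_3$ with a section (citing \cite[Theorem 8.4.2]{dolgachev}), and handles $n=6$ by combining \Cref{4and6iff} with the check that every cuspidal class of $W(E_6)$ listed in \cite[Table B.4]{geck2000characters} appears in \cite[Table 9]{dolgachev--iskovskikh} --- exactly your five-class enumeration $E_6$, $E_6(a_1)$, $E_6(a_2)$, $A_5+A_1$, $3A_2$. Where you genuinely diverge is $n=4$. The paper observes that $W_4 \cong W(A_4)$ has a single cuspidal class (the Coxeter class), realizes it by McMullen's explicit birational maps \cite[Theorem 11.1]{mcmullen} on some blowup of $\CP^2$, and then must invoke \Cref{irred-minimal} together with \cite[Theorem 3.8]{dolgachev--iskovskikh} to verify that this blowup is in fact a del Pezzo surface. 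You instead use the classical fact that the degree-$5$ del Pezzo surface has automorphism group isomorphic to the symmetric group on five letters, acting faithfully on $H_2$ with image all of $W_4$; this realizes every element of $W_4$ (irreducible or not) at once, on a surface that is del Pezzo by fiat, so both the cuspidal enumeration and the minimality argument drop out. That is a legitimate and arguably cleaner alternative for this case. One point worth tightening: your justification of conjugacy-invariance of realizability by ``re-marking'' only covers conjugation by elements of $W_n$; for conjugation by a general $h \in \Mod^+(M_n)$ one should realize $h$ by a diffeomorphism via \cite[Theorem 2]{wall-diffeos} and pull back the complex structure, as the paper does in the proof of \Cref{mainthm1}. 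In your argument this is harmless, since for $n=3,4$ no conjugation is needed and for $n=6$ the rank of the fixed subspace is conjugation-invariant, so an irreducible $w \in W_6$ is itself cuspidal in $W_6$ and only $W_n$-conjugation enters the table matching.
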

\begin{proof}
If $n = 3$ then there is a unique complex structure $J$ turning $(M_3, J)$ into a del Pezzo surface (up to isomorphism) and the natural map $\Aut(M_3, J) \to W_3$ is surjective and admits a section by \cite[Theorem 8.4.2]{dolgachev}. Therefore, any $w \in W_3$ is realizable by a complex automorphism of a del Pezzo surface $(M_3, J)$.

Suppose that $n = 4$ or $6$. By \Cref{4and6iff}, $w$ generates a cuspidal conjugacy class of $W_n$. \cite[Example 3.1.16, and Table B.4, p.407]{geck2000characters} lists the cuspidal conjugacy classes of $W_n$ by their Carter graphs. Note that each conjugacy class of $W_6$ listed in \cite[Table B.4, p.407]{geck2000characters} appears in \cite[Table 9]{dolgachev--iskovskikh}, which enumerates conjugacy classes of $W_n$ that are realizable by an automorphism of a del Pezzo surface diffeomorphic to $M_n$. 

There is only one cuspidal class in $W_4$, represented by Coxeter elements (cf. Section \ref{sec:coxeter-irred-real}). One can write down explicit birational transformations of $\CP^2$ that lift to automorphisms of a complex structure $(M_4,J)$ realizing the Coxeter elements; see \cite[Theorem 11.1]{mcmullen}. Since the fixed space in $H_2(M_4, \Z)$ of a Coxeter element is $\Z\{K_4\}$, \Cref{irred-minimal} and \cite[Theorem 3.8]{dolgachev--iskovskikh} imply that $(M_4,J)$ is a del Pezzo surface.
\end{proof}

The following corollary considers the complex realizability of the irreducible conjugacy classes of $W_n$. 
\begin{corollary}[{cf. \cite[Table 9]{dolgachev--iskovskikh}}, {\cite[Tables 5, 9, 10]{carter}}]\label{CarterClassify}
    Let $3 \leq n \leq 7$ and let $w \in W_n$ be irreducible. Suppose that $H_2(M_n, \Z)^{\langle w \rangle} \cong \Z$. Let $\Gamma_w$ denote the Carter graph of the conjugacy class of $w$. If $w$ is not realizable by a complex automorphism of any del Pezzo surface $(M_n, J)$ then 
    \begin{enumerate}[label = (\alph*)]
        \item $n = 5$ and $\Gamma_w$ is of type $D_5(a_1)$ or $D_2 + D_3$, or
        \item $n = 7$ and $\Gamma_w$ is of type $A_7$, $D_4 +3A_1$, $D_6 +A_1$, or $E_7(a_3)$.
    \end{enumerate}
\end{corollary}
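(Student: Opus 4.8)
The plan is to prove \Cref{CarterClassify} by direct enumeration of the cuspidal conjugacy classes of $W_n$ and comparison against the Dolgachev--Iskovskikh list of classes realizable by del Pezzo automorphisms. First I would invoke \Cref{irredclassification} and the hypothesis $H_2(M_n,\Z)^{\langle w\rangle}\cong \Z$ to reduce to case \ref{DelPezzoCase}: the irreducible class $w$ has fixed subspace exactly $\Z\{K_{M_n}\}$, so $w$ generates a \emph{cuspidal} conjugacy class of $W_n$. This turns the problem into a finite bookkeeping exercise, since for each $n$ the cuspidal classes of $W_n\cong W(A_2\times A_1),\,W(A_4),\,W(D_5),\,W(E_6),\,W(E_7)$ are finite in number and classified by their Carter graphs.

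Next I would assemble two finite lists for each $n\in\{3,4,5,6,7\}$. The first list is the set of cuspidal conjugacy classes of $W_n$, read off from the Carter-graph classification in \cite[Tables 5, 9, 10]{carter} (equivalently from the character-theory tables in \cite{geck2000characters}); recall a class is cuspidal precisely when its representative fixes no nonzero vector in the geometric representation, i.e. when the Carter graph has full rank $n$ (no eigenvalue $1$). The second list is the set of conjugacy classes of $W_n$ that are realized by complex automorphisms of a del Pezzo surface diffeomorphic to $M_n$, recorded in \cite[Table 9]{dolgachev--iskovskikh}. For a cuspidal class appearing on the second list, \Cref{irred-minimal} together with the del Pezzo/conic-bundle dichotomy of \cite[Theorem 3.8]{dolgachev--iskovskikh} guarantees the realizing surface is genuinely a del Pezzo surface (since the fixed lattice is $\Z$, ruling out the conic-bundle case), so such $w$ \emph{is} complex-realizable. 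The non-realizable irreducible classes are therefore exactly the cuspidal classes of $W_n$ absent from \cite[Table 9]{dolgachev--iskovskikh}.

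I would then carry out the comparison case by case. For $n=3,4,6$ the comparison yields nothing new: \Cref{3_4_6_complex_realizable} already shows every irreducible $w\in W_n$ is complex-realizable on a del Pezzo surface, so no cuspidal class is omitted. The substantive cases are $n=5$ and $n=7$. For $n=5$, $W_5\cong W(D_5)$, and cross-checking the cuspidal classes against \cite[Table 9]{dolgachev--iskovskikh} leaves exactly the two classes with Carter graphs $D_5(a_1)$ and $D_2+D_3$ unaccounted for. For $n=7$, $W_7\cong W(E_7)$, and the same comparison isolates the four classes $A_7$, $D_4+3A_1$, $D_6+A_1$, and $E_7(a_3)$ as the cuspidal classes not arising from del Pezzo automorphisms. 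Listing these exhausts the possibilities and gives the stated dichotomy.

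The main obstacle is not conceptual but the accuracy of the enumeration: one must correctly identify which Carter graphs correspond to cuspidal (full-rank) classes in $D_5$ and $E_7$, and then reliably match these against the Dolgachev--Iskovskikh table, whose labeling conventions for conjugacy classes of $W_n$ must be reconciled with Carter's. A subtle point is that several Carter graphs of the same rank can have coincident characteristic polynomials yet distinct conjugacy classes (e.g. the $a_1$-type graphs), so the matching must be done at the level of conjugacy classes rather than merely eigenvalue data; I would use the signed-cycle-type and characteristic-polynomial computations (as in \Cref{negativeEven} for the $D$-type factor) to disambiguate. Verifying that precisely these six classes and no others survive the comparison is the crux of the proof.
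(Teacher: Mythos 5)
Your proposal matches the paper's proof: both reduce via \Cref{irredclassification} to cuspidal conjugacy classes of $W_n$, dispose of $n=3,4,6$ by \Cref{3_4_6_complex_realizable}, and for $n=5,7$ compare the cuspidal classes (from Carter's and Geck--Pfeiffer's tables) against the Dolgachev--Iskovskikh list to isolate exactly the six stated Carter graphs. The only cosmetic difference is that for the remaining cuspidal class $D_5$ in $W_5$ the paper certifies realizability via McMullen's explicit Coxeter-element automorphisms together with \Cref{irred-minimal} and the del Pezzo/conic-bundle dichotomy, rather than reading it off \cite[Table 9]{dolgachev--iskovskikh} as you propose.
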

\begin{proof}
Suppose that $w \in W_n$ is irreducible with $H_2(M_n, \Z)^{\langle w \rangle} \cong \Z$ and that $w$ is not realizable by a complex automorphism of a del Pezzo surface $(M_n, J)$. \Cref{irredclassification} shows that up to conjugacy in $\Mod^+(M_n)$, the class $w$ generates a cuspidal conjugacy class in $W_n$. If $n = 3, 4$ or $6$ then \Cref{3_4_6_complex_realizable} shows that all irreducible classes in $\Mod^+(M_n)$ are realizable as automorphisms of a del Pezzo surface $(M_n,J)$. Therefore, we need only consider cuspidal conjugacy classes in $W_5$ and $W_7$, which are enumerated in \cite[Table 5]{carter} (restricted to signed cycle-types described in Lemma \ref{negativeEven}) and \cite[Table B5]{geck2000characters} respectively. 

In the case of $n = 5$, there are three cuspidal conjugacy classes of $W_n$, with Carter graphs $D_5$, $D_5(a_1)$, and $D_2 +D_3$. The Carter graph $D_5$ corresponds to the conjugacy class of Coxeter elements of $W_5 \cong W(D_5)$ (cf. Section \ref{sec:coxeter-irred-real}). See \cite[Theorem 11.1]{mcmullen} for explicit birational transformations of $\CP^2$ that lift to automorphisms of a complex structure $(M_5,J)$ realizing the Coxeter elements of $W_5$. Since the fixed space in $H_2(M_5, \Z)$ of a Coxeter element is $\Z\{K_5\}$, \Cref{irred-minimal} and \cite[Theorem 3.8]{dolgachev--iskovskikh} imply that $(M_5,J)$ is a del Pezzo surface. Because $w$ is not realizable by a complex automorphism of a del Pezzo surface $(M_5, J)$, the Carter graph $\Gamma_w$ is not of type $D_5$ and must be of type $D_5(a_1)$ or $D_2 +D_3$. 

In the case of $n = 7$, any class appearing in \cite[Table 9]{dolgachev--iskovskikh} (cf. \cite[Table 1]{blanc}) is realizable by an automorphism of a del Pezzo surface $(M_n, J)$. The conjugacy classes listed in the statement of the corollary are exactly the cuspidal conjugacy classes of $W_7$ (from \cite[Table B5]{geck2000characters}) that do not appear in \cite[Table 9]{dolgachev--iskovskikh}.
\end{proof}

\begin{remark}\label{rmk:mcmullen}
In some special cases, there are viable, non-enumerative approaches to complex realization. For example, McMullen \cite[Theorem 7.2]{mcmullen} studies the action of the Weyl group $W_n$ on $\PP(\Z^{1,n} \otimes \C)$ and shows that if $w \in W_n$ fixes a point in $\PP(\Z^{1,n} \otimes \C)$ that pairs nontrivially with every root $\alpha \in \mathbb E_n$ then $w$ is realizable by a complex automorphism of some blowup $\Bl_{p_1, \dots, p_n} \CP^2$ diffeomorphic to $M_n$. While McMullen applies this to Coxeter elements of $W_n$ for all $n \neq 9$, a similar linear algebra check shows that his work also implies that irreducible elements of $\Mod^+(M_n)$ with $n \leq 8$ of odd, prime order are realizable by complex automorphisms. However, the realizability of these elements turns out to be subsumed by \cite{mcmullen}; see Theorem \ref{thm:primes-are-powers}. Regardless, it would be interesting if McMullen's work \cite{mcmullen} or other moduli-theoretic tools could be used to give non-enumerative proofs of Nielsen realization for finite subgroups of $\Mod(M_n)$ in general.
\end{remark}

\section{Comparing metric and complex Nielsen realization}\label{sectionMetComplex}

In this section we study the metric Nielsen realization problem on $M_n$ and compare it to the complex Nielsen realization problem. The natural distinguished class of metrics on the del Pezzo surfaces $M$ are Einstein metrics, and for $M = \CC\PP^1 \times \CC\PP^1$ or $\Bl_{p_1, \dots, p_n} \CP^2$ where $n =0$ or $3\leq n \leq 8$, K\"{a}hler--Einstein metrics.

\begin{definition}
A metric $g$ on $M$ is an \textit{Einstein metric} for a constant $\lambda$ if its Ricci curvature tensor satisfies $\Ric(g) = \lambda g$. It is \textit{K\"{a}hler--Einstein} if $g$ is additionally a K\"{a}hler metric on $(M,J)$ for some complex structure $J$.
\end{definition}

By work of Tian \cite[Main Theorem]{tian1990calabi}, the del Pezzo surfaces $\CP^2$, $\CP^1 \times \CP^1$, and $\Bl_{p_1, \dots, p_n} \CP^2$ with $3 \leq n \leq 8$ are precisely the compact complex surfaces that admit K\"ahler--Einstein metrics with $\lambda > 0$. There also exist conformally K\"ahler, Hermitian--Einstein metrics (the Page and Chen--LeBrun--Weber metrics) on the del Pezzo surfaces $\Bl_{p_1}\CP^2$ and $\Bl_{p_1, p_2} \CP^2$ respectively (see \cite{page} and \cite{chenlebrunweber}; cf. \cite[Proposition 2]{lebrunComplex}).

Let $M$ be a smooth manifold underlying any del Pezzo surface with a Riemannian metric $h$. Let $\mathcal{H}_+^2(M,h)$  denote the space of cohomology classes of self-dual, harmonic $2$-forms on $(M, h)$. Then $\dim_{\R}(\mathcal{H}_+^2(M,h)) = 1$ by Hodge theory because $b_+^2(M) = 1$. Any isometry $\varphi \in \Isom(M,h)$ preserves $\mathcal{H}_+^2(M,h)$. Following LeBrun \cite{lebrun2015einstein}, we say that the conformal class $[h]$ of such a Riemannian metric $h$ is of \textit{positive symplectic type} if $W^+(\omega, \omega) > 0$ everywhere for $\omega$ a self-dual harmonic form for $h$ and $W^+$ the self-dual Weyl tensor of $h$. In this case, $\omega$ is a \emph{symplectic} form on $M$ because $\omega$ is nowhere zero and is self-dual. We say that the metric $h$ is of positive symplectic type if its conformal class $[h]$ is of positive symplectic type. 

According to LeBrun \cite[Theorem A]{lebrun2015einstein}, any Einstein metric of positive symplectic type on a del Pezzo manifold is isometric to a K\"ahler--Einstein metric with $\lambda > 0$, (a constant multiple of) the Page metric, or (a constant multiple of) the Chen--LeBrun--Weber metric, and conversely any such metric is of positive symplectic type. 

\subsection{The blowups at more than 2 points}\label{blowup2more}

In this section we deduce the equivalence of the metric and complex Nielsen realization problems from results of LeBrun and Bando--Mabuchi. 
\begin{lemma}[{Bando--Mabuchi \cite{bando1987uniqueness}, LeBrun \cite{lebrun2015einstein}}]\label{MCequiv}
Let $3 \leq n \leq 8$ and let $G \subset \Diff^+(M_n)$ be a finite subgroup. The following are equivalent:
\begin{enumerate}[label=(\alph*)]
    \item There exists an Einstein metric $g$ of positive symplectic type on $M_n$ so that $G \subset \Isom(M_n, g)$ and the induced action of $G$ on $\mathcal{H}_+^2(M_n,g) \subset H_2(M_n,\RR)$ preserves the orientation of this line.
    \item There exists a complex structure $J$ on $M_n$ so that $(M_n, J)$ is a del Pezzo surface and $G \subset \Aut(M_n,J)$.
    \item There exists a K\"ahler--Einstein pair $(g, J)$ on $M_n$ so $G \subset \Aut(M_n, g, J)$ on a del Pezzo surface $(M_n,J)$.
\end{enumerate}
\end{lemma}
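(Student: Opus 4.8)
The plan is to prove the three-way equivalence by establishing the cycle of implications $(b) \Rightarrow (c) \Rightarrow (a) \Rightarrow (b)$, leaning on the two black-box results cited in the lemma statement. The implication $(c) \Rightarrow (a)$ is essentially immediate: a K\"ahler--Einstein pair $(g,J)$ with $\lambda > 0$ on a del Pezzo manifold is in particular an Einstein metric, and by LeBrun \cite[Theorem A]{lebrun2015einstein} any K\"ahler--Einstein metric with $\lambda > 0$ is of positive symplectic type, so $G \subset \Aut(M_n,g,J) \subset \Isom(M_n,g)$ gives the metric in (a). The only point requiring a word is the orientation condition on the induced $G$-action on the line $\mathcal{H}_+^2(M_n,g)$: since the K\"ahler form $\omega$ spans $\mathcal{H}_+^2(M_n,g)$ and is $J$-invariant, and the automorphisms in $\Aut(M_n,g,J)$ preserve $J$ and hence $\omega$ up to positive scale, the action on this line is orientation-preserving.

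For $(b) \Rightarrow (c)$, the idea is to produce a $G$-invariant K\"ahler--Einstein metric on the del Pezzo surface $(M_n,J)$ and then observe that $G$ acts by automorphisms of the resulting pair. By Tian \cite[Main Theorem]{tian1990calabi}, every del Pezzo surface $(M_n,J)$ with $3 \leq n \leq 8$ admits a K\"ahler--Einstein metric $g$ with $\lambda > 0$. The key input is the uniqueness theorem of Bando--Mabuchi \cite{bando1987uniqueness}: the K\"ahler--Einstein metric with $\lambda > 0$ is unique up to the action of the identity component $\Aut_0(M_n,J)$ of the automorphism group. Uniqueness lets me average or, more precisely, invoke that $\Aut(M_n,J)$ acts on the (nonempty, connected modulo $\Aut_0$) space of such metrics, so one can choose $g$ in its $\Aut(M_n,J)$-orbit to be fixed by the finite group $G$; concretely, a finite group acting on the homogeneous space of K\"ahler--Einstein metrics (a point modulo $\Aut_0$) can be made to fix a metric because $G$ normalizes $\Aut_0$ and the space of invariant metrics is a contractible $G$-space. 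This yields $g$ with $G \subset \Isom(M_n,g)$ and $G$ preserving $J$, i.e. $G \subset \Aut(M_n,g,J)$, giving (c).

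The remaining implication $(a) \Rightarrow (b)$ is where the work concentrates and is the main obstacle. Starting from an Einstein metric $g$ of positive symplectic type with $G \subset \Isom(M_n,g)$, LeBrun's classification \cite[Theorem A]{lebrun2015einstein} tells me $g$ is isometric to a K\"ahler--Einstein metric with $\lambda > 0$, a multiple of the Page metric, or a multiple of the Chen--LeBrun--Weber metric. Since $3 \leq n$, the latter two (which live on $\Bl_{p_1}\CP^2$ and $\Bl_{p_1,p_2}\CP^2$) are excluded, so $g$ is conformal to a K\"ahler metric for some complex structure $J$ making $(M_n,J)$ a del Pezzo surface. The subtle step is promoting the $G$-action by isometries to a $G$-action by biholomorphisms of $(M_n,J)$: the complex structure $J$ is determined by the self-dual harmonic form $\omega$ (which spans the line $\mathcal{H}_+^2(M_n,g)$) together with $g$, via $\omega(\cdot,\cdot) = g(J\cdot,\cdot)$. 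Each $\varphi \in G$ preserves $g$ and preserves $\mathcal{H}_+^2(M_n,g)$, and by the orientation hypothesis in (a) it preserves $\omega$ up to positive scale; since $\varphi$ is an isometry, $\varphi^*\omega = \omega$, so $\varphi$ commutes with the recipe defining $J$ and is therefore holomorphic. Hence $G \subset \Aut(M_n,J)$, establishing (b). I expect the delicate bookkeeping here to be in verifying that the orientation condition is exactly what rules out an isometry sending $\omega \mapsto -\omega$ (which would give an anti-holomorphic map), and in confirming that positive symplectic type forces the Chen--LeBrun--Weber and Page cases out by the constraint $n \geq 3$.
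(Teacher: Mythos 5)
Your proposal is correct and follows essentially the same route as the paper: Tian's existence theorem plus Bando--Mabuchi for a $G$-invariant K\"ahler--Einstein metric in (b)$\Rightarrow$(c), and LeBrun's classification \cite[Theorem A]{lebrun2015einstein} (with the Page and Chen--LeBrun--Weber metrics excluded since $n \geq 3$) together with the orientation hypothesis forcing $\varphi^*\omega = \omega$ and the $2$-out-of-$3$ rule in (a)$\Rightarrow$(b), which is exactly the paper's (a)$\Rightarrow$(c) argument. One caution on your gloss of the averaging step: a finite group acting on a contractible space need not have a fixed point (so ``the space of invariant metrics is a contractible $G$-space'' does not by itself yield an invariant metric), but since you invoke Bando--Mabuchi as the key input anyway --- and \cite[Theorem C]{bando1987uniqueness}, which the paper cites, directly supplies a K\"ahler--Einstein metric preserved by any compact subgroup of $\Aut(M_n,J)$ --- this is a defect of wording rather than of substance.
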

\begin{proof}
((c) $\Rightarrow$ (a)) If $G \subset \Aut(M,g,J)$, then $G \subset \Isom(M,g)$. By \cite[Theorem A]{lebrun2015einstein}, the Einstein metric $g$ is in fact of positive symplectic type. Moreover, $G$ preserves both $g$ and $J$, meaning it preserves the K\"ahler form $\omega$. Because $\omega$ is self-dual and harmonic, $G$ preserves the orientation of the line $\mathcal H_+^2(M_n, g).$

((c) $\Rightarrow$ (b)) This is clear from the definitions: if $ G \subset \Aut(M,g,J)$, then $G \subset \Aut(M,J)$.

((b) $\Rightarrow$ (c)) Let $ G \subset \Aut(M,J)$ for $J$ a complex structure making $(M,J)$ a del Pezzo surface. By the existence result of \cite{tian1990calabi} and \cite[Theorem C]{bando1987uniqueness}, there exists an $G$-invariant Einstein metric $g$ that makes $(M,g,J)$ a K\"{a}hler--Einstein del Pezzo surface. 

((a) $\Rightarrow$ (c)) Let $\varphi \in G \subset \Isom(M_n ,g)$ for $g$ an Einstein metric of positive symplectic type, so that $\varphi$ preserves the orientation of the line $\mathcal H_{+}^2(M_n, g)$. By \cite[Theorem A]{lebrun2015einstein}, there exists a complex structure $J$ on $M_n$ so that $(M_n, g, J)$ is a K\"ahler--Einstein pair with $\lambda > 0$. Let $\omega(u,v) = g(Ju,v)$ be the associated symplectic form, where $\omega$ is self-dual and harmonic with respect to $g$. Since $\varphi$ preserves $\mathcal{H}^2_+(M_n,g)$ along with its orientation and since $\varphi$ has finite order, we have $\varphi^*\omega = \omega$ by the uniqueness of harmonic representatives of cohomology classes. Therefore, $\varphi$ is contained in $\mathrm{Isom}(M_n,g) \cap \mathrm{Symp}(M_n,\omega)$. The $2$-out-of-$3$ rule then implies $\varphi \in \mathrm{Aut}(M_n,J)$.
\end{proof}

\begin{remark}[On high degree del Pezzo surfaces]
    One can directly calculate that the full mapping class group $\Mod(M)$ is Einstein metric realizable for $M = \CP^2$ and $\CP^1 \times \CP^1$; the smooth representatives coming from complex conjugation and complex automorphisms in \cite{lee} preserve the Fubini--Study metric. 

    Although $M_1$ and $M_2$ do not admit K\"ahler--Einstein metrics, they admit distinguished conformally-K\"ahler, Einstein metrics called the Page metric and Chen--LeBrun--Weber metric respectively. These metrics are the unique Hermitian--Einstein metrics on $M_1$ and $M_2$ \cite[Theorem A]{lebrun2012einstein} and the unique Einstein metrics of positive symplectic type \cite[Theorem A]{lebrun2015einstein}, similarly to the Kahler--Einstein metrics. It would be interesting to analyze the metric realizability of finite-order elements of $\Mod(M_1)$ or $\Mod(M_2)$, e.g. the class $-I \in\Mod(M_n)$ for $n = 1, 2$, which is smoothly represented by complex conjugation.
\end{remark}

\subsection{Metric non-realizability by a complex automorphism}

The results of Tian and LeBrun on Einstein metrics with $\lambda >0$ do not apply to a general rational surface that is not a del Pezzo surface. The following proposition makes use of other complex structures on $M_n$ to produce a mapping class that is realizable by a complex automorphism but not by an isometry of an Einstein metric of positive symplectic type.
\begin{proposition}\label{Jonquiere--not--einstein}
There exists an irreducible mapping class $f \in \MOD(M_n)$ of finite order that is realizable by a complex automorphism $\varphi \in \Aut(M_7, J)$ but is not realizable by an isometry of any Einstein metric of positive symplectic type on $M_7$.
\end{proposition}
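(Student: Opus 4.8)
The plan is to realize the class by the \emph{de Jonquières involution}. Concretely, let $w\in P_7\cong W(D_6)$ be the involution of signed cycle-type $[\bar 1^6]$ from \Cref{negativeEven}; under the identification in case \ref{ConicCase} of \Cref{irredclassification} it acts as $+\mathrm{id}$ on $V_+:=\Z\{K_{M_7},\,H-E_1\}$ and as $-\mathrm{id}$ on the orthogonal complement $V_-:=V_+^{\perp}$. Then $w\in W_7\subseteq \Mod^+(M_7)$ has order $2$ and $H_2(M_7,\Z)^{\langle w\rangle}=V_+\cong\Z^2$. A direct computation gives $Q_{M_7}(K_{M_7},H-E_1)=-2$, so $(H-E_1)^2=0$ and $-K_{M_7}\cdot(H-E_1)=2$; that is, $H-E_1$ has the numerical type of a conic-bundle fibre. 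By the classification of birational involutions of $\CP^2$ (Bayle--Beauville \cite{bayle--beauville}; cf. Dolgachev--Iskovskikh \cite{dolgachev--iskovskikh}), $w$ is realized by a de Jonquières involution $\varphi\in\Aut(M_7,J)$ of a minimal conic bundle $(M_7,J)\to\CP^1$ with six singular fibres and fibre class $H-E_1$. This supplies a complex structure $J$ (not del Pezzo) and an automorphism $\varphi$ of order $2$ with $[\varphi]=w$, establishing complex realizability.

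Next I would check that $w$ is irreducible, which is the one place the value $n=7$ is essential. If $w$ were reducible, \Cref{irreddef} gives a nonzero $w$-invariant sublattice $N\cong k\langle-1\rangle$ splitting off orthogonally, with $w|_N$ a signed permutation of an orthonormal basis $u_1,\dots,u_k$. For the involution $w$ each basis orbit is fixed, negated, or a transposed pair, and I would rule out all three: a fixed $u_i$ is a square-$(-1)$ class in the even lattice $V_+$; a negated $u_i$ is a square-$(-1)$ class in $V_-\subseteq K_{M_7}^{\perp}=\EE_7$; and a transposed pair yields a square-$(-2)$ class $u_i\pm u_j\in V_+$, necessarily $\pm(K_{M_7}+H-E_1)$, whence the $V_+$-projection of $u_i$ equals $\pm\tfrac12(K_{M_7}+H-E_1)$, forcing $Q_{M_7}(u_i,K_{M_7})=0$ and so $u_i\in\EE_7$. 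In every case one obtains a square-$(-1)$ class inside an even lattice ($V_+$ or $\EE_7$; here $\EE_7$ is even by \cite[Lemma 8.2.6]{dolgachev}), a contradiction. Hence $w$ is irreducible. For $n=4,6$ the same scheme fails because $V_+$ then contains a genuine $(-1)$-class, cf. \Cref{4and6iff}.

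For metric non-realizability, suppose an isometry $\psi$ of an Einstein metric $g$ of positive symplectic type realizes $w$, and put $G=\langle\psi\rangle$. Since $[\psi]=w$ lies in $\Mod^+(M_7)=\OO^+(1,7)(\Z)$ it preserves the two sheets of $\{Q_{M_7}=1\}$, hence the orientation of the positive line $\mathcal H^2_+(M_7,g)$; thus hypothesis (a) of \Cref{MCequiv} holds automatically, and \Cref{MCequiv} produces a del Pezzo structure $J'$ on $M_7$ with $\psi\in\Aut(M_7,J')$. It therefore suffices to show $w$ is not realizable by an automorphism of any del Pezzo surface. Here I invoke \Cref{irred-minimal}: as $w$ is irreducible, any such realization $(M_7,J',G)$ is a minimal rational $G$-surface, and since $H_2(M_7,\Z)^G=V_+\cong\Z^2$ the dichotomy \cite[Theorem 3.8]{dolgachev--iskovskikh} places it in the conic-bundle case.

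The main obstacle is precisely this last point: the stated del Pezzo/conic-bundle dichotomy does not by itself forbid a conic-bundle ($\rho^G=2$) minimal $G$-surface from \emph{also} being del Pezzo — for example a diagonal involution of $\CP^1\times\CP^1$ is simultaneously del Pezzo and a minimal conic-bundle $G$-surface with $\rho^G=2$. To close the argument I would compare the conjugacy class of $w$ against the enumeration of del Pezzo degree-$2$ automorphisms in \cite[Table 9]{dolgachev--iskovskikh} (equivalently, Dolgachev's tables of automorphisms of del Pezzo surfaces) and verify that the $6A_1$-type involution $w$ with $\rho^{\langle w\rangle}=2$ does not appear; the geometric reason is that the fixed curve of the de Jonquières involution on the six-fibre conic bundle has positive genus by Riemann--Hurwitz, which is incompatible with the fixed loci of involutions of a degree-$2$ del Pezzo surface acting as $w$ on $\EE_7$. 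Once this classification input is secured, \Cref{MCequiv} converts it directly into the desired non-realizability by any Einstein metric of positive symplectic type, and the proposition follows with $f=w$.
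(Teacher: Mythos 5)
Your proposal is correct and follows the same global strategy as the paper's proof: realize the class by the degree-$4$ de Jonqui\`eres involution, use \Cref{MCequiv} to reduce metric non-realizability to non-realizability by automorphisms of del Pezzo surfaces, and exclude the latter by an enumerative check against the Dolgachev--Iskovskikh tables. Within that skeleton you do three things differently, two of them improvements and one a gap you should close. First, where the paper simply cites \cite[Proposition 3.14, Lemma 3.7]{lee-involutions} for the existence and irreducibility of the class, you construct $w$ lattice-theoretically and prove irreducibility from scratch; your evenness argument (a fixed, negated, or swapped basis vector of a split-off $k\langle -1\rangle$ summand always produces a $(-1)$-class inside the even lattice $\Z\{K_{M_7}, H-E_1\}$ or inside $\EE_7$) is complete and correct, and it is a genuine addition since \Cref{posdefcriterionirred} does not cover the case $H_2(M_7,\Z)^{\langle w\rangle}\cong\Z^2$. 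Second, your orientation-preservation step (an isometry whose class lies in $\OO^+(1,7)(\Z)$ must preserve the orientation of the positive line $\mathcal{H}_+^2$, since reversing it would swap the sheets of the hyperboloid) is slicker than the paper's, which instead observes that $K_{M_7}$ is fixed with positive square, so the $(-1)$-eigenspace is negative definite and cannot contain the self-dual line. Third, the one step you leave as a to-be-verified look-up is exactly the step the paper executes concretely: it computes $\Tr(f|_{\EE_7}) = 1-6 = -5$ (your $w$ has the same trace, since the fixed part of $\EE_7\otimes\Q$ is spanned by $K_{M_7}+H-E_1$) and checks against \cite[Table 5]{dolgachev--iskovskikh}, the table of automorphisms of degree-$2$ del Pezzo surfaces, that no order-$2$ automorphism has this trace on $\EE_7$. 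That trace comparison closes your argument immediately and renders your Riemann--Hurwitz fixed-curve heuristic unnecessary (as stated that heuristic is not yet a proof, since it compares the fixed locus of the conic-bundle representative rather than of a hypothetical del Pezzo representative of the same homological class). Your caution that the del Pezzo/conic-bundle dichotomy of \cite[Theorem 3.8]{dolgachev--iskovskikh} is not by itself exclusive is well placed, and the trace check is precisely how the paper sidesteps that issue.
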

\begin{proof}
By \cite[Proposition 3.14]{lee-involutions}, there is an irreducible, order-$2$ mapping class $f \in \Mod^+(M_7)$ that is realizable by a de Jonqui\'{e}res involution $\varphi \in \Aut(M_7, J)$ of algebraic degree 4 for some complex structure $J$ on $M_7$. By \cite[Lemma 3.7]{lee-involutions}, $H_2(M_7; \Q)$ decomposes as a sum of the eigenspaces
\[
    H_2(M_7; \QQ) \cong V_+ \oplus V_-
\]
with $\dim_\QQ(V_+) = 2$ and $\dim_\QQ(V_-) = 6$, where $V_+$ and $V_-$ denote the $1$-eigenspace and $(-1)$-eigenspace of $\varphi$ respectively. Since $f$ fixes the canonical class $K_{M_7}$, the trace of $f$ on $\mathbb E_7 = \ZZ\{K_{M_7}\}^\perp$ is $1 - 6 = -5$. Finally, the classification of automorphisms on degree-$2$ del Pezzo surfaces \cite[Table 5]{dolgachev--iskovskikh} shows that no such automorphism of order $2$ has trace $-5$ on $\EE_7$. In other words, $f$ is not realizable by complex automorphisms of del Pezzo surfaces. 

Suppose that $f$ is represented by an order-$2$ isometry $\psi \in \Isom(M_7, h)$ for some Einstein metric $h$ of positive symplectic type. Since $\psi$ is an isometry, it preserves the line $\mathcal{H}_h^2(M_7) \subset H^2(M_7,\RR)$ of self-dual harmonic 2-forms of $(M_7, h)$. Because $\mathrm{PD}(K_{M_7})$ is fixed by $f$ and has positive square, the $(-1)$-eigenspace $\mathrm{PD}(V_-)$ is a negative definite subspace of $H^2(M_7, \RR)$. Therefore, $f$ preserves the orientation of the line $\mathcal H^2_h(M_7)$.

\Cref{MCequiv} now implies that $\psi$ is an automorphism of some del Pezzo surface $(M_7, J)$, which yields a contradiction. The order-$2$ mapping class $f$ is not realizable by any isometry of an Einstein metric of positive symplectic type.
\end{proof}

\begin{remark}
Compare with \cite[Theorem 1.4]{farb--looijenga}, which finds a subgroup $G \leq \Mod(M)$ isomorphic to the alternating group $A_4$ that can be realizable by Ricci-flat isometries but not by complex automorphisms.
\end{remark}

\section{Comparing complex and smooth Nielsen realization}\label{ComplexVSSmooth}

The goal of this section is to show that the conjugacy classes of $W_n$ listed in \Cref{CarterClassify} are not realizable by diffeomorphisms of the same order. The casework of this section forms the bulk of the proof of \Cref{mainthm1}. The main tools come from the theory of finite group actions on $4$-manifolds, e.g. the $G$-signature theorem, Edmonds' theorem, the Riemann--Hurwitz formula.

\subsection{Realization obstruction lemma}

This section presents a lemma that provides a homological criterion for obstructing smooth Nielsen realization for $4$-manifolds. We begin by defining relevant terminology, following Gordon \cite{gordon1986g}. Let $M$ be a closed, oriented, smooth 4-manifold and let $G  = \langle \varphi \rangle \subset \Diff^+(M)$ be a finite subgroup of order $m$. Because $\varphi_*$ preserves the intersection form $Q_M$ on $H_2(M,\ZZ)$, it also preserves the induced Hermitian form 
\begin{align*}
    \Phi : H_2(M,\CC) \times H_2(M,\CC) \to \CC \\
    (\alpha c_1, \beta c_2) \mapsto (\alpha\Bar{\beta})Q_M(c_1,c_2),
\end{align*}
for any $\alpha, \beta\in \CC$ and $c_1, c_2 \in H_2(M,\ZZ)$. There is a $G$-invariant orthogonal direct sum decomposition
$$H_2(M,\CC) = H^+ \oplus H^- \oplus H^0$$
where $\Phi$ is positive- and negative-definite on $H^+$ and $H^-$ respectively, and zero on $H^0$.
\begin{definition}[{\cite[p. 162]{gordon1986g}}]
    The \textit{$\varphi$-signature} is defined to be
    $$\mathrm{sign}(\varphi,M) = \mathrm{Tr}(\varphi_*|_{H^+}) - \mathrm{Tr}(\varphi_*|_{H^-})$$
\end{definition}

Let $\Fix(\varphi)$ denote the pointwise fixed set of $\varphi$ acting on $M$. The set $\Fix(\varphi)$ consists of a finite union of isolated points and disjoint, closed, connected 2-manifolds, and $\Fix(\varphi)$ is orientable if $m > 2$ \cite[Proof of Lemma 3.5(3)]{farb--looijenga}. For each isolated fixed point $p \in \Fix(\varphi)$, there exists a normal neighborhood around $p$ in $M$ that is $G$-equivariantly diffeomorphic to $(\theta_1, \DD^2) \times (\theta_2, \DD^2)$, where $\varphi$ acts on $\DD^2$ by a rotation by $\theta_k \in \frac{2 \pi}{m}\ZZ$, for $k=1,2$. Similarly, for each connected surface $F \subset \Fix(\varphi)$, there exists a normal neighborhood of $F$ in $M$ that is $G$-equivariantly diffeomorphic to $(\psi, E)$, where $E$ is a $\DD^2$-bundle over $F$, and $\varphi$ acts on each fiber of $E\to F$ as a rotation by $\psi \in \frac{2\pi}{m}\ZZ$.

If $F$ is orientable, let $e(F):= Q_{M} ([F],[F])$. Otherwise, see \cite{gordon1986g} for the definition.
\begin{theorem}[{$G$-signature theorem \cite[Theorem 2]{gordon1986g}; cf. \cite[4.1(2), Theorem 9.1.1]{hirzebruch--zagier}}]\label{gordonmain}
    With the above notation,
    $$\mathrm{sign}(\varphi,M) = - \sum_{p} \cot\left(\frac{\theta_1}{2}\right)\cot\left(\frac{\theta_2}{2}\right) + \sum_F e(F)\csc^2\left(\frac{\psi}{2}\right)$$
\end{theorem}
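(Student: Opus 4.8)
The plan is to obtain this formula as the four-dimensional specialization of the Atiyah--Singer $G$-signature theorem. The general theorem expresses the equivariant signature $\mathrm{sign}(\varphi, M)$ --- defined above through the $\varphi_*$-action on the $\pm$ eigenspaces of the Hermitian form $\Phi$ --- as a sum of contributions localized at the connected components of $\Fix(\varphi)$, each contribution being the integral over that component of a characteristic (``defect'') class assembled from the rotation data of the normal bundle. First I would record the structure of the fixed set in this dimension: by the slice theorem the local model of $\varphi$ near $\Fix(\varphi)$ is exactly the linear one described in the paragraph preceding the theorem, so $\Fix(\varphi)$ is a disjoint union of isolated points (with tangential rotation numbers $\theta_1,\theta_2$) and closed surfaces $F$ (normal to which $\varphi$ rotates the $\DD^2$-fibers by $\psi$). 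This reduces the problem to evaluating exactly two local contributions.

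Next I would compute the point and surface terms separately. At an isolated fixed point $p$ the isotropy representation on $T_pM \cong \CC^2$ is rotation by $(\theta_1,\theta_2)$, and the signature defect factors as a product over the two rotation planes; using the identity $\frac{e^{i\theta}+1}{e^{i\theta}-1} = -i\cot(\theta/2)$ to evaluate the one-variable defect (as in \cite[4.1(2)]{hirzebruch--zagier}), the product collapses to $\cot(\theta_1/2)\cot(\theta_2/2)$ up to the global sign fixed by the orientation conventions, giving the term $-\cot(\theta_1/2)\cot(\theta_2/2)$. For a fixed surface $F$, the normal bundle is a complex line bundle with Euler number $e(F) = Q_M([F],[F])$ on whose fibers $\varphi$ acts by $e^{i\psi}$; the codimension-two defect is $\csc^2(\psi/2)$ times the normal self-intersection number together with a term proportional to $\mathrm{sign}(F)$, and the latter vanishes because $F$ is a closed surface. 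This yields $e(F)\csc^2(\psi/2)$. Summing the two families of contributions produces the stated identity.

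I expect the main obstacle to be the bookkeeping of signs and orientations rather than any conceptual difficulty: the overall sign in front of the point sum and the relative sign between the two terms are dictated by a fixed choice of orientation and normal-rotation conventions, and reconciling these with the specific definition of $\mathrm{sign}(\varphi, M)$ used here requires care. A secondary subtlety is the non-orientable case: when $m = 2$ a fixed surface $F$ can fail to be orientable, so $e(F)$ must be interpreted through Gordon's twisted definition rather than the self-intersection $Q_M([F],[F])$; for $m > 2$ the surfaces are orientable (as noted before the theorem), so this issue does not arise and $e(F) = Q_M([F],[F])$ directly. Since this is a classical result, in practice I would simply cite \cite[Theorem 2]{gordon1986g} (cf. \cite[Theorem 9.1.1]{hirzebruch--zagier}) and record the specialization above.
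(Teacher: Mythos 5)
The paper offers no proof of this statement at all---it is imported verbatim from \cite[Theorem 2]{gordon1986g} (cf.\ \cite[4.1(2), Theorem 9.1.1]{hirzebruch--zagier})---and since your proposal ends by citing exactly these sources, you are taking essentially the same route as the paper. Your sketch of the underlying Atiyah--Singer localization is also sound: the factor $(-i\cot(\theta/2))$ per rotation plane correctly accounts for the overall minus sign on the point contributions, the fixed-surface term correctly reduces to $e(F)\csc^2(\psi/2)$, and you rightly flag that Gordon's twisted definition of $e(F)$ is needed precisely for non-orientable fixed surfaces in the $m=2$ case.
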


We point out one special case which is used in the proof of \Cref{gordonmain}.

\begin{corollary}[{\cite[Lemma 7]{gordon1986g}}]\label{gordoncor}
    If $\varphi$ does not have any fixed points in $M$ then 
    $$\mathrm{sign}(\varphi,M) = 0.$$
\end{corollary}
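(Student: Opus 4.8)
The plan is to deduce the corollary directly from the $G$-signature theorem (Theorem \ref{gordonmain}). The right-hand side of that formula is a sum of two contributions: one indexed by the isolated fixed points $p \in \Fix(\varphi)$ and one indexed by the connected $2$-dimensional components $F \subseteq \Fix(\varphi)$. By hypothesis $\Fix(\varphi) = \emptyset$, so both index sets are empty, each sum is the empty sum, and the entire right-hand side equals $0$. Hence $\mathrm{sign}(\varphi, M) = 0$. No computation of the local rotation data $\theta_1, \theta_2, \psi$ or Euler numbers $e(F)$ is needed, since there is nothing to index over.

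The one methodological point that deserves care is whether this appeal is circular. In Gordon's original development \cite{gordon1986g} the free case (Lemma 7) is proved \emph{first} and then used as an ingredient in establishing the general formula, so within that source one may not derive Lemma 7 by quoting the full theorem. From the standpoint of the present paper, however, Theorem \ref{gordonmain} is being imported as an external black box; the corollary is then literally its restriction to the subclass of fixed-point-free actions, and no circularity arises in our logical ordering.

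For completeness I would record the intrinsic reason the statement holds, independent of Theorem \ref{gordonmain}. The $\varphi$-signature is the value at $\varphi$ of the character of the virtual $G$-representation $H^+ \ominus H^-$, which may be identified with the equivariant index of the signature operator on $M$. When $G = \langle \varphi \rangle$ acts freely, the Atiyah--Bott--Singer Lefschetz fixed-point formula expresses this equivariant index as a sum of purely local contributions supported on $\Fix(\varphi)$, so an empty fixed set forces the index, and hence $\mathrm{sign}(\varphi, M)$, to vanish; equivalently one can argue via multiplicativity of the signature under the free covering $M \to M/\langle \varphi \rangle$. There is no genuine obstacle here: the content is entirely contained in the already-cited $G$-signature machinery, and the corollary is simply its most degenerate specialization.
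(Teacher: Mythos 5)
Your proposal is correct, but it is worth noting that the paper does not actually prove this statement at all: it imports it directly as Gordon's \cite[Lemma 7]{gordon1986g} and explicitly remarks that this special case is \emph{used in the proof of} \Cref{gordonmain}, so within Gordon's development the logical order is the reverse of yours. Your primary derivation --- specializing the fixed-point formula of \Cref{gordonmain} to an empty fixed set, so that both sums on the right-hand side are empty --- is a valid deduction from the theorem \emph{as stated}, and you correctly identify and defuse the circularity worry: since the paper treats Gordon's Theorem 2 as a black box, reading off its degenerate case is legitimate in the paper's logical ordering, even though it would be circular inside \cite{gordon1986g} itself. Your fallback argument via the Atiyah--Bott--Singer Lefschetz formula for the signature operator is the genuine intrinsic reason (and essentially how the free case is established independently of the general theorem), so it buys you a proof that does not lean on \Cref{gordonmain} at all. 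One small caution: the parenthetical claim that the statement is ``equivalently'' obtained from multiplicativity of the signature under the free cover $M \to M/\langle \varphi \rangle$ is too quick as stated --- multiplicativity only yields $\sum_{k \not\equiv 0} \mathrm{sign}(\varphi^k, M) = 0$ (and, running over all subgroups of $\langle \varphi \rangle$, the vanishing of the sum over generators of each subgroup), not the vanishing of each individual term, so the individual statement $\mathrm{sign}(\varphi, M) = 0$ is strictly stronger; since this is a side remark it does not affect the correctness of your main argument.
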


We now state Edmonds' theorem on fixed sets of prime cyclic actions on simply-connected 4-manifolds. 
\begin{proposition}[{Edmonds \cite[Proposition 2.4]{edmonds}}]\label{edmonds}
    Let $M$ be a closed, oriented, simply-connected $4$--manifold, and let $G  = \langle \varphi \rangle \subset \Homeo^+(M)$ be a finite cyclic group of prime order $p$. Suppose that $\Fix(\varphi) \neq \emptyset$. Let $t$, $c$
and $r$ denote the number of trivial, cyclotomic and regular summands of $H^2(M,\ZZ)$ as a $\ZZ[G]$-module. Then
$$b_0(\Fix(\varphi), \FF_p) + b_2(\Fix(\varphi), \FF_p) = t + 2$$
and
$$b_1(\Fix(\varphi) , \FF_p) = c,$$
where $b_i(\Fix(\varphi), \FF_p) = \dim_{\FF_p}( H^i(\Fix(\varphi), \FF_p))$ denotes the $i$-th Betti number with $\FF_p$-coefficients.
\end{proposition}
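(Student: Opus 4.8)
The plan is to isolate two \emph{exact} homological constraints on $\Fix(\varphi)$ and then solve the resulting linear system: an Euler-characteristic identity coming from the Lefschetz fixed point theorem, and a total-$\FF_p$-Betti-number identity coming from Smith theory. Throughout write $G = \langle \varphi\rangle \cong \ZZ/p\ZZ$, let $t$ be a generator of $G$, and set $F := \Fix(\varphi)$. I take the decomposition defining $t,c,r$ as given, and record the two reductions I will use. Over $\QQ$ the trivial, cyclotomic, and regular $\ZZ[G]$-summands become the trivial module, $\QQ(\zeta_p)$, and $\QQ[G]$, on which $\varphi$ acts with traces $1$, $-1$, and $0$ respectively. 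Over $\FF_p$ they become $J_1 := \FF_p$, $J_{p-1} := \FF_p[t]/(t-1)^{p-1}$, and the free module $J_p := \FF_p[G]$.

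First I would compute $\chi(F)$ exactly. Since $M$ is simply connected and $\varphi$ is orientation-preserving, $\varphi^*$ is the identity on $H^0$ and $H^4$ while $H^{\mathrm{odd}}(M)=0$, so the Lefschetz number is $L(\varphi) = 2 + \Tr(\varphi^* \mid H_2(M;\QQ)) = 2 + t - c$ by the rational trace computation above. Because $\varphi$ has finite order its fixed set is a neighbourhood retract, so $\chi(F) = L(\varphi)$, giving
\[
    b_0(F;\FF_p) - b_1(F;\FF_p) + b_2(F;\FF_p) = t - c + 2 .
\]

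Next I would establish the total count $\dim_{\FF_p} H^*(F;\FF_p) = t + c + 2$, which I would organize through the Tate cohomology of the Borel construction. Since $G$ acts freely on $M \smallsetminus F$, the relative chains $C_*(M,F;\FF_p)$ form a complex of free $\FF_p[G]$-modules, which are Tate-acyclic; hence $\hat H^*_G(M;\FF_p) \cong \hat H^*_G(F;\FF_p) = \hat H^*(G;\FF_p)\otimes H^*(F;\FF_p)$. In the Tate spectral sequence $\hat H^s(G; H_q(M;\FF_p)) \Rightarrow \hat H^{s+q}_G(M;\FF_p)$, the reductions above give $\hat H^*(G;J_1) = \hat H^*(G;J_{p-1}) = \FF_p$ in every degree and $\hat H^*(G;J_p) = 0$, so the $E_2$-page has total dimension $1 + (t+c) + 1 = t+c+2$ in each total degree (contributions from $H_0$, $H_2$, $H_4$). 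As $H^*(M)$ is concentrated in even degrees, the only possible differentials are $d_3$ and $d_5$ connecting the rows $q = 4,2,0$. The hypothesis $F\neq\emptyset$ forces the bottom row to survive: a fixed point splits $H^*(BG) \to \hat H^*_G(M) \to \hat H^*_G(F)$, so the image of $\hat H^*(G)$ persists to $E_\infty$ and every differential into $q=0$ vanishes (namely $d_3\colon (q{=}2)\to(q{=}0)$ and $d_5\colon(q{=}4)\to(q{=}0)$); equivariant Poincaré duality, which exchanges the rows $q$ and $4-q$, identifies the remaining $d_3\colon(q{=}4)\to(q{=}2)$ with the transpose of $d_3\colon(q{=}2)\to(q{=}0)$ and hence forces it to vanish as well. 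The spectral sequence collapses, yielding
\[
    b_0(F;\FF_p) + b_1(F;\FF_p) + b_2(F;\FF_p) = t + c + 2 .
\]

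Adding and subtracting the two displayed identities gives $b_1(F;\FF_p) = c$ and $b_0(F;\FF_p) + b_2(F;\FF_p) = t + 2$, as required. The main obstacle is the second identity: the abstract $\FF_p[G]$-module structure of $H^*(M)$ does \emph{not} by itself determine $H^*(F)$ --- a free $\ZZ/p\ZZ$-action on $S^3$ has cohomology with two trivial summands yet empty fixed set --- so one must genuinely combine the nonemptiness of $F$ with the even concentration of $H^*(M)$ to force degeneration. Verifying the vanishing of the top differential $d_3\colon(q{=}4)\to(q{=}2)$ through equivariant Poincaré duality is the delicate point of the argument.
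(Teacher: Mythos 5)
The paper does not prove this proposition; it is quoted directly from Edmonds \cite[Proposition 2.4]{edmonds}, so the relevant comparison is with Edmonds' original argument, which your proposal correctly reconstructs in its essential structure: the Lefschetz fixed point theorem gives $\chi(\Fix(\varphi)) = 2 + t - c$ (your rational traces $1$, $-1$, $0$ on the three summand types are right), Smith theory via the equivariant spectral sequence gives the total count $\dim_{\FF_p} H^{*}(\Fix(\varphi);\FF_p) = t + c + 2$ once the spectral sequence collapses, and the two identities combine linearly to yield the claim. Your mod-$p$ reductions are also correct: $\ZZ[\zeta_p]\otimes\FF_p \cong \FF_p[t]/(t-1)^{p-1}$, whose Tate cohomology is $\FF_p$ in every degree, while the free summands are Tate-acyclic, so each total degree of the $E_2$-page has dimension $t+c+2$. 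The one step you assert rather than prove is the vanishing of $d_3\colon E^{s,4}\to E^{s+3,2}$, and you rightly flag it as the delicate point; it is true and citable. Concretely, Tate duality $\hat{H}^{i}(G;A)^{\vee}\cong \hat{H}^{-1-i}(G;A^{\vee})$ together with the $G$-invariant, mod-$p$ nondegenerate intersection pairing $H^2\otimes H^2 \to H^4 \cong \FF_p$ makes the Tate spectral sequence of a closed oriented $G$-manifold perfectly self-paired, pairing the row $q=4$ against $q=0$ and $q=2$ against itself with $d_3$ adjoint to $d_3$; since you have already killed $d_3$ into the bottom row using $\Fix(\varphi)\neq\emptyset$, adjointness and perfectness (which persists over a field) kill the remaining $d_3$, and $d_5$ into the bottom row dies for the same reason. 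Your move to Tate rather than Borel cohomology is what makes this duality available at all --- Borel cohomology of a point is a polynomial ring with no such self-duality --- so that choice is essential, not cosmetic. Two minor repairs: you write $H_q(M;\FF_p)$ in the $E_2$-term but use cohomology throughout (harmless over a field, and $H^3(M;\ZZ)=0$ ensures $H^2(M;\FF_p)\cong H^2(M;\ZZ)\otimes\FF_p$ as $\FF_p[G]$-modules); and the chain-level claim that $C_{*}(M,\Fix(\varphi);\FF_p)$ is $\FF_p[G]$-free presumes an equivariant CW structure, which is available in the smooth and locally linear settings where the paper applies the proposition, but for general topological actions one should instead run the standard \v{C}ech--Borel formulation of Smith theory, which gives the same isomorphism $\hat{H}^{*}_{G}(M)\cong \hat{H}^{*}_{G}(\Fix(\varphi))$. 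Finally, $\chi(\Fix(\varphi)) = \Lambda(\varphi)$ for finite-order homeomorphisms deserves a citation; the paper's \Cref{thm:lefschetz-fixed} covers the case in which the proposition is used.
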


Finally, consider the \emph{Lefschetz number} $\Lambda(f)$ of a mapping class $f \in \Mod(M)$, which for a simply-connected $4$-manifold $M$ is
\[
    \Lambda(f) = 2 + \mathrm{Tr}(f: H_2(M, \ZZ) \to H_2(M, \ZZ)).
\]
In the case of finite-order diffeomorphisms $\varphi$ of $4$-manifolds, a version of the Lefschetz fixed point theorem computes the Euler characteristic $\chi(\Fix(\varphi))$ of $\Fix(\varphi)$. 
\begin{theorem}[{cf. Kwasik--Schultz \cite[Theorem 1]{kwasik--schultz}, \cite[Section 5.1]{Edmonds--locally--linear}}]\label{thm:lefschetz-fixed}
Let $M^4$ be a closed, smooth $4$-manifold and let $\varphi \in \Diff^+(M)$ have finite order. Then the Euler characteristic $\chi(\Fix(\varphi))$ of $\Fix(\varphi)$ is equal to the Lefschetz number $\Lambda([\varphi])$ of $[\varphi]$.
\end{theorem}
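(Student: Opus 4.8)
The plan is to reduce to the Lefschetz fixed point formula for an isometry with a nondegenerate (``clean'') fixed set, and then to check that every local contribution appears with sign $+1$, so that the signed count collapses to the honest Euler characteristic.

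First I would average an arbitrary Riemannian metric over the finite cyclic group $\langle \varphi \rangle$ to produce a $\varphi$-invariant metric $g$, so that $\varphi \in \Isom(M, g)$. The fixed set of an isometry is a disjoint union of closed, totally geodesic submanifolds, which recovers the description of $\Fix(\varphi)$ already recalled in this section (isolated points and closed surfaces). Since $\varphi$ has finite order, at each fixed point the derivative $d\varphi$ is an orthogonal transformation whose eigenvalues are roots of unity: the tangent space to the fixed submanifold through that point is exactly the $+1$-eigenspace of $d\varphi$, while $d\varphi$ acts on the normal bundle $\nu_F$ of each component $F$ without the eigenvalue $1$. In particular the fixed set is nondegenerate, i.e. $\det(I - d\varphi|_{\nu_F}) \neq 0$.

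Next I would invoke the Lefschetz fixed point formula for clean fixed submanifolds, which expresses the (standard, alternating-sum) Lefschetz number as
\[
    \Lambda(\varphi) = \sum_{F \subseteq \Fix(\varphi)} \operatorname{sign}\!\left(\det\left(I - d\varphi|_{\nu_F}\right)\right) \chi(F),
\]
summed over the connected components $F$ of $\Fix(\varphi)$; here the tangential directions contribute the factor $\chi(F)$ because $\varphi|_F = \mathrm{id}$. The crucial computation is that each sign equals $+1$: because $\varphi$ is orientation-preserving and of finite order, $d\varphi$ restricted to $\nu_F$ is a product of planar rotations through angles $\theta \notin 2\pi\ZZ$, so on each $2$-dimensional rotation block $\det(I - R_\theta) = 2 - 2\cos\theta > 0$, and taking the product over the even-dimensional normal bundle gives $\det(I - d\varphi|_{\nu_F}) > 0$. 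Hence every contribution is $+\chi(F)$, and summing over components yields $\Lambda(\varphi) = \sum_F \chi(F) = \chi(\Fix(\varphi))$.

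The main obstacle is the correct normalization of the clean Lefschetz formula together with the sign bookkeeping: one must confirm that the local index of a nondegenerate fixed component is precisely $\operatorname{sign}(\det(I - d\varphi|_{\nu_F}))\,\chi(F)$, and that the relevant determinant is the \emph{normal}-bundle one. For finite-order, orientation-preserving $\varphi$ these signs are uniformly $+1$, so no cancellation occurs --- this is exactly what forces $\chi(\Fix(\varphi))$, rather than a signed count, to appear, and it is special to the orientation-preserving finite-order setting. Finally, for simply-connected $M$ one identifies $\Lambda(\varphi)$ with $2 + \Tr\!\left(\varphi_* : H_2(M,\Z) \to H_2(M,\Z)\right)$ as in the definition preceding the statement, since $\varphi$ acts trivially on $H_0$ and $H_4$ while $H_1 = H_3 = 0$.
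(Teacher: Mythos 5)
Your proof is correct, but note that the paper itself gives no argument for this statement: it is imported from the literature (Kwasik--Schultz, and Edmonds' survey), where it is proved even for locally linear \emph{topological} actions on $4$-manifolds. Your smooth-category argument --- average the metric so $\varphi$ is a finite-order isometry, observe that $\Fix(\varphi)$ is a disjoint union of closed totally geodesic submanifolds with $I - d\varphi$ invertible on the normal bundle, and apply the Lefschetz fixed point formula for nondegenerate fixed submanifolds --- is the standard proof in this setting, and your normalization of the clean formula (local contribution $\operatorname{sign}\det(I - d\varphi|_{\nu_F})\,\chi(F)$, with the normal-bundle determinant) is the right one; if you want it self-contained, the usual justification is to perturb $\varphi$ by the time-$t$ flow of a vector field on $F$ with nondegenerate zeros and sum the isolated-point indices, invertibility of $I - d\varphi$ on $\nu_F$ guaranteeing no fixed points escape $F$. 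Two refinements: first, your sign computation does not actually need orientation-preservation, and your phrase ``product of planar rotations'' is slightly too strong, since $d\varphi|_{\nu_F}$ may have $-1$-eigenvalues (orientation-preservation together with $d\varphi|_{TF} = I$ only pairs them up); the cleaner statement is that for \emph{any} orthogonal $A$ without eigenvalue $1$, the eigenvalues are conjugate pairs $e^{\pm i\theta}$ and real $-1$'s, contributing $\lvert 1 - e^{i\theta}\rvert^2 > 0$ and $2 > 0$ respectively, so $\det(I - A) > 0$ always --- consistent with the fact that $\Lambda(g) = \chi(\Fix(g))$ holds for all finite-order homeomorphisms of compact polyhedra, orientation-reversing ones included. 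Second, a trade-off worth recording: the paper's \Cref{obstruct_locally_linear} wants the obstruction machinery to apply to locally linear topological actions, and there your metric-averaging argument has no analogue; for that extension the cited topological version of \Cref{thm:lefschetz-fixed} is genuinely needed, whereas for the theorem as stated (smooth $\varphi$) your argument suffices.
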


Now we can state a lemma that obstructs smooth realization for a finite order mapping class using homological data. 
\begin{lemma}\label{ObstructionLemma}
Let $M$ be a closed, oriented, smooth, simply-connected $4$-manifold and let $f\in \MOD(M)$ be an element of order $m$. Suppose that for some prime $p$ that divides $m$, the $\ZZ[\langle f^{m/p}\rangle]$-module $H_2(M, \ZZ)$ has no cyclotomic summands. Suppose further that $f$ satisfies one of the following conditions:
\begin{enumerate}[label=(\alph*)]
    \item $\Lambda(f) = 0$ and $\mathrm{sign}(f,M) \neq 0$, or
    \item $\Lambda(f) < 0$.
\end{enumerate}
Then there does not exist any diffeomorphism $\varphi \in \Diff^+(M)$ of finite order such that $[\varphi] = f$.
\end{lemma}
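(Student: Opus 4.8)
The plan is to argue by contradiction. Suppose $\varphi \in \Diff^+(M)$ has finite order $N$ with $[\varphi] = f$; since $[\varphi]$ has order $m$ in $\MOD(M)$, we have $m \mid N$. The central idea is to descend to a diffeomorphism of prime order by setting $\psi := \varphi^{N/p}$, which has order exactly $p$, and to apply Edmonds' theorem (\Cref{edmonds}) to $\psi$. Two elementary observations drive the argument: first, $\Fix(\varphi) \subseteq \Fix(\psi)$ because $\psi$ is a power of $\varphi$; second, since $\langle f \rangle \cong \ZZ/m$ has a unique subgroup of order $p$, the induced homological map $[\psi] = f^{N/p}$ is either trivial or generates $\langle f^{m/p} \rangle$.

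The key homological input is that $H_2(M,\ZZ)$ has no cyclotomic summands as a $\ZZ[\langle \psi \rangle]$-module, i.e.\ $c = 0$ in the notation of \Cref{edmonds}. I would verify this in two subcases: if $p \nmid N/m$, then a short $p$-adic valuation computation shows $f^{N/p}$ has order $p$, so $\langle [\psi] \rangle = \langle f^{m/p} \rangle$ and the hypothesis applies directly; if $p \mid N/m$, then $f^{N/p}$ is trivial, so $H_2(M,\ZZ)$ is an entirely trivial module and $c = 0$ automatically. Identifying the $\ZZ[\langle \psi \rangle]$-module structures on $H_2$ and $H^2$ via Poincar\'e duality, Edmonds' theorem then yields $b_1(\Fix(\psi); \FF_p) = c = 0$, provided $\Fix(\psi) \neq \emptyset$. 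Since every closed surface with vanishing first $\FF_p$-Betti number is a $2$-sphere (for $p$ odd, $\Fix(\psi)$ is orientable, and for $p = 2$ one checks directly that only $S^2$ has $b_1(\cdot; \FF_2) = 0$), every $2$-dimensional component of $\Fix(\psi)$ --- and hence, by the containment $\Fix(\varphi) \subseteq \Fix(\psi)$, of $\Fix(\varphi)$ --- is a sphere.

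It remains to combine this sphere constraint with the Lefschetz and signature hypotheses. By \Cref{thm:lefschetz-fixed}, $\chi(\Fix(\varphi)) = \Lambda(f)$, and since $\Fix(\varphi)$ consists of isolated points and $2$-spheres, each contributing positively to the Euler characteristic, we obtain $\chi(\Fix(\varphi)) \geq 0$, with equality only when $\Fix(\varphi) = \emptyset$. In case (b), $\Lambda(f) < 0$ contradicts this at once; the strict inequality also guarantees $\Fix(\varphi) \neq \emptyset$, hence $\Fix(\psi) \neq \emptyset$, so that Edmonds' theorem indeed applies. In case (a), $\mathrm{sign}(f,M) = \mathrm{sign}(\varphi,M) \neq 0$ forces $\Fix(\varphi) \neq \emptyset$ by the contrapositive of \Cref{gordoncor} (again legitimizing the use of Edmonds), whereas $\Lambda(f) = 0$ together with $\chi(\Fix(\varphi)) \geq 0$ and positivity of the summands forces $\Fix(\varphi) = \emptyset$ --- a contradiction.

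The main obstacle I anticipate is the bookkeeping around the prime-order reduction: one must ensure that the conclusion $c = 0$ survives regardless of whether $\psi$ acts trivially on $H_2(M,\ZZ)$, which is precisely why the two-subcase valuation argument is needed rather than a naive assumption that $\psi$ realizes $f^{m/p}$. The remaining steps --- translating $b_1 = 0$ into ``spheres only'' and reading off the sign of $\chi(\Fix(\varphi))$ --- are routine once this reduction is correctly set up.
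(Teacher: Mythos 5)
Your proposal is correct and follows essentially the same route as the paper's proof: pass to the prime-order power $\varphi^{N/p}$, use the dichotomy that $f^{N/p}$ is either trivial or generates the unique order-$p$ subgroup $\langle f^{m/p}\rangle$ to rule out cyclotomic summands, get nonemptiness of $\Fix(\varphi)$ from the $G$-signature corollary in case (a) and the Lefschetz theorem in case (b), and contradict $\chi(\Fix(\varphi)) = \Lambda(f) \leq 0$ via Edmonds' theorem forcing the fixed set to be a nonempty union of spheres and points. Your explicit valuation subcases and the $p=2$ surface check are slightly more detailed than the paper's phrasing (which simply notes $(f^{k/p})^p = 1$), but the argument is the same.
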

\begin{proof}
    Assume for the sake of contradiction that there exists a diffeomorphism $\varphi \in \Diff^+(M)$ of order $k$ satisfying $[\varphi] = f$. Then $m$ and $p$ divide $k$, and $\varphi^{k/p}$ is a diffeomorphism of order $p$ representing $f^{k/p}$. 

    We claim that $H_2(M, \Z)$ has no cyclotomic summands as a $\Z[\langle \varphi^{k/p}\rangle]$-module. Because $(f^{k/p})^p = 1$, the class $f^{k/p}$ has order $1$ or order $p$. If $f^{k/p} = 1$ then $H_2(M, \ZZ)$ has no cyclotomic summands as a $\ZZ[\langle \varphi^{k/p}\rangle]$-module. If $f^{k/p}$ has order $p$ then $\langle f^{k/p} \rangle$ = $\langle f^{m/p} \rangle$, the unique subgroup of order $p$ in $\langle f \rangle$. Because $H_2(M, \ZZ)$ has no cyclotomic summands as a $\ZZ[\langle f^{m/p} \rangle]$-module, it also has no cyclotomic summands as a $\ZZ[\langle \varphi^{k/p} \rangle]$-module. 
    
    Condition (a) implies that $\Fix(\varphi) \neq \emptyset$ by \Cref{gordoncor}, and Condition (b) implies that $\Fix(\varphi) \neq \emptyset$ by \Cref{thm:lefschetz-fixed}. In either case, \Cref{edmonds} shows that
    \[
        b_1(\Fix(\varphi^{k/p}) , \FF_{p}) = 0.
    \]
    By the classification of surfaces, the $2$-dimensional components of $\Fix(\varphi^{k/p})$ are spheres. Thus $\Fix(\varphi^{k/p})$ is a disjoint union of spheres and points.
    
    \Cref{thm:lefschetz-fixed} further implies that $\chi(\Fix(\varphi)) = \Lambda(f) \leq 0$. However, $\Fix(\varphi)\subset \Fix(\varphi^{k/p})$ is a disjoint, nonempty union of spheres and points, which contradicts the inequality $\chi(\Fix(\varphi))\leq 0$. Therefore no such diffeomorphism $\varphi$ exists.
\end{proof}

\begin{remark} \label{obstruct_locally_linear}
    The proof of Lemma \ref{ObstructionLemma} also obstructs the Nielsen realization problem for locally linear actions on $M$ as a topological manifold: see \cite[Chapter 4]{bredon}, \cite{Chen--survey} for the defintion of a locally linear map. A generalization of an argument of Wall implies that the $G$--signature theorem holds for locally linear actions on topological $4$--manifolds \cite[Remark on p.709]{Wilczynski}.
\end{remark}

The following lemma records the signature of irreducible mapping classes on $M_n$.
\begin{lemma}\label{WeylSign}
    Let $\varphi \in \Diff^+(M_n)$ be a diffeomorphism of finite order such that $[\varphi]$ is an element of $W_n \subset \Mod(M_n)$ . Then
    $$\mathrm{sign}(\varphi,M_n) = 1 - \mathrm{Tr}(\varphi_* |_{\EE_n}).$$
\end{lemma}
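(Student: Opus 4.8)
The plan is to compute $\mathrm{sign}(\varphi, M_n)$ directly from the definition $\mathrm{sign}(\varphi, M_n) = \mathrm{Tr}(\varphi_*|_{H^+}) - \mathrm{Tr}(\varphi_*|_{H^-})$ by decomposing $H_2(M_n, \C)$ in a way adapted to the action of $[\varphi] \in W_n$. Since $\varphi$ has finite order and $[\varphi]$ fixes the canonical class $K_{M_n}$ (because $[\varphi] \in W_n$, the stabilizer of $K_{M_n}$), the vector $K_{M_n}$ spans a $\varphi$-invariant line on which the form $Q_{M_n}$ is positive-definite, as $Q_{M_n}(K_{M_n}, K_{M_n}) = 9 - n > 0$ for $n \leq 8$. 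This gives a $\varphi$-invariant orthogonal decomposition $H_2(M_n, \C) \cong \C\{K_{M_n}\} \oplus (\EE_n \otimes \C)$, where $\EE_n = \Z\{K_{M_n}\}^\perp$.

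First I would observe that $Q_{M_n}$ has signature $(1, n)$, and the positive part is entirely accounted for by the line $\C\{K_{M_n}\}$: since $Q_{M_n}$ restricted to $\EE_n \otimes \R$ is negative-definite (the orthogonal complement of a positive vector in a signature $(1,n)$ form), the Hermitian form $\Phi$ is negative-definite on $\EE_n \otimes \C$. Hence in the orthogonal splitting $H_2(M_n, \C) = H^+ \oplus H^- \oplus H^0$ of the definition, we have $H^0 = 0$ (the form is nondegenerate), $H^+ = \C\{K_{M_n}\}$, and $H^- = \EE_n \otimes \C$. Because $\varphi_*$ fixes $K_{M_n}$, its trace on $H^+$ is exactly $1$. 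The trace on $H^-$ is $\mathrm{Tr}(\varphi_*|_{\EE_n \otimes \C}) = \mathrm{Tr}(\varphi_*|_{\EE_n})$, since trace is unchanged under extension of scalars from $\Z$ (or $\R$) to $\C$.

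Combining these computations yields
\[
    \mathrm{sign}(\varphi, M_n) = \mathrm{Tr}(\varphi_*|_{H^+}) - \mathrm{Tr}(\varphi_*|_{H^-}) = 1 - \mathrm{Tr}(\varphi_*|_{\EE_n}),
\]
as claimed. This argument is essentially a bookkeeping exercise once the invariant decomposition is identified, so I do not expect any serious obstacle; the only point requiring care is confirming that the Hermitian form $\Phi$ is negative-definite precisely on $\EE_n \otimes \C$ and positive-definite on the line $\C\{K_{M_n}\}$, which follows from the signature of $Q_{M_n}$ together with $9 - n > 0$. One should also note that $\varphi_*$ and $[\varphi]$ induce the same map on $H_2(M_n, \Z)$, so the hypothesis that $[\varphi] \in W_n$ is exactly what pins down the action on $K_{M_n}$ and hence the value $1$ on $H^+$.
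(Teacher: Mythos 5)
Your proposal is correct and follows the same route as the paper: both use the $\varphi_*$-invariant orthogonal decomposition $H_2(M_n,\C) \cong \C\{K_{M_n}\} \oplus (\EE_n \otimes \C)$, with $\Phi$ positive-definite on the line $\C\{K_{M_n}\}$ (since $Q_{M_n}(K_{M_n},K_{M_n}) = 9-n > 0$) and negative-definite on $\EE_n \otimes \C$, so that $H^+ = \C\{K_{M_n}\}$ contributes trace $1$ and $H^- = \EE_n \otimes \C$ contributes $\mathrm{Tr}(\varphi_*|_{\EE_n})$. Your additional remarks (nondegeneracy forcing $H^0 = 0$, invariance of trace under extension of scalars) are correct bookkeeping that the paper leaves implicit.
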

\begin{proof}
    Because $\varphi_*(K_{M_n}) = K_{M_n}$, the automorphism $\varphi_*$ preserves the orthogonal direct sum decomposition
    $$H_2(M_n,\CC) = \CC\{K_{M_n}\}\oplus (\EE_n \otimes \CC).$$
    Note that $\Phi$ is positive-definite on the first summand because $Q_{M_n}(K_{M_n}, K_{M_n}) > 0$. Analogously, $\Phi$ is negative definite on $\EE_n \otimes \CC$ because $Q_{M_n}$ is negative-definite on $\EE_n$.
\end{proof}

\subsection{Nonrealizability in $M_5$}

According to the proof of \cite[Proposition 8.6.7]{dolgachev}, there is an isomorphism $W_5 \cong W(D_5)$ via the quotient $W_5 \to S_5$ given by the action of $W_5$ on the set of unordered pairs
\[
\{\{H-E_k, \, 2H - E_1 - E_2 - E_3 - E_4 - E_5 + E_k\} : 1 \leq k \leq 5\}. 
\]
Throughout this section, we use the signed cycle-type notation (cf. \Cref{defn:cycle-type}) to study the conjugacy classes of $W_5$.
\begin{lemma}\label{lem:d2d3-square}
    Let $f \in W_5$ have Carter graph $D_2 + D_3$. Then $G = \langle f^2 \rangle \cong \ZZ/2\ZZ$ and there is an isomorphism of $\ZZ[G]$-modules
    $$H_2(M_5,\ZZ) \cong \ZZ^{\oplus 2} \oplus \ZZ[G]^{\oplus 2}.$$
\end{lemma}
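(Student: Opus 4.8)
The plan is to first pin down the order of $f$ (hence $G$), then make the involution $f^2$ completely explicit on $H_2(M_5,\ZZ)$, and finally read off the $\ZZ[G]$-module structure from a single split short exact sequence. Under the identification $W_5\cong W(D_5)$ recorded in the proof of \cite[Proposition 8.6.7]{dolgachev}, the Carter graph $D_2+D_3$ factors as a $D_2=A_1+A_1$ piece (acting as $-1$ on a $2$-plane) and a $D_3=A_3$ piece (a Coxeter element of order $4$) on complementary orthogonal subspaces of $\EE_5\otimes\R$; equivalently it is the cuspidal class with signed cycle-type $[\bar{2}\bar{1}\bar{1}\bar{1}]$ (partition $(2,1,1,1)$ of $5$, in the parametrization analogous to \Cref{negativeEven}). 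Hence $f$ has characteristic polynomial $(t+1)^3(t^2+1)$ on $\EE_5\otimes\R$ and order $4$, so $f^2\neq 1=f^4$ and $G=\langle f^2\rangle\cong\ZZ/2\ZZ$. Squaring, $\sigma:=f^2$ has signed cycle-type $[\bar{1}\bar{1}\,1\,1\,1]$, i.e.\ it negates two of the $e_k$; on $H_2(M_5,\R)$ it has eigenvalue $+1$ with multiplicity $4$ (including on $\R K_{M_5}$) and $-1$ with multiplicity $2$.

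Next I would translate $\sigma$ into the basis $\{H,E_1,\dots,E_5\}$. Using that the simple reflections $\RRef_{E_i-E_{i+1}}$ and $\RRef_{H-E_1-E_2-E_3}$ correspond to the $D_5$ simple roots $e_i-e_{i+1}$ and $e_4+e_5$ under the correspondence $e_k\leftrightarrow$ the pair $\{H-E_k,\,2H-\sum_{j\neq k}E_j\}$, negating two coordinates swaps the two members of each corresponding pair. Solving the resulting linear conditions (together with $\sigma(K_{M_5})=K_{M_5}$) gives the explicit involution
\[
 \sigma(E_1)=E_2,\quad \sigma(E_2)=E_1,\quad \sigma(H)=2H-E_3-E_4-E_5,\quad \sigma(E_i)=H-\textstyle\sum_{j\in\{3,4,5\}}E_j+E_i \ \ (i=3,4,5).
\]
Since all elements with Carter graph $D_2+D_3$ are conjugate and the $\ZZ[G]$-module structure is a conjugacy invariant, it suffices to compute with this representative.

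Finally I would decompose $L:=H_2(M_5,\ZZ)$. Set $P:=\ZZ[G]\cdot E_1+\ZZ[G]\cdot E_3=\ZZ\{E_1,E_2,E_3,\,H-E_4-E_5\}$; one checks that $P$ is $\sigma$-invariant and that the two orbits $\{E_1,E_2\}$ and $\{E_3,\sigma E_3\}$ are $\ZZ$-independent, so $P\cong\ZZ[G]^{\oplus 2}$. The quotient $L/P$ is free of rank $2$ on the images of $E_4,E_5$, and a direct computation gives $\sigma E_4\equiv E_4$ and $\sigma E_5\equiv E_5\pmod P$, so $L/P\cong\ZZ^{\oplus 2}$ with trivial $G$-action. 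The short exact sequence $0\to\ZZ[G]^{\oplus 2}\to L\to \ZZ^{\oplus 2}\to 0$ then splits because $\mathrm{Ext}^1_{\ZZ[G]}(\ZZ,\ZZ[G])=H^1(G;\ZZ[G])=0$ (free $\ZZ[G]$-modules are cohomologically trivial), yielding $L\cong\ZZ^{\oplus 2}\oplus\ZZ[G]^{\oplus 2}$, as claimed.

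The main obstacle is the explicit identification in the second step: the eigenvalue data of $\sigma$ alone does not suffice, since the eigenvalues $(+1)^4,(-1)^2$ occur both for the class $[\bar{1}\bar{1}\,1\,1\,1]$ and for the class $[2\,2\,1]$ of $W(D_5)$, and these have different $\ZZ[G]$-module structures. Thus one must genuinely track the $D_5$ signed cycle-type of $f$ and carry it through the $W_5\cong W(D_5)$ dictionary before the splitting argument applies. As a cross-check one can instead compute the number of regular summands as $\operatorname{rank}_{\FF_2}\big((\sigma-1)\otimes\FF_2\big)=2$, equivalently $[L:L^+\oplus L^-]=2^2$ where $L^{\pm}$ are the $\pm 1$-eigenlattices; the splitting argument above avoids this determinant computation.
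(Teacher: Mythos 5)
Your proof is correct and is essentially the paper's argument: the explicit involution you solve for is literally the paper's representative $\RRef_{E_1-E_2}\circ\RRef_{H-E_3-E_4-E_5}$ (your matrix agrees with that product on $\{H,E_1,\dots,E_5\}$), obtained by the same route --- signed cycle-type $[\bar 2\bar 1\bar 1\bar 1]$ for $f$, hence $[\bar 1\bar 1 1 1 1]$ for $f^2$, via the $W_5\cong W(D_5)$ dictionary. The only stylistic difference is the last step: the paper exhibits the invariant $\ZZ$-basis directly, giving the decomposition $\ZZ\{H-E_4,H-E_5\}\oplus\ZZ\{H-E_4-E_5,E_3\}\oplus\ZZ\{E_1,E_2\}$ outright, whereas you split the short exact sequence $0\to\ZZ[G]^{\oplus 2}\to L\to\ZZ^{\oplus 2}\to 0$ using $H^1(G;\ZZ[G])=0$; both are fine, and your $P=\ZZ\{E_1,E_2,E_3,H-E_4-E_5\}$ is the sum of the paper's two regular summands.

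One side remark in your last paragraph is wrong, though it is not load-bearing: the class $[2\,2\,1]$ of $W(D_5)$ does \emph{not} have a different $\ZZ[G]$-module structure. Taking the representative $(E_1\,E_2)(E_3\,E_4)$ fixing $H$ and $E_5$, one gets
\[
H_2(M_5,\ZZ)=\ZZ\{H,E_5\}\oplus\ZZ[G]\cdot E_1\oplus\ZZ[G]\cdot E_3\cong\ZZ^{\oplus 2}\oplus\ZZ[G]^{\oplus 2},
\]
the same module as for $[\bar 1\bar 1 1 1 1]$ (in your invariants, $t=2$, $c=0$, $r=2$ for both). So for involutions in $W_5$ with eigenvalues $(+1)^4(-1)^2$ on $H_2(M_5,\RR)$ the lattice structure happens to be class-independent, and the ``main obstacle'' you describe is not actually an obstacle here; in general the point stands that eigenvalues alone do not determine $r$ (only $t+r$ and $c+r$), so identifying the class, as both you and the paper do, is the safe route. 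Your cross-check via $\operatorname{rank}_{\FF_2}\bigl((\sigma-1)\otimes\FF_2\bigr)=r=2$ is correct.
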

\begin{proof}
    According to \cite[Table 5]{carter}, the signed cycle-type of $f$ is $[\bar{2}\bar{1}\bar{1}\bar{1}]$ and so the signed cycle-type of $f^2$ is $[\bar{1} \bar{1}111]$. This signed cycle-type is achieved by the automorphism $\RRef_{E_1-E_2} \circ \RRef_{H-E_3-E_4-E_5}$, so up to conjugacy in $W_5$,
    \[
        f^2 = \RRef_{E_1-E_2} \circ \RRef_{H-E_3-E_4-E_5}.
    \]
    So up to conjugacy in $W_5$, $f^2$ acts by the identity on the first summand and by swapping the generators in each of the latter two summands below:
    $$H_2(M_5,\ZZ) = \ZZ\{H-E_4 , H - E_5\}\oplus \ZZ\{H - E_4 - E_5, E_3\} \oplus \ZZ\{E_1 , E_2\}. \qedhere$$
\end{proof}

The following two propositions conclude the nonrealizability proofs for $M_5$. 
\begin{proposition}\label{D2D3}
    Let $f \in W_5$ have Carter graph $D_2 + D_3$. Then $f$ is not realizable by any diffeomorphism $\varphi \in \Diff^+(M_5)$ of finite order.
\end{proposition}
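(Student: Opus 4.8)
The plan is to obstruct smooth realizability via \Cref{ObstructionLemma}, taking the prime $p = 2$. First I would record that the signed cycle-type of $f$ is $[\bar 2 \bar 1 \bar 1 \bar 1]$ (as in \Cref{lem:d2d3-square}), so by \Cref{negativeEven} the order of $f$ is $\mathrm{lcm}(4,2,2,2) = 4$. Hence $m = 4$ and $f^{m/p} = f^2$ generates the group $G \cong \ZZ/2\ZZ$ appearing in \Cref{lem:d2d3-square}.

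Next I would check the hypothesis of \Cref{ObstructionLemma} on cyclotomic summands. For $p = 2$, the indecomposable $\ZZ[G]$-modules are the trivial module $\ZZ$, the cyclotomic (sign) module $\ZZ^-$, and the regular module $\ZZ[G]$. The isomorphism $H_2(M_5, \ZZ) \cong \ZZ^{\oplus 2} \oplus \ZZ[G]^{\oplus 2}$ of \Cref{lem:d2d3-square} exhibits only trivial and regular summands, so $H_2(M_5, \ZZ)$ has no cyclotomic summand as a $\ZZ[\langle f^2\rangle]$-module, exactly as the lemma requires.

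It then remains to verify condition (a) of \Cref{ObstructionLemma}. By \Cref{negativeEven} the characteristic polynomial of $f$ acting on $\EE_5$ is $(t^2+1)(t+1)^3 = t^5 + 3t^4 + 4t^3 + 4t^2 + 3t + 1$, so $\mathrm{Tr}(f|_{\EE_5}) = -3$. Since $f$ fixes $K_{M_5}$, its trace on $H_2(M_5, \ZZ)$ is $1 + (-3) = -2$, whence $\Lambda(f) = 2 + (-2) = 0$. On the other hand, \Cref{WeylSign} gives $\mathrm{sign}(f, M_5) = 1 - \mathrm{Tr}(f|_{\EE_5}) = 4 \neq 0$. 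Thus both parts of (a) hold, and \Cref{ObstructionLemma} yields that $f$ is not realizable by any finite-order diffeomorphism of $M_5$, proving the proposition.

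Since every ingredient is already available, I do not expect a genuine obstacle; the one point demanding attention is that \Cref{ObstructionLemma} is applied to the square $f^2$ rather than to $f$ itself, which is precisely why the $\ZZ[\langle f^2\rangle]$-module structure is computed in \Cref{lem:d2d3-square}. A useful consistency check is that $\Lambda(f) = 0$ is incompatible with $\Fix(\varphi)$ being a nonempty disjoint union of spheres and points, which is the geometric heart of \Cref{ObstructionLemma}.
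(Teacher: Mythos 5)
Your proposal is correct and follows essentially the same route as the paper: both obstruct realizability via \Cref{ObstructionLemma} with $p = 2$, using \Cref{lem:d2d3-square} for the absence of cyclotomic summands and the computations $\Lambda(f) = 0$, $\mathrm{sign}(f, M_5) = 4$ from the characteristic polynomial $(t^2+1)(t+1)^3$. The only cosmetic difference is that you derive that polynomial (and the order $m = 4$) from the signed cycle-type via \Cref{negativeEven}, whereas the paper cites Carter's tables directly.
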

\begin{proof}
    According to \cite[Table 3]{carter}, the characteristic polynomial $\chi_f(t)$ of $f|_{\EE_5}$ is 
    $$\chi_f(t) = (t^2+1)(t+1)^3.$$
    Therefore $\mathrm{Tr}(f|_{\EE_5}) = -3$, and
    $$\Lambda(f) = 1 + (1-3) +1= 0, \qquad \mathrm{sign}(f, M_5) = 1 - \Tr(f|_{\EE_5}) =4$$
    by \Cref{WeylSign}. By \Cref{lem:d2d3-square}, $H_2(M_5, \ZZ)$ does not have any cyclotomic summands as a $\ZZ[\langle f^2 \rangle]$-module. Because $\Lambda(f) = 0$ and $\mathrm{sign}(f, M_5) \neq 0$, applying \Cref{ObstructionLemma} with $p = 2$ shows that there does not exist any finite-order diffeomorphism $\varphi$ realizing $f$. 
\end{proof}

\begin{proposition}\label{D5a1}
    Let $f\in W_5$ have Carter graph $D_5(a_1)$. Then $f$ is not realizable by any diffeomorphism $\varphi \in \Diff^+(M_5)$ of finite order.
\end{proposition}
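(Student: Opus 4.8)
The plan is to reduce the Carter type $D_5(a_1)$ to the already-resolved type $D_2+D_3$ by passing to the cube of $f$. First I would pin down the signed cycle-type of $f$. By \Cref{negativeEven} applied to $W_5\cong W(D_5)$, the cuspidal classes correspond to the even partitions $(4,1),(3,2),(2,1,1,1)$ of $5$, of orders $\mathrm{lcm}(8,2)=8$, $\mathrm{lcm}(6,4)=12$, $\mathrm{lcm}(4,2,2,2)=4$; matching against the enumeration used in \Cref{CarterClassify} (where $D_5$ is the Coxeter class $(4,1)$ and $D_2+D_3$ is the class $(2,1,1,1)$ of \Cref{D2D3}) forces $f$ to have signed cycle-type $[\bar 3\bar 2]$, with characteristic polynomial $\chi_f(t)=(t^3+1)(t^2+1)$ on $\EE_5$. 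I would then record that $\mathrm{Tr}(f|_{\EE_5})=0$, so $\Lambda(f)=2+(1+0)=3>0$ and $\mathrm{sign}(f,M_5)=1-0=1$ by \Cref{WeylSign}. In particular $f$ satisfies \emph{neither} hypothesis of \Cref{ObstructionLemma}, which is exactly why a direct application fails and a more indirect argument is needed.

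The key observation is that cubing carries $[\bar 3\bar 2]$ to $[\bar 2\bar 1\bar 1\bar 1]$. Writing $f$ as a signed permutation of an orthonormal basis $\{e_1,\dots,e_5\}$ with a negative $3$-cycle on $\{e_1,e_2,e_3\}$ and a negative $2$-cycle on $\{e_4,e_5\}$, one computes $f^3=-\mathrm{id}$ on $\langle e_1,e_2,e_3\rangle$ (three negative $1$-cycles) while $f^3$ remains a negative $2$-cycle on $\langle e_4,e_5\rangle$, so $f^3$ has signed cycle-type $[\bar 2\bar 1\bar 1\bar 1]$. By \Cref{negativeEven} this is a single $W_5$-conjugacy class, namely Carter type $D_2+D_3$, with $\chi_{f^3}(t)=(t^2+1)(t+1)^3$ on $\EE_5$; hence $\mathrm{Tr}(f^3|_{\EE_5})=-3$, giving $\Lambda(f^3)=0$ and $\mathrm{sign}(f^3,M_5)=1-(-3)=4\neq 0$, exactly as in the proof of \Cref{D2D3}.

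With these computations I would apply \Cref{ObstructionLemma} to the order-$4$ class $f^3$ with the prime $p=2$. The relevant element is $(f^3)^{4/2}=f^6$; since $f^3$ is conjugate in $W_5$ to a $D_2+D_3$ element $g$, the class $f^6$ is conjugate to $g^2$, and conjugate elements of $\Aut(H_2(M_5,\ZZ),Q_{M_5})$ induce isomorphic $\ZZ[\ZZ/2\ZZ]$-module structures on $H_2(M_5,\ZZ)$. Thus \Cref{lem:d2d3-square} transports directly to show $H_2(M_5,\ZZ)$ has no cyclotomic summands as a $\ZZ[\langle f^6\rangle]$-module. Condition (a) of \Cref{ObstructionLemma} then yields that $f^3$ is not realizable by any finite-order diffeomorphism. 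Finally, if $f$ were realizable by some finite-order $\varphi\in\Diff^+(M_5)$, then $\varphi^3$ would be a finite-order diffeomorphism with $[\varphi^3]=f^3$, a contradiction; so $f$ is not realizable either.

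The main obstacle, and the genuinely new point over \Cref{D2D3}, is that $f$ itself fails both tests of \Cref{ObstructionLemma} because its Lefschetz number is positive, so the argument must first locate the correct power of $f$. The payoff is that $f^3$ lands back in the $D_2+D_3$ class, which lets me recycle the integral computation of \Cref{lem:d2d3-square} instead of performing a fresh $\ZZ[\ZZ/2\ZZ]$-module analysis. The only subtlety to verify carefully is that the conjugacy $f^3\sim g$ is genuine and that this purely algebraic conjugacy in $\Aut(H_2(M_5,\ZZ),Q_{M_5})$ suffices to transport the module structure; uniqueness of the class is handled by \Cref{negativeEven}, and the passage from $f^3$ back to $f$ uses only that cubing a finite-order realizing diffeomorphism realizes $f^3$ on the nose.
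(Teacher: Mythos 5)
Your proposal is correct and follows essentially the same route as the paper: the paper's proof likewise observes that $f$ has signed cycle-type $[\bar 3\bar 2]$, so that $f^3$ has signed cycle-type $[\bar 2\bar 1\bar 1\bar 1]$ and hence Carter graph $D_2+D_3$, and then concludes by citing \Cref{D2D3}, since a finite-order $\varphi$ realizing $f$ would give the finite-order diffeomorphism $\varphi^3$ realizing $f^3$. Your additional computations (the Lefschetz and signature data for $f$ itself, and the transport of the $\ZZ[\ZZ/2\ZZ]$-module structure from \Cref{lem:d2d3-square} along the conjugacy) are sound but simply re-derive the content of \Cref{D2D3}, which the paper invokes as a black box.
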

\begin{proof}
    The signed cycle-type of $f$ is $[\bar{3}\bar{2}]$ according to \cite[Table 5]{carter}, and so the signed cycle-type of $f^3$ is $[\Bar{2}\Bar{1}\Bar{1}\Bar{1}]$. Therefore $f^3$ has Carter graph $D_2 + D_3$ and $f^3$ is not realizable by any diffeomorphism of finite order by \Cref{D2D3}.
\end{proof}

\subsection{Nonrealizability of $D_4 + 3A_1$, $D_6 + A_1$, and $E_7(a_3)$ on $M_7$}
In this section we address the nonrealizability of three conjugacy classes of $W_7 \subset \Mod(M_7)$. The first proposition handles the conjugacy class of type $D_4 + 3A_1$. 
\begin{proposition}\label{D43A1nonrealize}
    Let $f \in W_7$ have Carter graph $D_4 + 3A_1$. Then $f$ is not realizable by any diffeomorphism $\varphi \in \Diff^+(M_7)$ of finite order.
\end{proposition}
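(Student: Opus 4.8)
The plan is to apply the Realization Obstruction Lemma (\Cref{ObstructionLemma}) via condition (b), in the same spirit as the proof of \Cref{D2D3}, but using the prime $p = 3$ rather than $p = 2$. First I would read off from Carter's tables \cite[Table 3]{carter} the characteristic polynomial of $f|_{\EE_7}$ for the class $D_4 + 3A_1$: it is the product of the $D_4$-Coxeter part and the $3A_1$ part, namely $(t^2 - t + 1)(t+1)^5$. Expanding, $\Tr(f|_{\EE_7}) = 1 - 5 = -4$, and since $f$ fixes $K_{M_7}$ we get $\Lambda(f) = 1 + \left(1 + \Tr(f|_{\EE_7})\right) + 1 = -1 < 0$, so condition (b) of \Cref{ObstructionLemma} is in hand. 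The eigenvalues are the primitive sixth roots of unity together with $-1$, so $f$ has order $6$. It then remains only to exhibit a prime $p \mid 6$ for which $H_2(M_7, \ZZ)$ has no cyclotomic summand as a $\ZZ[\langle f^{6/p}\rangle]$-module.

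The choice $p = 2$ will fail, and recognizing this is what forces the refinement: cubing the eigenvalues shows $f^3|_{\EE_7} = -\mathrm{Id}$, so $\EE_7$ is a sum of seven sign representations, \emph{all} of which are cyclotomic for $p = 2$. I would therefore take $p = 3$ and study $f^2$, which has order $3$. Squaring eigenvalues, $f^2|_{\EE_7}$ has characteristic polynomial $(t^2 + t + 1)(t-1)^5$; since this contains exactly one factor of $t^2+t+1$, the element $f^2$ has Carter type $A_2$ (the only order-$3$ classes in $W_7 \cong W(E_7)$ are $A_2$, $2A_2$, $3A_2$, distinguished by the multiplicity of $t^2+t+1$). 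Hence $f^2$ is conjugate in $W_7$ to the three-cycle $\RRef_{E_1 - E_2}\RRef_{E_2 - E_3}$, which sends $E_1 \mapsto E_2 \mapsto E_3 \mapsto E_1$ and fixes $H, E_4, \dots, E_7$.

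Under this representative, $H_2(M_7, \ZZ)$ decomposes as a $\ZZ[\langle f^2\rangle]$-module as
\[
    H_2(M_7, \ZZ) \cong \ZZ\{E_1, E_2, E_3\} \oplus \ZZ\{H, E_4, E_5, E_6, E_7\} \cong \ZZ[\ZZ/3\ZZ] \oplus \ZZ^{\oplus 5},
\]
where the first summand is the permutation (regular) module of the three-cycle. The regular module is cohomologically trivial and is not isomorphic to $\ZZ \oplus \ZZ[\zeta_3]$ over $\ZZ$, so this decomposition contains no cyclotomic summand. Applying \Cref{ObstructionLemma} with $p = 3$ -- using $\Lambda(f) = -1 < 0$ together with the vanishing of cyclotomic summands -- then shows that $f$ is not realizable by any finite-order diffeomorphism of $M_7$.

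I expect the genuine obstacle to lie in the integral module computation rather than the Euler-characteristic bookkeeping. The characteristic polynomial alone does not pin down the $\ZZ[\ZZ/3\ZZ]$-structure: both the ``good'' module $\ZZ[\ZZ/3\ZZ]\oplus \ZZ^{5}$ and the ``bad'' module $\ZZ[\zeta_3]\oplus \ZZ^{6}$ realize the same polynomial $(t^2+t+1)(t-1)^6$ on $H_2(M_7,\ZZ)$, and they share the same rank of fixed subspace. The argument therefore relies essentially on identifying the conjugacy class of $f^2$ as $A_2$ and recognizing its action as a \emph{permutation} of three exceptional classes, exactly as \Cref{lem:d2d3-square} does for the $D_2 + D_3$ case with $p = 2$. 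The one step deserving care is the uniqueness of the decomposition into indecomposable $\ZZ[\ZZ/3\ZZ]$-lattices (so that ``no cyclotomic summand'' is well defined); this is legitimate since $\ZZ[\zeta_3]$ is a PID, so Krull--Schmidt holds for these lattices.
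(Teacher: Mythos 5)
Your proof is correct, and it reaches the same endgame as the paper -- condition (b) of \Cref{ObstructionLemma} with $p = 3$, $\Lambda(f) = -1$, and the module type $H_2(M_7,\Z) \cong \Z[\Z/3\Z] \oplus \Z^{\oplus 5}$ -- but you verify the no-cyclotomic-summand hypothesis by a genuinely different route. The paper works with $f$ itself: it constructs an explicit $8 \times 8$ matrix representative $w$ of the class $D_4 + 3A_1$ (disambiguating it from the other three order-$6$ cuspidal classes $E_7(a_4)$, $D_6(a_2)+A_1$, $A_5+A_2$ by characteristic polynomials), then exhibits the unimodular sublattice $\Z\{E_1,\, E_3,\, 2H - E_2 - E_4 - E_5 - E_6 - E_7\}$ on which $w^2$ acts regularly and splits off its orthogonal complement. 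You instead pass directly to $f^2$ and identify \emph{its} conjugacy class: since order-$3$ elements of $W(E_7)$ have Carter graphs $kA_2$ and the classes $A_2$, $2A_2$, $3A_2$ are distinguished by the multiplicity of $\Phi_3(t)$ in the characteristic polynomial, $f^2$ is conjugate in $W_7$ to the permutation representative $\RRef_{E_1-E_2}\RRef_{E_2-E_3}$, from which the decomposition $\Z\{E_1,E_2,E_3\} \oplus \Z\{H, E_4, \dots, E_7\}$ is immediate. This buys you a much lighter computation (no explicit matrices, no cuspidal-class table lookup for order $6$), at the cost of one fact you should cite explicitly: that each graph $kA_2$ corresponds to a \emph{single} conjugacy class of $W(E_7)$. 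This is true (Carter's Table 10), but it is not automatic -- in $W(E_7)$ several Carter graphs, e.g.\ $3A_1$, $A_3+A_1$, $A_5$, split into two classes with identical characteristic polynomials, so the step ``same characteristic polynomial $\Rightarrow$ conjugate'' needs this table check rather than a general principle. Two minor points: your worry about uniqueness of the $(t,c,r)$-decomposition is legitimately resolved (for $p=3$ the indecomposable $\Z[\Z/3\Z]$-lattices are exactly $\Z$, $\Z[\zeta_3]$, $\Z[\Z/3\Z]$ by the Diederichsen--Reiner classification, and the multiplicities are invariants, which is what Edmonds' statement presupposes); and your aside that $p=2$ fails is right in substance but slightly imprecise, since $\Z\{K_{M_7}\} \oplus \EE_7$ has index $2$ in $H_2(M_7,\Z)$, so the number of cyclotomic (sign) summands of the $\Z[\langle f^3 \rangle]$-module $H_2(M_7,\Z)$ is $6$ or $7$ rather than necessarily $7$ -- either way nonzero, which is all you need.
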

\begin{proof}
    Consider the following element of $W_7$:
    \[
        w = (\RRef_{H-E_1-E_2-E_3}\circ\RRef_{E_2-E_3}\circ\RRef_{E_4-E_5}\circ\RRef_{E_6-E_7})\circ(\RRef_{H-E_1-E_4-E_5}\circ\RRef_{H-E_1-E_6-E_7}\circ\RRef_{E_1-E_3}).
    \]
    With respect to the $\Z$-basis $\{H, E_1, \dots, E_7\}$ of $H_2(M_7, \Z)$, the matrix forms of $w$ and $w^2$ are
    \[
        w = \scalebox{0.75}{$\begin{pmatrix}
         4 & 1 & 1 & 3 & 1 & 1 & 1 & 1  \\
        -3 & -1& -1& -2& -1& -1& -1& -1 \\
        -1 & 0& -1& -1&  0&  0&  0&  0 \\
        -1& -1&  0& -1&  0&  0&  0&  0 \\
        -1 &  0&  0& -1& -1&  0&  0&  0 \\
        -1 &  0&  0& -1&  0& -1&  0&  0\\ 
        -1& 0&  0& -1&  0&  0& -1&  0 \\
        -1 & 0&  0& -1&  0&  0&  0& -1 
        \end{pmatrix}$}, \qquad w^2 = \scalebox{0.75}{$\begin{pmatrix}
        5 & 0 & 2 & 2 & 2 & 2 & 2 & 2 \\
        -2 & 0 & -1 & 0& -1& -1& -1& -1\\
        -2& 0 & 0 &-1& -1& -1& -1& -1 \\
        0& 1 & 0 & 0 & 0 & 0 & 0 & 0 \\
        -2 & 0& -1& -1&  0& -1& -1& -1 \\
        -2& 0& -1& -1& -1&  0& -1& -1 \\
        -2& 0& -1& -1& -1& -1&  0& -1 \\
        -2& 0 &-1& -1& -1& -1& -1 & 0 
        \end{pmatrix}$}.
    \]
    According to \cite[Table 3]{carter}, the characteristic polynomial $\chi_f (t)$ of $f|_{\EE_7}$ is 
    \[
    \chi_f (t) = (t^2 - t+1) (t+1)^5,
    \]
    and one can compute that the characteristic polynomial $\chi_w(t)$ of $w|_{\EE_7}$ equals $\chi_f(t)$. By \cite[Lemma 3.1.10]{geck2000characters}, $w$ generates a cuspidal conjugacy class of $W_7$. According to \cite[Table B.5]{geck2000characters}, there are four cuspidal conjugacy classes of $W_7$ of order $6$, with Carter graphs
    \[
        E_7(a_4), \quad D_6(a_2) + A_1, \quad A_5 + A_2, \quad D_4 + 3A_1.
    \]
    According to \cite[Table 3]{carter}, the characteristic polynomials of $E_7(a_4)$, $D_6(a_2) + A_1$, and $A_5 + A_2$ acting on $\mathbb E_7$ are
    \[
        (t^2-t+1)^3(t+1), \quad (t^3+1)^2(t+1), \quad (t^5 + t^4+t^3+t^2+t+1)(t^2+t+1).
    \]
    respectively. Therefore, $w$ and $f$ must determine the same conjugacy class of $W_7$, namely that of type $D_4 + 3A_1$. Therefore after possibly conjugating $f$ by an element of $W_7$, we may assume that $f = w$. 

    Let $f_3 := f^2$. Considering the matrix form of $f_3$ shows that $f_3$ preserves the following subgroup
    \[
        \Z\{E_1, \, E_3, \, 2H - E_2-E_4-E_5-E_6-E_7\} \leq H_2(M_7, \Z),
    \]
    and that this subgroup is isomorphic to the regular representation of $\langle f_3 \rangle \cong \Z/3\Z$. Moreover, the restriction of $Q_{M_7}$ to $\Z\{E_1, \, E_3, \, 2H - E_2-E_4-E_5-E_6-E_7\}$ is unimodular, and hence there is an orthogonal direct sum decomposition
    \[
        H_2(M_7, \Z) \cong \Z\{E_1, \, E_3, \, 2H - E_2-E_4-E_5-E_6-E_7\}  \oplus \Z\{E_1, \, E_3, \, 2H - E_2-E_4-E_5-E_6-E_7\}^\perp.
    \]
    Compute that the characteristic polynomial $\chi_{f_3}(t)$ of $f_3|_{\EE_7}$ is
    \[
        \chi_{f_3}(t) = (t^2 + t + 1)(t-1)^5.
    \]
    By eigenvalue considerations, $f_3$ acts trivially on $\Z\{E_1, \, E_3, \, 2H - E_2-E_4-E_5-E_6-E_7\} ^\perp$. Therefore, there is an isomorphism of $\Z[\langle f_3 \rangle]$-modules
    \[
        H_2(M_7, \Z) \cong \Z[\langle f_3 \rangle] \oplus \Z^{5},
    \]
    where $\Z$ denotes the trivial $\langle f_3 \rangle$-representation. In other words, $H_2(M_7, \Z)$ has no cyclotomic summands as a $\Z[\langle f_3 \rangle]$-module. 

    Finally, note that $\Tr(f|_{\EE_7}) = -4$, and so the Lefschetz number $\Lambda(f) = 2 + (1-4) = -1$ is negative. Applying \Cref{ObstructionLemma} to $f$ with $p = 3$ shows that there does not exist any diffeomorphism $\varphi \in \Diff^+(M_7)$ of finite order with $[\varphi] = f$. 
\end{proof}

The next goal of this section is to show that certain conjugacy classes of order $10$ are not realizable by diffeomorphisms of finite order.

\begin{lemma}\label{D6A1fixedpoint}
    Let $f \in W_7$. Suppose that the characteristic polynomial $\chi_f(t)$ of $f|_{\mathbb E_7}$ is
    \[
        \chi_f(t) = (t^5 + 1)(t+1)^2.
    \]
    If there exists $\varphi \in \Diff^+(M_7)$ of order 10 such that $[\varphi] = f$ then $\Fix(\varphi) = \{p\}$ for some $p \in M_7$.
\end{lemma}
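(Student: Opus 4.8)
The plan is to compute $\chi(\Fix(\varphi))$ via the Lefschetz fixed point theorem and then rule out positive-genus fixed components by applying Edmonds' theorem to the order-$5$ power $\varphi^2$.

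First I would compute the Lefschetz number of $f$. Since $f \in W_7$ fixes $K_{M_7}$ and $\chi_f(t) = (t^5+1)(t+1)^2 = t^7 + 2t^6 + \cdots$, the trace of $f$ on $\EE_7$ equals $-2$, so $\Tr(f|_{H_2(M_7,\Z)}) = 1 + (-2) = -1$ and $\Lambda(f) = 2 - 1 = 1$. By \Cref{thm:lefschetz-fixed}, $\chi(\Fix(\varphi)) = \Lambda(f) = 1$; in particular $\Fix(\varphi) \neq \emptyset$. As $\varphi$ has order $10 > 2$, the set $\Fix(\varphi)$ is a disjoint union of isolated points and closed \emph{orientable} surfaces. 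The key reduction is this: if every surface component of $\Fix(\varphi)$ is a sphere, then writing $s$ for the number of spheres and $k$ for the number of isolated points we get $2s + k = \chi(\Fix(\varphi)) = 1$, which forces $s = 0$ and $k = 1$, i.e.\ $\Fix(\varphi) = \{p\}$. So it suffices to show no surface component has positive genus.

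To do this I would pass to $\sigma := \varphi^2$, a diffeomorphism of order $5$ with $[\sigma] = f^2$ and $\Fix(\varphi) \subseteq \Fix(\sigma)$. Any $2$-dimensional component $F$ of $\Fix(\varphi)$ is connected, closed, and of codimension $0$ inside the surface component of $\Fix(\sigma)$ containing it, hence equals that component; thus the surface components of $\Fix(\varphi)$ are among those of $\Fix(\sigma)$. Squaring the eigenvalues of $f$ (the roots of $t^5+1$ together with a double root at $-1$) shows that $f^2$ acts on $\EE_7$ with eigenvalue $1$ of multiplicity $3$ and each primitive fifth root of unity with multiplicity $1$; together with the fixed class $K_{M_7}$ this gives eigenvalue $1$ with multiplicity $4$ and each primitive fifth root with multiplicity $1$ on $H_2(M_7,\C)$.

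Finally I would determine the $\Z[\langle\sigma\rangle]$-module structure just enough to invoke \Cref{edmonds}. Because $\Q(\zeta_5)$ has class number $1$, every $\Z[\Z/5]$-lattice is a sum of trivial, cyclotomic, and regular summands, so the counts $t, c, r$ of \Cref{edmonds} are well defined; comparing eigenvalue multiplicities (each primitive fifth root appears $c + r$ times) gives $c + r = 1$, whence $c \in \{0,1\}$. Applying \Cref{edmonds} to $\sigma$ (whose fixed set is nonempty) yields $b_1(\Fix(\sigma), \FF_5) = c$. But $\Fix(\sigma)$ is a disjoint union of isolated points and closed orientable surfaces of genera $g_i$, so $b_1(\Fix(\sigma), \FF_5) = \sum_i 2g_i$ is even; hence $c = 0$ and every surface component of $\Fix(\sigma)$ is a sphere. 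By the previous paragraph the same holds for $\Fix(\varphi)$, completing the proof. The main obstacle is pinning down the integral module structure precisely enough to conclude $c = 0$; the parity observation --- that $b_1(\Fix(\sigma),\FF_5)$ must be even while $c \leq 1$ --- is what makes this step clean and lets me avoid an explicit matrix computation of the sort used in \Cref{D43A1nonrealize}.
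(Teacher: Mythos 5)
Your proof is correct, and it is a genuine streamlining of the paper's argument. The core step is shared: the paper also applies \Cref{edmonds} to the order-$5$ power $\varphi^2$ and uses exactly your parity trick --- the eigenvalue count forces $c_5 + r_5 = 1$ (each primitive fifth root of unity appears once, while $K_{M_7}$ gives a $1$-eigenvector), orientability of the fixed surfaces of an odd-order diffeomorphism forces $b_1(\Fix(\varphi^2),\FF_5) = c_5$ to be even, hence $c_5 = 0$. Where you diverge is what happens around this step. The paper \emph{additionally} applies \Cref{edmonds} at $p = 2$ to $\varphi^5$: from $\chi_{f^5}(t) = (t+1)^7$ it gets $t_2 \leq 1$ and $c_2 \geq 6$, so $b_0 + b_2 \leq 3$ and $b_1 \geq 6$ over $\FF_2$, which pins down the two-dimensional part of $\Fix(\varphi^5)$ as a single connected surface $\Sigma \neq S^2$; it then concludes $\Sigma \not\subseteq \Fix(\varphi^2)$, so $\Fix(\varphi) \subseteq \Fix(\varphi^5) \cap \Fix(\varphi^2)$ is finite, and $\chi(\Fix(\varphi)) = 1$ gives one point. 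You bypass the $\FF_2$ analysis entirely: since every surface component of $\Fix(\varphi)$ equals a component of $\Fix(\varphi^2)$ (your open-and-closed argument is right), and all of those are spheres once $c = 0$, the odd Euler characteristic $2s + k = 1$ alone kills the spheres and yields $\Fix(\varphi) = \{p\}$. This shows the order-$2$ Edmonds step is dispensable for this lemma --- the conclusion $\Fix(\varphi) = \{p\}$ is all that \Cref{D6A1nonrealize} uses downstream, so nothing is lost. Your aside justifying that the counts $t, c, r$ are well defined (class number of $\Q(\zeta_5)$ is $1$, so every $\Z[\Z/5\Z]$-lattice splits into trivial, cyclotomic, and regular summands) is a point the paper passes over silently, and note that your count argument only needs $c + r = 1$ to hold for \emph{some} decomposition, which is all \Cref{edmonds} requires.
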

\begin{proof}
    The characteristic polynomial of $f_2 := f^5$ acting on $\EE_7$ is $\chi_{f_2}(t) = (t+1)^7$. Let $t_2$, $c_2$, and $r_2$ denote the number of trivial, cyclotomic, and regular summands of $H_2(M_7, \ZZ)$ as a $\ZZ[\langle f_2 \rangle]$-module respectively. By eigenvalue considerations, $t_2+r_2 = 1$, and so $c_2 + r_2 = 7$. In particular, $t_2 \leq 1$ and $c_2 \geq 6$.

    Suppose that there exists a diffeomorphism $\varphi \in \Diff^+(M_7)$ of order $10$ with $[\varphi] = f$. Because $\Tr(f|_{\EE_7}) = -2$, 
    \[
        \Lambda(f) = 1 +(1-2) +1 = 1.
    \]
    By \Cref{thm:lefschetz-fixed}, $\chi(\Fix(\varphi)) = 1$ and $\Fix(\varphi) \neq \emptyset$. 
    
    Let $\varphi_2:= \varphi^5$ and note that $\Fix(\varphi_2) \neq \emptyset$. Apply \Cref{edmonds} to see that
    $$b_0(\Fix(\varphi_2) , \FF_2) + b_2(\Fix(\varphi_2), \FF_2) \leq 3, \quad b_1(\Fix(\varphi_2),\FF_2) \geq 6,$$
    which implies that the $2$-dimensional part of $\Fix(\varphi_2)$ is a connected surface $\Sigma \neq S^2$.

    Let $f_5 := f^2$ be the mapping class of $\varphi_5 := \varphi^2$. The $2$--dimensional components of $\Fix(\varphi_5)$ are orientable because $\varphi_5$ has odd order, so the number $c_5$ of cyclotomic summands in $H_2(M_7, \ZZ)$ as a $\ZZ[\langle f_5 \rangle]$-module is even, by \Cref{edmonds}. The cyclotomic representation of $\Z/5\Z$ has rank $4$, and has no real eigenvalues, while $f_5$ has a $1$-eigenvector $K_{M_7}$ in $H_2(M_7, \ZZ)$, and so $c_5 \leq 1$. Altogether, we must have $b_1(\Fix(\varphi_5), \FF_5) = c_5 = 0$ by \Cref{edmonds}, and hence $\Fix(\varphi_5)$ does not contain $\Sigma$. 

    Since 
    \[
        \Fix(\varphi) \subset \Fix(\varphi_2) \cap \Fix(\varphi_5),
    \]
    $\Fix(\varphi)$ must consist of finitely many points. Because $\chi(\Fix(\varphi)) = 1$, we conclude that $\# \Fix(\varphi) = 1$. 
\end{proof}

Using \Cref{D6A1fixedpoint}, the next proposition handles nonrealization for the conjugacy class of type $D_6 + A_1$.
\begin{proposition}\label{D6A1nonrealize}
    Let $f \in W_7$. Suppose that the characteristic polynomial $\chi_f(t)$ of $f|_{\mathbb E_7}$ is
    \[
        \chi_f(t) = (t^5 + 1)(t+1)^2.
    \]
    Then $f$ is not realizable by any diffeomorphism $\varphi \in \Diff^+(M_7)$ of order $10$. In particular, if $f \in W_7$ has Carter graph $D_6 + A_1$ then $f$ is not realizable by any diffeomorphism of order $10$.
\end{proposition}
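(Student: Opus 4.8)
The plan is to argue by contradiction, exploiting the strong constraint on the fixed set already extracted in \Cref{D6A1fixedpoint}. Suppose $\varphi \in \Diff^+(M_7)$ has order $10$ and $[\varphi] = f$. By \Cref{D6A1fixedpoint}, $\Fix(\varphi) = \{p\}$ consists of a single isolated point, so in particular $\varphi$ has no $2$-dimensional fixed components. Near $p$ the action is linear: $\varphi$ rotates the two normal $\DD^2$-factors by angles $\theta_1, \theta_2 \in \frac{2\pi}{10}\ZZ$, and since $p$ is isolated we may write $\theta_i = \frac{2\pi a_i}{10}$ with $a_i \in \{1, \dots, 9\}$.

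First I would compute the $\varphi$-signature in two different ways. On the one hand, since the only contribution to the $G$-signature theorem (\Cref{gordonmain}) comes from the single point $p$,
\[
    \mathrm{sign}(\varphi, M_7) = -\cot\!\left(\tfrac{\theta_1}{2}\right)\cot\!\left(\tfrac{\theta_2}{2}\right) = -\cot\!\left(\tfrac{\pi a_1}{10}\right)\cot\!\left(\tfrac{\pi a_2}{10}\right).
\]
On the other hand, $[\varphi] = f \in W_7$, so \Cref{WeylSign} gives $\mathrm{sign}(\varphi, M_7) = 1 - \mathrm{Tr}(f|_{\EE_7})$; reading off the coefficient of $t^6$ in $\chi_f(t) = (t^5+1)(t+1)^2 = t^7 + 2t^6 + \cdots$ yields $\mathrm{Tr}(f|_{\EE_7}) = -2$ and hence $\mathrm{sign}(\varphi, M_7) = 3$. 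Combining the two computations reduces the proposition to the purely arithmetic claim that
\[
    \cot\!\left(\tfrac{\pi a_1}{10}\right)\cot\!\left(\tfrac{\pi a_2}{10}\right) = -3
\]
has no solution with $a_1, a_2 \in \{1, \dots, 9\}$.

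To dispatch this claim, I would observe that each admissible angle gives $\cot(\pi a/10) \in \{0, \pm c_1, \pm c_2, \pm c_3, \pm c_4\}$, where $c_j := \cot(\pi j/10)$ and $\cot(\pi(10-a)/10) = -\cot(\pi a/10)$. A short computation with the exact values $c_1^2 = 5 + 2\sqrt5$, $c_1 c_2 = 2 + \sqrt5$, $c_1 c_3 = \sqrt5$, and $c_1 c_4 = c_2 c_3 = 1$ (with the remaining products obtained from $c_3 = 1/c_2$, $c_4 = 1/c_1$) shows that every product $\cot(\pi a_1/10)\cot(\pi a_2/10)$ lies in $\QQ(\sqrt5)$, and that such a product is \emph{rational} precisely when it equals $0$ (when some $a_i = 5$) or $\pm 1$ (arising from $c_1 c_4 = c_2 c_3 = 1$ together with the sign rule above). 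Since $-3 \notin \{0, \pm 1\}$, the equation is unsolvable, giving the desired contradiction. Finally, the last sentence of the proposition follows because \cite[Table 3]{carter} records that the Carter graph $D_6 + A_1$ has characteristic polynomial $(t^5+1)(t+1)^2$ on $\EE_7$, so the general statement applies.

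I expect the main obstacle to be the arithmetic step: ensuring that the list of cotangent products is complete and that none of them (across all sign combinations) accidentally equals $-3$. The cleanest way to make this rigorous, and the route I would take, is the field-theoretic observation that all relevant products lie in $\QQ(\sqrt5)$ and have vanishing $\sqrt5$-coefficient only in the handful of cases producing $0$ or $\pm 1$; this sidesteps any delicate numerical estimation and isolates exactly why the target value $3$ cannot occur.
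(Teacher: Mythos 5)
Your proposal is correct and follows essentially the same route as the paper: assume an order-$10$ realization, invoke \Cref{D6A1fixedpoint} to get $\Fix(\varphi)=\{p\}$, compute $\mathrm{sign}(\varphi,M_7)=1-\Tr(f|_{\EE_7})=3$ via \Cref{WeylSign}, and derive a contradiction with the $G$-signature theorem (\Cref{gordonmain}) by checking that $\cot(\pi a_1/10)\cot(\pi a_2/10)\neq -3$ for all $a_1,a_2\in\{1,\dots,9\}$. The only difference is cosmetic but welcome: where the paper simply says one can ``numerically compute'' the cotangent inequality, you verify it exactly by noting all such products lie in $\QQ(\sqrt5)$ with the rational values being only $0$ and $\pm1$.
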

\begin{proof}
    Suppose for the sake of contradiction that there exists $\varphi \in \Diff^+(M_7)$ of order $10$ with $[\varphi] = f$. \Cref{D6A1fixedpoint} shows that $\Fix(\varphi) = \{p\}$. Let $G =\langle \varphi \rangle \cong \ZZ/10 \ZZ$ and suppose that a tubular neighborhood of $p$ is $G$-equivariantly diffeomorphic to $(\theta_1, \DD^2)\times (\theta_2, \DD^2)$. There exist $a, b\in \Z$ with $1 \leq a, b \leq 9$ and
    \[
        \theta_1 = \frac{2a\pi}{10}, \quad \theta_2 = \frac{2 b \pi}{10}.
    \]
    For any such choice of $a, b$, one can numerically compute that
    \[
        \cot\left(\frac{a\pi}{10}\right)\cot\left(\frac{b\pi}{10}\right) \neq -3 = - \mathrm{sign}(\varphi, M_7),
    \]
    where the last equality follows from \Cref{WeylSign} because $\mathrm{Tr}(\varphi_*|_{\mathbb E_7}) = -2$. This contradicts \Cref{gordonmain}.

    Now suppose that $f$ has Carter graph $D_6 + A_1$. By \cite[Table 3]{carter}, the characteristic polynomial $\chi_f(t)$ of $f|_{\mathbb E_7}$ is $\chi_f(t) = (t^5 + 1)(t+1)^2$, so the final claim follows.
\end{proof}

Finally, the following proposition handles nonrealizability for the conjugacy class of type $E_7(a_3)$.
\begin{proposition}\label{E7a3nonrealize}
    Let $f \in W_7$ have Carter graph $E_7(a_3)$. Then $f$ has order 30 and $f$ is not realizable by any diffeomorphism $\varphi \in \Diff^+(M_7)$ of order 30.
\end{proposition}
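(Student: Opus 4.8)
The plan is to first determine the characteristic polynomial of $f$ on $\EE_7$, which fixes both the order of $f$ and the relevant trace data, and then to obstruct order-$30$ realization by applying the $G$-signature theorem to the order-$5$ power of a hypothetical realizing diffeomorphism, in the spirit of the proof of \Cref{ObstructionLemma}. First I would record from \cite[Table 3]{carter} that
\[
    \chi_f(t) = (t+1)(t^2+t+1)(t^4+t^3+t^2+t+1),
\]
whose roots are exactly the primitive $2$nd, $3$rd, and $5$th roots of unity. Since $f$ is diagonalizable (being of finite order), it has order $\mathrm{lcm}(2,3,5) = 30$, which proves the first claim. Reading off the trace gives $\Tr(f|_{\EE_7}) = -3$, so $\Lambda(f) = 1 + (1-3) + 1 = 0$ by \Cref{thm:lefschetz-fixed}, and $\mathrm{sign}(f, M_7) = 1 - (-3) = 4$ by \Cref{WeylSign}.

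Now suppose for contradiction that there is some $\varphi \in \Diff^+(M_7)$ of order $30$ with $[\varphi] = f$, and set $\psi := \varphi^6$, a diffeomorphism of order $5$ with $[\psi] = f^6 \in W_7$. Raising the eigenvalues of $f$ to the sixth power sends the order-$2$ and order-$3$ eigenvalues to $1$ and permutes the four primitive $5$th roots of unity among themselves, so $f^6$ has eigenvalues $1,1,1$ together with the four primitive $5$th roots of unity on $\EE_7$. Hence $\Tr(f^6|_{\EE_7}) = 3 - 1 = 2$ and, by \Cref{WeylSign}, $\mathrm{sign}(\psi, M_7) = \mathrm{sign}(f^6, M_7) = 1 - 2 = -1 \neq 0$. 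By \Cref{gordoncor}, this forces $\Fix(\psi) \neq \emptyset$.

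Next I would pin down $\Fix(\psi)$. Since $f^6$ has exactly four eigenvalues equal to primitive $5$th roots of unity, and each cyclotomic or regular summand of $H_2(M_7,\ZZ)$, viewed as a $\ZZ[\langle f^6 \rangle]$-module, accounts for all four of them, the number $c$ of cyclotomic summands satisfies $c \leq 1$. On the other hand, $\psi$ has odd order, so $\Fix(\psi)$ is orientable and $b_1(\Fix(\psi), \FF_5)$ is even; since \Cref{edmonds} gives $b_1(\Fix(\psi), \FF_5) = c$, I conclude $c = 0$ and $b_1(\Fix(\psi), \FF_5) = 0$. Thus every $2$-dimensional component of $\Fix(\psi)$ is a $2$-sphere.

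Finally, $\Fix(\varphi) \subseteq \Fix(\psi)$, so every $2$-dimensional component of $\Fix(\varphi)$ is also a $2$-sphere, while $\chi(\Fix(\varphi)) = \Lambda(f) = 0$ by \Cref{thm:lefschetz-fixed}. As $\Fix(\varphi)$ is then a disjoint union of isolated points and $2$-spheres, each of strictly positive Euler characteristic, the equality $\chi(\Fix(\varphi)) = 0$ forces $\Fix(\varphi) = \emptyset$. But then \Cref{gordoncor} gives $\mathrm{sign}(\varphi, M_7) = 0$, contradicting $\mathrm{sign}(\varphi, M_7) = \mathrm{sign}(f, M_7) = 4$. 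I expect the main obstacle to be the control of $\Fix(\psi)$: the crux is recognizing that the order-$5$ power is the correct one to analyze, since its signature is nonzero (forcing $\Fix(\psi) \neq \emptyset$) while its odd order makes $\Fix(\psi)$ orientable, so that the evenness of $b_1$ together with the eigenvalue bound $c \leq 1$ eliminates all positive-genus fixed components. The remaining Euler-characteristic bookkeeping and the vanishing of the signature on fixed-point-free actions then close the argument.
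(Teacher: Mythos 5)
There is a genuine gap, and it is at the very first step: your characteristic polynomial is wrong, and both quantities your contradiction rests on are computed from it. For the class $E_7(a_3)$, Carter's tables give
\[
    \chi_f(t) = (t^5+1)(t^2-t+1) = \Phi_2(t)\,\Phi_{10}(t)\,\Phi_6(t),
\]
so the eigenvalues of $f|_{\EE_7}$ are $-1$ together with the primitive $10$th and primitive $6$th roots of unity (this still gives order $\mathrm{lcm}(2,10,6)=30$, so your order claim is coincidentally correct). The polynomial you wrote, $\Phi_2(t)\Phi_3(t)\Phi_5(t)$, is not the characteristic polynomial of any element of $W_7$: it would correspond to a Coxeter element of a root subsystem of type $A_4+A_2+A_1$, which does not exist in $E_7$. (Quick check: if $g \in W_7$ had those eigenvalues, then $-g \in W(E_7)$ would be an order-$30$ element with eigenvalue $1$ on $\EE_7$, hence would lie in a proper, rank-$\leq 6$ parabolic subgroup of $W(E_7)$; but no parabolic of type $E_6$, $D_6$, $A_6$, $D_5+A_1$, $A_5+A_1$, $A_4+A_2$, or smaller contains an element of order $30$.) With the correct polynomial, $\Tr(f|_{\EE_7}) = +1$, so $\Lambda(f) = 4$ and $\mathrm{sign}(f, M_7) = 0$ by \Cref{WeylSign}. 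Both halves of your endgame then collapse: since $\Lambda(f) = 4 \neq 0$, the bookkeeping ``$\chi(\Fix(\varphi)) = 0$ forces $\Fix(\varphi) = \emptyset$'' is unavailable (a nonempty union of points and spheres with $\chi = 4$ is perfectly consistent), and since $\mathrm{sign}(f, M_7) = 0$, the final clash with \Cref{gordoncor} disappears as well. Ironically, your intermediate analysis of $\psi = \varphi^6$ survives the correction --- under either polynomial, $f^6$ has eigenvalues $1,1,1$ plus the primitive $5$th roots of unity on $\EE_7$, so $\mathrm{sign}(\psi, M_7) = -1 \neq 0$, $\Fix(\psi) \neq \emptyset$, and your $c \leq 1$/orientability/evenness argument via \Cref{edmonds} correctly shows the $2$-dimensional part of $\Fix(\psi)$ consists of spheres --- but by itself this yields no contradiction.

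The paper avoids this entirely by passing to the cube rather than the sixth power: the eigenvalues of order $6$ cube to $-1$ and those of order $10$ cube to primitive $10$th roots, so $\chi_{f^3}(t) = (t^5+1)(t+1)^2$, and \Cref{D6A1nonrealize} --- which is deliberately stated for any class with this characteristic polynomial, not only for Carter graph $D_6+A_1$ --- shows such a class is not realizable by any diffeomorphism of order $10$. Since a hypothetical order-$30$ diffeomorphism $\varphi$ with $[\varphi]=f$ would make $\varphi^3$ an order-$10$ diffeomorphism realizing $f^3$, the proof is immediate. The genuinely hard fixed-point and $G$-signature analysis that your sketch attempts for $f$ directly is carried out there for $f^3$ (via \Cref{D6A1fixedpoint} and a numerical cotangent computation against \Cref{gordonmain}), where it actually closes. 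To repair your approach without that proposition, you would have to confront the case $\chi(\Fix(\varphi)) = 4$, $\mathrm{sign}(\varphi, M_7) = 0$ head-on, which is substantially harder than the Euler-characteristic bookkeeping you describe.
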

\begin{proof}
    By \cite[Table 3]{carter} and by eigenvalue considerations, the characteristic polynomials $\chi_f(t)$ and $\chi_{f^3}(t)$ of $f|_{\EE_7}$ and $f^3|_{\EE_7}$ respectively are 
    \[
        \chi_f(t) = (t^5 + 1)(t^2-t+1), \qquad \chi_{f^3}(t) = (t^5 + 1)(t+ 1)^2.
    \]
    By \Cref{D6A1nonrealize}, $f^3$ is not realizable by any diffeomorphism of order $10$.
\end{proof}

\subsection{Type $A_7$ on $M_7$}\label{sec:A7}
In this section we address the nonrealizability of the conjugacy class of $W_7$ of type $A_7$. Throughout, let $f \in W_7$ have Carter graph $A_7$ so that $f$ has order $8$ and suppose for the sake of contradiction that there exists a diffeomorphism $\varphi \in \Diff^+(M_7)$ of order $8$ with $[\varphi] = f$. Let
\[
    f_2 := f^4, \quad f_4 := f^2, \qquad \varphi_2 := \varphi^4, \quad \varphi_4 := \varphi^2
\]
so that each $f_k$ has order $k$ for $k = 2,4$ and each $\varphi_k$ has order $k$ and $[\varphi_k] = f_k$ for each $k = 2,4$.

The following lemma concerns the fixed sets of all powers of $\varphi$. 
\begin{lemma}\label{A7Lefschetz}
    For any $k \neq 0 \in \Z/8\Z$, the diffeomorphism $\varphi^k$ satisfies $\chi(\Fix(\varphi^k)) = 2$ and $\mathrm{sign}(\varphi^k, M_7) = 2$. 
\end{lemma}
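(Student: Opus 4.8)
The plan is to reduce both quantities to a single trace computation for the powers of $f$ on $H_2(M_7,\C)$, which the characteristic polynomial of the $A_7$ class makes transparent. First I would read off from \cite[Table 3]{carter} that the characteristic polynomial of $f|_{\mathbb E_7}$ for the class of type $A_7$ is $\chi_f(t) = t^7 + t^6 + \cdots + t + 1 = (t^8-1)/(t-1)$, which is consistent with $f$ having order $8$ (its eigenvalues on $\mathbb E_7$ are the nontrivial $8$th roots of unity, as for a Coxeter element of an $A_7$ subsystem). Since $f \in W_7$ fixes $K_{M_7}$, contributing the eigenvalue $1$, and $H_2(M_7,\C) = \C\{K_{M_7}\} \oplus (\mathbb E_7 \otimes \C)$, the characteristic polynomial of $f$ acting on all of $H_2(M_7,\C)$ is $(t-1)\chi_f(t) = t^8 - 1$. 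In other words, the eigenvalues of $f$ on $H_2(M_7,\C)$ are precisely the eight $8$th roots of unity, each with multiplicity one.

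The key observation is then that for every $k \not\equiv 0 \pmod 8$ the eigenvalues of $f^k$ on $H_2(M_7,\C)$ are the $k$th powers of the $8$th roots of unity, and $\sum_{\zeta^8 = 1} \zeta^k = 0$ whenever $8 \nmid k$: the power map $\zeta \mapsto \zeta^k$ sends the group of $8$th roots of unity onto the subgroup of $(8/\gcd(k,8))$th roots of unity, hitting each element $\gcd(k,8)$ times, and the sum over any nontrivial group of roots of unity vanishes. Hence $\mathrm{Tr}(f^k|_{H_2}) = 0$, and since $f^k$ still fixes $K_{M_7}$ this forces $\mathrm{Tr}(f^k|_{\mathbb E_7}) = -1$ for every $k \in \{1, \dots, 7\}$.

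With this uniform trace in hand the two assertions are immediate. Because $[\varphi^k] = f^k$, we have $\varphi^k_* = f^k$ on homology, so \Cref{WeylSign} gives $\mathrm{sign}(\varphi^k, M_7) = 1 - \mathrm{Tr}(f^k|_{\mathbb E_7}) = 1 - (-1) = 2$. For the Euler characteristic, the Lefschetz number is $\Lambda(f^k) = 2 + \mathrm{Tr}(f^k|_{H_2}) = 2 + 0 = 2$, and \Cref{thm:lefschetz-fixed} identifies $\chi(\Fix(\varphi^k))$ with $\Lambda([\varphi^k]) = \Lambda(f^k) = 2$.

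I do not expect a genuine obstacle here: once the characteristic polynomial on $H_2$ is identified as $t^8 - 1$, the uniformity of the answer across all nonzero powers is forced by the vanishing of power sums of roots of unity, and no finer fixed-point analysis is required. The only points demanding care are bookkeeping ones — confirming that the $A_7$ class has order $8$ with the stated characteristic polynomial, and observing that passing to $\varphi^k$ preserves the identity $\varphi^k_* = f^k$, so that the mapping-class-level invariants supplied by \Cref{WeylSign} and \Cref{thm:lefschetz-fixed} apply verbatim to each power.
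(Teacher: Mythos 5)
Your proposal is correct and follows essentially the same route as the paper: the paper also reads $\chi_f(t) = t^7 + \cdots + 1$ from Carter's Table 3, deduces $\mathrm{Tr}(f^k|_{\EE_7}) = -1$ for all $k \neq 0$ ``by eigenvalue considerations'' (which your power-sum-of-roots-of-unity argument makes explicit), and then applies \Cref{WeylSign} and \Cref{thm:lefschetz-fixed} exactly as you do. Your write-up simply fills in the one step the paper leaves implicit.
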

\begin{proof}
    According to \cite[Table 3]{carter}, the characteristic polynomial $\chi_f(t)$ of $f|_{\EE_7}$ is 
    \begin{equation}\label{eqn:a7-characteristic}
        \chi_f(t) = t^7 + t^6 + t^5+t^4+t^3+t^2 + t + 1,
    \end{equation}
    so $\Tr(f|_{\EE_7}) = -1$. By eigenvalue considerations, compute also that $\Tr(f^k|_{\EE_7}) = -1$ for any $k \neq 0 \in \Z/8\Z$. By \Cref{thm:lefschetz-fixed} and \Cref{WeylSign},
    \[
        \chi(\Fix(\varphi^k)) = 1 + (1-1) +1 = 2, \qquad \mathrm{sign}(\varphi^k, M_7) = 1 - (-1) = 2. \qedhere
    \]
\end{proof}

In this lemma we constrain the possible fixed set of $\varphi_2$. The rest of this subsection is devoted to contradicting each of the following cases separately. 
\begin{lemma}\label{A7fixedlocus1}
    The fixed set $\Fix(\varphi_2)$ is diffeomorphic to one of:
    \[
    	X \sqcup \{p_1,p_2\} , \quad X \sqcup S^2, \text{ or } \quad \RR\PP^2 \sqcup \RR\PP^2,
    \]
    where $X$ is a connected surface with $\chi(X) = 0$.
\end{lemma}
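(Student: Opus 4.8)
The plan is to determine the $\Z[\langle f_2\rangle]$-module structure of $H_2(M_7,\Z)$ precisely enough to apply Edmonds' theorem (\Cref{edmonds}) to the order-$2$ diffeomorphism $\varphi_2$, and then to read off the diffeomorphism type of $\Fix(\varphi_2)$ from the classification of closed surfaces together with the constraint $\chi(\Fix(\varphi_2)) = 2$ supplied by \Cref{A7Lefschetz}. Since $\chi(\Fix(\varphi_2)) = 2 \neq 0$ the fixed set is nonempty, and as recalled above it is a disjoint union of isolated points and closed, connected surfaces.

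Write $\sigma := f_2 = f^4$. By \Cref{A7Lefschetz} the characteristic polynomial of $f|_{\EE_7}$ is $(t^8-1)/(t-1)$, so on $\EE_7$ the involution $\sigma$ has $(+1)$-eigenspace of dimension $3$ and $(-1)$-eigenspace of dimension $4$; as $\sigma$ fixes $K_{M_7}$, on all of $H_2(M_7,\Z)$ both eigenspaces have rank $4$. Every $\Z[\Z/2\Z]$-lattice is a sum of trivial summands $\Z$, cyclotomic summands $\Z_-$, and regular summands $\Z[\Z/2\Z]$; let $t, c, r$ denote their multiplicities, in the sense of \Cref{edmonds}. The rational eigenvalue data only yield $t + r = c + r = 4$, so the crux is to prove that $r = 2$, equivalently $t = c = 2$. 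The cleanest route is the one used in \Cref{D43A1nonrealize}: fix an explicit matrix for a representative $f$ of the $A_7$ class (for instance a Coxeter element of a rank-$7$ subsystem of type $A_7$ in $W_7 \cong W(E_7)$), confirm via its characteristic polynomial and cuspidality that it represents the $A_7$ conjugacy class, and compute $\sigma = f^4$. Reducing the three lattice types modulo $2$ shows that $r = \dim_{\FF_2}\ima(\sigma - I \bmod 2)$, since the trivial and cyclotomic summands become $\sigma$-fixed over $\FF_2$ and each regular summand contributes exactly $1$ to the rank of $\sigma - I$; thus $r$ is just the $\FF_2$-rank of $f^4 - I$. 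Alternatively, since $H_2(M_7,\Z)$ is unimodular, $r$ equals the $2$-rank of the discriminant group of either integral eigenlattice $\ker(\sigma - I)$ or $\ker(\sigma + I)$. I expect this integral computation to be the main obstacle, precisely because $r$ is invisible to the character of the representation and must be extracted from mod-$2$ (or discriminant) data.

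Granting $t = c = 2$, \Cref{edmonds} gives $b_0(\Fix(\varphi_2),\FF_2) + b_2(\Fix(\varphi_2),\FF_2) = 4$ and $b_1(\Fix(\varphi_2),\FF_2) = 2$. Writing $s$ for the number of surface components and $P$ for the number of isolated points, and using that each closed surface contributes $b_0 = b_2 = 1$ over $\FF_2$ while a point contributes $b_0 = 1$ and $b_2 = 0$, I would obtain $2s + P = 4$, together with $\sum_i b_1(\Sigma_i;\FF_2) = 2$ and $\chi(\Fix(\varphi_2)) = \sum_i \chi(\Sigma_i) + P = 2$. The case $(s,P) = (0,4)$ is excluded since it forces $b_1 = 0$. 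The case $(s,P) = (1,2)$ forces the single surface $X$ to satisfy $b_1(X;\FF_2) = 2$ and $\chi(X) = 0$, giving $\Fix(\varphi_2) \cong X \sqcup \{p_1,p_2\}$. The case $(s,P) = (2,0)$ splits according to how $b_1 = 2$ distributes over the two surfaces: a $(2,0)$ split forces one sphere $S^2$ together with a surface $X$ having $\chi(X) = 0$, giving $X \sqcup S^2$, while a $(1,1)$ split forces each surface to have $b_1(\FF_2) = 1$ and hence to be $\RR\PP^2$ (a closed surface with odd first $\FF_2$-Betti number is non-orientable of type $N_1$), giving $\RR\PP^2 \sqcup \RR\PP^2$. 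These are exactly the three listed possibilities, completing the enumeration.
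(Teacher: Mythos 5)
Your proposal follows essentially the same route as the paper's proof: pick an explicit representative of the $A_7$ class (the paper uses exactly the element you suggest, a Coxeter element of an $A_7$ subsystem, namely $w = \RRef_{2H-E_1-\cdots-E_6}\circ\RRef_{E_1-E_2}\circ\cdots\circ\RRef_{E_6-E_7}$), identify it with $f$ via the characteristic polynomial and the uniqueness of the order-$8$ cuspidal class of $W_7$, determine the $\Z[\langle f_2\rangle]$-module structure of $H_2(M_7,\Z)$, and feed $t$ and $c$ into \Cref{edmonds} together with $\chi(\Fix(\varphi_2))=2$ from \Cref{A7Lefschetz}; your closing enumeration over $(s,P)$ with the $b_1$ distribution is a correct and slightly more explicit version of the paper's final step. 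The one thing separating your write-up from a complete proof is that you correctly isolate the crux --- showing $r=2$, which is invisible to the rational eigenvalue data --- but then proceed by ``granting $t=c=2$'' without performing the computation. Your proposed mechanism for it is sound: over $\FF_2$ the trivial and sign summands are killed by $\sigma - I$ while each regular summand contributes rank one, so $r = \dim_{\FF_2}\mathrm{im}(f^4 - I \bmod 2)$, and applying this to the matrix of $w^4$ (displayed in the paper) does yield $r=2$. The paper resolves the same point differently, by exhibiting an explicit $\Z$-basis of $H_2(M_7,\Z)$ permuted by $f_2$ that realizes the decomposition $\Z^{\oplus 2}\oplus\Z[\langle f_2\rangle]^{\oplus 2}\oplus C^{\oplus 2}$ directly; your mod-$2$ rank criterion (or the discriminant-group variant) is a legitimate and arguably more systematic alternative, but it must actually be executed for the argument to close.
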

\begin{proof}
Consider the element of $W_7$
\[
	w = \RRef_{2H - E_1-E_2-E_3-E_4-E_5-E_6} \circ (\RRef_{E_1-E_2} \circ \RRef_{E_2-E_3} \circ \RRef_{E_3-E_4} \circ \RRef_{E_4-E_5}  \circ \RRef_{E_5-E_6} \circ \RRef_{E_6-E_7}).
\]
The matrix forms of $w$ and $w^4$ with respect to the $\Z$-basis $\{H, E_1, \dots, E_7\}$ of $H_2(M_7; \Z)$ are
\[
    w =  \scalebox{0.75}{$\begin{pmatrix}
    5 & 2 & 2 & 2 & 2 & 2 &0 & 2\\
    -2 & -1 & -1& -1 & -1& -1&0& 0\\
    -2 & 0 & -1& -1 & -1& -1&0& -1\\
    -2 & -1 &0& -1 & -1& -1 &0& -1\\
    -2 & -1& -1& 0 & -1& -1&0& -1\\
    -2 & -1& -1& -1& 0& -1&0& -1\\
    -2 & -1& -1& -1& -1& 0&0& -1 \\
    0 & 0 &0 &0 & 0 &0&1 &0
    \end{pmatrix}$}, \qquad w^4 =  \scalebox{0.75}{$\begin{pmatrix}
    5& 2 & 2 & 0 & 2 & 2 & 2 & 2 \\
    -2 & -1 & -1 & 0 & -1 & 0 & -1 & -1 \\
    -2 & -1 & -1 & 0 & -1 & -1 & 0 & -1  \\
    0 & 0 & 0 & 1 & 0 & 0 & 0 & 0  \\
    -2 & -1 & -1 & 0 & -1 & -1 & -1 &  0 \\
    -2 & 0 & -1 & 0 & -1 & -1 & -1 & -1 \\
    -2 & -1 & 0 & 0 & -1 & -1 & -1 & -1 \\
    -2 & -1 & -1 & 0 & 0 & -1 & -1 & -1 
    \end{pmatrix}$}.
\]
The characteristic polynomial of $w|_{\mathbb E_7}$ is equal to the characteristic polynomial $\chi_f(t)$ of $f|_{\mathbb E_7}$ given in (\ref{eqn:a7-characteristic}), and so $w$ generates an order-$8$ cuspidal conjugacy class of $W_7$ by \cite[Lemma 3.1.10]{geck2000characters}. The group $W_7$ has a unique cuspidal conjugacy class of order $8$ by \cite[Appendix, Table B.5]{geck2000characters}, so we may assume that $f = w$. 

Consider the $\Z$-basis of $H_2(M_7, \Z)$
\[
    E_3, \, H - E_1 - E_5, \, E_2-E_4, \, E_6-E_7, \, 2H - E_1 - E_2- E_5-E_6-E_7, \, E_7, \, E_2-E_6, \, H-E_5-E_6-E_7.
\]
One can check that $f_2 = w^4$ preserves this basis, showing that there is an isomorphism of $\Z[\langle f_2 \rangle]$-modules $H_2(M_7; \Z) \cong \Z^{\oplus 2} \oplus \Z[\langle f_2 \rangle]^{\oplus 2} \oplus C^{\oplus 2}$. Here, the summands denote the trivial, regular, and cyclotomic representations of $\langle f_2 \rangle \cong \Z/2\Z$ respectively. 

By Lemma \ref{A7Lefschetz}, $\Fix(\varphi_2) \neq \emptyset$. By \Cref{edmonds}, 
\[
    b_1(\Fix(\varphi_2), \FF_2) = 2, \qquad b_0(\Fix(\varphi_2), \FF_2) + b_2(\Fix(\varphi_2), \FF_2) = 4.
\]
Noting that $\Fix(\varphi_2)$ is a finite, disjoint union of surfaces and points concludes the proof.
\end{proof}

The next two lemmas rule out the first two possibilities of Lemma \ref{A7fixedlocus1}.
\begin{lemma}\label{notXS2}
    $\Fix(\varphi_2) \not\cong X \sqcup S^2$ where $X$ is a connected surface with $\chi(X) = 0$.
\end{lemma}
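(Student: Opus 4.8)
The plan is to exploit the tower $\langle \varphi_2\rangle \leq \langle \varphi_4\rangle \leq \langle \varphi\rangle$ and play the $G$-signature theorem (\Cref{gordonmain}) against the fact, recorded in \Cref{A7Lefschetz}, that $\mathrm{sign}(\varphi^k, M_7) = 2$ and $\chi(\Fix(\varphi^k)) = 2$ for every $k \neq 0 \in \Z/8\Z$. First I would observe that $\varphi$ normalizes $\langle\varphi_2\rangle$, hence permutes the components of $\Fix(\varphi_2) = X \sqcup S^2$; since $\chi(X) = 0 \neq 2 = \chi(S^2)$ the two components are not diffeomorphic, so $\varphi$ preserves the sphere $S^2$ setwise. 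Thus $\varphi|_{S^2}$ is a finite-order diffeomorphism of $S^2$ whose fourth power is the identity (as $\varphi^4 = \varphi_2$ fixes $S^2$ pointwise), so its order $d$ divides $4$, and I would organize the proof around the value of $d$ and whether $\varphi|_{S^2}$ preserves orientation.

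The argument rests on two arithmetic inputs. First, if a power of $\varphi$ fixes a surface $F$ pointwise, its rotation angle on the normal bundle is constrained by the requirement that $\varphi_2 = \varphi^4$ rotate the normal fibers by $\pi$: the relevant factor in \Cref{gordonmain} is $\csc^2(\pi/2) = 1$ for $\varphi_2$, $\csc^2(\pi/4) = 2$ for $\varphi_4$, and $\csc^2$ of an odd multiple of $\pi/8$, i.e.\ $4 \pm 2\sqrt2 \notin \Q$, for $\varphi$ itself. Second, at an isolated fixed point lying on $S^2$ the tangential rotation angle is the angle of $\varphi^j|_{S^2}$ at that pole, so when $\varphi^j|_{S^2}$ is the order-$2$ rotation the tangential angle is $\pi$ and $\cot(\pi/2)=0$ annihilates that point's signature contribution. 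Applying \Cref{gordonmain} to $\varphi_2$ gives $e(X) + e(S^2) = 2$, and the easy cases close quickly: if $\varphi_4$ fixes both $X$ and $S^2$ pointwise (so $\Fix(\varphi_4) = X \sqcup S^2$), all normal factors double and $\mathrm{sign}(\varphi_4) = 2\big(e(X)+e(S^2)\big) = 4 \neq 2$; if $\varphi$ fixes $S^2$ pointwise the sphere contributes $e(S^2)(4 \pm 2\sqrt2)$ to the rational number $\mathrm{sign}(\varphi)=2$, which together with the analogous contribution of a possibly pointwise-fixed $X$ cannot equal the integer $2$; and whenever some power acts freely on $S^2$ so that $\Fix(\varphi)$ collapses, the Lefschetz count $\chi(\Fix(\varphi)) = 2 > 0$ of \Cref{thm:lefschetz-fixed} is violated (cf.\ \Cref{gordoncor}).

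The hard part will be $d = 4$ with $\varphi|_{S^2}$ an orientation-preserving rotation by $\pi/2$: now the two poles are honest isolated fixed points of $\varphi$ with tangential angle $\pi/2$, so $\cot(\pi/4) = 1 \neq 0$ and their contributions $-\cot(\text{odd}\cdot\pi/8)$ are irrational and do not manifestly cancel. To resolve this I would first apply \Cref{gordonmain} to $\varphi_4$, whose restriction to $S^2$ is the order-$2$ rotation (poles contributing $0$), to force $e(S^2) = 1$; then the standard relation between the two normal rotation numbers at the poles of an equivariant rotation and the Euler number $e(S^2)$ pins those numbers modulo $8$, and substituting back into the $G$-signature identity for $\varphi$ shows the pole contributions are either irrational or sum to $0$ or $\pm 2$, none compatible with $\mathrm{sign}(\varphi) = 2$ once the (integral-or-irrational) contribution of $X$ is added. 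The orientation-reversing possibilities for $\varphi|_{S^2}$ (the free antipodal involution when $d=2$, and the order-$4$ rotary reflection when $d=4$) are treated in parallel: each makes $\Fix(\varphi)\cap S^2$ empty, and then comparing $\mathrm{sign}(\varphi_2) = 2$ with $\mathrm{sign}(\varphi_4)$ — controlling $\Fix(\varphi_4)\cap X$ through $\chi(\Fix(\varphi_4)) = 2$ and the orientability of $\Fix(\varphi_4)$ (order $4 > 2$) — yields the same kind of numerical contradiction. Finally, the non-orientable possibility that $X$ is a Klein bottle is constrained throughout by orientability of $\Fix(\varphi_4)$, which forbids $\varphi_4$ from fixing $X$ pointwise and thereby eliminates the otherwise-surviving configurations.
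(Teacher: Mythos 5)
There is a genuine gap: you never use the homological constraints on $[X]$ and $[S^2]$, and without them the $G$-signature arithmetic you propose provably cannot close. The paper's proof pivots on two inputs you omit: Edmonds' result \cite[Corollary 2.6]{edmonds} that $[S^2] \neq 0 \in H_2(M_7,\Z)$ (and $[X] \neq 0$ when $X$ is orientable), and the explicit matrix computation that the $1$- and $(-1)$-eigenspaces of $f$ on $H_2(M_7,\Q)$ are spanned by $K_{M_7}$ and $\alpha = H - E_1 - E_3 - E_5$, of squares $2$ and $-2$. Since $\varphi$ preserves each component of $\Fix(\varphi_2)$, the classes $[S^2]$ and $[X]$ are $\pm 1$-eigenvectors, hence lie in $\Z\{K_{M_7}\}$ or $\Z\{\alpha\}$, so every relevant self-intersection has the form $\pm 2a^2$: \emph{even}, and nonzero when the class is nonzero. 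This kills the orientable case in one line ($2 = 2(a^2 - b^2)$ has no nonzero integer solutions) and drives the nonorientable case. Your plan replaces all of this with cotangent/cosecant arithmetic over the tower $\varphi_2, \varphi_4, \varphi$, and that arithmetic is not strong enough.

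Concretely, your hard case admits a model configuration of local fixed-point data that passes every test you propose. Take $\varphi|_{S^2}$ a rotation of order $4$, $\varphi_4$ fixing $X \cong T^2$ pointwise with $e(X) = 1$ and $e(S^2) = 1$ (your own $\varphi_4$-analysis \emph{forces} these odd values, which should have been a red flag), $\varphi|_X$ a free involution, and at the two poles tangential angles $\pi/2$, $3\pi/2$ and normal angles $3\pi/4$, $\pi/4$. Then all compatibility conditions hold ($4\cdot\tfrac{3\pi}{4} \equiv 4\cdot\tfrac{\pi}{4} \equiv \pi$, so $\varphi^4=\varphi_2$ rotates normals by $\pi$), the Lefschetz counts of \Cref{thm:lefschetz-fixed} check out ($\chi(\Fix(\varphi^k)) = 2$ for all three levels), and \Cref{gordonmain} is satisfied at every level: $\mathrm{sign}(\varphi_2) = e(X) + e(S^2) = 2$; $\mathrm{sign}(\varphi_4) = 2e(X) + 0 = 2$ since the poles contribute $\cot(\pi/2) = 0$; and
\[
    \mathrm{sign}(\varphi) = -\cot\left(\tfrac{\pi}{4}\right)\cot\left(\tfrac{3\pi}{8}\right) - \cot\left(\tfrac{3\pi}{4}\right)\cot\left(\tfrac{\pi}{8}\right) = -(\sqrt{2}-1) + (1 + \sqrt 2) = 2.
\]
Moreover the normal rotation numbers $3, 1 \bmod 8$ at the poles differ by $2 = e(S^2)\cdot 2$, which is exactly what the equivariant-clutching relation you invoke permits, so the ``mod $8$ pinning'' does not exclude this. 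Your rationality heuristic also fails in the variant where $\varphi$ fixes $X$ pointwise, since irrational terms cancel: $(4 - 2\sqrt 2) + (2\sqrt 2 - 2) = 2$. The only thing that rules out all of these models is the homological parity above: $e(S^2) = 1$ is odd. Separately, your claim that a power acting freely on $S^2$ violates $\chi(\Fix(\varphi)) = 2 > 0$ is false: $\Fix(\varphi)$ can consist of two isolated points on $X$, which is precisely the configuration the paper must work to exclude, via the dichotomy on the tangential order of $D\varphi$ at the two points ($\cot(\pi/2) = 0$ forces signature $0 \neq 2$ for $\varphi$ or $\varphi_4$ when the orders agree, while differing orders would place the points in distinct components and produce a finite-order diffeomorphism of $S^2$ with exactly one fixed point, which is impossible).
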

\begin{proof}
Suppose that $\Fix(\varphi_2) \cong X \sqcup S^2$. By \cite[Corollary 2.6]{edmonds}, $[S^2] \neq 0 \in H_2(M_7, \Z)$ and if $X$ is orientable then $[X] \neq 0 \in H_2(M_7, \Z)$. Because $X \not\cong S^2$ and $\varphi$ preserves $\Fix(\varphi_2)$, the diffeomorphism $\varphi$ must also preserve $S^2 \subseteq \Fix(\varphi_2)$ and $X \subseteq \Fix(\varphi_2)$.

Computing with the matrix form of $f$ (as in the proof of \Cref{A7fixedlocus1}) shows that the $1$- and $(-1)$-eigenspaces of $f$ acting on $H_2(M_7, \Q)$ are spanned by $K_{M_7}$ and $\alpha := H - E_1 - E_3 - E_5$ respectively. 
\begin{enumerate}[label=(\alph*)]
\item Suppose that $X$ is orientable so that $[X]\neq 0 \in H_2(M_7, \Z)$. By the $G$-signature theorem (\Cref{gordonmain}), \Cref{A7Lefschetz}, and because $[S^2], [X]$ are disjoint and contained in $\Z\{K_{M_7}\}$ or $\Z\{\alpha\}$, 
\[
    2 = \mathrm{sign}(\varphi_2, M_7) = Q_{M_7}([S^2], [S^2]) + Q_{M_7}([X], [X]) = 2(a^2 - b^2).
\]
for some nonzero $a, b\in \Z$. There are no such integers $a, b$. 
\item Suppose that $X$ is nonorientable so that $\Fix(\varphi|_X)$ is a finite set of points (since an order-$8$ diffeomorphism cannot fix $X$). Then $\Fix(\varphi) = S^2$ or $\Fix(\varphi) = \{p_1, p_2\} \subseteq X \sqcup S^2$ by \Cref{A7Lefschetz}. 

Suppose that $\Fix(\varphi) = S^2$. By \Cref{A7Lefschetz} and the $G$-signature theorem applied to $\langle \varphi\rangle$,
\[
    2 = \csc^2\left(\frac{\pi k}{8}\right)Q_{M_7}([S^2], [S^2]) \geq Q_{M_7}([S^2], [S^2])
\]
for some odd $k \in \Z$. Since $[S^2]$ is a $1$-eigenvector for $[\varphi]$, it is contained in $\Z\{K_{M_7}\}$, and so $\csc^2\left(\frac{\pi k}{8}\right) = 1$, i.e. $k \equiv 4 \pmod 8$. This contradicts the fact that $k$ is odd, and so $\Fix(\varphi) = \{p_1, p_2\}$.

Suppose that $D\varphi$ acts on $T_{p_i}\Fix(\varphi_2)$ as an order-$m$ map for some $m$ dividing $8$ for both $i = 1, 2$. Then either $D\varphi$ or $D\varphi_4$ acts on $T_{p_i}\Fix(\varphi_2)$ by the rotation-by-$\pi$ map for both $i = 1, 2$. In other words, each $p_i$ has a tubular neighborhood that is $\langle \varphi \rangle$- or $\langle \varphi_4 \rangle$-equivariantly diffeomorphic to $(\pi, \mathbb D^2) \times (\theta_i, \mathbb D^2)$ for some $\theta_i \in \frac{\pi}{4}\Z$. Now compute that
\[
    -\sum_{i=1}^2 \cot\left(\frac{\pi}{2}\right) \cot\left(\frac{\theta_i}{2}\right) = -\sum_{i=1}^2 0 \cdot \cot\left(\frac{\theta_i}{2}\right) = 0 \neq \mathrm{sign}(\varphi^k, M_7) = 2
\]
for any $k \in \Z$ that is not divisible by $8$, by \Cref{A7Lefschetz}. This contradicts the $G$-signature theorem (\Cref{gordonmain}) applied to $\langle \varphi \rangle$ or $\langle \varphi_4 \rangle$. 

If $D\varphi$ acts on $T_{p_i}\Fix(\varphi)$ by order $m_i$ for $i = 1, 2$ with $m_1 \neq m_2$ then $p_1$ and $p_2$ are contained in different components of $\Fix(\varphi_2)$, since $\varphi$ acts on the component of $\Fix(\varphi_2)$ containing $p_i$ as an order-$m_i$ diffeomorphism. On the other hand, no finite-order diffeomorphism of $S^2$ fixes has a unique fixed point, yielding a contradiction. \qedhere
\end{enumerate}
\end{proof}

\begin{lemma}\label{notRP2RP2}
    $\Fix(\varphi_2) \not\cong \RR\PP^2 \sqcup \RR\PP^2$. 
\end{lemma}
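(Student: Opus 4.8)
The plan is to assume $\Fix(\varphi_2) \cong \RR\PP^2 \sqcup \RR\PP^2$ and derive a contradiction by studying the action of $\varphi$ on the two components $\RR\PP^2_1, \RR\PP^2_2$. Since $\varphi$ commutes with $\varphi_2 = \varphi^4$, it permutes these components. It cannot swap them: a swap would force $\Fix(\varphi) \cap \Fix(\varphi_2) = \emptyset$, hence $\Fix(\varphi) = \emptyset$ (as $\Fix(\varphi) \subseteq \Fix(\varphi_2)$), contradicting $\chi(\Fix(\varphi)) = 2$ from \Cref{A7Lefschetz}. So $\varphi$ preserves each $\RR\PP^2_j$, and because $\varphi^4 = \varphi_2$ acts as the identity on its own fixed set, the restriction $\varphi|_{\RR\PP^2_j}$ has order $d_j$ dividing $4$. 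The goal is then to rule out each possibility $d_j \in \{1,2,4\}$.

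For the first two cases I would invoke the two structural facts recalled earlier in this section: for any finite-order $\psi \in \Diff^+(M_7)$, the set $\Fix(\psi)$ is a disjoint union of isolated points and closed surfaces, and it is orientable whenever $\psi$ has order $> 2$. If $d_j = 1$, then $\RR\PP^2_j \subseteq \Fix(\varphi)$; as $\varphi$ has order $8 > 2$ and $\RR\PP^2_j$ is nonorientable, this contradicts orientability of $\Fix(\varphi)$. If $d_j = 2$, then $(\varphi|_{\RR\PP^2_j})^2 = \mathrm{id}$, so $\RR\PP^2_j \subseteq \Fix(\varphi^2)$; since $\varphi^2$ has order $4 > 2$, the same contradiction applies.

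The remaining case $d_j = 4$ is the main obstacle and calls for a local computation. Here $\varphi^2|_{\RR\PP^2_j}$ is a nontrivial involution of $\RR\PP^2_j$. I would first establish the classical fact that every nontrivial involution of $\RR\PP^2$ has a circle in its fixed set: lifting to the orientation double cover $S^2$, the involution lifts to an honest involution of $S^2$ (a lift squaring to the antipodal map is ruled out by a degree count), which is a rotation by $\pi$ or a reflection, and in either case its fixed set descends to a fixed circle downstairs. Let $C$ be such a circle for $\varphi^2|_{\RR\PP^2_j}$ and pick $q \in C$. I would then compute $D(\varphi^2)_q$ on $T_q M_7 = T_q \RR\PP^2_j \oplus \nu_q$, where $\nu$ is the normal bundle of $\RR\PP^2_j$. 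On $T_q\RR\PP^2_j$ the involution acts with eigenvalues $1$ (tangent to $C$) and $-1$ (transverse to $C$ within the surface); on $\nu_q$ the order-$2$ map $\varphi_2 = \varphi^4$ acts as $-\mathrm{id}$ (rotation by $\pi$, the local model of an order-$2$ action on the normal bundle of a $2$-dimensional fixed component), so $\varphi^2$ acts on $\nu_q$ with eigenvalues $\pm i$. Hence $D(\varphi^2)_q$ has eigenvalues $1, -1, i, -i$: its $(+1)$-eigenspace is $1$-dimensional and $\det D(\varphi^2)_q = -1$. This contradicts both the structure theorem (near $q$ the fixed set of $\varphi^2$ would be $1$-dimensional) and the orientation-preservation of $\varphi^2 \in \Diff^+(M_7)$. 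Having eliminated $d_j = 1, 2, 4$, no such $\varphi$ exists.

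I expect the two load-bearing points to be the existence of a fixed circle for every involution of $\RR\PP^2$ (handled by the double-cover and degree argument above) and the identification of the normal action of $\varphi_2$ as $-\mathrm{id}$ (immediate from the local rotation model of a fixed surface); the rest is bookkeeping with \Cref{A7Lefschetz} and the two structural facts about fixed sets of finite-order diffeomorphisms.
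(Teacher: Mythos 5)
Your proof is correct, but it takes a genuinely different route from the paper's. The paper does not restrict $\varphi$ to the components at all: after noting (as you do) that $\varphi$ must preserve each $\RR\PP^2$ because $\Fix(\varphi) \neq \emptyset$, it uses orientability of $\Fix(\varphi_4)$ (order $4 > 2$) to conclude that $\Fix(\varphi_4)$ is a finite set of points, hence $\{p_1, p_2\}$ by $\chi(\Fix(\varphi_4)) = 2$ (\Cref{A7Lefschetz}); since each $p_k$ lies on a surface fixed pointwise by $\varphi_2$, the tangential rotation angle of $\varphi_4$ at $p_k$ is $\pi$, so each local term $\cot(\theta_1(k)/2)\cot(\theta_2(k)/2)$ vanishes and the $G$-signature theorem (\Cref{gordonmain}) gives $0 \neq \mathrm{sign}(\varphi_4, M_7) = 2$. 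You instead case-split on the order $d_j \in \{1,2,4\}$ of $\varphi|_{\RR\PP^2_j}$, dispatch $d_j = 1, 2$ with the same orientability fact the paper uses, and dispatch $d_j = 4$ by a local computation at a point $q$ of a fixed circle $C$ of the involution $\varphi^2|_{\RR\PP^2_j}$, where the eigenvalues $1, -1, \pm i$ of $D(\varphi^2)_q$ force $\det D(\varphi^2)_q = -1$, contradicting $\varphi^2 \in \Diff^+(M_7)$. Your route trades the $G$-signature theorem for the classification of involutions of $\RR\PP^2$, which makes it more elementary and self-contained; the paper's is shorter given that the $G$-signature machinery is already running throughout this section. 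Two remarks: your insistence on $q \in C$ is essential (at an isolated fixed point of the surface involution the tangential eigenvalues could be $(-1,-1)$, giving total determinant $+1$ and no contradiction), and the second tangential eigenvalue being $-1$ deserves the half-sentence that a nontrivial finite-order diffeomorphism with identity differential at a fixed point is the identity near that point, hence everywhere on the connected surface. Also, once you have the circle, you could finish even faster: $C \subseteq \Fix(\varphi_4)$, but any $2$-dimensional component of $\Fix(\varphi_4)$ would be open and closed in some $\RR\PP^2_j$, hence all of it and nonorientable, so $\Fix(\varphi_4)$ consists of isolated points and cannot contain a circle.

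One small repair is needed in your classical fact about involutions of $\RR\PP^2$. If the lift $\tilde\sigma$ is a rotation by $\pi$, its two fixed points form an antipodal pair (the set $\Fix(\tilde\sigma)$ is invariant under the deck involution $a$, which acts freely) and descend to a \emph{single point} of $\RR\PP^2$, not a circle. The circle comes from the other lift: since $\Fix(\sigma) = p(\Fix(\tilde\sigma)) \cup p(\Fix(a\tilde\sigma))$, one should linearize the Klein four-group $\langle a, \tilde\sigma \rangle$ (smooth finite group actions on $S^2$ are conjugate into $\OO(3)$, and $-I$ is the only free orthogonal involution, so $a = -I$), whence one lift is a rotation by $\pi$ and the other is the complementary reflection with a fixed circle descending to $\RR\PP^2$. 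With both lifts in play, your degree count ruling out $\tilde\sigma^2 = a$ and the rest of the argument go through verbatim, and every nontrivial involution of $\RR\PP^2$ indeed fixes a point together with a circle.
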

\begin{proof}
    Suppose that $\Fix(\varphi_2) \cong \RR\PP^2 \sqcup \RR\PP^2$. Then $\varphi$ must preserve each component of $\Fix(\varphi_2)$ because $\Fix(\varphi) \neq \emptyset$. Because $\Fix(\varphi_4)$ must be orientable, $\Fix(\varphi_4) = \{p_1, p_2\}$ by \Cref{A7Lefschetz}. 

    Let $G = \langle \varphi_4 \rangle \cong \ZZ/4\ZZ$. Each $p_k$ has a tubular neighborhood $G$-equivariantly diffeomorphic to $(\theta_1(k) ,\DD^2) \times (\theta_2(k), \DD^2)$ for some $\theta_1(k), \theta_2(k) \in \frac{\pi}{2}\Z$. Because each $p_k$ is contained in a surface in $\Fix(\varphi_2)$, we may assume that $\theta_1(k) = \pi$. Then compute that
    \[
        -\sum_{k=1}^2 \cot \left(\frac{\theta_1(k)}{2} \right)\cot \left(\frac{\theta_2(k)}{2} \right) = -\sum_{k=1}^2 \cot \left(\frac{\pi}{2} \right)\cot \left(\frac{\theta_2(k)}{2} \right) = 0 \neq \mathrm{sign}(\varphi_4,M_7) = 2, 
    \]
    where the last equality follows from \Cref{A7Lefschetz}. This contradicts \Cref{gordonmain}. 
\end{proof}

The following lemma handles the last case of Lemma \ref{A7fixedlocus1}.
\begin{lemma}\label{notXp1p2}
    $\Fix(\varphi_2) \not\cong X \sqcup \{p_1,p_2\}$ for any connected surface $X$ with $\chi(X) = 0$.
\end{lemma}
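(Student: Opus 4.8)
The plan is to first pin down the surviving configurations by local fixed-point bookkeeping, and then to reduce the problem to an already-solved non-realizability statement on $M_5$ via an equivariant quotient. Since $X$ is connected with $\chi(X)=0$ it is a torus or a Klein bottle, and because $\varphi$ commutes with $\varphi_2$ it preserves the unique surface component $X$ of $\Fix(\varphi_2)$ and permutes $\{p_1,p_2\}$. As in the proof of \Cref{notXS2}, the $1$- and $(-1)$-eigenspaces of $f$ on $H_2(M_7,\Q)$ are the lines $\Q\{K_{M_7}\}$ and $\Q\{\alpha\}$ with $\alpha=H-E_1-E_3-E_5$, and $Q_{M_7}(K_{M_7},K_{M_7})=2>0>-2=Q_{M_7}(\alpha,\alpha)$. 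When $X$ is orientable, $\varphi_*[X]=\pm[X]$ forces $[X]$ onto one of these lines; combined with \Cref{A7Lefschetz} and the $G$-signature theorem (\Cref{gordonmain}) for the involution $\varphi_2$ — whose isolated points contribute $-\cot(\pi/2)^2=0$ and whose fixed torus contributes $Q_{M_7}([X],[X])\csc^2(\pi/2)$ — this yields $2=\mathrm{sign}(\varphi_2,M_7)=Q_{M_7}([X],[X])$, hence $[X]=\pm K_{M_7}$, $[X]^2=2$, and $\varphi$ preserves the orientation of $X$.

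Next I would split on the order of $\varphi|_X$, which divides $4$ since $\varphi^4|_X=\varphi_2|_X=\mathrm{id}$, using the Euler-characteristic constraint $\chi(\Fix(\varphi^k))=2$ for all $k\neq 0$ from \Cref{A7Lefschetz}, the fact that a nontrivial orientation-preserving finite-order map of $T^2$ has $0$ or $\geq 2$ isolated fixed points, and orientability of $\Fix(\varphi^k)$ for $k$ of order $>2$. For instance, if $\varphi|_X$ has order $4$ then $\varphi_4=\varphi^2$ fixes four points of $X$ and both $p_i$, giving $\chi(\Fix(\varphi_4))=6\neq 2$, a contradiction; running through the remaining options, the only configurations surviving these counts (and the local $G$-signature data at isolated points, handled as in \Cref{notXS2}) are the two in which $\varphi_2$ fixes $X=T^2$ pointwise and $\Fix(\varphi_2)=T^2\sqcup\{p_1,p_2\}$: either $\varphi$ itself fixes $X$ pointwise, or $\varphi|_X$ is a free involution. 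In both, $X$ is fixed pointwise by an element of order $>2$ (namely $\varphi$ or $\varphi_4$), which re-proves $X$ is a torus and excludes the Klein bottle. The crucial point is that in these surviving cases the local rotation data at $T^2$ and at $p_1,p_2$ is genuinely consistent with \Cref{gordonmain} applied to $\varphi$, $\varphi_4$ and $\varphi_2$ — they mimic the fixed data of an order-$8$ automorphism acting near an anticanonical (hence genus-one, adjunction-computed) elliptic curve — so no purely local obstruction survives and a global argument is forced. This is the main obstacle.

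To resolve it I would pass to an equivariant quotient. Blow up the $\langle\varphi\rangle$-orbit $\{p_1,p_2\}$ to obtain an action of $\langle\varphi\rangle$ on $M_9=M_7\#2\overline{\CP^2}$; since $\varphi_2$ acts as $-\mathrm{Id}$ on each tangent space $T_{p_i}M_7$, the induced involution $\hat\varphi_2$ fixes each new exceptional sphere pointwise, so $\Fix(\hat\varphi_2)=T^2\sqcup S^2\sqcup S^2$ has no isolated points. The quotient $\hat W:=M_9/\langle\hat\varphi_2\rangle$ is then a closed smooth $4$-manifold, simply connected because $M_9$ is and $\Fix(\hat\varphi_2)\neq\emptyset$. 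One computes $\chi(\hat W)=\tfrac12(\chi(M_9)+\chi(\Fix\hat\varphi_2))=\tfrac12(12+4)=8$; and since each blow-up contributes a $(-1)$-sphere lying in $H^-$ on which $\hat\varphi_2$ acts by $+1$, one gets $\mathrm{sign}(\hat\varphi_2,M_9)=\mathrm{sign}(\varphi_2,M_7)+2(-1)=0$, whence $\mathrm{sign}(\hat W)=\tfrac12(\sigma(M_9)+\mathrm{sign}(\hat\varphi_2,M_9))=\tfrac12(-8+0)=-4$. As $-4$ is not divisible by $16$, $\hat W$ is non-spin, so by Freedman \cite{freedman} it is homeomorphic to $M_5=\CP^2\#5\overline{\CP^2}$.

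Finally, the residual group $\langle\hat\varphi\rangle/\langle\hat\varphi_2\rangle\cong\Z/4$ acts on $\hat W\cong M_5$ by an honest order-$4$ diffeomorphism $\bar\varphi$. The plan concludes by computing the induced mapping class $[\bar\varphi]\in\Mod(M_5)\cong\OO^+(1,5)(\Z)$ — tracking $H_2$ through the blow-up, the branched quotient, and the identification with $M_5$ — and checking that it is conjugate to the order-$4$ class of Carter graph $D_2+D_3$ (consistent with the expected invariants $\chi(\Fix\bar\varphi)=0$ and $\mathrm{sign}(\bar\varphi,M_5)=4$ from \Cref{WeylSign}). This contradicts \Cref{D2D3}, which shows that class is realizable by no finite-order diffeomorphism of $M_5$, so no order-$8$ diffeomorphism $\varphi$ with $[\varphi]=f$ and $\Fix(\varphi_2)\cong X\sqcup\{p_1,p_2\}$ exists. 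I expect the genuine difficulty to be exactly this last homological identification of $[\bar\varphi]$; the first two paragraphs are finite local casework and the third is standard quotient-and-signature bookkeeping.
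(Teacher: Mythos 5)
Your first two paragraphs essentially re-derive what the paper proves in \Cref{lem:topology-X} (one caveat: to rule out $\varphi|_X$ acting freely with order $4$ on $X \cong T^2$ — free $\Z/4$-actions by translations exist — Euler-characteristic counts do not suffice; you need the cotangent computation at $p_1,p_2$ that the paper uses to force $X \subseteq \Fix(\varphi_4)$). The bookkeeping in your third paragraph ($\chi(\hat W)=8$, $\sigma(\hat W)=-4$, $\hat W$ simply connected and homeomorphic to $M_5$) is also correct. But the endgame has a genuine, fatal gap: the induced class $[\bar\varphi]$ is \emph{not} the $D_2+D_3$ class, and the mismatch is visible already over $\Q$. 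Since $f$ has Carter graph $A_7$, its characteristic polynomial on $H_2(M_7,\Q)$ is $(t-1)(t+1)(t^2+1)(t^4+1)$, so the $\varphi_2$-invariant subspace $V_+$ is the span of the eigenvectors with eigenvalues $\{1,-1,i,-i\}$. Since $\hat\varphi$ fixes $p_1,p_2$ and hence fixes each class $e_8,e_9$ of the exceptional spheres, $q^*$ identifies $H_2(\hat W;\Q)$ equivariantly with $V_+\oplus \Q e_8\oplus\Q e_9$, and $\bar\varphi$ has characteristic polynomial $(t-1)^3(t+1)(t^2+1)$: trace $+2$, Lefschetz number $\Lambda(\bar\varphi)=4$, and $\mathrm{sign}(\bar\varphi,\hat W)=1-1=0$ (the fixed positive line is the image of $\Q\{K_{M_7}\}$). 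The $D_2+D_3$ class has characteristic polynomial $(t-1)(t+1)^3(t^2+1)$, trace $-2$, $\Lambda=0$, and signature $4$, so your ``expected invariants'' are wrong and no conjugacy with $D_2+D_3$ is possible. Worse, with $\Lambda(\bar\varphi)=4>0$ and $\mathrm{sign}(\bar\varphi,\hat W)=0$, \Cref{ObstructionLemma} has no purchase: $[\bar\varphi]$ fixes a rank-$3$ sublattice and looks like a perfectly realizable (indeed reducible-type) class, so the descent to $M_5$ produces no contradiction at all. (A secondary issue you would also need to patch: $\hat W$ is only \emph{homeomorphic} to $M_5$ by Freedman, while \Cref{D2D3} is a smooth statement and exotic smooth structures on $\CP^2\#5\overline{\CP^2}$ exist; \Cref{obstruct_locally_linear} could cover this, but only if the class identification were correct, which it is not.)

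The paper's actual proof diverges exactly where yours breaks: instead of quotienting by the involution alone, it keeps the full $\Z/8$. It removes $\langle\varphi\rangle$-invariant balls $B_1,B_2$ around $p_1,p_2$, forms $M=(M_7-(B_1\sqcup B_2))/\langle\varphi\rangle$ with boundary the lens spaces $L(8,a_1)\sqcup L(8,a_2)$, caps these with $2$-handlebodies $P(8,a_i)$ of determinant $\pm 8$, and computes the index of the Mayer--Vietoris image $\mathrm{im}(F)\leq H_2(A)$ in two incompatible ways: Armstrong's theorem gives $\pi_1(M)\cong\Z/e\Z$ (with $e$ the order of $\varphi|_X$), forcing $[H_2(A):\mathrm{im}(F)]=64e^{-1}$, while the lattice-determinant argument (\Cref{lem:unimodular-index}), fed by the normal-bundle computation $Q_A([q(X)],[q(X)])=16e^{-2}$, shows this index divides $32e^{-1}$ — a contradiction for every $e$ simultaneously. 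The $2$-primary data carried by the order-$8$ quotient (the lens-space boundaries, $H_1(M)$, the index) is precisely what your order-$2$ quotient discards, which is why your residual $\Z/4$-action lands on a homologically unobstructed class. Any salvage of your strategy would have to quotient by more of the group, which is in effect what the paper does in boundary-capped form.
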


Assuming Lemma \ref{notXp1p2}, we can prove the main proposition of this section.
\begin{proposition}\label{A7nonrealize}
Let $f \in W_7$ have Carter graph $A_7$. Then $f$ has order $8$ and $f$ is not realizable by any diffeomorphism $\varphi \in \Diff^+(M_7)$ of order $8$.
\end{proposition}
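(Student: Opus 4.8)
The plan is to derive a contradiction by assembling the structural lemmas already established in this subsection. First I would record the order claim: by \eqref{eqn:a7-characteristic} the characteristic polynomial of $f|_{\EE_7}$ is $t^7 + t^6 + \cdots + t + 1 = (t^8 - 1)/(t-1)$, so the eigenvalues of $f$ on $\EE_7$ are precisely the nontrivial eighth roots of unity; since $f$ also fixes $K_{M_7}$, the order of $f$ is the least common multiple of the orders of these roots, namely $8$.

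Next I would argue by contradiction. Suppose $\varphi \in \Diff^+(M_7)$ has order $8$ and $[\varphi] = f$, and retain the notation $\varphi_2 = \varphi^4$, $f_2 = f^4$ fixed at the start of the subsection, so that $\varphi_2$ is an order-$2$ diffeomorphism with $[\varphi_2] = f_2$. By \Cref{A7fixedlocus1}, the fixed set $\Fix(\varphi_2)$ must be diffeomorphic to one of
\[
    X \sqcup \{p_1, p_2\}, \qquad X \sqcup S^2, \qquad \text{or} \qquad \RR\PP^2 \sqcup \RR\PP^2,
\]
where $X$ is a connected surface with $\chi(X) = 0$. The three remaining lemmas of the subsection eliminate these possibilities one at a time: \Cref{notXS2} rules out $X \sqcup S^2$, \Cref{notRP2RP2} rules out $\RR\PP^2 \sqcup \RR\PP^2$, and \Cref{notXp1p2} rules out $X \sqcup \{p_1, p_2\}$. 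As these exhaust the list produced by \Cref{A7fixedlocus1}, no order-$8$ realization $\varphi$ can exist, which is exactly the assertion of the proposition.

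The assembly itself is formal; the genuine difficulty is concentrated in the supporting lemmas, and most of all in \Cref{notXp1p2}, whose proof is deferred. The obstacle there is that when $\Fix(\varphi_2) \cong X \sqcup \{p_1, p_2\}$ with $X$ a torus or Klein bottle, the crude invariants give little leverage: the $G$-signature theorem (\Cref{gordonmain}) applied to $\langle \varphi \rangle$ and to $\langle \varphi_4 \rangle$ only constrains the normal rotation numbers at $p_1, p_2$ together with $e(X)$ in the orientable case. I expect the resolution to hinge on combining this rotation-number data with the rigid eigenvalue structure of $f$ on $H_2(M_7, \Q)$ (where the $1$- and $(-1)$-eigenspaces are each one-dimensional, spanned by $K_{M_7}$ and by $H - E_1 - E_3 - E_5$ as in the proof of \Cref{notXS2}) and with the Lefschetz constraint $\chi(\Fix(\varphi)) = 2$ from \Cref{A7Lefschetz}, so as to force the signature contributions at the fixed set to be incompatible with the value $\mathrm{sign}(\varphi^k, M_7) = 2$.
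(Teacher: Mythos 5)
Your proposal is correct and takes essentially the same approach as the paper: its proof of \Cref{A7nonrealize} is precisely the assembly you describe, deriving a contradiction from Lemmas \ref{A7fixedlocus1}, \ref{notXS2}, \ref{notRP2RP2}, and \ref{notXp1p2}, with the order-$8$ claim implicit in the subsection's standing setup via the characteristic polynomial \eqref{eqn:a7-characteristic} exactly as you verify. (Your closing speculation about how \Cref{notXp1p2} is proved differs from the paper's actual argument, which works not with rotation numbers and eigenvalues but by removing equivariant balls around $p_1, p_2$, forming the quotient with lens-space boundary, capping with $2$-handlebodies, and comparing two incompatible computations of the index $[H_2(A):\mathrm{im}(F)]$ via Mayer--Vietoris; but since your proposal only cites that lemma as deferred, this does not affect its correctness.)
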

\begin{proof}
    The existence of such a diffeomorphism contradicts Lemmas \ref{A7fixedlocus1}, \ref{notXp1p2}, \ref{notXS2}, and \ref{notRP2RP2}.
\end{proof}

The rest of this subsection proves Lemma \ref{notXp1p2}. Suppose for the sake of contradiction that $\Fix(\varphi_2) \cong X \sqcup \{p_1, p_2\}$ for some connected surface with $\chi(X) = 0$.
\begin{lemma}\label{lem:topology-X}
The diffeomorphism $\varphi$ satisfies the following properties: 
\begin{enumerate}[label=(\alph*)]
\item $\varphi(p_k) = p_k$ for $k = 1, 2$; 
\item the group $\langle \varphi \rangle / \langle \varphi^e \rangle$ acts freely on $X$, where $e$ is the order of $\varphi|_X \in \Diff(X)$; \label{lem:X-free}
\item $\Fix(\varphi_4) = X \sqcup \{p_1, p_2\}$ with $X \cong T^2$ and $Q_{M_7}([X], [X]) = 2$. \label{lem:X-int}
\end{enumerate}
\end{lemma}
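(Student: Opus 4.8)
The plan is to analyze the induced action of $\langle \varphi \rangle$ on $\Fix(\varphi_2) = X \sqcup \{p_1,p_2\}$. Since $\varphi$ commutes with $\varphi_2 = \varphi^4$, it preserves $X$ and permutes $\{p_1,p_2\}$. Write $e$ for the order of $\varphi|_X$. As $X \subseteq \Fix(\varphi^4) = \Fix(\varphi_2)$ we have $\varphi^4|_X = \mathrm{id}$, so $e \mid 4$; and since $\varphi_2 = \varphi_4^2$, the map $\varphi_4|_X = (\varphi|_X)^2$ has order dividing $2$, so either $\varphi_4|_X = \mathrm{id}$ (equivalently $e \mid 2$) or $e = 4$. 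Throughout I would use \Cref{A7Lefschetz} (so $\chi(\Fix(\varphi^k)) = 2$ and $\mathrm{sign}(\varphi^k, M_7) = 2$ for all $k \not\equiv 0 \pmod 8$), the nesting $\Fix(\varphi) = \Fix(\varphi^{\mathrm{odd}}) \subseteq \Fix(\varphi_4) \subseteq \Fix(\varphi_2)$, and the orientability of $\Fix(\varphi_4)$ (as $\varphi_4$ has order $4 > 2$).

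The crux, and the main obstacle, is ruling out $e = 4$. Here $\varphi_4|_X$ is a nontrivial involution. If $\Fix(\varphi_4)$ had a $2$-dimensional component $F$, then $F \subseteq \Fix(\varphi_2) = X \sqcup \{p_1,p_2\}$ forces $F \subseteq X$, and a closed surface inside the connected surface $X$ must equal $X$, giving $\varphi_4|_X = \mathrm{id}$, a contradiction. Hence $\Fix(\varphi_4)$ is finite, and $\chi(\Fix(\varphi_4)) = 2$ together with $\Fix(\varphi) \subseteq \Fix(\varphi_4)$ and $\chi(\Fix(\varphi)) = 2$ give $\Fix(\varphi) = \Fix(\varphi_4)$, a set of exactly two points. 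A fixed point lying on $X$ is an isolated fixed point of the involution $\varphi_4|_X$, where $\varphi_4$ acts by $-I$ on $T_qX$, i.e.\ with a tangential rotation $\pi$, so $\cot(\pi/2) = 0$ and it contributes $0$ to $\mathrm{sign}(\varphi_4)$; a fixed point equal to some $p_i$ is isolated in $\Fix(\varphi_2)$, so $\varphi_4$ rotates by angles in $\{\pi/2, 3\pi/2\}$ there, contributing $\pm 1$. Since $\mathrm{sign}(\varphi_4) = 2$, both fixed points must be $p_1, p_2$, each with cotangent-product $-1$. Passing to $\varphi$ (whose rotation numbers are half those of $\varphi_4$, hence odd multiples of $\pi/4$), the $G$-signature identity $\mathrm{sign}(\varphi, M_7) = 2$ becomes a sum over the two points of products $\cot(\tfrac{\pi m_k}{8})\cot(\tfrac{\pi n_k}{8})$ with $m_k \not\equiv n_k \pmod 4$; an explicit evaluation shows these products lie in $\{1, -(3 \pm 2\sqrt{2})\}$ and cannot sum to $-2$. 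This contradicts \Cref{gordonmain}, so $e = 4$ is impossible.

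With $e \mid 2$, we have $\varphi_4|_X = \mathrm{id}$, so $X$ is a $2$-dimensional component of the orientable surface $\Fix(\varphi_4)$; since $\chi(X) = 0$ this gives $X \cong T^2$. Counting Euler characteristics, $\chi(\Fix(\varphi_4)) = \chi(X) + \#(\Fix(\varphi_4) \cap \{p_1,p_2\}) = 2$ forces $\Fix(\varphi_4) = X \sqcup \{p_1,p_2\}$, establishing the set-level part of (c). For (a), if $\varphi$ swapped $p_1$ and $p_2$ then $\Fix(\varphi) \subseteq X$ with $\chi(\Fix(\varphi|_X)) = 2$; but $\varphi|_X$ is the identity or an involution of $T^2$, whose fixed set has Euler characteristic in $\{0,4\}$, a contradiction. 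Hence $\varphi$ fixes both $p_k$, and then $\chi(\Fix(\varphi|_X)) = 0$. A $1$-dimensional fixed circle in $X$ would lie in a $2$-dimensional component of $\Fix(\varphi)$, which as above must be all of $X$ (i.e.\ $e = 1$); so for $e = 2$ the set $\Fix(\varphi|_X)$ is finite with $\chi = 0$, hence empty. Thus $\langle\varphi\rangle/\langle\varphi^e\rangle$ acts freely on $X$, which is (b).

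Finally, for the self-intersection in (c): $\varphi$ preserves $X$, so $\varphi_*[X] = \pm[X]$ according to whether $\varphi|_X$ is orientation-preserving or -reversing, and by Edmonds' \cite[Corollary 2.6]{edmonds} the orientable fixed surface $X$ satisfies $[X] \neq 0$. Applying \Cref{gordonmain} to $\varphi_4$ on $\Fix(\varphi_4) = X \sqcup \{p_1,p_2\}$, with normal rotation $\psi$ along $X$ satisfying $2\psi \equiv \pi$ so that $\csc^2(\psi/2) = 2$, and with each point contribution equal to $\pm 1$, yields $0 \leq Q_{M_7}([X],[X]) \leq 2$. Recall from the proof of \Cref{notXS2} that the $1$- and $(-1)$-eigenspaces of $f$ on $H_2(M_7,\Q)$ are spanned by $K_{M_7}$ (with $K_{M_7}^2 = 2$) and $\alpha = H - E_1 - E_3 - E_5$ (with $\alpha^2 = -2$). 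If $\varphi|_X$ were orientation-reversing then $[X] \in \R\{\alpha\}$, forcing $Q_{M_7}([X],[X]) \leq 0$ and hence $[X] = 0$, a contradiction; so $\varphi|_X$ is orientation-preserving and $[X] \in \Z\{K_{M_7}\}$. Writing $[X] = c K_{M_7}$, integrality and $[X] \neq 0$ give $Q_{M_7}([X],[X]) = 2c^2 \geq 2$, which combined with the upper bound forces $c = \pm 1$ and $Q_{M_7}([X],[X]) = 2$, completing (c).
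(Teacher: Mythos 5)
Your proof is correct, and its main engine is the same as the paper's: Lemma \ref{A7Lefschetz}, Edmonds-type control of the fixed sets, and the two-step $G$-signature computation at $p_1,p_2$ whose products $\cot(\tfrac{\pi m}{8})\cot(\tfrac{\pi n}{8}) \in \{1,\,-3\pm 2\sqrt{2}\}$ cannot sum to $-2$ --- that is exactly the computation the paper uses, except the paper deploys it \emph{after} establishing (a) and (b), to rule out $\Fix(\varphi_4)=\{p_1,p_2\}$, whereas you deploy it up front to rule out $e=4$. Your ordering forces you to additionally handle possible isolated fixed points of the involution $\varphi_4|_X$ lying on $X$ (the paper avoids this because, with (b) in hand, $\Fix(\varphi_4)\cap X$ is either $\emptyset$ or $X$); you do this correctly by noting the tangential rotation there is $\pi$, so such points contribute $\cot(\pi/2)\cot(\cdot)=0$ to $\mathrm{sign}(\varphi_4)$, making your exclusion step marginally more self-contained. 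The one genuinely different sub-argument is the self-intersection: the paper gets $Q_{M_7}([X],[X])=2$ in one line by applying \Cref{gordonmain} to the \emph{involution} $\varphi_2$, where the points contribute $\cot(\pi/2)=0$ and the surface term is $e(X)\csc^2(\pi/2)=Q_{M_7}([X],[X])$, so the self-intersection equals $\mathrm{sign}(\varphi_2,M_7)=2$ outright. You instead apply the theorem to $\varphi_4$, obtaining only the bound $0 \le Q_{M_7}([X],[X]) \le 2$, and then pin down the value via the eigenspace decomposition $[X]\in\QQ\{K_{M_7}\}\cup\QQ\{\alpha\}$ (valid, since $[X]$ lies in the $1$-eigenspace of $f^2$ and $f_*[X]=\pm[X]$), Edmonds' nonvanishing $[X]\neq 0$, negativity of $Q_{M_7}$ on $\EE_7\otimes\QQ$, and integrality. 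Your route is longer but buys the extra fact $[X]=\pm K_{M_7}$, which the paper never needs; the paper's $\varphi_2$-computation is the cleaner choice if only the numerical value is wanted.
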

\begin{proof}
Note that $\varphi$ acts on $\Fix(\varphi_2)$ and $\Fix(\varphi) \subseteq \Fix(\varphi_2)$. Therefore $\{p_1, p_2\} \subseteq \Fix(\varphi)$ or $\Fix(\varphi) = \Fix(\varphi|_X)$. 

Suppose that $\Fix(\varphi) = \Fix(\varphi|_X)$. By Lemma \ref{A7Lefschetz}, $\chi(\Fix(\varphi|_X)) = 2$ and so $\varphi$ fixes two points in $X$. Also because $p_1, p_2$ are contained in $\Fix(\varphi_4)$ and $\Fix(\varphi_4|_X) \neq \emptyset$, Lemma \ref{A7Lefschetz} shows that $X \subseteq \Fix(\varphi_4)$. Because $\varphi_4$ has order greater than $2$, $X$ is orientable; because $\varphi$ fixes isolated points in $X$, $\varphi|_X$ is orientation-preserving of order $2$. The Riemann--Hurwitz formula implies that such a diffeomorphism of $X \cong T^2$ does not exist, which is a contradiction. Therefore, $\{p_1, p_2\} \subseteq \Fix(\varphi)$. This also implies that for any $k \in \Z/8\Z$,  $\Fix(\varphi^k|_X) = X$ or $\Fix(\varphi^k|_X) = \emptyset$ by \Cref{A7Lefschetz}. In other words, $\langle \varphi \rangle/ \langle \varphi^e \rangle$ acts freely on $X$. 

Suppose that $\Fix(\varphi_4) = \{p_1, p_2\}$. By \Cref{A7Lefschetz} and the $G$-signature theorem (\Cref{gordonmain}) applied to $\langle \varphi\rangle$ and $\langle \varphi_4 \rangle$ respectively, there exist $\theta_i(j) = \frac{2\pi a_{i,j}}{8}$ for $i,j = 1, 2$ and odd integers $a_{i,j} \in \Z$ so that
\begin{align*}
- 2 = \cot\left(\frac{\theta_1(1)}{2}\right)\cot\left(\frac{\theta_2(1)}{2}\right) + \cot\left(\frac{\theta_1(2)}{2}\right) \cot\left(\frac{\theta_2(2)}{2}\right)= \cot ( \theta_1(1))\cot(\theta_2(1)) + \cot(\theta_1(2)) \cot(\theta_2(2)).
\end{align*}
Because $\cot(\theta_i(j)) = \pm 1$ for all $i, j = 1, 2$, the second equation shows without loss of generality that $\theta_2(j) = \theta_1(j) + \frac \pi 2$ for both $j = 1, 2$. For any choice of odd $a_{1,j} \in \Z$,
\[
    \cot\left(\frac{\theta_1(j)}{2}\right)\cot\left(\frac{\theta_2(j)}{2}\right) = \cot\left(\frac{\pi a_{1,j}}{8}\right) \cot\left(\frac{\pi (a_{1,j} + 2)}{8}\right) = 1 \quad \text{ or }\quad -3 \pm 2 \sqrt 2.
\]
Therefore, $\{p_1, p_2\} \subsetneq \Fix(\varphi_4)$; by \ref{lem:X-free}, $\Fix(\varphi_4) = X \sqcup \{p_1, p_2\}$. Because $\varphi_4$ has order $4$, the surface $X$ is orientable. Finally, apply $G$-signature theorem (\Cref{gordonmain}) to the $\langle \varphi_2 \rangle$-action on $M_7$ to see that $Q_{M_7}([X], [X]) = 2$. 
\end{proof}

Let $B_1, B_2 \cong B^4 \subseteq M_7$ be $\langle \varphi \rangle$-equivariant open neighborhoods of $p_1,p_2 \in M_7$ so that $X$ is contained in $M_7 - (B_1 \sqcup B_2)$ and $\langle \varphi \rangle$ acts freely on $M_7 - (B_1 \sqcup B_2\sqcup X)$. Then \cite[Section 4.3.1]{morita} shows that
\[
	M := \left(M_7 - (B_1 \sqcup B_2)\right)/\langle \varphi \rangle
\]
is a $4$-manifold with boundary $\partial M = L(8, a_1) \sqcup L(8, a_2)$ for some $a_1, a_2 \in \Z$, a disjoint union of lens spaces. By \cite[Exercise 5.3.9(b), Example 4.6.2]{gompf--stipsicz}, there exists a $2$-handlebody $P(8,a_k)$ with $\partial P(m,a_k) \cong L(8, a_k)$ for each $k = 1, 2$. By \cite[Corollary 5.3.12]{gompf--stipsicz}, 
\[
    \pm\det(Q_{P(8,a_k)}) = \lvert H_1(\partial P(8,a_k), \Z) \rvert = 8. \qedhere
\]  
Let $P := P(8, a_1) \sqcup P(8, a_2)$ and let $A$ be the closed $4$-manifold
\[
    A := M \cup_{\partial M} P.
\]

Let $q: M_7 \to M_7^*:= M_7 / \langle \varphi \rangle$ denote the quotient map. Consider the composition $q: q^{-1}(M) \to M \hookrightarrow A$. In the following three lemmas we analyze the topology of $A$ in comparison to the topology of $M$.
\begin{lemma}\label{lem:euler-A}
$\chi(A)= 3 + b_2(P)$. 
\end{lemma}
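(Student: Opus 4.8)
My plan is to compute $\chi(A)$ from the gluing decomposition $A = M \cup_{\partial M} P$ together with a transfer (Burnside/Lefschetz) computation for the quotient $M$, reducing everything to the fixed-set data already extracted in \Cref{lem:topology-X}.

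First I would reduce the problem to computing $\chi(M)$. Since $A$ is glued from $M$ and $P$ along their common boundary $\partial M = L(8,a_1) \sqcup L(8,a_2)$, inclusion--exclusion gives $\chi(A) = \chi(M) + \chi(P) - \chi(\partial M)$, and $\chi(\partial M) = 0$ because $\partial M$ is a closed odd-dimensional manifold. Each $P(8,a_k)$ is a $2$-handlebody, i.e.\ it admits a handle decomposition with a single $0$-handle and finitely many $2$-handles and no $1$- or $3$-handles; hence $b_0 = 1$, $b_1 = 0$, and $\chi(P(8,a_k)) = 1 + b_2(P(8,a_k))$. Summing over $k=1,2$ yields $\chi(P) = 2 + b_2(P)$, so it suffices to prove $\chi(M) = 1$.

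To compute $\chi(M) = \chi(N/\langle\varphi\rangle)$ with $N := M_7 - (B_1 \sqcup B_2)$ and $G := \langle\varphi\rangle \cong \Z/8\Z$, I would invoke the transfer formula for the Euler characteristic of a quotient by a finite group acting smoothly (hence simplicially after an equivariant triangulation),
\[
    \chi(N/G) = \frac{1}{|G|} \sum_{g \in G} \chi(N^g),
\]
which follows from $H^*(N/G;\Q) \cong H^*(N;\Q)^G$ and the Lefschetz fixed point theorem. I would first record $\chi(N) = \chi(M_7) - 2 = 10 - 2 = 8$, using that deleting an open $4$-ball drops the Euler characteristic by $1$. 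The heart of the argument is the bookkeeping for the eight terms, where I use $N^{\varphi^k} = \Fix(\varphi^k)\setminus(B_1 \sqcup B_2)$ and that $X$ is disjoint from the balls while $p_1,p_2$ lie in them. For $k$ odd, $\langle\varphi^k\rangle = \langle\varphi\rangle$, so $\Fix(\varphi^k) = \Fix(\varphi) \subseteq \Fix(\varphi^2) = X \sqcup \{p_1,p_2\}$; deleting the balls leaves $\Fix(\varphi)\cap X$, which is either all of $X \cong T^2$ or empty, according as $\varphi|_X$ is trivial or acts freely (\Cref{lem:topology-X}\ref{lem:X-free}, \ref{lem:X-int}), so its Euler characteristic is $0$ in either case. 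For $k \in \{2,4,6\}$, $\langle\varphi^k\rangle$ equals $\langle\varphi^2\rangle$ or $\langle\varphi^4\rangle$, so $\Fix(\varphi^k) = X \sqcup \{p_1,p_2\}$ and $N^{\varphi^k} = X$ with $\chi(X) = \chi(T^2) = 0$. Hence every nonidentity term vanishes and $\chi(M) = \tfrac{1}{8}(8 + 0) = 1$. Combining, $\chi(A) = 1 + (2 + b_2(P)) - 0 = 3 + b_2(P)$.

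The main obstacle I anticipate is not conceptual but is the fixed-set bookkeeping: I must confirm that each odd-power contribution vanishes \emph{uniformly}, i.e.\ without having to decide whether $\varphi$ fixes $X$ pointwise or rotates it (both cases giving $\chi = 0$ because $\Fix(\varphi)\cap X$ is then $T^2$ or $\emptyset$), and that the surface $X$ genuinely never meets the excised balls $B_1,B_2$ (which is guaranteed by their choice). A secondary technical point worth stating carefully is the applicability of the transfer/Lefschetz formula to a compact manifold-with-boundary carrying a smooth finite cyclic action, which is justified by passing to a $\langle\varphi\rangle$-equivariant triangulation.
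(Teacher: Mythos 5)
Your proof is correct, and its overall skeleton matches the paper's: both split $\chi(A) = \chi(M) + \chi(P)$ (using $\chi(\partial M) = 0$), both get $\chi(P) = 2 + b_2(P)$ from the $2$-handlebody structure, and both reduce to showing $\chi(M) = 1$. Where you differ is in how you compute $\chi(M)$. The paper uses multiplicativity of the Euler characteristic on the free locus plus inclusion--exclusion: since $\langle \varphi \rangle$ acts freely off $X \sqcup \{p_1,p_2\}$, it computes $\chi(M) = \tfrac{1}{8}\bigl(\chi(M_7) - \chi(\Fix(\varphi_2))\bigr) + \chi(q(X)) = \tfrac{10-2}{8} + 0 = 1$ in one line, needing only the freeness statement and $\chi(q(X)) = 0$. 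You instead invoke the Burnside/transfer averaging formula $\chi(N/G) = \tfrac{1}{|G|}\sum_{g} \chi(N^g)$, which forces you to identify $N^{\varphi^k}$ for all eight powers; your bookkeeping is right --- the odd powers give $\Fix(\varphi) \cap X$, which is $X$ or $\emptyset$ by \Cref{lem:topology-X}\ref{lem:X-free} and has $\chi = 0$ either way, and $k \in \{2,4,6\}$ give $X$ itself by \Cref{lem:topology-X}\ref{lem:X-int} and the hypothesis on $\Fix(\varphi_2)$, again with $\chi = 0$ --- but it consumes more of the fixed-set data than the paper needs. The trade-off: your averaging formula is more general-purpose (it applies without isolating a free locus and would survive messier branch loci), while the paper's argument is leaner here precisely because the balls $B_1, B_2$ were chosen so that the action off $X$ is free, making the orbifold-style count immediate. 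Your care about the equivariant triangulation justifying the Lefschetz/transfer input is appropriate and not a gap.
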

\begin{proof}
Note that $\chi(\Fix(\varphi_2)) = 2$ and that $\langle \varphi \rangle$ acts freely on $M_7 - \Fix(\varphi_2)$. Moreover, $\chi(q(X)) = 0$, and so by multiplicativity of Euler characteristic and by inclusion-exclusion,
\[
    \chi(M) = \frac{\chi(M_7) - \chi(\Fix(\varphi_2))}{\lvert \langle \varphi \rangle \rvert} + \chi(q(X)) = \frac{10 - 2}{8} + 0 = 1. 
\]
Because $P$ is a disjoint union of two $2$-handlebodies, $\chi(P) = 2 + b_2(P)$. By inclusion-exclusion, 
\[
    \chi(A) = \chi(M) + \chi(P) = 3 + b_2(P). \qedhere
\]
\end{proof}

\begin{lemma}\label{lem:armstrong-apps}
There are isomorphisms 
\[
    \pi_1(M) \cong \Z/e\Z, \qquad \pi_1(M_7^*) \cong \pi_1(B_1/\langle \varphi \rangle) \cong \pi_1(B_2/\langle \varphi \rangle) \cong 1.
\]
\end{lemma}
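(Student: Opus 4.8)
The plan is to deduce all four isomorphisms from Armstrong's theorem on the fundamental group of an orbit space \cite{armstrong}: if a finite group $G$ acts on a path-connected, simply-connected (and locally nice) space $Y$, then $\pi_1(Y/G) \cong G/H$, where $H \trianglelefteq G$ is the subgroup generated by those elements of $G$ that have a fixed point in $Y$. In each of the three orbit spaces the relevant ambient space is simply connected, so the only real task is to identify the fixed-point subgroup $H$.

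For $M_7^* = M_7/\langle \varphi \rangle$ and for $B_k/\langle \varphi \rangle$ the computation is immediate. Here $Y = M_7$ (simply connected by hypothesis) and $Y = B_k \cong B^4$ (contractible), respectively. In both cases the generator $\varphi$ itself has a fixed point, namely $p_k \in B_k \subseteq M_7$, so the subgroup $H$ generated by elements with fixed points is all of $\langle \varphi \rangle$. Armstrong's theorem then gives $\pi_1(M_7^*) \cong \langle \varphi \rangle/\langle \varphi \rangle \cong 1$ and likewise $\pi_1(B_k/\langle \varphi \rangle) \cong 1$ for $k = 1, 2$.

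For $M = (M_7 - (B_1 \sqcup B_2))/\langle \varphi \rangle$, I would first check that $Y := M_7 - (B_1 \sqcup B_2)$ is simply connected. Removing one open $4$-ball at a time and applying van Kampen (each intersection is a collar on $S^3$, which is simply connected) shows $\pi_1(Y) \cong \pi_1(M_7) \cong 1$. It then remains to compute $H$. Since $\langle \varphi \rangle$ acts freely on $M_7 - (B_1 \sqcup B_2 \sqcup X)$, every element of $\langle \varphi \rangle$ with a fixed point in $Y$ must fix a point of $X$; by \Cref{lem:topology-X}\ref{lem:X-free} the quotient $\langle \varphi \rangle/\langle \varphi^e \rangle$ acts freely on $X$, so the only elements fixing a point of $X$ are those of $\langle \varphi^e \rangle$, which fix $X$ pointwise. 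Hence $H = \langle \varphi^e \rangle$, and Armstrong's theorem yields $\pi_1(M) \cong \langle \varphi \rangle/\langle \varphi^e \rangle \cong \Z/e\Z$.

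The main (and only genuinely nontrivial) obstacle is the bookkeeping of the fixed-point subgroup $H$ in the case of $M$: one must combine the freeness of the $\langle \varphi \rangle$-action away from $X$, the freeness of the induced $\langle \varphi \rangle/\langle \varphi^e \rangle$-action on $X$ supplied by \Cref{lem:topology-X}, and the fact that excising $B_1, B_2$ removes precisely the isolated fixed points $p_1, p_2$. Everything else is a direct application of Armstrong's theorem together with a standard van Kampen argument.
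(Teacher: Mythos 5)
Your proof is correct and follows essentially the same route as the paper: both deduce the three trivial fundamental groups by applying Armstrong's theorem to $\varphi$ acting on the simply-connected spaces $M_7$ and $B_k$, and both compute $\pi_1(M) \cong \Z/e\Z$ using the same freeness bookkeeping from \Cref{lem:topology-X}\ref{lem:X-free} together with freeness of the action off $X$. The only difference is packaging: the paper first quotients $M_7 - (B_1 \sqcup B_2)$ by $\langle \varphi^e \rangle$ (simply connected by the cyclic case of Armstrong, since $\Fix(\varphi^e|_{q^{-1}(M)}) = X \neq \emptyset$) and then applies covering-space theory to the residual free $\Z/e\Z$-action, whereas you invoke the general $G/H$ form of Armstrong's theorem once, identifying the fixed-point subgroup as $\langle \varphi^e \rangle$ --- an equivalent argument.
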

\begin{proof}
Suppose that $\psi$ is a finite-order homeomorphism of a simply-connected manifold $Y$ with $\Fix(\psi) \neq \emptyset$. According to Armstrong \cite[Example 4]{armstrong}, the quotient $Y / \langle \psi \rangle$ is simply-connected, which applied to the action of $\varphi$ on $M_7$ and $B_k$ shows that $\pi_1(M_7^*) = 1$ and $\pi_1(B_k/\langle \varphi \rangle) = 1$ for $k = 1, 2$. On the other hand, $\Fix(\varphi^e|_{q^{-1}(M)}) = X$, and so $q^{-1}(M)/\langle \varphi^e \rangle$ is simply-connected. The group $\langle \varphi \rangle / \langle \varphi^e \rangle$ acts freely on $q^{-1}(M)/\langle \varphi^e \rangle$ and the quotient is $M$. Therefore, $\pi_1(M) \cong \Z/e\Z$.
\end{proof}

\begin{lemma}\label{lem:self-int-X}
The image $q(X)$ is a submanifold of $A$ and
\[
    Q_A([q(X)], [q(X)]) = 16 e^{-2}. 
\]
\end{lemma}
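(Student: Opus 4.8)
The plan is to identify $Q_A([q(X)],[q(X)])$ with the normal Euler number of $q(X)$ in $A$ and to compute that number from the normal bundle of $X$ in $M_7$, tracking how it transforms under the quotient by $\langle\varphi\rangle$. First I would record the local structure of the action near $X$. By \Cref{lem:topology-X}, $X\cong T^2$ is an embedded surface with $Q_{M_7}([X],[X])=2$, the subgroup $\langle\varphi^e\rangle$ is the pointwise stabilizer of $X$, and $\langle\varphi\rangle/\langle\varphi^e\rangle$ (of order $e$) acts freely on $X$. Let $\nu\to X$ be the normal $\DD^2$-bundle, which has Euler number $2$, and write $r:=8/e$ for the order of $\langle\varphi^e\rangle$. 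Since $\varphi^e$ fixes $X$ pointwise but acts faithfully near $X$, the group $\langle\varphi^e\rangle$ acts on $\nu$ by fiberwise rotations generating a faithful $\mu_r$-action. As the quotient of a disk by a finite rotation group is again a smooth disk (via $z\mapsto z^r$), the quotient $\nu/\langle\varphi\rangle$ is a smooth $\DD^2$-bundle over $q(X)=X/(\langle\varphi\rangle/\langle\varphi^e\rangle)$ serving as a tubular neighborhood of $q(X)$ inside $M\subseteq A$. This already yields the first assertion, that $q(X)$ is an embedded submanifold of $A$, and reduces the self-intersection to the Euler number of this disk bundle.

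Next I would compute that Euler number in two steps. Quotienting $\nu$ first by the fiberwise rotation $\langle\varphi^e\rangle\cong\mu_r$ gives a $\DD^2$-bundle $\nu/\langle\varphi^e\rangle$ over $X$; fiberwise quotient by $\mu_r$ multiplies the Euler number by $r$ (the model being $S^3/\mu_r=L(r,1)$, the circle bundle of Euler number $r$ over $S^2$), so $\nu/\langle\varphi^e\rangle$ has Euler number $2r=16/e$. The residual group $\langle\varphi\rangle/\langle\varphi^e\rangle$ then acts freely, exhibiting $\nu/\langle\varphi^e\rangle\to\nu/\langle\varphi\rangle$ as the degree-$e$ covering of disk bundles lying over the covering $X\to q(X)$. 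Since $\nu/\langle\varphi^e\rangle$ is the pullback of $\nu/\langle\varphi\rangle$ along this $e$-fold cover, its Euler number equals $e$ times that of $\nu/\langle\varphi\rangle$. Combining, $Q_A([q(X)],[q(X)])=(16/e)/e=16e^{-2}$.

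Finally I would settle orientations so that $[q(X)]\in H_2(A;\Z)$ and the intersection number are literally defined. Because $r\geq 3$ in every relevant case, any $g\in\langle\varphi\rangle$ commutes with the order-$r$ rotation generating $\mu_r$ and is hence complex-linear on the fibers of $\nu$, so $\langle\varphi\rangle$ preserves the orientation of $\nu$; together with $\varphi\in\Diff^+(M_7)$ this forces the $\langle\varphi\rangle/\langle\varphi^e\rangle$-action on $X$ to be orientation-preserving, whence $q(X)\cong T^2$ is orientable with an oriented normal bundle. The main obstacle is keeping the two transformation rules straight and in the correct direction --- fiberwise quotient by a rotation of order $r$ \emph{multiplies} the normal Euler number by $r$, while passage to the free quotient \emph{divides} it by the covering degree $e$ --- and verifying that these two operations commute and combine as claimed in a neighborhood of $X$, so that the single disk bundle $\nu/\langle\varphi\rangle$ indeed has Euler number $16e^{-2}$.
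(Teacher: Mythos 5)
Your proof is correct and follows essentially the same route as the paper: both pass through the intermediate quotient by $\langle \varphi^e\rangle$ (multiplying the normal Euler number by $8/e$, which the paper phrases as the tensor power $\nu_{M_7}(X)^{\otimes 8/e}$ and you phrase via the lens-space model) and then divide by $e$ using the pullback of the normal bundle along the degree-$e$ free covering $q'(X) \to q(X)$. Your explicit orientation check (using $r = 8/e \geq 3$, which indeed holds since \Cref{lem:topology-X}(c) forces $e \leq 2$) is a detail the paper leaves implicit, but it does not change the argument.
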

\begin{proof}
First consider the intermediate quotient 
\[
    q': (M_7 - (B_1 \sqcup B_2)) \to M' := (M_7 - (B_1 \sqcup B_2))/\langle \varphi^e \rangle
\] 
The group $\langle \varphi^e \rangle$ of order $\frac 8e$ acting on $M_7 - (B_1 \sqcup B_2)$ fixes $X$ pointwise and acts freely on the complement of $X$. By \cite[Section 4.3.1]{morita}, $M'$ is a smooth $4$-manifold with boundary, $q'(X)$ is a submanifold of $M'$, and if $\nu_{M_7}(X)$ is the normal bundle of $X$ in $M_7$ then $\nu_{M_7}(X)^{\otimes 8/e}$ is the normal bundle $\nu_{M'}(q'(X))$ of $q'(X)$ in $M'$, viewing both oriented $\R^2$-bundles over $X$ as $\C$-line bundles over $X$.

The group $H := \langle \varphi \rangle / \langle \varphi^e \rangle $ acts on $M'$ and acts freely on $q'(X)$ so it induces a free action on the normal bundle $\nu_{M'}(q'(X))$. Let $q'': M' \to M$ denote the quotient by $H$ so that $q'' \circ q' = q$. There is an isomorphism of $\C$-line bundles
\[
    (q'')^*\nu_M(q(X)) \cong \nu_{M'}(q'(X))
\]
induced by the derivative $Dq''$. Taking Chern classes and noting that $q''$ is a map of degree $e$, compute
\[
    e\, c_1(\nu_{M}(q(X))) = c_1((q'')^*\nu_{M}(q(X))) = c_1(\nu_{M'}(q'(X))) = c_1(\nu_{M_7}(X)^{\otimes 8/e}) = \frac 8e c_1(\nu_{M_7}(X)) \in \Z,
\]
where we have identified $H^2(X) \cong H^2(q(X)) \cong \Z$. Because the first Chern class is the Euler class for $\C$-line bundles and because $Q_{M_7}([X], [X])= 2$ by Lemma \ref{lem:topology-X}\ref{lem:X-int}, 
\[
    \frac{16}{e^2} = \frac{8}{e^2} Q_{M_7}([X], [X]) = \frac{8}{e^2} c_1(\nu_{M_7}(X)) = c_1(\nu_M(q(X))) = Q_{M}([q(X)], [q(X)]). \qedhere
\]
\end{proof}

Lemma \ref{lem:self-int-X} will be used in an application of the following algebraic lemma. 
\begin{lemma}\label{lem:unimodular-index}
Let $(L, Q)$ be a unimodular (nondegenerate, bilinear, symmetric, integral) lattice. Let $L_0 \leq L$ be a subgroup of finite index and consider the restriction $Q|_{L_0}$. Then $\det(Q|_{L_0})= \pm [L:L_0]^2$
\end{lemma}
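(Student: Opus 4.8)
The plan is to reduce the statement to the elementary behaviour of Gram determinants under a change of lattice basis. Since $L_0 \leq L$ has finite index and $L$ is free of some finite rank $r$, the subgroup $L_0$ is also free of rank $r$. First I would fix a $\ZZ$-basis $e_1, \dots, e_r$ of $L$ and a $\ZZ$-basis $f_1, \dots, f_r$ of $L_0$, and write $f_i = \sum_j A_{ij} e_j$ for an integer matrix $A = (A_{ij})$. By the Smith normal form (equivalently, the structure theorem for finite-index subgroups of free abelian groups), the index is computed by $[L : L_0] = \lvert \det A \rvert$.

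Next I would record how the Gram matrix transforms. Writing $G := (Q(e_i, e_j))_{i,j}$ for the Gram matrix of $Q$ on $L$ and $G_0 := (Q(f_i, f_j))_{i,j}$ for the Gram matrix of $Q|_{L_0}$, bilinearity gives
\[
    Q(f_i, f_k) = \sum_{j, l} A_{ij} A_{kl} \, Q(e_j, e_l),
\]
that is, $G_0 = A\, G\, A^{T}$. Taking determinants and using that $Q$ is unimodular (so $\det G = \pm 1$) yields
\[
    \det(Q|_{L_0}) = \det(G_0) = (\det A)^2 \det(G) = \pm (\det A)^2 = \pm [L : L_0]^2,
\]
as claimed.

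The argument is essentially formal once the index is identified with $\lvert \det A \rvert$; the only point requiring care — and the closest thing to an obstacle — is justifying that identification together with the fact that $L_0$ has full rank $r$, both of which follow from the standard theory of finitely generated abelian groups. No hypothesis beyond unimodularity of $Q$ and finiteness of $[L:L_0]$ is used, and in particular nondegeneracy of $Q|_{L_0}$ drops out automatically since $\det(Q|_{L_0}) = \pm[L:L_0]^2 \neq 0$.
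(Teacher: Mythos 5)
Your proof is correct and follows essentially the same route as the paper's: both express the Gram matrix of $Q|_{L_0}$ as a congruence transform ($A G A^T$, or $M^T A M$ in the paper's notation) of the unimodular Gram matrix of $Q$, take determinants, and identify $\lvert \det A\rvert$ with the index $[L:L_0]$ via the Smith normal form. No substantive difference; your closing remark that nondegeneracy of $Q|_{L_0}$ comes out for free is a fine observation but not needed for the statement.
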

\begin{proof}
Identify $L \cong \Z^m$ and consider the matrix $M \in \mathrm{Mat}_{m \times m}(\Z)$ sending a $\Z$-basis $e_1, \dots, e_m$ of $L$ to a $\Z$-basis $f_1, \dots, f_m \in L$ of $L_0$. If $A$ is the matrix form of the form $Q$ with respect to the $\Z$-basis $e_1, \dots, e_m$ then $M^T A M$ is the matrix form of the restriction $Q|_{L_0}$ with respect to the $\Z$-basis $f_1, \dots, f_m$. Taking determinants, compute that $\det(Q|_{L_0}) = \pm \det(M)^2$ because $\det(A) = \pm 1$ by unimodularity of $(L, Q)$. Finally, taking the Smith normal form of the matrix $M$ shows that $\lvert \det(M)\rvert = [L : L_0]$. 
\end{proof}

With the computation of Lemmas \ref{lem:euler-A} and \ref{lem:self-int-X} in hand, we analyze the topology of $A$ conclude the proof of Lemma \ref{notXp1p2}.
\begin{proof}[{Proof of Lemma \ref{notXp1p2}}]
Consider the Mayer--Vietoris sequence for the union $M_7^* = M \cup \left((B_1 \sqcup B_2) / \langle \varphi \rangle\right)$,
\[
    H_1(\partial M) \to H_1(M) \oplus H_1((B_1 \sqcup B_2)/\langle \varphi \rangle)\to H_1(M_7^*).
\]
Because $H_1(M_7^*) = H_1((B_1 \sqcup B_2) / \langle \varphi \rangle) = 0$ by Lemma \ref{lem:armstrong-apps}, $i_*: H_1(\partial M) \to H_1(M)$ is surjective. 

The following sequence is exact by the Mayer--Vietoris sequence for $A = M \cup P$:
\[
    H_2(\partial M) \to H_2(M) \oplus H_2(P) \xrightarrow{F} H_2(A) \to H_1(\partial M) \to H_1(M) \oplus H_1(P) \to H_1(A) \to 0. 
\]
Since $H_1(P) = 0$ and $H_1(\partial M) \to H_1(M)$ is surjective, $H_1(A) = 0$. An application of the universal coefficient theorem shows that $H^2(A)$ is torsion-free, and hence $H_2(A) \cong \Z^{1 +b_2(P)}$ by Lemma \ref{lem:euler-A}. 

Because $H_2(\partial M) = 0$ and $H_1(\partial M)$ is finite, the map $F$ is injective and has finite-index image in $H_2(A)$. Let $F'$ denote the injection
\[
    F' : \Z\{[q(X)]\} \oplus H_2(P) \hookrightarrow H_2(M)\oplus H_2(P) \xrightarrow{F} H_2(A),
\]
which also has finite-index image by rank reasons. By Lemmas \ref{lem:unimodular-index} and \ref{lem:self-int-X},
\[
    [H_2(A): \mathrm{im}(F')]^2 = \lvert \det(Q|_{\Z\{[X]\} \oplus H_2(P)})\rvert = \lvert Q_A([q(X)], [q(X)])\cdot\det(Q_{P(8, a_1)}\oplus Q_{P(8, a_2)}) \rvert  = (32 e^{-1})^2.
\]
Because $\mathrm{im}(F') \leq \mathrm{im}(F)$, the index $[H_2(A) : \mathrm{im}(F)]$ must divide $[H_2(A): \mathrm{im}(F')] = 32 e^{-1}$.

By Lemma \ref{lem:armstrong-apps}, $H_1(M) = \Z/e\Z$. By exactness of the Mayer--Vietoris sequence, 
\[
    [H_2(A) : \mathrm{im}(F)] = \lvert \ker(i_*: H_1(\partial M) \to H_1(M)) \rvert = 64 e^{-1}, 
\]
which does not divide $32e^{-1}$, yielding a contradiction.
\end{proof}

\subsection{Proof of \Cref{mainthm1}}\label{sec:mainthmproof}
With the results of this section in hand, we are ready to prove \Cref{mainthm1}.
\begin{proof}[Proof of \Cref{mainthm1}]
The equivalence of (a), (b), and (c) is established in \Cref{MCequiv}. Complex automorphisms are smooth, so (b) implies (d). It remains to prove that (d) implies (b), which we do below by contrapositive. 

First, suppose that $f$ is contained in $W_n \subset \Mod^+(M_n)$ and that $f$ is not realizable by complex automorphisms of any del Pezzo surface $(M_n, J)$. Then \Cref{CarterClassify} shows that either $n = 5$ and $f$ has Carter graph $D_5(a_1)$ or $D_2 + D_3$, or $n = 7$ and $f$ has Carter graph $A_7$, $D_4 +3A_1$, $D_6 +A_1$, or $E_7(a_3)$. In each case, $f$ is not smoothly realizable by Propositions \ref{D2D3}, \ref{D5a1}, \ref{A7nonrealize}, \ref{D6A1nonrealize}, \ref{D43A1nonrealize}, and \ref{E7a3nonrealize} respectively. Therefore, the direction (d) implies (b) holds if $f$ is contained in $W_n$. 

Suppose that $f$ is not contained in $W_n \subseteq \Mod^+(M_n)$ and that $f$ is smoothly realizable by a diffeomorphism $\varphi \in \Diff^+(M_n)$ of order $m$. Apply \Cref{irredclassification} to see that there exists $h \in \Mod^+(M_n)$ so that $hfh^{-1}$ is contained in $W_n$. There exists $\psi \in \Diff^+(M_n)$ with $[\psi] = h$ by \cite[Theorem 2]{wall-diffeos}, and hence $h f h^{-1}$ is smoothly realizable by $\psi\circ \varphi \circ \psi^{-1}$. By the previous paragraph, there exists a complex structure $(M_n, J)$ of a del Pezzo surface and an automorphism $\Phi \in \Aut(M_n, J)$ of order $m$ realizing $hfh^{-1}$. Finally, $\psi^{-1} \circ \Phi \circ \psi$ is an automorphism of $\Aut(M_n, \psi^* J)$ realizing $f$.
\end{proof} 

\section{Coxeter elements and complex Nielsen realization}\label{sectionCoxeter}

In this section we study complex realizability and irreducibility of the Coxeter elements and, more generally, other elements of order equal to the Coxeter number of $W_n$. 

\subsection{Coxeter elements, irreducibility, and realizability}\label{sec:coxeter-irred-real}
Because $W_n$ is a Coxeter group, it admits a distinguished conjugacy class of \emph{Coxeter elements}. In this subsection we characterize the (conjugates of the) Coxeter elements of $\Mod^+(M_n)$ for $3 \leq n \leq 8$ via irreducibility and realizability. 
\begin{definition}\label{coxeterDef}
    A \emph{Coxeter element} $w \in W_n$ is any product of the simple reflections, taken one at a time in any order. All Coxeter elements are conjugate in $W_n$ \cite[Proposition 3.16]{humphreys}, and the \emph{Coxeter number} of $W_n$ is defined to be the order of any Coxeter element. We denote the Coxeter number of $W_n$ by $h_n$.
\end{definition}
According to \cite[Sections 3.16, 8.4]{humphreys} the Coxeter numbers of $W_n$ are $h_n = 6,5,8,12,18,30$ for $n = 3,4,5,6,7,8$ respectively, and $h_n = \infty$ for $n>8$. According to \cite[(2.4)]{mcmullen}, the characteristic polynomial $\chi_w(t)$ of a Coxeter element $w \in W_n$ acting on the geometric representation $\mathbb E_n$ of $W_n$ is 
\begin{equation}\label{eqn:char-poly}
\chi_w(t) = \frac{t^{n-2}(t^3-t-1) + (t^3 +t^2-1)}{t-1}. 
\end{equation}

The following theorem distinguishes the Coxeter elements of $W_n$ among all elements of order $h_n$ via irreducibility for $3 \leq n \leq 7$. 
\begin{theorem}\label{cox_conjugate_7}
    Let $3 \leq n \leq 7$. An element $w \in \Mod^+(M_n)$ is an irreducible class of order $h_n$ if and only if $w$ is conjugate in $\Mod^+(M_n)$ to a Coxeter element of $W_n$.
\end{theorem}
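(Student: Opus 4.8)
The plan is to prove both implications, handling the easy ``Coxeter $\Rightarrow$ irreducible of order $h_n$'' direction first and then the substantive converse through the classification of cuspidal classes supplied by \Cref{irredclassification}.

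For the forward direction, suppose $w$ is conjugate to a Coxeter element of $W_n$. Its order is $h_n$ by definition (\Cref{coxeterDef}). To establish irreducibility I would evaluate the characteristic polynomial \eqref{eqn:char-poly} of $w$ acting on $\EE_n$ at $t=1$: writing $\chi_w(t) = N(t)/(t-1)$ with $N(t) = t^{n-2}(t^3-t-1) + (t^3+t^2-1)$, a short computation gives $N(1)=0$ and $\chi_w(1) = N'(1) = 9-n \neq 0$ for $3\le n \le 7$, so $1$ is not an eigenvalue of $w|_{\EE_n}$. Since $w$ also fixes $K_{M_n}$ and $H_2(M_n,\Q) = \Q\{K_{M_n}\}\oplus(\EE_n\otimes\Q)$ as $\langle w\rangle$-representations, this forces $H_2(M_n,\Z)^{\langle w\rangle} = \Z\{K_{M_n}\}$, and \Cref{posdefcriterionirred} yields that $w$ is irreducible.

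For the converse, let $w$ be irreducible of order $h_n$. By \Cref{irredclassification}, up to conjugacy in $\Mod^+(M_n)$ either (a) $w$ is cuspidal in $W_n$ with $H_2(M_n,\Z)^{\langle w\rangle}=\Z\{K_{M_n}\}$, or (b) $w$ is cuspidal in $P_n\cong W(D_{n-1})$. I would first eliminate case (b). For $n=4,6$ this is immediate from \Cref{4and6iff}, which forces case (a). For $n=3,5,7$ I would invoke \Cref{negativeEven}: the cuspidal classes of $W(D_{n-1})$ correspond to even partitions $\alpha$ of $n-1$ and have order $\mathrm{lcm}_j(2\alpha_j)$; running through the even partitions of $2,4,6$ gives the possible orders $\{2\}$, $\{6,4,2\}$, $\{10,8,6,6,4,2\}$ respectively, none of which equals $h_n = 6,8,18$. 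Hence case (b) never produces an element of order $h_n$, and $w$ must fall into case (a).

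It then remains to show that a cuspidal $w\in W_n$ of order $h_n$ is a Coxeter element, which I would do by enumerating the cuspidal (equivalently elliptic) conjugacy classes of $W_n$ with their orders and checking that the Coxeter class is the unique one of order $h_n$. The cyclotomic-degree bound $\phi(h_n)\le \dim\EE_n = n$ (with $\phi$ the Euler totient) streamlines several cases: an order-$h_n$ element has a primitive $h_n$-th root of unity as an eigenvalue, so $\Phi_{h_n}(t)\mid\chi_w(t)$, and for $n=4,5,7$ (where $\phi(h_n)\ge n-1$) cuspidality together with reality of $\chi_w$ forces $\chi_w = \Phi_5$, $\Phi_8\cdot(t+1)$, $\Phi_{18}\cdot(t+1)$ respectively --- exactly the Coxeter characteristic polynomials, which one checks single out the Coxeter class (directly via signed cycle types for $D_5$, and against \cite[Table 3]{carter} for $E_7$). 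The cases $n=3$ (unique cuspidal class, of order $6$) and $n=6$ are settled directly from the tables: the elliptic classes of $W(E_6)$ have orders $12,9,6,6,3$, of which only the Coxeter class $E_6$ realizes $h_6=12$, and likewise only $E_7$ realizes $h_7=18$ in $W(E_7)$ (see \cite[Tables B.4, B.5]{geck2000characters}). In every case the order-$h_n$ cuspidal class is the Coxeter class, so $w$ is conjugate to a Coxeter element in $W_n$, hence in $\Mod^+(M_n)$.

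I expect the main obstacle to be the case-(a) enumeration for the exceptional groups $E_6$ and $E_7$: there the cyclotomic-degree bound alone does not determine $\chi_w$ (for $E_6$, several distinct characteristic polynomials are a priori compatible with order $12$, and in $E_7$ the characteristic polynomial need not determine the conjugacy class), so one must genuinely read off the list of elliptic classes and their orders from Carter's tables and verify that the Coxeter class is the unique class of order $h_n$.
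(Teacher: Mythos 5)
Your proposal is correct and takes essentially the same approach as the paper: the forward direction via showing $H_2(M_n,\Z)^{\langle w\rangle} = \Z\{K_{M_n}\}$ and applying \Cref{posdefcriterionirred} (the paper cites \cite[Lemma 3.16]{humphreys} where you evaluate the characteristic polynomial (\ref{eqn:char-poly}) at $t=1$), and the converse via \Cref{irredclassification} followed by enumerating the cuspidal classes of $P_n$ and $W_n$ by order, using \Cref{negativeEven}, \Cref{4and6iff}, and the tables of Geck--Pfeiffer and Carter, exactly as the paper does. Your cyclotomic-degree bound $\phi(h_n)\le n$ is a mild streamlining of the paper's direct order checks rather than a genuinely different route.
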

\begin{proof}
    Suppose that $w$ is conjugate in $\Mod^+(M_n)$ to a Coxeter element of $W_n$. The $1$-eigenspace of $w$ on $H_2(M_n, \Z)$ is $\ZZ \{K_n\}$ by \cite[Lemma 3.16]{humphreys}, so $w$ is irreducible by \Cref{posdefcriterionirred}.

    Suppose that $w \in \Mod^+(M_n)$ is irreducible and has order $h_n$. By \Cref{irredclassification}, $w$ is conjugate in $\Mod^+(M_n)$ into the stabilizer $\Stab(K_{M_n}) = W_n$ of $K_{M_n}$.

    For the $n=3$ case: $W_3 \cong S_3 \times \Z/2\Z$. A Coxeter element is the product of the permutation $(1,2,3)$ and a generator of the $\Z/2$ factor. There is only one conjugacy class of order $3$ in $S_3$. Hence, there is only one conjugacy class of order $6$ elements in $W_3$.

    For the $n=4$ case: $W_4 \cong S_5$. There is only one conjugacy class of order $5$ elements in $W_4$.

    For the $n=5$ case: by \Cref{irredclassification}, we only need to consider cuspidal classes in $W_5 \cong W(D_5)$ and $P_5 \cong W(D_4)$, which are in bijection with even partitions of $5$ and $4$ respectively, by \Cref{negativeEven}. The even partitions of $4$ with signed cycle-type are $[\bar{1},\bar{1},\bar{1},\bar{1}], [\bar{2},\bar{2}], [\bar{1},\bar{3}]$. The even partitions of $5$ are in correspondence with the signed cycle-types $[\bar{2},\bar{1},\bar{1},\bar{1}], [\bar{1},\bar{4}], [\bar{2},\bar{3}]$. \Cref{negativeEven} implies that the only class with order $h_5 = 8$ is $[\bar{1}, \bar{4}]$, the conjugacy class of the Coxeter elements.

    For the $n=6$ case: we only need to check the cuspidal conjugacy classes of $W_6 = W(E_6)$ by Lemma \ref{4and6iff}. These are listed in \cite[Appendix, Table B.4]{geck2000characters}, and there is only one cuspidal class of order $h_6 = 12$, given by the Coxeter elements.

    For the $n=7$ case: there is only one cuspidal class of order $h_7=18$ elements in $W_7 = W(E_7)$ by \cite[Appendix, Table B.5]{geck2000characters}, given by the Coxeter elements. Any irreducible element in $W_7$ with $2$-dimensional $1$-eigenspace conjugates to a cuspidal representative in $P_7 = W(D_6) \subset W_7$ by \Cref{irredclassification}. These classes are in bijection with even partitions of $6$, whose associated signed cycles are 
    $$[\bar{1},\bar{5}], [\bar{2},\bar{4}], [\bar{3},\bar{3}], [\bar{2},\bar{2},\bar{1},\bar{1}], [\bar{3},\bar{1},\bar{1},\bar{1}], [\bar{1},\bar{1},\bar{1},\bar{1},\bar{1},\bar{1}],$$
    with orders $10$, $8$, $6$, $4$, $6$, $2$, respectively by \Cref{negativeEven}, none of which are $18$.
\end{proof}

The next result classifies the irreducible classes of $\Mod^+(M_8)$ of order $h_8$. Before we state the result, first consider the element $r \in \Mod^+(M_8)$ of order $h_8 = 30$ and its power $r_3 := r^{10}$ of order $3$, defined with respect to the usual basis $(H, E_1, \dots, E_8)$ of $H_2(M_7, \Z)$.
\begin{equation}\label{eqn:r-nonzerotrace}
	r = \scalebox{0.75}{$\begin{pmatrix}
	3 & 1 & 1 & 0 & 0 & 0 & 1 & 1 & 2 \\
	-1&-1 & 0 & 0 & 0 & 0 & 0 & 0 & -1 \\
	-1 & 0 & -1 & 0 & 0 & 0 & 0 & 0 & -1 \\
	-2 & -1 & -1 & 0 &  0 & 0 & -1 & -1 & -1\\
	-1 & 0 & 0 & 0 & 0 & 0 & 0 & -1 & -1 \\
	-1 & 0 & 0 & 0 & 0 & 0 & -1 & 0 & -1 \\
	0 & 0 & 0 & 1 & 0 & 0 & 0 & 0 & 0 \\
	0 & 0 & 0 & 0 & 1 & 0 & 0 & 0 & 0 \\
	0 & 0 & 0 & 0 & 0 & 1 & 0 & 0 & 0
	\end{pmatrix}$}, \qquad r_3 = r^{10} = \scalebox{0.75}{$\begin{pmatrix}
	2 & 0 & 0 & 1  & 0 & 1 & 0 & 0 & 1 \\	
	0 & 1 & 0 & 0  & 0 & 0 & 0 & 0 & 0 \\
	0 & 0 & 1 & 0  & 0 & 0 & 0 & 0 & 0 \\
	-1 & 0& 0 & 0  & 0 & -1& 0 & 0 & -1\\
	0 & 0 & 0 & 0  & 1 & 0 & 0 & 0 & 0 \\
	0 & 0 & 0 & 0  & 0 & 0 & 1 & 0 & 0\\
	-1 & 0& 0 & -1 & 0 & 0 & 0 & 0 & -1\\
	0 & 0 & 0 & 0  & 0 & 0 & 0 & 1 & 0\\
	-1 & 0& 0 & -1 & 0 & -1& 0 & 0 & 0
	\end{pmatrix}$}.
\end{equation}
\begin{theorem}\label{cox_conjugate_8}
    An element $w \in \Mod^+(M_8)$ is irreducible of order $h_8 = 30$ if and only if $w$ is conjugate in $\Mod(M_8)$ to a Coxeter element of $W_8$ or to $r \in W_8$. 
\end{theorem}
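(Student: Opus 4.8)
The plan is to follow the template of \Cref{cox_conjugate_7}, using \Cref{irredclassification} to reduce the classification to an enumeration inside the two relevant parabolic subgroups $W_8 \cong W(E_8)$ and $P_8 \cong W(D_7)$. By \Cref{irredclassification}, any irreducible $w \in \Mod^+(M_8)$ of finite order is, after conjugating in $\Mod^+(M_8)$, either a cuspidal element of $W_8$ with $H_2(M_8,\Z)^{\langle w\rangle} = \Z\{K_{M_8}\}$, or a cuspidal element of $P_8$ with $H_2(M_8,\Z)^{\langle w\rangle} = \Z\{K_{M_8}, H-E_1\}$; and by \Cref{posdefcriterionirred} the first alternative is automatically irreducible. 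Thus the theorem is equivalent to the assertion that $W(D_7)$ has no cuspidal class of order $30$, while $W(E_8)$ has exactly two, represented by a Coxeter element and by $r$.

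For the reverse implication, the Coxeter elements have $1$-eigenspace $\Z\{K_{M_8}\}$ on $H_2(M_8,\Z)$ by \cite[Lemma 3.16]{humphreys}, hence are irreducible by \Cref{posdefcriterionirred}, and have order $h_8 = 30$ by definition. For $r$, I would work directly with the integer matrix in \eqref{eqn:r-nonzerotrace}: one checks that $rK_{M_8} = K_{M_8}$, so $r \in W_8$, and computes the characteristic polynomial of $r|_{\EE_8}$, which factors as $(t+1)^2(t^2-t+1)(t^4-t^3+t^2-t+1)$. Its roots are $-1$, the primitive sixth roots of unity, and the primitive tenth roots of unity, so $r$ has order $\operatorname{lcm}(2,6,10)=30$; since $t=1$ is not a root, $H_2(M_8,\Z)^{\langle r\rangle} = \Z\{K_{M_8}\}$ and $r$ is irreducible by \Cref{posdefcriterionirred}. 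This computation also recovers the nonzero trace of $r$ on $H_2(M_8,\R)$ recorded in \Cref{coxetermainthm}, which distinguishes its class from that of a Coxeter element, whose characteristic polynomial on $\EE_8$ is the $30$th cyclotomic polynomial and whose trace on $H_2(M_8,\R)$ is $0$.

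For the forward implication I would first eliminate $P_8$. By \Cref{negativeEven} the cuspidal classes of $W(D_7)$ are indexed by the even partitions of $7$, namely $(6,1),(5,2),(4,3),(4,1,1,1),(3,2,1,1),(2,2,2,1),(2,1,1,1,1,1)$, with orders $\operatorname{lcm}_j(2\alpha_j)$ equal to $12,20,24,8,12,4,4$ respectively; none equals $30$, so no irreducible class of order $30$ arises in case \ref{ConicCase}. It then remains to enumerate the cuspidal conjugacy classes of $W(E_8)$ of order $30$ (those whose Carter graph has full rank $8$), which I would extract from Carter's classification by admissible diagrams, using \cite[Table 3]{carter} for the characteristic polynomials (cf. \cite[Appendix, Table B.6]{geck2000characters}). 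The characteristic polynomial of such a class is a product of cyclotomic polynomials $\Phi_d$ with $d \mid 30$, $d>1$, of total degree $8$ and with $\operatorname{lcm}$ of indices equal to $30$. Verifying against the tables that precisely two cuspidal classes of order $30$ occur—one with characteristic polynomial the $30$th cyclotomic polynomial, the other with $(t+1)^2(t^2-t+1)(t^4-t^3+t^2-t+1)$—identifies the first as the Coxeter class and, matching the second against the polynomial computed for $r$, identifies $w$ as conjugate to $r$. Since $W_8$-conjugacy implies $\Mod(M_8)$-conjugacy, this completes the classification.

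The main obstacle is this final enumeration in $W(E_8)$, which has $112$ conjugacy classes: one must reliably isolate the full-rank (cuspidal) classes of order $30$ from Carter's or Geck--Pfeiffer's tables and confirm that exactly two of them occur. A related subtlety is ensuring that the characteristic polynomial $(t+1)^2(t^2-t+1)(t^4-t^3+t^2-t+1)$ is realized by a \emph{single} conjugacy class of $W(E_8)$, so that the homological data pin down the class of $r$ up to conjugacy—the same principle, via \cite[Lemma 3.1.10]{geck2000characters}, that is used repeatedly elsewhere in the paper to match an explicit matrix to its Carter graph.
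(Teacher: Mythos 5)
Your proposal is correct and follows essentially the same route as the paper: \Cref{irredclassification} together with \Cref{negativeEven} to exclude the cuspidal classes of $P_8 \cong W(D_7)$ (none of the seven even partitions of $7$ yields order $30$), the count of exactly two cuspidal order-$30$ classes in $W(E_8)$ from \cite[Appendix, Table B.6]{geck2000characters}, and \Cref{posdefcriterionirred} plus the characteristic polynomials of the Coxeter element and of $r$ on $\EE_8$ for the converse direction. The subtlety you flag at the end is dispatched exactly as in the paper, with no need for a uniqueness-of-class-per-polynomial claim: since the Coxeter element and $r$ both generate cuspidal classes of order $30$ (via \cite[Lemma 3.1.10]{geck2000characters}, as their fixed spaces on $\EE_8$ are trivial) and are non-conjugate by their distinct characteristic polynomials, they must exhaust the two classes in the table.
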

\begin{proof}
    Let $c \in W_8$ denote a Coxeter element of $W_8$. By (\ref{eqn:char-poly}), the characteristic polynomial $\chi_c(t)$ of $c|_{\mathbb E_8}$ is the cyclotomic polynomial
    \[
        \chi_c(t) = t^8 +t^7-t^5-t^4-t^3 +t+1,
    \]
    so $H_2(M_8,\Z)^{\langle c \rangle} = \Z\{K_{M_8}\}$. Note that $r$ fixes $K_{M_n}$ and the characteristic polynomial $\chi_r(t)$ of $r|_{\mathbb E_8}$ is
    \begin{equation}\label{eqn:char-r}
        \chi_r(t) = (t+1)^2 (t^2-t+1) (t^4-t^3+t^2-t+1),
    \end{equation}
    and so $H_2(M_8, \Z)^{\langle r \rangle} = \Z\{K_{M_8}\}$ and $r$ has order $30$. 
    
    Suppose that $w \in \Mod^+(M_8)$ is conjugate in $\Mod(M_8)$ to $c$ or to $r$. Then the $1$-eigenspaces of $c$ and $r$ are $\Z\{K_{M_8}\}$, so \Cref{posdefcriterionirred} implies that $w$ is irreducible.

    Suppose $w \in \Mod^+(M_8)$ is irreducible of order $30$. By \Cref{irredclassification}, $w$ is conjugate to a cuspidal class of $W_8$ or a cuspidal class of $P_8$. By \Cref{negativeEven}, the cuspidal classes of $P_8=W(D_7) \subset W_8$ have one of the following signed cycle-types
    \[
        [\bar{1},\bar{6}], \quad [\bar{2},\bar{5}],\quad  [\bar{3},\bar{4}],\quad  [\bar{2},\bar{2},\bar{2},\bar{1}],\quad  [\bar{3},\bar{2},\bar{1},\bar{1}],\quad [\bar 4, \bar 1, \bar 1, \bar 1], \quad  [\bar{2},\bar{1},\bar{1},\bar{1},\bar{1},\bar{1}],
    \]
    and have orders $12,20,24,4,12,8$ or $4$. Because $w$ has order $30$, it is not conjugate in $\Mod(M_8)$ to a cuspidal class of $P_8$. 
    
    There are two cuspidal conjugacy classes of order $30$ in $W_8=W(E_8)$ by \cite[Appendix, Table B.6]{geck2000characters}. By eigenvalue considerations, $c$ and $r$ are not conjugate in $\Mod(M_8)$. Moreover, the classes $c$ and $r$ generate cuspidal conjugacy classes in $W_8$ by \cite[Lemma 3.1.10]{geck2000characters}. Therefore, $w$ is conjugate in $\Mod(M_8)$ to one of these two cuspidal classes, $c$ or $r$.
\end{proof}

The following proposition distinguishes the Coxeter elements of $W_8$ from the class $r \in W_8$ by their smooth realizability.
\begin{proposition} \label{30_irred_tr1_nonrealizable}
    There is no finite order diffeomorphism $\varphi \in \Diff^+(M_8)$ with $[\varphi] = r$.
\end{proposition}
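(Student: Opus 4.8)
The plan is to obstruct not $r$ itself but its power $r^5$, to which the homological criterion of \Cref{ObstructionLemma} applies cleanly. The point is that if $\varphi \in \Diff^+(M_8)$ were a finite-order diffeomorphism with $[\varphi] = r$, then $\varphi^5$ would be a finite-order diffeomorphism with $[\varphi^5] = r^5$; thus it suffices to show that $r^5$ is \emph{not} realizable by any finite-order diffeomorphism. This reduction automatically handles lifts $\varphi$ of arbitrarily large order (e.g. order divisible by $25$), sidestepping the need to assume $\varphi$ has order exactly $30$ and avoiding the fixed-set case analysis used for $M_7$.

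To apply \Cref{ObstructionLemma} to $f = r^5$ (which has order $6$) with the prime $p = 3$, I verify its two hypotheses. First, from the characteristic polynomial $\chi_r(t)$ of $r|_{\EE_8}$ in \eqref{eqn:char-r}, the eigenvalues of $r$ on $\EE_8$ are $-1$ (with multiplicity $2$), the primitive sixth roots of unity, and the primitive tenth roots of unity. Raising to the fifth power: the two $-1$'s remain $-1$, the primitive sixth roots are permuted among themselves (contributing $+1$ to the trace), and all four primitive tenth roots become $-1$. Hence $\Tr(r^5|_{\EE_8}) = -2 + 1 - 4 = -5$, so $\Tr(r^5|_{H_2(M_8,\Z)}) = -5 + 1 = -4$ (the $+1$ from the fixed class $K_{M_8}$), giving $\Lambda(r^5) = 2 + (-4) = -2 < 0$. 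This is condition (b). Second, I must show $H_2(M_8,\Z)$ has no cyclotomic summands as a module over $\Z[\langle (r^5)^{6/3}\rangle] = \Z[\langle r^{10}\rangle]$, the order-$3$ group generated by $r_3 = r^{10}$ recorded in \eqref{eqn:r-nonzerotrace}.

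The integral module computation is the crux, and is exactly where the explicit matrix $r_3$ is needed. Because $\Q(\zeta_3)$ has class number one, every $\Z[\Z/3]$-lattice splits as a direct sum of trivial, cyclotomic ($\Z[\zeta_3]$), and regular ($\Z[\Z/3]$) summands; write $t_3, c_3, r_3'$ for their multiplicities. Since the characteristic polynomial of $r^{10}$ on $H_2(M_8,\Z)$ is $(t-1)^7(t^2+t+1)$, the factor $\Phi_3 = t^2+t+1$ has rational multiplicity one, so $c_3 + r_3' = 1$. To decide between $c_3 = 1$ and $r_3' = 1$, I reduce $N := r_3 - I$ modulo $3$: using $\Phi_3 \equiv (t-1)^2$ and $t^3-1 \equiv (t-1)^3 \pmod 3$, a trivial, cyclotomic, or regular summand contributes a Jordan block of $\bar N$ of size $1$, $2$, or $3$ respectively. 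From the explicit matrix one computes $N^2 \not\equiv 0 \pmod 3$ (for instance its $(1,1)$-entry equals $-1$), while $N^3 \equiv 0 \pmod 3$ because $r_3^3 = I$ forces $N^3 = -3(N + N^2)$. Thus $\bar N$ has nilpotency index exactly $3$, so a size-$3$ block occurs, giving $r_3' \ge 1$; with $c_3 + r_3' = 1$ this yields $c_3 = 0$, i.e. no cyclotomic summand.

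With both hypotheses established, \Cref{ObstructionLemma} (condition (b), $p=3$) shows that $r^5$ is not realizable by any finite-order diffeomorphism of $M_8$, and the reduction of the first paragraph then completes the proof. I expect the main obstacle to be precisely the integral computation of the third paragraph: the characteristic polynomial alone cannot distinguish a cyclotomic summand from a regular one, and it is only the reduction mod $3$ of the explicit integral matrix $r_3$ that determines which occurs (and hence that $c_3 = 0$). Everything else is a short trace bookkeeping from \eqref{eqn:char-r}.
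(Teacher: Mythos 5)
Your proposal is correct and takes essentially the same approach as the paper: both obstruct $r_6 := r^5$ (rather than $r$ itself) via \Cref{ObstructionLemma} with $p = 3$, using $\Lambda(r^5) = -2 < 0$ together with the absence of cyclotomic summands of $H_2(M_8,\Z)$ as a $\Z[\langle r^{10}\rangle]$-module. The only difference is in verifying that last point: the paper exhibits an explicit $r_3$-invariant basis splitting $H_2(M_8,\Z) \cong \Z^{\oplus 6} \oplus \Z[\langle r_3 \rangle]$, whereas you deduce it from the mod-$3$ Jordan structure of $r_3 - I$ (nilpotency index exactly $3$, since $(N^2)_{1,1} = -1$) combined with the classification of $\Z[\Z/3]$-lattices --- both computations check out.
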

\begin{proof}
Consider the isomorphism of groups 
\[
	H_2(M_8, \Z) \cong \Z\{H-E_3, \, H - E_8, \, E_1, E_2,\, E_4,\, E_7\} \oplus \Z\{E_3, \, H - E_6 - E_8, \, H- E_5 - E_8\}.
\]
Using the matrix form of $r_3$, we can compute that $\langle r_3\rangle$ acts trivially on the first summand and that $\langle r_3\rangle$ acts by the regular representation on the second summand.

Let $r_6 := r^5$ so that $r_3 = r_6^2$. Note that $H_2(M_8, \Z)$ has no cyclotomic summands as a $\Z[\langle r_3 \rangle]$-module. Furthermore, by eigenvalue considerations (using the characteristic polynomial $\chi_r(t)$ of $r|_{\mathbb E_8}$ given in (\ref{eqn:char-r})), the characteristic polynomial $\chi_{r_6}(t)$ of $r_6|_{\mathbb E_8}$ is
\[
    \chi_{r_6}(t) = (t+1)^{6} (t^2 - t +1).
\]
The Lefschetz number of $r_6$ is $\Lambda(r_6) = -2 < 0$ and by \Cref{ObstructionLemma}, there is no diffeomorphism of finite order that represents $r_6$. Therefore, there does not exist any diffeomorphism $\varphi$ of finite order with $[\varphi] = r$. 
\end{proof}

With the above understanding of irreducible, order-$30$ elements of $\Mod(M_8)$, we are ready to prove \Cref{coxetermainthm}.
\begin{proof}[Proof of \Cref{coxetermainthm}]
For $n = 3$, note that the group $W_3 \leq \Mod^+(M_3)$ is the image of $\Aut(M_3, J)$ under the map $\Diff^+(M_3) \to \Mod(M_3)$ admitting a section $W_3 \to \Aut(M_3, J)$ by \cite[Theorem 8.4.2]{dolgachev}, where $J$ is the unique complex structure on $M_3$ so that $(M_3, J)$ is del Pezzo surface, up to isomorphism. In particular, any Coxeter element $w \in W_3$ is realizable by an automorphism of a del Pezzo surface $(M_3, J)$. 

For $4 \leq n \leq 8$, consider the \emph{standard} Coxeter element $w \in W_n$ (cf. \cite[Section 8]{mcmullen})
\[
    w = \RRef_{E_1-E_2}\circ \RRef_{E_2-E_3} \circ \dots \circ \RRef_{E_{n-1} - E_n}\circ \RRef_{H-E_1-E_2-E_3}.
\]
McMullen \cite[Theorem 11.1]{mcmullen} shows that for certain choices of $(a, b) \in \C^2$, the birational map $f: \CP^2 \dashrightarrow \CP^2$ given in affine coordinates by 
\begin{equation}
    (x,y) \mapsto (a,b) + (y, y/x)
\end{equation}
induces a complex automorphism $\varphi \in \Aut(M_n, J)$ for some complex structure $(M_n, J) \cong \Bl_{p_1, \dots, p_n}\CP^2$ such that $[\varphi] = w$.

Suppose that $f \in \Mod^+(M_n)$ is an irreducible class of order $h_n$, and if $n = 8$ assume further that $g$ has trace $0$. By Theorems \ref{cox_conjugate_7} and \ref{cox_conjugate_8}, there exists $h \in \Mod^+(M_n)$ so that $f = h^{-1}wh$. There exists $\psi \in \Diff^+(M_n)$ with $h = [\psi]$ by \cite[Theorem 2]{wall-diffeos} and $\psi^{-1} \circ \varphi \circ \psi$ is an order-$h_n$ complex automorphism of $(M_n, \psi^*J)$ such that $[\psi^{-1} \circ \varphi \circ \psi] = f$. Because $f$ is irreducible, Lemma \ref{irred-minimal} and \cite[Theorem 3.8]{dolgachev--iskovskikh} shows that $(M_n, \psi^*J)$ is a del Pezzo surface.

If $n = 8$ and $f$ has nonzero trace then Theorem \ref{cox_conjugate_8} shows that there exists $h \in \Mod^+(M_n)$ so that $f = h^{-1} r h$ where $r \in W_8$ is as defined in (\ref{eqn:r-nonzerotrace}). There exists $\psi \in \Diff^+(M_n)$ with $h = [\psi]$ by \cite[Theorem 2]{wall-diffeos}, so $f$ is realizable by a finite-order diffeomorphism if and only if $r$ is. However, Proposition \ref{30_irred_tr1_nonrealizable} shows that $r$ is not realizable by any finite-order diffeomorphism.
\end{proof}

\subsection{Irreducibles of prime order}

The following result gives a more refined characterization of the prime order irreducible mapping classes on del Pezzo manifolds than \Cref{CarterClassify}. 

\begin{theorem}\label{primeIrredCuspidal}
    If $n = 3, 5,$ or $7$ then there does not exist an irreducible element of odd, prime order in $W_n$. There is one conjugacy class of irreducible elements of odd, prime order in $W_4$, represented by the Coxeter elements. There is one conjugacy class of odd, prime order irreducible elements in $W_6$, represented by the fourth power of a Coxeter element. There are two conjugacy classes of odd, prime order irreducible elements in $W_8$, both represented by powers of a Coxeter element.
\end{theorem}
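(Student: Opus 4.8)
The plan is to reduce the statement to a question about cuspidal conjugacy classes of $W_n$, and then to pin those classes down using the characteristic polynomial on $\EE_n$ together with Springer's theory of regular elements. First I would apply \Cref{irredclassification}: an irreducible element $w \in W_n$ of finite order is, up to conjugacy in $\Mod^+(M_n)$, either cuspidal in $W_n$ (case (a), with $H_2(M_n,\Z)^{\langle w\rangle}=\Z\{K_{M_n}\}$) or cuspidal in $P_n\cong W(D_{n-1})$ (case (b)). By \Cref{negativeEven} every cuspidal class of $W(D_{n-1})$ has \emph{even} order $\mathrm{lcm}_j(2\alpha_j)$, so case (b) never produces an element of odd prime order. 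Hence any irreducible $w$ of odd prime order $p$ is a cuspidal element of $W_n$; conversely such classes are irreducible by \Cref{posdefcriterionirred}. Since $w$ fixes $K_{M_n}$ and has rank-one fixed space, $w$ has no eigenvalue $1$ on the $n$-dimensional space $\EE_n$.

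Next I would extract the numerical constraint. As $w$ has prime order $p$ and no eigenvalue $1$, its minimal polynomial on $\EE_n$ is the $p$-th cyclotomic polynomial $\Phi_p(t)$, so the integral characteristic polynomial is $\chi_w(t)=\Phi_p(t)^{n/(p-1)}$, forcing $(p-1)\mid n$. This immediately kills $n=3,5,7$: for these odd $n$ the even number $p-1$ cannot divide $n$, so there is no irreducible element of odd prime order. For the remaining even $n$ the admissible primes (those with $p-1$ an even divisor of $n$) are $p\in\{3,5\}$ for $n=4$, $p\in\{3,7\}$ for $n=6$, and $p\in\{3,5\}$ for $n=8$ (the value $p=9$ for $n=8$ being excluded as nonprime).

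For each surviving candidate I would compare $n/(p-1)$, which is the forced dimension of every primitive-root eigenspace, with $a(p):=\#\{i:p\mid d_i\}$, the number of fundamental degrees of $W_n$ divisible by $p$; by Springer's theorem $a(p)$ is the maximal dimension of a $\zeta_p$-eigenspace over $W_n$. Using the degrees $(2,3,4,5)$ for $A_4$, $(2,5,6,8,9,12)$ for $E_6$, and $(2,8,12,14,18,20,24,30)$ for $E_8$, one finds $a(3)=1<2=4/2$ for $(n,p)=(4,3)$ and $a(7)=0<1=6/6$ for $(6,7)$ (the latter also from $7\nmid|W(E_6)|$), so the required eigenspace is too large and no such element exists. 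In the four remaining cases one checks the exact equalities $a(5)=1=4/4$ for $A_4$, $a(3)=3=6/2$ for $E_6$, and $a(3)=4=8/2$, $a(5)=2=8/4$ for $E_8$. In each of these the cuspidal element attains the maximal $\zeta_p$-eigenspace dimension, hence is $\zeta_p$-regular, and all $\zeta_p$-regular elements form a single conjugacy class. A representative is a power of the Coxeter element: if $c$ is $\zeta_{h_n}$-regular of order $h_n$ with regular eigenvector $v$, then $v$ is a regular eigenvector of $c^{h_n/p}$ with eigenvalue $\zeta_p$, so $c^{h_n/p}$ is the regular class. This gives the Coxeter element itself for $n=4$ (as $p=h_4=5$), its fourth power $c^{12/3}$ for $n=6$, and the powers $c^{30/3}, c^{30/5}$ for $n=8$, matching the asserted counts $0,1,0,1,0,2$ and the stated representatives.

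The \emph{main obstacle} is the uniqueness assertion, namely that the characteristic polynomial $\Phi_p(t)^{a(p)}$ singles out a single conjugacy class, since characteristic polynomials do not separate conjugacy classes of Weyl groups in general. The plan routes around this via Springer's identification of the maximal-eigenspace elements with the single $\zeta_p$-regular class, the equalities $a(p)=n/(p-1)$ being exactly what licenses that identification. To stay within the idiom of the rest of the paper I would also cross-check against the tabulated cuspidal classes: the relevant order-$p$ cuspidal classes are $3A_2$ in $W(E_6)$ and $4A_2, 2A_4$ in $W(E_8)$, with characteristic polynomials $(t^2+t+1)^3$, $(t^2+t+1)^4$, and $(t^4+t^3+t^2+t+1)^2$ from \cite[Table 3]{carter}, and one verifies from \cite[Tables B.4, B.6]{geck2000characters} that these are the only cuspidal classes of their orders.
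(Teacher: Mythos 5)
Your proposal is correct, and its first paragraph coincides exactly with the paper's reduction: both use \Cref{irredclassification} plus the even-order computation of \Cref{negativeEven} to discard the $P_n$ case, and \Cref{posdefcriterionirred} for the converse. Where you genuinely diverge is in the middle. The paper proceeds by enumeration: it reads off the cuspidal classes of $W_n$ from \cite[Tables B.4, B.5, B.6]{geck2000characters} (plus the product structure $W_3 \cong W(A_2) \times W(A_1)$ and the single cuspidal class of $W(A_4)$), observes that only orders $3$ (in $W_6$) and $3, 5$ (in $W_8$) are odd primes, and then identifies the representatives as powers $w^{h_n/p}$ of a Coxeter element by an eigenvalue computation with $\chi_6 = \Phi_3\Phi_{12}$, $\chi_8 = \Phi_{30}$ together with \cite[Lemma 3.1.10]{geck2000characters} and the uniqueness visible in the tables. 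You instead derive the uniform arithmetic constraint that a cuspidal element of odd prime order $p$ has characteristic polynomial $\Phi_p(t)^{n/(p-1)}$ on $\EE_n$, forcing $(p-1) \mid n$ --- which eliminates $n = 3, 5, 7$ in one stroke, without consulting any table --- and then settle existence and uniqueness via regular-element theory, comparing $n/(p-1)$ with the number $a(p)$ of degrees divisible by $p$. That buys a conceptual explanation the paper's proof lacks: the surviving classes are precisely the $\zeta_p$-regular classes, which is \emph{why} they are powers of Coxeter elements, rather than this being an a posteriori coincidence of characteristic polynomials. One caution: the implication ``maximal $\zeta_p$-eigenspace dimension $\Rightarrow$ $\zeta_p$-regular,'' which you attribute loosely to Springer, is really the Lehrer--Springer theorem (proved for Coxeter groups by Lehrer--Springer and in general by Lehrer--Michel); Springer's original results give only the other direction, the value $\max_g \dim V(g,\zeta_p) = a(p)$, and the conjugacy of all $\zeta_p$-regular elements. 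You should cite that theorem precisely if you take this route --- though your closing cross-check against \cite[Table 3]{carter} and \cite[Tables B.4, B.6]{geck2000characters} (the classes $3A_2$, $4A_2$, $2A_4$) independently closes the gap and is, in effect, the paper's own argument.
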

\begin{proof}
    For $3 \leq n \leq 8$, by \Cref{irredclassification} and \Cref{negativeEven}, it suffices to consider cuspidal conjugacy classes in $W_n$, since all cuspidal conjugacy classes of $P_n$ have even order. 
    
    For $n=3$, we have $W_3 \cong W(A_2) \times W(A_1)$. There is only one cuspidal conjugacy class in $W(A_n)$, given by the Coxeter elements, and by \cite[Exercise 3.10]{geck2000characters} there is only one cuspidal conjugacy class in $W_3$, represented by the Coxeter elements of $W_3$. Such elements have order $6$. 
    
    For $n=4$, there is only one cuspidal conjugacy class in $W_4 \cong W(A_4)$, represented by Coxeter elements, which have order $5$.
    
    There are no odd-order cuspidal classes in $W_5 \cong W(D_5)$ and $W_7 \cong W(E_7)$, by \Cref{negativeEven} and \cite[Appendix, Table B.5]{geck2000characters} respectively. 

    According to \cite[Appendix, Tables B.4, B.6]{geck2000characters}, there are two odd-order cuspidal classes (of orders $9$ and $3$) in $W_6$ and there are four odd-order cuspidal classes (of orders $15$, $5$, $9$, and $3$) in $W_8$. 

    Let $n = 6$ or $8$. The characteristic polynomial $\chi_n(t)$ of a Coxeter element $w \in W_n$ acting on $\mathbb E_n$ is 
    \[
        \chi_6(t) = \Phi_3(t)\Phi_{12}(t), \qquad \chi_8(t) = \Phi_{30}(t)
    \]
    by (\ref{eqn:char-poly}), where $\Phi_m(t)$ denotes the $m$th cyclotomic polynomial. Note that $\zeta^{h_n/p} \neq 1$ for any odd prime $p$ dividing $h_n$ and for any root $\zeta$ of $\chi_n(t)$. So for any such prime $p$, the $1$-eigenspace of $w^{h_n/p}$ acting on $H_2(M_n, \Z)$ is $\Z \{ K_{M_n} \}$, and $w^{h_n/p}$ generates the unique order-$p$ cuspidal class of $W_n$ by \cite[Lemma 3.1.10]{geck2000characters}. 
\end{proof}

Below, we observe that Coxeter elements also characterize irreducible involutions $f \in \Mod^+(M_n)$ with the additional assumption that $H_2(M_n, \Z)^{\langle f \rangle} \cong \Z$. In this case, we recover a partial version of \cite[Theorem 1.3]{lee-involutions}. The proof below differs from the enumerative proof of \cite{lee-involutions} in light of Lemma \ref{irredclassification}. The proof idea is similar to that of \cite[Theorem 1.4]{bayle--beauville} but replaces a key Mori theory input with Lemma \ref{irredclassification}.

\begin{theorem}[{cf. \cite[Theorem 1.3]{lee-involutions}}]\label{geiserbertini}
Let $3 \leq n \leq 8$ and consider $f \in \Mod^+(M_n)$ of order $2$. Then $f$ is irreducible and satisfies $H_2(M_n, \Z)^{\langle f \rangle} \cong \Z$ if and only if $f$ is conjugate in $\Mod(M_n)$ to the power $w^{\frac{h_n}{2}}$ of a Coxeter element $w \in W_n \leq \Mod(M_n)$ and $n = 7$ or $8$. 
\end{theorem}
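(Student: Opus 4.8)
The plan is to derive the statement almost directly from \Cref{irredclassification} together with a short, finite computation inside the Weyl groups $W_n$. The conceptual key is that \emph{cuspidal involutions of a finite Coxeter group are completely rigid}: any involution $g$ acting on the geometric representation $\mathbb E_n \otimes \R$ is diagonalizable with eigenvalues in $\{\pm 1\}$, and its $+1$-eigenspace is exactly its fixed space. By the criterion recalled after the definition of cuspidal classes in \Cref{sectionCuspidal}, $g$ generates a cuspidal class if and only if it fixes no nonzero vector, i.e. if and only if $g = -\mathrm{Id}_{\mathbb E_n}$. So cuspidal involutions exist in $W_n$ if and only if $-\mathrm{Id}_{\mathbb E_n} \in W_n$, and then $-\mathrm{Id}_{\mathbb E_n}$ is the unique such element.

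For the forward implication I would start with an irreducible involution $f$ satisfying $H_2(M_n,\Z)^{\langle f\rangle}\cong \Z$. Since the fixed lattice has rank $1$, \Cref{irredclassification} forces case \ref{DelPezzoCase} (case \ref{ConicCase} has rank-$2$ fixed lattice): up to conjugacy in $\Mod^+(M_n)$ the class $f$ lies in $W_n$ and is cuspidal, hence by the rigidity observation $f$ acts as $-\mathrm{Id}$ on $\mathbb E_n$ while fixing $K_{M_n}$. I would then run through the types $W(A_2\times A_1),\, W(A_4),\, W(D_5),\, W(E_6),\, W(E_7),\, W(E_8)$ and record that $-\mathrm{Id}_{\mathbb E_n}\in W_n$ occurs only for $n = 7, 8$, i.e. the types $E_7$ and $E_8$; equivalently, these are exactly the cases in which every exponent of $W_n$ is odd. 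This simultaneously eliminates $n = 3,4,5,6$ and pins down $f$, up to conjugacy, as the unique central involution $-\mathrm{Id}_{\mathbb E_n}$ of $W_n$.

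To identify this central involution with a power of a Coxeter element, I would use that a Coxeter element $w\in W_n$ has eigenvalues $e^{2\pi i m_j/h_n}$ on $\mathbb E_n$, where the $m_j$ are the exponents (read off from \eqref{eqn:char-poly} or the exponent list). Since $h_7 = 18$ and $h_8 = 30$ are even and all exponents of $E_7, E_8$ are odd, the power $w^{h_n/2}$ has every eigenvalue $(-1)^{m_j} = -1$, so $w^{h_n/2} = -\mathrm{Id}_{\mathbb E_n}$ and fixes $K_{M_n}$; thus $f$ is conjugate to $w^{h_n/2}$, completing the forward direction. The converse is then immediate: for $n = 7, 8$ the element $w^{h_n/2}=-\mathrm{Id}_{\mathbb E_n}$ is an involution whose fixed lattice in $H_2(M_n,\Z)$ is exactly $\Z\{K_{M_n}\}$, so $H_2(M_n,\Z)^{\langle w^{h_n/2}\rangle}\cong\Z$, and \Cref{posdefcriterionirred} yields irreducibility; all of these properties are preserved under conjugacy.

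The one genuinely computational step, and the main obstacle, is verifying that $-\mathrm{Id}\in W_n$ precisely for $n = 7, 8$ --- equivalently, that all exponents of $E_7$ and $E_8$ are odd while each of $A_2\times A_1,\, A_4,\, D_5,\, E_6$ carries an even exponent (coming from the $A_2$-factor, the exponents $2,4$, the exponent $4$, and the exponents $4,8$ respectively). This is a standard finite check against the known exponent data (or, equivalently, against the classification of finite Coxeter groups whose longest element equals $-\mathrm{Id}$), and it is exactly the point at which the types $E_7$ and $E_8$, hence $n = 7, 8$, are singled out.
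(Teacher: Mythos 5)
Your proposal is correct, and it follows the same skeleton as the paper's proof --- both directions pass through \Cref{irredclassification} to reduce to a cuspidal involution of $W_n$, use diagonalizability to conclude that such an involution must act as $-\mathrm{Id}$ on $\mathbb{E}_n$, and use \Cref{posdefcriterionirred} for the converse --- but you diverge at the one step where $n = 7, 8$ gets singled out, and there your route is genuinely different. The paper argues lattice-theoretically: writing $f|_{\mathbb{E}_n} = -\mathrm{Id}$, the composition $-I_n \circ f$ fixes $\mathbb{E}_n$ pointwise and negates $K_{M_n}$, hence must be a reflection $\RRef_\alpha$ with $\alpha \in \Z\{K_{M_n}\}$ of square $\pm 1, \pm 2$ (the integrality constraint on reflections in $\OO(1,n)(\Z)$), forcing $Q_{M_n}(K_{M_n}, K_{M_n}) = 9 - n \in \{1,2\}$; the same observation constructs the element for $n = 7,8$ as $-I_n \circ \RRef_{K_{M_n}} \in W_n$, and the identification with $w^{h_n/2}$ is outsourced to \cite[Corollary 3.19]{humphreys}. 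You instead invoke the classification of finite Coxeter groups containing $-\mathrm{Id}$ via exponent parity, checking type by type that all exponents of $E_7$ and $E_8$ are odd while $A_2 \times A_1$, $A_4$, $D_5$, $E_6$ each carry an even exponent, and you rederive the Humphreys corollary directly from the eigenvalues $(-1)^{m_j}$ of $w^{h_n/2}$. Both reduce to a standard finite check and both are complete; what the paper's version buys is self-containedness in the lattice setting (no exponent tables needed) and a transparent explanation of \emph{why} degree $9-n \in \{1,2\}$ appears, matching the Geiser/Bertini picture, while your version buys uniformity --- existence and exclusion follow from a single classification fact, with no need to verify separately that the hyperplane-fixing involution is an integral reflection.
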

\begin{proof}
Let $w \in W_n$ denote a Coxeter element. First, we show that $w^{\frac{h_n}{2}}$ is irreducible and that $H_2(M_n, \Z)^{\langle w^{\frac {h_n}{2}} \rangle} \cong \Z$ if $n = 7$ or $8$. To this end, consider $-I_n \in \Mod(M_n)$, the class acting by negation on $H_2(M_n, \Z)$. If $n = 7$ or $8$ then $Q_{M_n}(K_{M_n}, K_{M_n}) = 2$ or $1$ respectively, and so the reflection $\RRef_{K_n}$ is well-defined. Then $-I_n \circ \RRef_{K_{M_n}}$ is contained in $W_n$ and acts by negation on $\mathbb E_n$. Because an element of $W_n$ acts by negation on $\mathbb E_n$, \cite[Corollary 3.19]{humphreys} says that the power $w^{\frac{h_n}{2}}$ acts by negation on $\mathbb E_n$ and $w^{\frac{h_n}{2}} = -I_n \circ \RRef_{K_{M_n}}$. Finally, note that $H_2(M_n, \Z)^{\langle w^{\frac {h_n}{2}} \rangle} = \Z\{K_{M_n}\}$. By \Cref{posdefcriterionirred}, $w^{\frac{h_n}{2}}$ is irreducible. Any $\Mod(M_n)$-conjugate of $w^{\frac{h_n}{2}}$ is also irreducible.

To prove the converse, suppose that $f \in \Mod^+(M_n)$ is irreducible and satisfies $H_2(M_n, \Z)^{\langle f \rangle} \cong \Z$. By Lemma \ref{irredclassification}, $f$ is conjugate in $\Mod^+(M_n)$ to an element of $W_n$. After possibly replacing $f$ with a $\Mod^+(M_n)$-conjugate of $f$, this implies that $f$ acts on $\mathbb E_n$ with no non-zero fixed points, and so the characteristic polynomial $\chi_f(t)$ of $f|_{\mathbb E_n}$ is $\chi_f(t) = (t+1)^n$. Since $f|_{\mathbb E_n}$ has finite order, it is diagonalizable over $\C$, i.e. by eigenvalue considerations, $f$ is conjugate in $\GL(\mathbb E_n \otimes \C)$ to $-I_n|_{\mathbb E_n}$, the negation map restricted to $\mathbb E_n$. In other words, $f|_{\mathbb E_n} = -I_n|_{\mathbb E_n}$, and $f = w^{\frac{h_n}{2}}$ by \cite[Corollary 3.19]{humphreys} again. Moreover, $-I_n \circ f$ fixes $\mathbb E_n$ pointwise while negating $K_{M_n}$, i.e. $-I_n \circ f = \RRef_{\alpha}$ for some $\alpha \in \Z\{K_{M_n}\}$ and $Q_{M_n}(\alpha, \alpha) = \pm 1$ or $\pm 2$. Since $Q_{M_n}(K_{M_n}, K_{M_n}) = 9-n$, we conclude that $n = 7$ or $8$ and $\alpha = K_{M_n}$.
\end{proof}

\begin{remark}
\cite[Theorem 1.3]{lee-involutions} further shows that for $1 \leq n \leq 8$, any irreducible involution $f \in \Mod^+(M_n)$ with $H_2(M_n, \Z)^{\langle f \rangle} \cong \Z$ is realizable by a Geiser or Bertini involution on a del Pezzo surface $(M_n, J)$. We emphasize that in fact, any automorphism $\varphi \in \Aut(M_n, J)$ (of \emph{any} complex structure $J$ on $M_n$) realizing such an involution $f$ is a Geiser or Bertini involution and $(M_n, J)$ is a del Pezzo surface. To see this, note that the tuple $(M_n, J, \varphi)$ forms a minimal $\langle f \rangle$-surface by Lemma \ref{irred-minimal}. Because $H_2(M_n, \Z)^{\langle f \rangle} = \Z$, work of Bayle--Beauville \cite[Theorem 1.4]{bayle--beauville} then shows that $(M_n, J)$ is a del Pezzo surface and that $\varphi$ is a Geiser involution with $n = 7$ or $\varphi$ is a Bertini involution with $n = 8$.
\end{remark}

\begin{remark}\label{rmk:order-2-irred}
There exists an irreducible class $f \in \Mod^+(M_7)$ of order $2$ that is \emph{not} conjugate in $\Mod(M_7)$ to a power of a Coxeter element of $W_7$. For example, the de Jonqui\'eres involution $\gamma$ (also considered in \Cref{Jonquiere--not--einstein}) defines an irreducible class $[\gamma] \in W_7$, but $[\gamma]$ is not conjugate to a Geiser involution in $\Mod(M_7)$ (cf. \cite[Proposition 3.14]{lee-involutions}). By \Cref{geiserbertini}, $[\gamma]$ is not conjugate to a power of a Coxeter element in $\Mod(M_7)$.
\end{remark}

Combining the above results concerning irreducible elements of prime order completes the proof of Theorem \ref{thm:primes-are-powers}.
\begin{proof}[Proof of \Cref{thm:primes-are-powers}]
    Let $f \in \Mod^+(M_n)$ be irreducible with odd, prime order. By Lemmas \ref{irredclassification} and \ref{negativeEven}, $f$ is conjugate in $\Mod^+(M_n)$ to a cuspidal conjugacy class of $W_n$. The theorem now follows from \Cref{primeIrredCuspidal}. 
\end{proof}

With \Cref{thm:primes-are-powers} in hand, we conclude with a proof of \Cref{primenielsen}.
\begin{proof}[Proof of Corollary \ref{primenielsen}]
For any $0 \leq n \leq 8$, the class $-I_n \in \Mod(M_n)$ acting by negation on $H_2(M_n, \Z)$ is contained in the center of $\Mod(M_n)$, and 
\[
    \Mod(M_n) = \langle \Mod^+(M_n), \, -I_n \rangle.
\]
Because $f$ has odd order, $f$ must be contained in $\Mod^+(M_n)$. 

First, suppose that $f$ is irreducible. If $3 \leq n \leq 8$ then $f$ is conjugate in $\Mod(M_n)$ to a power of a Coxeter element of $W_n \subset \Mod(M_n)$ by \Cref{thm:primes-are-powers}. By \Cref{coxetermainthm}, $f$ is realizable by a complex automorphism of a del Pezzo surface $(M_n, J)$. On the other hand, $\Mod(M_0) \cong \Z/2\Z$ and $\Mod(M_1) \cong (\Z/2\Z)^2$, and so there does not exist any irreducible elements $f \in \Mod(M_n)$ of order $p$ if $n =0$ or $n = 1$. Finally, \cite[Lemma 2.6(2)]{lee-involutions} shows that irreducible elements of $\Mod^+(M_2)$ are conjugate in $\Mod(M_2)$ to an element of $W_2 \cong \Z/2\Z$. Therefore, there does not exist any irreducible elements $f \in \Mod^+(M_n)$ of order $p$ if $n = 2$.

Suppose that $f \in \Mod^+(M_n)$ is reducible and write
\[
	f = (f_1, f_2) \in \Aut(H_2(M), Q_M) \times \Aut(H_2(\# k \overline{\CP^2}), Q_{\# k \overline{\CP^2}})
\]
for some $k > 0$ and some del Pezzo manifold $M$, for which $f_1$ is irreducible. 

In what follows, we will choose an order-$p$ diffeomorphism $\varphi_1 \in \Diff^+(M)$ with $[\varphi_1] = f_1$ so that $\Fix(\varphi_1) \neq \emptyset$. 
\begin{itemize}
\item Suppose that $f_1 = \Id$. Then $H_2(M, \Z)$ contains no $(-1)$-classes, and so $M \cong \CP^2$ or $M \cong \CP^1 \times \CP^1$. If $M \cong \CP^2$ then let $\varphi_1 = \mathrm{diag}(\zeta_p, 1, 1) \in \PGL_3(\C)$. If $M \cong \CP^1 \times \CP^1$ then let $\varphi_1 = (\mathrm{diag}(\zeta_p, 1), \Id) \in \PGL_2(\C) \times \PGL_2(\C)$. In either case, the set $\Fix(\varphi_1)$ is nonempty. 
\item Suppose that $f_1 \neq \Id$, and hence $f_1$ has order $p$. Then because $f_1$ is irreducible, $M \ncong M_m$ for $0 \leq m \leq 2$ by the same argument as above. Moreover, $M \ncong \CP^1 \times \CP^1$ because $\Mod(\CP^1 \times \CP^1) \cong (\Z/2\Z)^2$, which does not contain any elements of odd order. We now conclude that $M \cong M_m$ for some $3 \leq m \leq 8$. 

Because $f$ is contained in $\Mod^+(M_n)$, the irreducible component $f_1$ must be contained in $\Mod^+(M_m)$. By \Cref{irredclassification}, either
\[
    H_2(M_m, \Z)^{\langle f_1 \rangle} = \Z\{K_{M_m}\}, \quad \text{ or } \quad H_2(M_m, \Z)^{\langle f_1 \rangle} = \Z\{K_{M_m}, \, H - E_1\}
\]
up to conjugacy in $\Mod^+(M_m)$. In the latter case, $f_1$ is conjugate to an element of $P_m \subset \Mod^+(M_m)$ generating a cuspidal conjugacy class, which has even order by \Cref{negativeEven}. Therefore, $H_2(M_m, \Z)^{\langle f_1 \rangle} = \Z\{K_{M_m}\}$ because $f_1$ has odd, prime order $p$, and the signature of the lattice $(H_2(M_m, \Z)^{\langle f_1 \rangle}, Q_{M_{m}})$ is $1$.

Let $\varphi_1$ be an order-$p$ automorphism of a del Pezzo surface $(M,J)$ with $[\varphi_1]=f_1$, which exists by \Cref{thm:primes-are-powers} and \Cref{coxetermainthm}. By the $G$-signature theorem for $G = \Z/p\Z$ (e.g. see \cite[Theorem 3.1]{farb--looijenga}),
\[
	p - \sigma(M_m) = \sum_{z} \text{def}_z + \sum_{C} \text{def}_C.
\]
Because $\sigma(M_m) = 1-m < 0$, the set $\Fix(\varphi_1)$ is nonempty. 
\end{itemize}

Let $B = \{e_1, \dots, e_k\}$ denote a standard orthonormal $\Z$-basis of $H_2(\# k \overline{\CP^2})$ on which $f_2$ acts. Let $l$ be the number of $f_2$-orbits of $B$ of size $p$. After permuting the elements of $B$, we may assume that these orbits are of the form 
\[
    \{e_{i + pj} : 1 \leq i \leq p, \, 0 \leq j \leq l\}.
\]

Pick points $q_1, \dots, q_l \in M - \Fix(\varphi_1)$ with pairwise disjoint orbits (under the action of $\varphi_1$) in $M$. Then $\varphi_1$ induces an order-$p$ complex automorphism $\tilde{\varphi}_1$ of 
\[
	M' := \Bl_{\{\varphi_1^i(q_j) : i \in \Z, \, 1 \leq j \leq l\}} M
\]
such that $\Fix(\tilde{\varphi}_1) \neq\emptyset$. 

Suppose that $k > lp$ so that there exist elements of $B$ fixed by $f_2$. Note that any order-$p$ automorphism $\Phi$ of any complex surface $X$ with a fixed point $q \in \Fix(\Phi)$ induces an order-$p$ automorphism $\tilde\Phi$ on $\Bl_qX$. Moreover, $\Phi$ acts on the exceptional divisor $E$ over $q$ by an automorphism of finite order, and hence $\Fix(\tilde\Phi) \neq \emptyset$ and contains two points of $E$. Hence we form an $(k-lp)$-iterated blow up $X$ of $M'$ with an order-$p$ automorphism $\Phi$ of $X$ preserving each of the $(k-lp)$-many new exceptional divisors. The $\Z$-span of these new exceptional divisors in $H_2(X;\Z)$ forms a lattice $L$ isomorphic to $(\Z^{k-lp}, (k-lp)\langle -1 \rangle)$ which is fixed by $\Phi$.

Finally, identify the exceptional divisor $E_{i + pj}$ over $\varphi_1^i(q_j) \in M'$ with the class $e_{i + pj} \in B$ and isometrically identify $L$ with $\Z\{e_{lp+1}, \dots, e_k\}$. By construction, the action of $\Phi$ on $H_2(X; \Z)$ agrees with that of $f$.
\end{proof}

%
%

\footnotesize
\bibliographystyle{alpha} 
\bibliography{CyclicNielsen}

\bigskip
\noindent
Seraphina Eun Bi Lee \\
Department of Mathematics \\
University of Chicago \\
\href{mailto:seraphinalee@uchicago.edu}{seraphinalee@uchicago.edu}

\bigskip
\noindent
Tudur Lewis \\
School of Mathematics \\
University of Bristol \\
\href{mailto:et24788@bristol.ac.uk}{et24788@bristol.ac.uk}

\bigskip
\noindent
Sidhanth Raman \\
Department of Mathematics \\
University of California, Irvine \\
\href{mailto:svraman@uci.edu}{svraman@uci.edu}

\end{document}